\let\cal\mathcal
\def\AA{{\cal A}}
\def\BB{{\cal B}}
\def\CC{{\cal C}}
\def\DD{{\cal D}}
\def\EE{{\cal E}}
\def\FF{{\cal F}}
\def\II{{\cal I}}
\def\TT{{\cal T}}
\def\UU{{\cal U}}
\let\blb\mathbb
\def\bD{{\blb D}}
\def\bR{{\blb R}}
\author{Ruben Henrard}
\address{Ruben Henrard \\ Universiteit Hasselt \\ Campus Diepenbeek \\ Departement WNI \\ 3590 Diepenbeek \\ Belgium}
\email{ruben.henrard@uhasselt.be}
\author{Adam-Christiaan van Roosmalen}
\address{Adam-Christiaan van Roosmalen \\ Universiteit Hasselt \\ Campus Diepenbeek \\ Departement WNI \\ 3590 Diepenbeek \\ Belgium}
\email{adamchristiaan.vanroosmalen@uhasselt.be}
\title{Localizations of (one-sided) exact categories}
\newtheorem{theorem}{Theorem}[section]
\newtheorem{proposition}[theorem]{Proposition}
\newtheorem{lemma}[theorem]{Lemma}
\newtheorem{corollary}[theorem]{Corollary}
\theoremstyle{definition}
\newtheorem{definition}[theorem]{Definition}
\newtheorem{remark}[theorem]{Remark}
\newtheorem{example}[theorem]{Example}
\newtheorem{construction}[theorem]{Construction}
\DeclareMathOperator{\im}{im}
\DeclareMathOperator{\coim}{coim}
\DeclareMathOperator{\coker}{coker}
\DeclareMathOperator{\Hom}{Hom}
\DeclareMathOperator{\Fun}{Fun}
\DeclareMathOperator{\Mor}{Mor}
\DeclareMathOperator{\Adm}{Adm}
\DeclareMathOperator{\cofib}{cofib}
\DeclareMathOperator{\fib}{fib}
\DeclareMathOperator{\rep}{rep}
\DeclareMathOperator{\add}{add}
\DeclareMathOperator{\Ab}{\mathsf{Ab}}
\DeclareMathOperator{\LCA}{\mathsf{LCA}}
\DeclareMathOperator{\LCAf}{\mathsf{LCA}_{\mathrm{f}}}
\DeclareMathOperator{\LC}{\mathsf{LC}}
\DeclareMathOperator{\LCAcon}{\mathsf{LCA}_\mathrm{con}}
\DeclareMathOperator{\LCAtd}{\mathsf{LCA}_\mathrm{td}}
\DeclareMathOperator{\Ob}{Ob}
\DeclareMathOperator{\Db}{\mathbf{D}^b}
\DeclareMathOperator{\DAb}{\mathbf{D}_{\mathcal{A}}^b}
\newcommand{\inflation}{\rightarrowtail}
\newcommand{\deflation}{\twoheadrightarrow}
\newcommand{\dq}{{/\mkern-6mu/}}
\newcommand{\myitem}[1]{%
\item[#1]\protected@edef\@currentlabel{#1}%
}
\subjclass[2010]{18E10, 18E35}
\begin{document}

\begin{abstract}
In this paper, we introduce quotients of exact categories by percolating subcategories.  This approach extends earlier localization theories by Cardenas and Schlichting for exact categories, allowing new examples. Let $\AA$ be a percolating subcategory of an exact category $\EE$, the quotient $\EE\dq \AA$ is constructed in two steps. In the first step, we associate a set $S_\AA \subseteq \Mor(\EE)$ to $\AA$ and consider the localization $\EE[S^{-1}_\AA]$. In general, $\EE[S_\AA^{-1}]$ need not be an exact category, but will be a one-sided exact category. In the second step, we take the exact hull $\EE \dq \AA$ of $\EE[S_\AA^{-1}]$. The composition $\EE \to \EE[S_\AA^{-1}] \to \EE \dq \AA$ satisfies the 2-universal property of a quotient in the 2-category of exact categories.

We formulate our results in slightly more generality, allowing to start from a one-sided exact category.  Additionally, we consider a type of percolating subcategories which guarantee that the morphisms of the set $S_\AA$ are admissible.

In related work, we show that these localizations induce Verdier localizations on the level of the bounded derived category.
\end{abstract}

\maketitle

\tableofcontents

\section{Introduction}
\addtocontents{toc}{\protect\setcounter{tocdepth}{1}}
Quotients of abelian and triangulated categories are ubiquitous in geometry, representation theory, and $K$-theory.  In these settings, the quotients are obtained by localizing the abelian or triangulated category at an appropriate multiplicative set of morphisms.

Quillen exact categories provide a more flexible framework to study homological and $K$-theoretic properties than abelian categories.  For an exact category $\CC$, quotient constructions similar to those mentioned before were given in \cite{Cardenas98, Schlichting04}.  Here, the Serre subcategory (of an abelian category) or thick subcategory (of a triangulated category) is replaced by a subcategory that \emph{localizes} $\CC$ (see \cite[4.0.35]{Cardenas98} or see \cite[appendix A]{Levine85}; in our terminology, such subcategories are called \emph{two-sided admissibly percolating} subcategories) or by a left or right \emph{special filtering} subcategory (see \cite{Schlichting04}).  In both cases, the quotient categories are obtained by localizing at a class of morphisms, and satisfy the expected homological and $K$-theoretic properties.

However, there are some natural examples that are not accommodated by these constructions.  It was observed in \cite[example 4]{Braunling20} that, in the category $\LCA$ of locally compact abelian groups, neither the subcategory $\LCA_{\mathsf{C}}$ of compact abelian groups nor the subcategory $\LCA_{\mathsf{D}}$ of discrete abelian groups satisfy the s-filtering condition used in \cite{Schlichting04}, nor do these examples satisfy the conditions in \cite{Cardenas98}.  Another example comes from the theory of glider representations \cite{CaenepeelVanOystaeyen19book}.  Here, one can construct the category of glider representations as a localization of an exact category (namely the category of pregliders \cite{HenrardvanRoosmalen20a}), but this localization is not described by \cite{Cardenas98, Schlichting04}.  Such examples indicate a need for a more general localization theory.

In this paper, we extend the previous localization theories.  We introduce a (left or right) percolating subcategory $\AA$ of an exact category $\CC$ and describe the quotient category $\CC \dq \AA$ (we opt for the notation $\CC \dq \AA$ for the quotient in the 2-category of exact categories; the notation $\CC / \AA$ will be used for the quotient in the 2-category of conflation categories, see below).  When $\CC$ is an abelian category, the notion of a left/right percolating subcategory reduces to the notion of a Serre subcategory.  This generalizes the construction from both \cite{Cardenas98, Schlichting04}.

\begin{theorem}\label{theorem:IntroductionExact}
Let $\AA$ be a left or right percolating subcategory in an exact category $\CC$.  There is an exact functor $Q\colon \CC \to \CC \dq \AA$ between exact categories, satisfying the usual 2-universal property of a quotient.
\end{theorem}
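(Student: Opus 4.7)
The plan follows the two-step construction announced in the abstract. First, I would associate to $\AA$ the class $S_\AA \subseteq \Mor(\CC)$ of admissible morphisms whose kernel and cokernel lie in $\AA$; this is the natural generalization of the class of morphisms used to form the Serre quotient of an abelian category. Using the axioms of a left or right percolating subcategory, I would verify that $S_\AA$ admits (a suitable one-sided version of) a calculus of fractions and that every composition of a fraction with a conflation whose outer terms are in $\AA$ factors compatibly, so that the additive localization $L\colon \CC \to \CC[S_\AA^{-1}]$ exists, inverts exactly the morphisms of $S_\AA$, and sends every object of $\AA$ to a zero object.

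Next, I would equip $\CC[S_\AA^{-1}]$ with a conflation structure transported from $\CC$: a sequence is declared a conflation if it is isomorphic in the localization to the image under $L$ of a conflation in $\CC$. It is precisely here that the one-sidedness enters, for when $\AA$ is only left (respectively right) percolating, the symmetry that would allow $\CC[S_\AA^{-1}]$ to be a Quillen exact category is broken, and one should only expect a one-sided exact structure. Verifying the axioms of a one-sided exact category on $\CC[S_\AA^{-1}]$ — most notably that push-outs along inflations (respectively pull-backs along deflations) that land in $S_\AA$ exist and compose correctly — will occupy the bulk of the work, and is where the percolating hypotheses on $\AA$ are really used.

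Finally, I would invoke the exact hull of $\CC[S_\AA^{-1}]$ to define $\CC \dq \AA$, which by the general theory of one-sided exact categories is the 2-universal fully exact embedding of a one-sided exact category into a Quillen exact category. The composite $Q\colon \CC \to \CC[S_\AA^{-1}] \to \CC \dq \AA$ is then an exact functor between exact categories annihilating $\AA$. To establish the 2-universal property, given any exact functor $F\colon \CC \to \DD$ between exact categories that sends $\AA$ to zero, I would show that $F$ inverts every morphism of $S_\AA$ (immediate from exactness and the vanishing of $\AA$ on kernels and cokernels), factor $F$ uniquely through the localization as a one-sided exact functor, and then extend uniquely across the exact hull by its defining 2-universal property. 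The principal obstacle is the middle step: establishing that $\CC[S_\AA^{-1}]$ carries a one-sided exact structure with $L$ one-sided exact. In the setting of \cite{Cardenas98} the two-sided admissibly percolating hypothesis yields a genuinely Quillen exact structure directly, and the role of the exact hull in the present framework is precisely to repair the loss of symmetry forced by allowing one-sided percolating subcategories.
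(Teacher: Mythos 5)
Your overall architecture matches the paper's: localize at a class $S_\AA$, transport a conflation structure so the localization becomes a one-sided exact category, pass to the exact hull, and finish with the 2-universal property of the exact hull. The final step and the definition of the conflation structure on $\CC[S_\AA^{-1}]$ as ``isomorphic to the image of a conflation in $\CC$'' are exactly the paper's definitions.

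However, your definition of $S_\AA$ has a genuine gap. You take $S_\AA$ to be the admissible morphisms whose kernel and cokernel lie in $\AA$. That class, which the paper denotes $\widehat{S_\AA}$, is in general \emph{not} closed under composition: the composite of two admissible morphisms in a one-sided exact category need not be admissible, so $\widehat{S_\AA}$ typically fails the very first axiom \ref{RMS1} of a right multiplicative system, and your plan to ``verify that $S_\AA$ admits a calculus of fractions'' cannot go through as stated. The paper instead defines $S_\AA$ as the class of \emph{weak isomorphisms}, i.e.\ all finite compositions of $\AA^{-1}$-inflations and $\AA^{-1}$-deflations (definition \ref{Definition:WeakIsomorphisms}); this is the closure under composition of your class, and it is precisely this closure that is shown to be a right multiplicative system (proposition \ref{proposition:MinimalConditionsRMS}). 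The two classes genuinely differ for general percolating subcategories: example \ref{Example:IsbellCategory} exhibits a weak isomorphism $\bZ \xrightarrow{\cdot p^2} \bZ$ in the Isbell category that is a composition of two $\AA^{-1}$-inflations but is not itself an inflation, hence not admissible. The paper introduces the stronger notion of an \emph{admissibly percolating} subcategory (definition \ref{Definition:AbelianPercolating}) exactly to recover the equality $S_\AA = \widehat{S_\AA}$ (theorem \ref{theorem:WeakIsomorphismsEqualAAInverseIsomorphisms}), but this is not available for the general percolating case that theorem \ref{theorem:IntroductionExact} is about.

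Because of this, the technical heart of the argument is also different from what you anticipate. The key inputs are not just a calculus of fractions: the paper needs the lifting lemma (lemma \ref{lemma:LiftingConflations}), which allows a conflation to be lifted along a weak isomorphism into the middle term, and the interpretation of axiom \ref{P4} (proposition \ref{proposition:InterpretationOfP4}), which characterizes when $Q(f)=0$. Both are used crucially in the proof that conflations descend to kernel--cokernel pairs (proposition \ref{proposition:CokernelsDescend}) and that $S_\AA^{-1}\CC$ is deflation-exact (theorem \ref{theorem:Maintheorem}). Your phrase ``every composition of a fraction with a conflation whose outer terms are in $\AA$ factors compatibly'' gestures in this direction, but these steps require the full strength of axioms \ref{P3} and \ref{P4} of a percolating subcategory, and you would need to be explicit about how you use them.
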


To discuss this result, it will be useful to have the following definition.  A \emph{conflation category} is an additive category $\CC$ together with a class of distinguished kernel-cokernel pairs, called \emph{conflations}, closed under isomorphisms.  A functor between conflation categories is called \emph{(conflation-)exact} if it preserves conflations.  The exact category $\CC$ in theorem \ref{theorem:IntroductionExact} is a conflation category.

In contrast to the aforementioned quotient construction for abelian or triangulated categories (as well as the localizations of exact categories in \cite{Cardenas98, Schlichting04}), the quotient $\CC \dq \AA$ in theorem \ref{theorem:IntroductionExact} is not necessarily a localization of $\CC$ at a set of morphisms.  Instead, our construction passes two steps.  In the first step, we invert a class of morphisms in $\CC$ that need to become invertible under the (exact) quotient functor $Q$, namely inflations with cokernel in $\AA$ and deflations with kernel in $\AA$, as well as their compositions (we write $S_\AA$ for this set; a morphism in $S_\AA$ is called a weak isomorphism).  When $\AA$ is a left or right percolating subcategory, the set $S_\AA$ of weak isomorphisms is a left or right multiplicative system, respectively, and we show that $\CC[S_\AA^{-1}]$ is the quotient $\CC / \AA$ in the 2-category of conflation categories.  In general, the localization $\CC[S_\AA^{-1}]$ can fail to be an exact category.

However, we show that $\CC[S_\AA^{-1}]$ is a one-sided exact category (see \S\ref{subsection:IntroductionExact}).  By \cite{Rosenberg11}, a one-sided exact category admits a 2-universal embedding into its exact hull.  The second step of the proof of theorem \ref{theorem:IntroductionExact} is to embed the localization $\CC[S_\AA^{-1}]$ into its exact hull; this is then the quotient $\CC \dq \AA$ of $\CC$ by $\AA$ in the 2-category of exact categories.

In this paper, we focus on the first step, namely understanding the category $\CC[S_\AA^{-1}].$

\subsection{Exact and one-sided exact categories}\label{subsection:IntroductionExact}  By considering localizations of exact categories at percolating subcategories, we naturally arrive at the setting of one-sided exact categories.  We start by establishing some terminology.

A Quillen \emph{exact category} is a conflation category, satisfying some additional axioms, which we recall in detail in definition \ref{definition:RightExact}.  Following \cite{Keller90}, we refer to the kernel morphism of a conflation as an \emph{inflation} and to the cokernel morphism as a \emph{deflation}.

The original axioms of a Quillen exact category, as given in \cite{Quillen73}, can be partitioned into two dual sets: one set only referring to inflations, and one set only referring to deflations.  In a \emph{one-sided exact category}, only one of these sets is required to hold (see \cite{BazzoniCrivei13,Rump11}).  Thus, for a \emph{deflation-exact category}, we require a set of conflations such that the class of deflations contains all isomorphisms, and is stable under both composition and base change.  One-sided exact categories still enjoy many useful homological properties (see \cite{BazzoniCrivei13}).

Similar one-sided exact structures have occurred in several guises throughout the literature. Our main source of examples is based on left or right almost abelian categories (see \cite{Rump01}).  The axioms of a one-sided exact category are closely related to those of a Grothendieck pretopology (see \cite{Rosenberg11}), to homological categories (see \cite{BorceuxBourn04}), and to categories with fibrations (or cofibrations) and Waldhausen categories (see \cite{Weibel13}).  The latter allows for a $K$-space to be associated to a one-sided exact category.

The main part of the proof of theorem \ref{theorem:IntroductionExact} is to understand the localization of a (one- or two-sided) exact category $\CC$ with respect to the set of weak isomorphisms $S_\AA$ associated to a percolating subcategory $\AA \subseteq \CC$.  In the 2-category of conflation categories, the localization $\CC \to \CC[S_\AA^{-1}]$ satisfies the 2-universal property of a quotient $\CC \to \CC / \AA.$  This is expressed in the following theorem (see theorem \ref{theorem:Maintheorem} in the text, as well as propositions \ref{proposition:MinimalConditionsRMS}, \ref{proposition:InterpretationOfP4}, and \ref{proposition:QuotientInConflationCategories}).

\begin{theorem}\label{theorem:IntroductionMain}
Let $\CC$ be a deflation-exact category and let $\AA \subseteq \CC$ be a deflation-percolating subcategory.  There is an exact functor $Q\colon \CC \to \CC / \AA$ between deflation-exact categories such that $Q(\AA) = 0$ and which is 2-universal with respect to this property.

Furthermore,
\begin{enumerate}
	\item the corresponding set of weak isomorphisms $S_\AA$ is a right multiplicative system and the category $\CC / \AA$ is equivalent to the corresponding localization $\CC[S^{-1}_\AA]$, and
	\item $Q(f\colon X \to Y)$ is zero if and only if $f$ factors through an object of $\AA$.
\end{enumerate}
\end{theorem}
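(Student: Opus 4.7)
The plan is to prove the theorem in three stages matching the three cited propositions. First I would verify that $S_\AA$ is a right multiplicative system; then construct $\CC[S_\AA^{-1}]$ by right fractions and equip it with a deflation-exact structure; finally, verify the 2-universal property together with the vanishing criterion in~(2).

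For the right Ore condition, the key observation is that every $s \in S_\AA$ is, by definition, a finite composition of admissible inflations with cokernel in $\AA$ and admissible deflations with kernel in $\AA$. Given such $s$ and an arbitrary $f$, I would produce the Ore square one factor at a time. If the factor is a deflation with kernel in $\AA$, base change exists by the deflation-exact axioms and the resulting pullback has the same kernel, hence remains in $S_\AA$. If the factor is an inflation with cokernel in $\AA$, the defining axioms of a deflation-percolating subcategory are designed precisely to yield the lift producing a commutative square whose new vertical arrow is still a weak isomorphism. Iterating handles a general $s$. The cancellation axiom ($sf = sg \Rightarrow ft = gt$ for some $t \in S_\AA$) is treated by the same case split on the two elementary building blocks, reducing to mono-cancellation for inflations and to a kernel-factoring argument (where the obstruction lies in $\AA$) for deflations.

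Once $S_\AA$ is a right multiplicative system, $\CC[S_\AA^{-1}]$ is constructed via the standard calculus of right fractions, and I would declare a sequence in it to be a conflation iff it is isomorphic (in the localization) to the image of a conflation in $\CC$. Checking the deflation-exact axioms for this class requires representing morphisms as roofs, pulling back along a weak isomorphism so that the comparison takes place inside $\CC$, and then invoking the axioms there. The functor $Q$ is exact by construction, and $Q(\AA) = 0$ because the split deflation $A \deflation 0$ has kernel $A \in \AA$, so it belongs to $S_\AA$ and is inverted by $Q$.

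For the universal property, any conflation-exact functor $F\colon \CC \to \DD$ with $F(\AA) = 0$ sends every inflation with cokernel in $\AA$ and every deflation with kernel in $\AA$ to an isomorphism (since $F$ is exact and kills the kernel or cokernel), hence inverts all of $S_\AA$; the universal property of categorical localization then produces a unique $\overline{F}\colon \CC[S_\AA^{-1}] \to \DD$ whose exactness is immediate from the chosen conflations in the localization. For~(2), the forward implication is trivial, and the reverse follows by representing the equality $Q(f) = 0$ as a right fraction: one obtains $t \in S_\AA$ with $f t = 0$ in $\CC$, and decomposing $t$ into its elementary inflation/deflation factors together with the closure properties of $\AA$ yields a factorization of $f$ through an object of $\AA$. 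The main obstacle throughout is tracking that the Ore and cancellation squares built factor-by-factor from the elementary weak isomorphisms have their new legs still in $S_\AA$ — this is exactly where the axioms of a deflation-percolating subcategory do the real work, and where one expects the hypothesis to be tight.
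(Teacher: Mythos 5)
Your overall structure matches the paper's strategy, and the treatment of the right multiplicative system (Proposition \ref{proposition:MinimalConditionsRMS}) and of the vanishing criterion (2) (Proposition \ref{proposition:InterpretationOfP4}) is essentially correct. However, the proposal skips the central technical step entirely. You declare a sequence in $\CC[S_\AA^{-1}]$ to be a conflation iff it is isomorphic to the image of one, and then assert that $Q$ is ``exact by construction.'' But for this to define a conflation category at all, you must show that the image $Q(X) \to Q(Y) \to Q(Z)$ of a conflation is actually a kernel-cokernel pair in $\CC[S_\AA^{-1}]$. Since $Q$ is a localization at a right multiplicative system it preserves finite limits, so $Q(i) = \ker Q(p)$ is automatic; but localization at a right multiplicative system does \emph{not} in general preserve cokernels, so $Q(p) = \coker Q(i)$ is genuinely nontrivial. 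This is precisely Proposition \ref{proposition:CokernelsDescend}, which the paper singles out as the main part of the proof. Its argument hinges on the lifting lemma (Lemma \ref{lemma:LiftingConflations}, relying on axiom \ref{P3}) to replace a roof by a morphism of conflations, and on Proposition \ref{proposition:InterpretationOfP4} (axiom \ref{P4}) to extract the desired factorization. Your proposal never mentions the lifting lemma, and without Proposition \ref{proposition:CokernelsDescend} the whole construction does not get off the ground.

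A secondary issue: your argument that $Q(\AA) = 0$ via ``the split deflation $A \deflation 0$ has kernel $A$'' does not work in a general deflation-exact category, since $A \to 0$ is only guaranteed to be a deflation under the extra axiom \ref{R0*}, which is not assumed here. The correct argument is dual: $\mathrm{id}_A$ is always a deflation (Lemma \ref{Lemma:BasicProperties}), so $0 \to A$ is its kernel and hence an inflation with cokernel $A \in \AA$; thus $0 \to A$ is an $\AA^{-1}$-inflation, a weak isomorphism, and $Q(A) \cong Q(0) = 0$.
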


As $Q$ is given by a localization at a right multiplicative set, we know that it commutes with finite limits (but not necessarily with finite colimits).  The main part of the proof of theorem \ref{theorem:IntroductionMain} is to establish that $Q$ maps a conflation in $\CC$ to a kernel-cokernel pair in $\CC[S^{-1}_\AA].$  This is done in proposition \ref{proposition:CokernelsDescend}.

\subsection{Admissibly percolating subcategories}  Even though the set $S_\AA$ of weak isomorphisms in theorem \ref{theorem:IntroductionMain} is a right multiplicative system, it might be difficult to determine whether a given morphism in $\CC$ is a weak isomorphism.  In many examples of interest, weak isomorphisms are \emph{admissible} or \emph{strict} (meaning that they admit a deflation-inflation factorization).  We provide a class of percolating subcategories for which this is always the case, called \emph{admissibly percolating subcategories} or \emph{strictly percolating subcategories} (see definition \ref{Definition:AbelianPercolating}) and show that the associated weak isomorphisms are admissible morphisms.  Moreover, admissibly percolating subcategories satisfy some additional desirable properties, such as the two-out-of-three-property (proposition \ref{proposition:2OutOf3}) and saturation (proposition \ref{proposition:Saturation}) for the weak isomorphisms.  The subcategories considered in \cite{Cardenas98} and the aforementioned subcategories $\LCA_{\mathsf{C}}$ and $\LCA_{\mathsf{D}}$ of the category of locally compact abelian groups $\LCA$ are examples of admissibly percolating subcategories.

Our main results about admissibly percolating subcategories are summarized in the following theorem (combining theorem \ref{theorem:WeakIsomorphismsEqualAAInverseIsomorphisms}, and propositions \ref{proposition:2OutOf3} and \ref{proposition:Saturation}).

\begin{theorem}
Let $\CC$ be a deflation-exact category, and let $\AA \subseteq \CC$ be an admissibly deflation-percolating subcategory.
\begin{enumerate}
  \item Every weak isomorphism is admissible.
	\item The set $S_\AA$ of weak isomorphisms is saturated and satisfies the 2-out-of-3 property.
\end{enumerate}
\end{theorem}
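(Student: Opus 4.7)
\medskip

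\noindent\textbf{Proof proposal.} The plan is to attack (1) first and then deduce (2) by combining admissibility with the structural properties of admissibly percolating subcategories and the fact, established in Theorem \ref{theorem:IntroductionMain}, that $S_\AA$ is a right multiplicative system.

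For (1), I would start from the fact that, by construction, every weak isomorphism $f\in S_\AA$ is a finite composition of morphisms of two types: (i)~inflations $i\colon X\inflation Y$ with $\coker(i)\in\AA$, and (ii)~deflations $p\colon Y\deflation Z$ with $\ker(p)\in\AA$. Both types are clearly admissible. The task is to show that any such composition is again admissible, and in fact admits a single deflation-inflation (or inflation-deflation) factorization whose kernel and cokernel lie in $\AA$. I would do this by induction on the length of the composition, reducing to the case of two consecutive generators. The two cases $ii'$ and $pp'$ are immediate because inflations (resp.\ deflations) with cokernel (resp.\ kernel) in $\AA$ are closed under composition once one verifies, using the conflation-category axioms and the closure of $\AA$ under extensions, that the cokernel (resp.\ kernel) of the composite still lies in $\AA$. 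The essential case is a mixed composition $p\circ i$. Here I would exploit the admissibly percolating hypothesis to perform a pullback of $p$ along $i$ (or, dually, a pushout of $i$ along $p$): the hypothesis should ensure that the relevant pullback/pushout exists in $\CC$ \emph{with kernel, cokernel, image and coimage inside $\AA$}, so that $p\circ i$ can be rewritten as a deflation with $\AA$-kernel followed by an inflation with $\AA$-cokernel. This is the main obstacle and will require using the precise axioms of Definition \ref{Definition:AbelianPercolating} — in particular, the stability properties of $\AA$ under taking kernels/cokernels of admissible morphisms between objects of $\AA$.

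For the 2-out-of-3 property in (2), consider a composable pair with $gh=f$ and assume two of $f,g,h$ lie in $S_\AA$. By (1) all of them would be admissible as soon as they are in $S_\AA$. I would analyze the resulting $3\times 3$ diagram of deflation-inflation factorizations: kernels and cokernels appear along rows and columns, and the admissibly percolating axioms let me conclude that the remaining kernel/cokernel lies in $\AA$. For instance, if $f,h\in S_\AA$ then using admissibility one obtains a conflation relating $\ker(g)$ and $\coker(g)$ to objects of $\AA$, from which one reads off that $\ker(g),\coker(g)\in\AA$, hence $g\in S_\AA$. The remaining two cases are similar, each relying on the closure of $\AA$ inside $\CC$ under admissible sub- and quotient objects.

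For saturation, I must show that if $Q(f)$ is an isomorphism in $\CC\dq\AA$, then already $f\in S_\AA$. Since $\CC\dq\AA=\CC[S_\AA^{-1}]$ is a localization at a right multiplicative system, $Q(f)$ being invertible gives a morphism $g$ and elements $s,t\in S_\AA$ with $fg=s$ and a right fraction witnessing $gf$ composed with something in $S_\AA$ equals something in $S_\AA$. Applying (1) to $s$ (and to the other occurrences of $S_\AA$-morphisms) together with the 2-out-of-3 property just established, I would deduce in finitely many steps that $f$ itself factors as a deflation with $\AA$-kernel followed by an inflation with $\AA$-cokernel, i.e.\ $f\in S_\AA$. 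Here the characterization from Theorem \ref{theorem:IntroductionMain}(2) — that $Q(h)=0$ iff $h$ factors through $\AA$ — will be the decisive tool to control the ambiguity in the fraction representation. The expected main obstacle throughout remains step~(1): once admissibility of weak isomorphisms is in hand, parts (2a) and (2b) reduce to standard diagram chases in the deflation-exact category $\CC$.
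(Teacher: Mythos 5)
Your plan for part (1) is essentially the paper's: induct on composition length, use closure of $\AA^{-1}$-inflations (lemma \ref{lemma:CompositionOfAAInflations}) and of $\AA^{-1}$-deflations (lemma \ref{lemma:CompositionOfAAdeflations}) under composition, and handle the mixed case (an $\AA^{-1}$-inflation followed by an $\AA^{-1}$-deflation) via a swap lemma. The swap relies on axiom \ref{A3} through the weak $3\times 3$-lemma (proposition \ref{proposition:WeakNineLemma} and corollary \ref{corollary:WeakNineLemma}); your appeal to pullbacks/pushouts of $\AA$-maps is pointing at the right thing. So (1) is a sound outline.

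Part (2) has real gaps. For 2-out-of-3 you write: if two of the maps lie in $S_\AA$, one ``reads off that $\ker(g),\coker(g)\in\AA$, hence $g\in S_\AA$.'' That implication does not hold. By (1) and remark \ref{remark:DefinitionAdmissibleWeakIso}, $g\in S_\AA$ is equivalent to $g$ being \emph{admissible} with $\ker(g),\coker(g)\in\AA$. In a deflation-exact category a general morphism need not be admissible, even if it has a kernel and a cokernel lying in $\AA$; establishing the deflation-inflation factorization of the unknown map is exactly where the work lies. The paper's proposition \ref{proposition:2OutOf3} handles this by a four-case analysis (according to whether the known maps are inflations or deflations), each case constructing the factorization explicitly from pullbacks/pushouts, corollary \ref{corollary:WeakNineLemma}, and proposition \ref{proposition:DeflationInflationFiveLemma}. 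Your ``$3\times3$ diagram of factorizations'' heuristic skips precisely this step.

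The saturation argument is circular as stated. You want to conclude $f\in S_\AA$ from $Q(f)$ being invertible, via some relation $fg=s\in S_\AA$ plus 2-out-of-3. But 2-out-of-3 needs $g$ or $gf$ to already be in $S_\AA$, which you cannot know; all the fraction calculus gives you is that certain morphisms become invertible under $Q$, which is what saturation is supposed to prove. The paper avoids this by a genuinely different device: take the pullback of $f$ along the weak isomorphism $fgh$ (which comes from the fraction representing $Q(f)^{-1}$); this produces a retraction $\alpha$ with section $\gamma$, and one shows $Q(1_P-\gamma\alpha)=0$. Lemma \ref{lemma:IdempotentsInKernel} — an idempotent-splitting statement for deflation-percolating subcategories — then gives $\ker(\alpha)\in\AA$, so $\alpha$ is an $\AA^{-1}$-deflation. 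Only \emph{after} that concrete membership is secured can 2-out-of-3 be applied to conclude $f\in S_\AA$. This idempotent trick is the key ingredient and is absent from your outline.
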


\subsection{Applications and recognition results}  In section \S\ref{section:Examples} we provide several tools for recognizing percolating subcategories in various contexts. In particular, we obtain the following useful proposition.

\begin{proposition}\label{proposition:IntroductionRecognitionToolI}
	Let $\CC$ be a deflation quasi-abelian category.  A full subcategory $\AA$ of $\CC$ is a (strongly) deflation-percolating if and only if $\AA$ is a Serre subcategory which is additionally closed under subobjects.
\end{proposition}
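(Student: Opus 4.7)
The plan is to verify the two implications separately, leveraging the canonical factorization available in a deflation quasi-abelian category. Recall that every morphism $f\colon X \to Y$ factors as $X \twoheadrightarrow \coim(f) \to \im(f) \hookrightarrow Y$, where in the deflation setting the first map is a deflation and the last is an inflation, while the middle comparison is a bimorphism that need not be an isomorphism. This factorization is the bridge between the percolating axioms (which are phrased in terms of inflations/deflations and factorizations) and the closure properties that characterize a Serre subcategory.

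For the direction $(\Rightarrow)$, suppose $\AA$ is strongly deflation-percolating. Closure under extensions and closure under admissible subobjects and admissible quotients are packaged into the percolating axioms, so the interesting step is upgrading \emph{admissible} subobjects to \emph{all} subobjects. Given any monomorphism $\iota\colon X \to A$ with $A \in \AA$, I would apply the canonical factorization to write $\iota = \iota' \circ e$ with $\iota'\colon \im(\iota) \to A$ an inflation and $e\colon X \to \im(\iota)$ an epimorphism. Since $\iota'$ is an inflation into an object of $\AA$, the strongly percolating hypothesis forces $\im(\iota) \in \AA$. Then applying the percolating factorization axiom to $e\colon X \to \im(\iota)$ and combining with the fact that the composite $\iota$ is monic should collapse the factorization and yield $X \in \AA$.

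For the direction $(\Leftarrow)$, suppose $\AA$ is a Serre subcategory closed under all subobjects. I need to check the percolating axioms. Closure under extensions is part of being Serre. For the key factorization axiom, given $f\colon A \to B$ with $A \in \AA$, the canonical factorization produces $A \twoheadrightarrow \coim(f) \to \im(f) \hookrightarrow B$. By closure under admissible quotients, $\coim(f) \in \AA$, and since the comparison $\coim(f) \to \im(f)$ is an epimorphism, closure of $\AA$ under quotients yields $\im(f) \in \AA$. Thus $f$ factors as a morphism to an object of $\AA$ followed by an inflation, giving the required factorization. Closure under all (not merely admissible) subobjects then supplies the strong version: any inflation into an object of $\AA$ has its domain in $\AA$, which is the additional strength demanded by the \emph{strongly} percolating condition.

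The main obstacle I anticipate is pinning down the precise correspondence between the word \emph{strongly} in the definition and the upgrade from closure under admissible subobjects to closure under all subobjects. Non-admissible monomorphisms arise naturally in a quasi-abelian setting as the comparison maps $\coim \to \im$, and these are exactly the objects that distinguish a Serre subcategory (in the admissible sense) from one closed under all subobjects. Once this correspondence is carefully formulated, each implication reduces to a routine verification using the canonical factorization.
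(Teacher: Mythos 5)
Your proof has a genuine and substantial gap, and also several factual errors about the deflation quasi-abelian setup.

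First, the direction of the comparison map is backwards. In a \emph{deflation} quasi-abelian category, the canonical morphism $\widehat{f}\colon \coim(f) \to \im(f)$ is a \emph{monomorphism}, not an epimorphism (this is \cite[proposition~1]{Rump01}, recorded in remark \ref{remark:QuasiAbelianremarks}). In your $(\Rightarrow)$ argument you call $e\colon X \to \im(\iota)$ an epimorphism; in your $(\Leftarrow)$ argument you use that $\coim(f)\to\im(f)$ is epic and conclude $\im(f)\in\AA$ via closure under quotients. Both claims fail in this setting. The $(\Rightarrow)$ direction survives despite this, because you don't actually need the image factorization at all: for a monomorphism $\iota\colon X\to A$ with $A\in\AA$, axiom \ref{P2} directly gives $X \deflation A' \to A$ with $A'\in\AA$, and since $\iota$ is monic the deflation $X\deflation A'$ is monic hence an isomorphism by lemma \ref{Lemma:BasicProperties}. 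This is exactly the paper's lemma \ref{lemma:QuasiAbelianP2}, and your detour through $\im(\iota)$ just adds noise.

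The serious gap is in $(\Leftarrow)$. Establishing that $\AA$ is \emph{strongly deflation-percolating} requires verifying axioms \ref{P3} and \ref{P4}, which you never address. Crucially, the deflation quasi-abelian category $\CC$ is not assumed to be exact, so axiom \ref{P3} is \emph{not} automatic (remark \ref{remark:PercolatingRemarks} only gives automaticity of \ref{P3} in the exact setting, and example \ref{Example:IsbellCategory} in the paper shows a genuine deflation quasi-abelian category that is not inflation-exact). The paper's proposition \ref{proposition:PercolatingInQA} devotes the bulk of its proof to a careful diagram chase verifying \ref{P3}: one forms the pushout of the inflation along the deflation to $\AA$ (which exists because $\CC$ is pre-abelian), applies proposition \ref{proposition:MitchellPullbackPushout} and proposition \ref{proposition:PushoutsPreserveCokernels}, and uses the weakly-idempotent-complete strongly-deflation-exact structure (proposition \ref{proposition:wicR3}) to upgrade a certain morphism to a deflation. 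Axiom \ref{P4} is then handled via proposition \ref{proposition:P4Criterion}. Your sketch has none of this; the ``key factorization axiom'' you discuss is stated with the morphism direction backwards ($f\colon A\to B$ with $A\in\AA$, rather than a morphism \emph{into} an object of $\AA$, which is what \ref{P2} requires), and so it does not even correctly formulate the condition being verified.

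Finally, your closing paragraph misidentifies what ``strongly'' means: it is not about closure under inflations into $\AA$ (that is already part of \ref{P1}), but about the map $A'\to A$ in axiom \ref{P2} being chosen \emph{monic} (definition \ref{definition:AdditionalPercolatingDefinitions}). In the deflation quasi-abelian setting this upgrade comes for free because $\coim(f)\to A$ is automatically monic (being $\widehat{f}$, which is monic, followed by the inflation $\im f\inflation A$); that is the observation the paper uses, and your correction should go there.
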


As an easy consequence of this proposition (and its dual), the subcategory $\LCAf\subset \LCA$ of finite abelian groups is a two-sided percolating subcategory. It follows that $\LCA/\LCAf$ is an exact category; in fact, proposition \ref{proposition:TwoSidedQuotientOfQuasiAbelianIsQuasiAbelian} shows that two-sided quotients of quasi-abelian categories are quasi-abelian, hence, $\LCA/\LCAf$ is a quasi-abelian category.

Torsion theories in exact categories give rise to percolating subcategories as well (see proposition \ref{proposition:TorsionfreeIsPercolating} and corollary \ref{corollary:RightConflationExact} in the text).

\begin{theorem}
	Let $(\TT, \FF)$ be a cohereditary torsion theory in an exact category $\CC$.
		\begin{enumerate}
			\item The torsion-free class $\FF$ is a right filtering subcategory of $\CC$.
			\item If the functor $\CC \to \CC\colon C \mapsto C_F$ mapping an object of $\CC$ to its torsion-free quotient is right conflation-exact (i.e.~for any conflation $X \inflation Y \deflation Z$, the map $Y_F \to Z_F$ is a deflation and is the cokernel of $X_F \to Y_F$, see definition \ref{definition:RightConflationExact}), then $\FF$ is a deflation-percolating subcategory of $\CC$.
			\item If the torsion-free functor $L\colon \CC\to \FF$ is an exact functor, then $\FF$ is a special right filtering subcategory of $\CC$.
		\end{enumerate}
\end{theorem}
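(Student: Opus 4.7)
The three parts ascend a ladder of exactness hypotheses on the torsion-free reflector $L\colon \CC \to \FF$, $C \mapsto C_F$. My plan is to pull everything from two standard consequences of the torsion theory: every $C$ sits in a canonical conflation $tC \inflation C \deflation C_F$ with $tC \in \TT$ and $C_F \in \FF$, and the vanishing $\Hom(\TT,\FF) = 0$ forces any morphism from $C$ into an object of $\FF$ to factor uniquely through the deflation $C \deflation C_F$. The cohereditary hypothesis enters to ensure that torsion behaves well under admissible quotients, so that the torsion parts of objects appearing in a conflation in $\CC$ stay in $\TT$ and do not leak into $\FF$.

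For (1), I would check the right filtering axioms directly. Given any morphism $f\colon X \to F$ with $F \in \FF$, the factorization property of $L$ produces a unique factorization through $X \deflation X_F \to F$. To handle the diagrams appearing in the filtering condition, I would form the canonical conflations $tC \inflation C \deflation C_F$ for the objects involved, and exploit the universal deflation $C \deflation C_F$ (which will become one of the weak isomorphisms in $S_\FF$) together with cohereditariness, to produce the factorizations and completions that the definition of right filtering demands.

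For (2), the added hypothesis that $L$ is right conflation-exact says that every conflation $X \inflation Y \deflation Z$ descends to a conflation $X_F \to Y_F \deflation Z_F$. I would use this to promote right filtering to deflation-percolating by verifying the remaining axioms, which require the existence of deflations in $\CC$ with kernel in $\FF$ arising as $L$ applied to appropriate conflations, and the compatibility of these deflations with base change. The right conflation-exactness is exactly the structural input that makes these constructions work without any further hypothesis.

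For (3), full exactness of $L$ additionally preserves inflations, so the sequence $X_F \inflation Y_F \deflation Z_F$ is a conflation in $\FF$. This symmetry supplies the remaining \emph{special} condition, which typically asks for admissibility of the filtering morphisms on both sides; exactness of $L$ produces the required admissible inflations directly. The main obstacle is part (1), since without any exactness hypothesis on $L$ one must carefully combine cohereditariness with the universal property of $C_F$ to produce the factorizations demanded by right filtering; once this is in hand, parts (2) and (3) follow formally from the strengthened hypotheses via the conflation calculus underlying theorem \ref{theorem:IntroductionMain}.
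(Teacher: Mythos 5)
Your sketch of parts (1) and (3) is essentially the paper's argument. For (1), axiom \ref{P1} is immediate from the cohereditary hypothesis, and for \ref{P2} you correctly identify the key step: given $f\colon X \to F$ with $F\in\FF$, the composite $X_T\inflation X \to F$ vanishes because $\Hom(\TT,\FF)=0$, so $f$ factors through the deflation $X\deflation X_F$. For (3), the paper also verifies the right special condition by observing that full exactness of $L$ forces the composite $A\inflation X\deflation X_F$ to be an inflation for any inflation $A\inflation X$ with $A\in\FF$.

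Part (2) has a genuine gap. First, your gloss of right conflation-exactness is wrong: it does not say that $X_F\to Y_F\deflation Z_F$ is a conflation, only that $Y_F\to Z_F$ is a deflation which is the cokernel of $X_F\to Y_F$ (the map $X_F\to Y_F$ need not be an inflation — this is exactly what distinguishes (2) from the stronger hypothesis in (3)). Second, having established \ref{P1} and \ref{P2}, the two axioms remaining to check are \ref{P3} and \ref{P4}; your description ("existence of deflations in $\CC$ with kernel in $\FF$ arising as $L$ applied to appropriate conflations, and the compatibility of these deflations with base change") does not match either of them. The actual argument in the paper is: \ref{P3} is automatic because $\CC$ is exact (see remark \ref{remark:PercolatingRemarks}), while \ref{P4} is verified via the criterion in proposition \ref{proposition:P4Criterion}, which only requires that the relevant morphisms in $\FF$ (those of the form $A\to X_F$ for an inflation $A\inflation X$ with $A\in\FF$) have cokernels in $\CC$ lying in $\FF$; the right conflation-exactness of $C\mapsto C_F$ supplies exactly this cokernel, namely $Z_F$. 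Without identifying that \ref{P3} is free and \ref{P4} reduces to a cokernel-existence condition, your plan for (2) is not executable.
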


As a consequence of this theorem, we show that the category $\mathsf{TAb}_{\mathsf{triv}}$ of topological abelian groups with the trivial (or indiscrete) topology is admissibly inflation-percolating in the category $\mathsf{TAb}$ of all topological abelian groups (see example \ref{example:IndiscreteAndHausdorff}).

In a follow-up paper (see \cite{HenrardvanRoosmalen19b}), we consider the derived category of a one-sided exact category and show that the localization sequence $\AA \to \CC \to \CC / \AA$ given by a percolating subcategory in a one-sided exact category, yields a Verdier localization $\Db(\CC) \to \Db (\CC / \AA)$ as in \cite{Schlichting04}.  Moreover, we show that the natural embedding $\CC \hookrightarrow \overline{\CC}$ of a one-sided exact category into its exact hull lifts to a triangle equivalence $\Db(\CC) \to \Db(\overline{\CC})$.

\textbf{Acknowledgments.}  We are grateful to Frederik Caenepeel and Freddy Van Oystaeyen for useful discussions and ideas leading to this paper.  The authors are grateful to Sven Ake-Wegner, Oliver Braunling, and Sondre Kvamme for motivating us to extend our earlier results to the current generality.  The second author is currently a postdoctoral researcher at FWO (12.M33.16N).
\addtocontents{toc}{\protect\setcounter{tocdepth}{2}}
\section{Preliminaries}\label{section:Preliminaries}

Throughout this paper, we will assume that all categories are small.

\subsection{Properties of pullbacks and pushouts}
For easy reference, it will be convenient to collect some properties of pullbacks and pushouts.  We start by recalling the pullback lemma.

\begin{proposition}[Pullback lemma] Consider the following commutative diagram in any category:
	\[\xymatrix{
			X\ar[r] \ar[d] & Y\ar[r] \ar[d] & Z \ar[d] \\
		X' \ar[r] & Y' \ar[r] & Z'
	}\]
	Assume that the right square is a pullback.  The left square is a pullback if and only if the outer rectangle is a pullback.
\end{proposition}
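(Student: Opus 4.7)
The plan is a standard universal-property chase, exploiting that the outer rectangle factors through the right-hand pullback square. Label the horizontal arrows $f\colon X \to Y$, $g\colon Y \to Z$ in the top row, $f'\colon X' \to Y'$, $g'\colon Y' \to Z'$ in the bottom row, and the verticals $a\colon X \to X'$, $b\colon Y \to Y'$, $c\colon Z \to Z'$.

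For the forward direction, assume the left square is a pullback; I want to show the outer rectangle is a pullback. Given a test object $W$ with arrows $u\colon W \to Z$ and $v\colon W \to X'$ satisfying $cu = g'f'v$, the right pullback supplies a unique $w\colon W \to Y$ with $gw = u$ and $bw = f'v$. The identity $bw = f'v$ is exactly the compatibility needed to apply the left pullback to $(w,v)$, yielding a unique $z\colon W \to X$ with $fz = w$ and $az = v$; then $gfz = u$ and $az = v$ as required. Uniqueness of $z$ against the outer cone follows by invoking the universal properties in reverse: any competing $z'$ forces $fz'$ to satisfy the right pullback conditions defining $w$, hence $fz' = w$, and then $z' = z$ by the left pullback.

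For the converse, assume the outer rectangle is a pullback and suppose $W$ comes equipped with maps $w\colon W \to Y$ and $v\colon W \to X'$ such that $bw = f'v$. Set $u := gw$; then $cu = cgw = g'bw = g'f'v$, so the outer rectangle yields a unique $z\colon W \to X$ with $gfz = u$ and $az = v$. The one subtlety, which will be the only genuinely careful step in the proof, is checking $fz = w$ rather than merely $gfz = gw$: one verifies both $fz$ and $w$ are cones over the right pullback with data $(u, f'v)$, and uniqueness in the right pullback forces $fz = w$. Uniqueness of $z$ in the left square then follows immediately from uniqueness in the outer pullback.

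I do not anticipate a real obstacle; the argument is entirely formal diagram chasing with the universal property invoked four times (twice for existence, twice for uniqueness). The only place to be attentive is ensuring that when we pull back existence and uniqueness through the right square in the converse direction, the hypothesis $bw = f'v$ is exactly what is needed to promote the equality $gfz = gw$ to the equality $fz = w$.
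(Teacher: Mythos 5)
Your proof is correct: the universal-property chase is the standard argument, and in particular the only subtle step in the converse direction (using uniqueness of the mediating map into the right pullback to upgrade $gfz = gw$ to $fz = w$) is handled properly. Note that the paper states the pullback lemma without proof, treating it as a well-known fact, so there is no authorial argument to compare against; yours is the expected one.
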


The following statement is \cite[proposition~I.13.2]{Mitchell65} together with its dual.

\begin{proposition}\label{proposition:MitchellPullbackPushout}\label{proposition:MitchellPullback}
Let $\CC$ be any pointed category.
\begin{enumerate}
  \item\label{enumerate:MitchellPullback} Consider a  diagram
	\[ \xymatrix{ {X'} \ar[r]^{f'} & {Y'} \ar[d]^{h} & \\ {X} \ar[r]^{f} & {Y} \ar[r]^{g} & Z } \]
where $f$ is the kernel of $g$.  The left-hand side can be completed to a pullback square if and only if $f'$ is the kernel of $gh$.
\item\label{enumerate:MitchellPushout} Consider a diagram
	\[ \xymatrix{ {X} \ar[r]^{f} & {Y} \ar[d]^{h} \ar[r]^{g} & Z \\ & {Y'} \ar[r]^{g'} & Z'} \]
where $g$ is the cokernel of $f$.  The right-hand side can be completed to a pushout square if and only if $g'$ is the cokernel of $hf$.
\end{enumerate}
\end{proposition}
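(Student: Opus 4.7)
The plan is to prove part \eqref{enumerate:MitchellPullback} by chasing the universal properties in both directions, and then to invoke duality to obtain part \eqref{enumerate:MitchellPushout}. Throughout, note that $f = \ker g$ is in particular monic, which trims most uniqueness arguments.

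For the ``only if'' direction of \eqref{enumerate:MitchellPullback}, suppose the square is completed by some $k\colon X' \to X$ with $fk = hf'$ into a pullback. Then $ghf' = gfk = 0$. To see that $f'$ is the kernel of $gh$, take any $u\colon W \to Y'$ with $gh u = 0$. Since $f = \ker g$, the morphism $hu$ factors uniquely as $hu = fv$ for some $v\colon W \to X$. The pair $(v,u)$ satisfies $fv = hu$, so by the pullback property there is a unique $c\colon W \to X'$ with $kc = v$ and $f'c = u$. For uniqueness of this factorization through $f'$, suppose $f'c' = u$; then $fkc' = hf'c' = hu = fv$, and since $f$ is monic this gives $kc' = v$, whence $c' = c$ by the pullback uniqueness.

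For the ``if'' direction, assume $f' = \ker(gh)$. Then $g(hf') = 0$, so by the universal property of $f = \ker g$ there is a unique $k\colon X' \to X$ with $fk = hf'$; we use this $k$ to complete the square. To check it is a pullback, let $a\colon W \to X$ and $b\colon W \to Y'$ satisfy $fa = hb$. Then $ghb = gfa = 0$, so $b$ factors uniquely through $f' = \ker(gh)$ as $b = f'c$ for some $c\colon W \to X'$. Then $fkc = hf'c = hb = fa$, and since $f$ is monic, $kc = a$. Uniqueness of $c$ follows from uniqueness of the factorization $b = f'c$ (again using that $f'$ is a kernel, in particular monic).

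Part \eqref{enumerate:MitchellPushout} is the dual statement: applying part \eqref{enumerate:MitchellPullback} in the opposite category $\CC^{\mathrm{op}}$ (which is again pointed, with kernels in $\CC^{\mathrm{op}}$ corresponding to cokernels in $\CC$ and pullbacks corresponding to pushouts) yields the result immediately. The main point to watch is that all the uniqueness clauses line up correctly; this is not really an obstacle, but it is the one place where care is required, since one must repeatedly cite the monicness of the kernel (dually, epicness of the cokernel) to conclude uniqueness of the induced comparison map.
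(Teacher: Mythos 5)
Your proof is correct. The paper itself gives no proof of this proposition: it cites Mitchell's \emph{Theory of Categories} (proposition I.13.2) for part \eqref{enumerate:MitchellPullback} and obtains \eqref{enumerate:MitchellPushout} as its dual. Your argument is the standard diagram chase one would find in that reference, carried out carefully: both directions hinge on the fact that a kernel is monic, which lets you upgrade existence of comparison morphisms to uniqueness, and the duality step for \eqref{enumerate:MitchellPushout} is applied correctly since passing to $\CC^{\mathrm{op}}$ preserves pointedness and interchanges kernels with cokernels and pullbacks with pushouts. There is nothing to add or fix.
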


The following well-known proposition states that pullbacks preserve kernels and pushouts preserve cokernels (see \cite[lemma 1]{Rump11}).

\begin{proposition}\label{proposition:PullbacksPreserveKernels}\label{proposition:PushoutsPreserveCokernels}
Let $\CC$ be any pointed category.  Consider the following diagram in $\CC$:
\[\xymatrix{
A \ar[d]^{g} \ar[r]^{f} & B \ar[d]^{h} \\
C \ar[r]^{f'} & D
}\]
\begin{enumerate}
	\item Assume that the commutative square is a pullback.  The morphism $f$ admits a kernel if and only if $f'$ admits a kernel.  In this case, the composition $\ker(f) \to A \to C$ is the kernel of $f'.$
  \item Assume that the commutative square is a pushout.  The morphism $f$ admits a cokernel if and only if $f'$ admits a cokernel.  In this case, the composition $B \to D \to \coker(f')$ is the cokernel of $f.$
\end{enumerate}
\end{proposition}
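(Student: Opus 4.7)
The plan is to prove part (1) directly from the universal property of pullbacks; part (2) is the dual statement, obtained by passing to $\CC^{\mathrm{op}}$ (which exchanges kernels with cokernels and pullbacks with pushouts), so I focus on (1).

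For the first direction, suppose $f$ has a kernel $k \colon K \to A$. I claim the composite $g k \colon K \to C$ is the kernel of $f'$. Commutativity gives $f'(gk) = hfk = 0$. For the universal property, let $\varphi \colon X \to C$ satisfy $f'\varphi = 0$. Then $\varphi$ and the zero morphism $X \to B$ are compatible after composing into $D$ (both give $0$), so the pullback property of the square yields a unique $\psi \colon X \to A$ with $g\psi = \varphi$ and $f\psi = 0$. Since $f\psi = 0$, the morphism $\psi$ factors uniquely through $k$, producing the desired (unique) factorization of $\varphi$ through $gk$.

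Conversely, suppose $f'$ has a kernel $k' \colon K' \to C$. From $f'k' = 0 = h \cdot 0$, the pullback property supplies a unique $\tilde k \colon K' \to A$ with $g\tilde k = k'$ and $f\tilde k = 0$; I claim $\tilde k$ is the kernel of $f$. Given $\varphi \colon X \to A$ with $f\varphi = 0$, one has $f'g\varphi = hf\varphi = 0$, so $g\varphi = k'\alpha$ for a unique $\alpha \colon X \to K'$. The two morphisms $\tilde k\alpha$ and $\varphi$ agree after applying $g$ (both equal $k'\alpha$) and after applying $f$ (both are zero), so pullback uniqueness forces $\tilde k\alpha = \varphi$; uniqueness of $\alpha$ follows because $k'$ is monic. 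Either construction directly exhibits the identity $\ker(f') = g \circ \ker(f)$ from the statement.

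I do not anticipate any substantive obstacle: the entire argument is a routine diagram chase with the universal property of pullbacks, invoked twice in the second half (once to build $\tilde k$, once to match factorizations). One could alternatively package the argument through Proposition~\ref{proposition:MitchellPullback}(1), but the direct approach is just as short.
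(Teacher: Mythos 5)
Your proof is correct; the paper offers no proof of its own (it cites \cite[lemma 1]{Rump11}), and your argument is the standard diagram chase via the universal property of pullbacks, which is exactly the right approach and fully rigorous (including the monicity of $k'$ to close the uniqueness step).
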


\subsection{Localizations and right calculus of fractions}\label{subsection:RMS}

The material of this section is based on \cite{GabrielZisman67, KashiwaraSchapira06}.

\begin{definition}\label{Definition:LocalizationWithRespectToMorphisms}
Let $\CC$ be any category and let $S \subseteq \Mor \CC$ be any subset of morphisms of $\CC$.  The \emph{localization of $\CC$ with respect to $S$} is a universal functor $Q\colon \CC \to S^{-1} \CC$ such that $Q(s)$ is invertible, for all $s \in S$.
\end{definition}

\begin{remark}
By universality, we mean that any functor $F\colon \CC \to \DD$ such that every morphism in $S$ becomes invertible in $\DD$ factors uniquely through $Q\colon \CC \to S^{-1} \CC$.  Put differently, for every category $\DD$, the functor $(Q \circ -)\colon \Fun(S^{-1}\CC, \DD) \to \Fun(\CC, \DD)$ induces an isomorphism between $\Fun(S^{-1}\CC, \DD)$ and the full subcategory of $\Fun(\CC, \DD)$ consisting of those functors $F\colon \CC \to \DD$ which make every $s \in S$ invertible.
\end{remark}

\begin{remark}
Since all the categories in this paper are small, localizations always exist.
\end{remark}

In this paper, we often consider localizations with respect to so-called right multiplicative systems.

\begin{definition}\label{definition:RMS}
Let $\CC$ be a category and let $S$ be a set of arrows. Then $S$ is called a \emph{right multiplicative system} if it has the following properties:
\begin{enumerate}[label=\textbf{RMS\arabic*},start=1]
	\item\label{RMS1} For every object $A$ of $\CC$ the identity $1_A$ is contained in $S$. Composition of composable arrows in $S$ is again in $S$.
	\item\label{RMS2} Every solid diagram
	\[\xymatrix{
		X \ar@{.>}[r]^{g} \ar@{.>}[d]_{t}^{\rotatebox{90}{$\sim$}}& Y\ar[d]_{s}^{\rotatebox{90}{$\sim$}}\\
		Z\ar[r]_{f} & W
	}\] with $s\in S$ can be completed to a commutative square with $t\in S$. 
	\item\label{RMS3} For every pair of morphisms $f,g\colon X\rightarrow Y$ and $s\in S$ with source $Y$ such that $s\circ f= s\circ g$ there exists a $t\in S$ with target $X$ such that $f\circ t =g\circ t$.
\end{enumerate}
Often arrows in $S$ will be endowed with $\sim$.
\end{definition}

For localizations with respect to a right multiplicative system, we have the following description of the localization.

\begin{construction}\label{construction:Localization}
	Let $\CC$ be a category and $S$ a right multiplicative system in $\CC$. We define a category $S^{-1}\CC$ as follows:
	\begin{enumerate}
		\item We set $\Ob(S^{-1}\CC)=\Ob(\CC)$.
		\item Let $f_1\colon X_1\rightarrow Y, s_1\colon X_1\rightarrow X, f_2\colon X_2\rightarrow Y, s_2\colon X_2\rightarrow X$ be morphisms in $\CC$ with $s_1,s_2\in S$. We call the pairs $(f_1,s_1), (f_2,s_2) \in (\Mor \CC) \times S$ equivalent (denoted by $(f_1,s_1) \sim (f_2,s_2)$) if there exists a third pair $(f_3\colon X_3\rightarrow Y,s_3\colon X_3\rightarrow X) \in (\Mor \CC) \times S$ and morphisms $u\colon X_3\rightarrow X_1, v\colon X_3\rightarrow X_2$ such that 
		\[\xymatrix@!{
			& X_1\ar[ld]_{s_1}^{}\ar[rd]^{f_1} & \\
			X &X_3\ar[d]^{v}\ar[u]_{u}\ar[l]_{s_3}^{}\ar[r]_{f_3} & Y\\
			& X_2 \ar[ul]^{s_2}_{}\ar[ur]_{f_2}&
		}\] is a commutative diagram.
		\item $\Hom_{S^{-1}\CC}(X,Y)=\left\{(f,s)\mid f\in \Hom_{\CC}(X',Y), s\colon X'\rightarrow X \mbox{ with } s\in S \right\} / \sim$
		\item The composition of $(f\colon X'\rightarrow Y, s\colon X'\rightarrow X)$ and $(g\colon Y'\rightarrow Z, t\colon Y'\rightarrow Y)$ is given by $(g\circ h\colon X''\rightarrow Z,s\circ u\colon X''\rightarrow X)$ where $h$ and $u$ are chosen to fit in a commutative diagram 
		\[\xymatrix{
		X''\ar[r]^{h}\ar[d]_{u}^{\rotatebox{90}{$\sim$}} & Y'\ar[d]_{t}^{\rotatebox{90}{$\sim$}}\\
		X'\ar[r]^{f} & Y
		}\] which exists by \ref{RMS2}.
	\end{enumerate}                                                                                                       
\end{construction}

\begin{proposition}\label{proposition:BasicPropertiesOfLocalization}
Let $\CC$ be a category and $S$ a right multiplicative system in $\CC$.
	\begin{enumerate}
			\item The assignment $X\mapsto X$ and $(f\colon X\rightarrow Y)\mapsto (f\colon X\rightarrow Y, 1_X\colon X\rightarrow X)$ defines a functor $Q\colon \CC\rightarrow S^{-1}\CC$ called the \emph{localization functor} and is a localization of $\CC$ with respect to the set $S$ as in definition \ref{Definition:LocalizationWithRespectToMorphisms}.
			\item For any $s\in S$, the map $Q(s)$ is an isomorphism.
			\item The localization functor commutes with finite limits.
			\item If $\CC$ is an additive category, then $S^{-1}\CC$ is an additive category and the localization functor $Q$ is an additive functor.
	\end{enumerate}
\end{proposition}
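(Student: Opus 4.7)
My plan is to first verify that Construction \ref{construction:Localization} yields a well-defined category, and then derive the four properties in sequence, with the universal property of part (1) being the essential content and the remaining parts being relatively direct consequences. The preliminary work is to check: (a) that the relation $\sim$ on pairs $(f,s)$ is an equivalence relation; (b) that the composition in item (4) of the construction is independent both of the chosen completion supplied by \ref{RMS2} and of the choice of representatives of the equivalence classes; and (c) that this composition is associative and unital. Reflexivity and symmetry of $\sim$ are immediate, but transitivity requires combining two witnesses $(f_3, s_3)$ and $(f_4, s_4)$ of equivalence into a common refinement via \ref{RMS2}, and then invoking \ref{RMS3} to reconcile the two candidate comparison morphisms landing in $X_1$ (and similarly in $X_2$). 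Well-definedness of composition is analogous: different completions from \ref{RMS2} are compared via a further \ref{RMS2} square and an \ref{RMS3} cancellation. Associativity is then purely formal once these checks are in place.

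For the universal property in part (1), given a functor $F\colon \CC \to \DD$ inverting every $s \in S$, I would define $\widetilde{F} \colon S^{-1}\CC \to \DD$ by $\widetilde{F}(X) = F(X)$ and $\widetilde{F}(f,s) = F(f) \circ F(s)^{-1}$; independence of the chosen representative follows because any witnessing diagram in Construction \ref{construction:Localization} commutes in $\DD$ after inserting the relevant inverses. Functoriality reduces to the observation that the square used in item (4) of the construction commutes in $\DD$ once $F(t)$ and $F(u)$ are inverted. Uniqueness of $\widetilde{F}$ is forced by the factorization $(f,s) = Q(f) \circ Q(s)^{-1}$ in $S^{-1}\CC$; this also confirms the functor-category reformulation in the remark following Definition \ref{Definition:LocalizationWithRespectToMorphisms}. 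Part (2) is an immediate unwinding: the pair $(1_X, s)$ is a two-sided inverse to $Q(s) = (s, 1_X)$, since the requisite commuting diagrams are witnessed by $s$ itself.

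For part (3), it suffices to treat terminal objects and pullbacks. Preservation of a terminal object is trivial. Given a pullback square in $\CC$, I would show its image is a pullback in $S^{-1}\CC$ by representing an arbitrary cone over $Q(X) \to Q(Z) \leftarrow Q(Y)$ on a single source via two applications of \ref{RMS2} to clear denominators and reconcile the two legs, then invoking the universal property in $\CC$ to produce a map into the pullback, with uniqueness up to $\sim$ ensured by \ref{RMS3}. For part (4), given $(f_1, s_1), (f_2, s_2) \colon X \to Y$ in $S^{-1}\CC$, one uses \ref{RMS2} to obtain a common source $X'$ and $s \in S$ so that both are represented as $(g_1, s)$ and $(g_2, s)$, and defines their sum to be $(g_1 + g_2, s)$. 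Independence of choices again uses \ref{RMS2} and \ref{RMS3}, and bilinearity of composition follows from additivity of $\CC$. The zero object and biproducts in $S^{-1}\CC$ are supplied by part (3), and $Q$ is additive essentially by construction.

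The main obstacle throughout is the bookkeeping of equivalence classes in part (1): at several points one must produce a third pair dominating two given ones and reconcile incompatible comparison morphisms, which is the recurring pattern in establishing transitivity of $\sim$, well-definedness of composition, and the pullback representation used in part (3). This is precisely the technical content of the axioms \ref{RMS2} and \ref{RMS3}, and once these foundational verifications are completed, parts (2), (3), and (4) follow with only mild additional bookkeeping.
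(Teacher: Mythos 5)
The paper does not prove this proposition; the subsection opens by stating that the material is taken from \cite{GabrielZisman67, KashiwaraSchapira06}, and the result is invoked as known. Your outline is a faithful reconstruction of the standard argument in those sources: verifying that $\sim$ is an equivalence relation and that composition is well-defined, associative, and unital via \ref{RMS2} and \ref{RMS3}; establishing the universal property by the formula $\widetilde{F}(f,s)=F(f)F(s)^{-1}$ together with the factorization $(f,s)=Q(f)Q(s)^{-1}$; checking $(1_X,s)$ is a two-sided inverse of $Q(s)$; reducing preservation of finite limits to terminal objects and pullbacks and clearing denominators with \ref{RMS2}/\ref{RMS3}; and installing the additive structure by bringing fractions to a common denominator, with the zero object and biproducts inherited via part (3). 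This matches what the cited references do, so the proposal is correct and aligned with the paper's (implicit) approach.
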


\begin{remark}\label{remark:PreservationOfKernelsAndPullbacks}
	It follows that if $\CC$ is an additive category and $S$ a right multiplicative system the functor $Q$ preserves kernels and pullbacks. 
\end{remark}

\begin{definition}
Let $\CC$ be any category and let $S \subseteq \Mor \CC$ be any subset.
\begin{enumerate}
\item We say that $S$ satisfies the \emph{2-out-of-3 property} if, for any two composable morphisms $f,g \in \Mor \CC$, we have that if two of $f,g,fg$ are in $S$, then so is the third.
\item Let $Q \colon \CC \to S^{-1}\CC$ be the localization of $\CC$ with respect to $S$.  We say that $S$ is \emph{saturated} if ${S} = \{f \in \Mor \CC \mid \mbox{$Q(f)$ is invertible}\}.$ 
\end{enumerate}
\end{definition}
\section{Basic results on one-sided exact categories}

We now recall the notion of a one-sided exact category as introduced by \cite{BazzoniCrivei13,Rosenberg11,Rump10}. In the remainder of the text we follow the conventions of Rosenberg \cite{Rosenberg11}, that is, one-sided exact categories containing all axioms referring to the deflation-side are called \emph{right exact categories}. This convention is opposite to the terminology used by \cite{BazzoniCrivei13}. In order to avoid further confusion, we prefer to use the terminology of \emph{deflation-exact categories} over right exact categories and dually \emph{inflation-exact} over left exact. 

\begin{definition}\label{Definition:ConflationCategory}
	Let $\CC$ be an additive category.  A sequence $A\xrightarrow{f} B\xrightarrow{g} C$ in $\CC$ where $f = \ker g$ and $g = \coker f$ is called a \emph{kernel-cokernel pair}. \\
	A \emph{conflation category} $\CC$ is an additive category $\CC$ together with a chosen class of kernel-cokernel pairs, closed under isomorphisms, called \emph{conflations}. A map that occurs as the kernel (or the cokernel) in a conflation is called an \emph{inflation} (or a \emph{deflation}). Inflations will often be denoted by $\rightarrowtail$ and deflations by $\twoheadrightarrow$.  A map $f\colon X\rightarrow Y$ is called an \emph{admissible morphism} if it admits a deflation-inflation factorization, i.e. $f$ factors as $X\twoheadrightarrow Z\rightarrowtail Y$. The set of admissible morphisms in $\CC$ is denoted by $\Adm(\CC)$.\\
	Let $\CC$ and $\DD$ be conflation categories.  An additive functor $F\colon \CC\rightarrow \DD$ is called \emph{exact} or \emph{conflation-exact} if conflations in $\CC$ are mapped to conflations in $\DD$.
\end{definition}

\begin{definition}\label{definition:RightExact}
  A \emph{right exact category} or a \emph{deflation-exact category} $\CC$ is a conflation category satisfying the following axioms:
	\begin{enumerate}[label=\textbf{R\arabic*},start=0]
		\item\label{R0} The identity morphism $1_0\colon 0\rightarrow 0$ is a deflation.
		\item\label{R1} The composition of two deflations is again a deflation.
		\item\label{R2} The pullback of a deflation along any morphism exists and is again a deflation, i.e. 
		\[\xymatrix{
		X\ar@{.>>}[d]\ar@{.>}[r] & Y \ar@{->>}[d]\\
		Z\ar[r] & W
		}\]
	\end{enumerate}
	
	Dually, we call an additive category $\CC$ \emph{left exact} or \emph{inflation-exact} if the opposite category $\CC^{op}$ is right exact. Explicitly, an inflation-exact category is a conflation category such that the inflations satisfy the following axioms:
		\begin{enumerate}[label=\textbf{L\arabic*},start=0]
		\item\label{L0} The identity morphism $1_0\colon 0\rightarrow 0$ is an inflation.
		\item\label{L1} The composition of two inflations is again an inflation.
		\item\label{L2} The pushout of an inflation along any morphism exists and is again an inflation, i.e. 
		\[\xymatrix{
		X\ar@{>->}[d]\ar@{->}[r] & Y \ar@{>.>}[d]\\
		Z\ar@{.>}[r] & W
		}\]
	\end{enumerate}
\end{definition}

\begin{definition}\label{definition:StrongRightExact}
Let $\CC$ be a conflation category. In addition to the properties listed in definition \ref{definition:RightExact}, we will also consider the following axioms:
\begin{enumerate}[align=left]
\myitem{\textbf{R0}$^\ast$}\label{R0*} For any $A\in \Ob(\CC)$, $A\rightarrow 0$ is a deflation. 
\myitem{\textbf{R3}}\label{R3} \hspace{0.175cm}If $i\colon A\rightarrow B$ and $p\colon B\rightarrow C$ are morphisms in $\CC$ such that $p$ has a kernel and $pi$ is a deflation, then $p$ is a deflation.
\myitem{\textbf{L0}$^\ast$}\label{L0*} For any $A\in \Ob(\CC)$, $0\rightarrow A$ is an inflation. 
\myitem{\textbf{L3}}\label{L3} \hspace{0.175cm}If $i\colon A\rightarrow B$ and $p\colon B\rightarrow C$ are morphisms in $\CC$ such that $i$ has a cokernel and $pi$ is an inflation, then $i$ is an inflation.
\end{enumerate}
A right exact category satisfying \ref{R3} is called \emph{strongly right exact} or \emph{strongly deflation-exact}.  Dually, a left exact category satisfying \ref{L3} is called \emph{strongly left exact} or \emph{strongly inflation-exact}.
\end{definition}

\begin{remark}\label{remark:DefExactAndQuillenObscureAxiom} 
	\begin{enumerate} 
	\item An exact category in the sense of Quillen (see \cite{Quillen73}) is a conflation category $\CC$ satisfying axioms \ref{R0} through \ref{R3} and \ref{L0} through \ref{L3}. In \cite[appendix~A]{Keller90}, Keller shows that axioms \ref{R0}, \ref{R1}, \ref{R2}, and \ref{L2} suffice to define an exact category.
	\item Axioms \ref{R3} and \ref{L3} are sometimes referred to as Quillen's \emph{obscure axioms} (see \cite{Buhler10,ThomasonTrobaugh90}). 
	\item In \cite{Rump11}, the notions of one-sided exact categories includes the obscure axiom.
	\end{enumerate}
\end{remark}

\begin{remark} {A deflation-exact category $\CC$ satisfies axiom \ref{R0*} if and only if every split kernel-cokernel pair is a conflation.}
\end{remark}

\begin{lemma}\label{Lemma:BasicProperties}
	Let $\CC$ be a deflation-exact category. Then:
	\begin{enumerate}
		\item Every isomorphism is a deflation.
		\item If $\CC$ is strongly deflation-exact, then $\CC$ satisfies \upshape{\ref{R0*}}.
		\item \itshape Every inflation is a monomorphism.  An inflation which is an epimorphism is an isomorphism.
		\item Every deflation is an epimorphism.  A deflation which is a monomorphism is an isomorphism.
	\end{enumerate}
\end{lemma}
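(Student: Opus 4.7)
The plan is to derive (1) and (2) directly from the axioms \ref{R0}, \ref{R2}, and \ref{R3} together with the fact that conflations are closed under isomorphisms, while (3) and (4) will reduce to the universal properties of kernels and cokernels.

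For (1), I first aim to show that $1_X$ is a deflation for every object $X$. The approach is to form the pullback of the deflation $1_0\colon 0 \to 0$ (which is a deflation by \ref{R0}) along the unique morphism $X \to 0$. A brief diagram chase shows that the pullback apex is $X$ with parallel projection $1_X$, so axiom \ref{R2} yields that $1_X$ is a deflation. To handle a general isomorphism $f\colon X \to Y$, I observe that the kernel-cokernel pair $0 \to Y \xrightarrow{1_Y} Y$ is isomorphic (via $f$) to $0 \to X \xrightarrow{f} Y$; since conflations are closed under isomorphism (Definition~\ref{Definition:ConflationCategory}), the latter is also a conflation and $f$ is a deflation.

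For (2), the key observation is that \ref{R3} applies in one stroke to the factorization $0 \xrightarrow{i} A \xrightarrow{p} 0$, where $i$ and $p$ are the zero morphisms. The composite $pi$ equals $1_0$, which is a deflation by \ref{R0}, and $p$ admits a kernel, namely $1_A$. Hence \ref{R3} forces $p\colon A \to 0$ to be a deflation, establishing \ref{R0*}.

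Parts (3) and (4) are mutually dual, so I describe only (3). Every inflation is by definition a kernel, hence a monomorphism. If additionally $f\colon A \to B$ is an inflation-epimorphism, choose the associated conflation $A \xrightarrow{f} B \xrightarrow{g} C$; then $gf = 0$ together with $f$ being an epimorphism forces $g = 0$, so $f = \ker 0$, which is canonically isomorphic to $1_B$, making $f$ an isomorphism. Dually for (4), a deflation is a cokernel (hence an epimorphism), and a mono-deflation has zero kernel and is therefore isomorphic to an identity. The only step that requires a moment of care is the pullback identification in (1); otherwise the arguments are essentially formal.
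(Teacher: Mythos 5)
Your proof is correct and takes essentially the same approach as the paper. The only cosmetic difference is in part~(1): you first establish that $1_X$ is a deflation via pullback of $1_0$ and then invoke closure of conflations under isomorphism, whereas the paper directly observes that $f$ itself sits in a pullback square over $1_0\colon 0\to 0$; both reduce to the same application of \ref{R0} and \ref{R2}.
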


\begin{proof}
	\begin{enumerate}
		\item Let $f\colon X \to Y$ be an isomorphism.  One easily checks that \[\xymatrix{X \ar[r]^f\ar[d] & Y\ar[d]\\
		0 \ar[r] & 0}\]
		is a pullback diagram.  By \ref{R0} and \ref{R2}, we know that $f\colon X \to Y$ is a deflation.
		\item Since $1_A\colon A\rightarrow A$ is the kernel of $p\colon A\rightarrow 0$ and the composition of $0\xrightarrow{i} A \xrightarrow{p} 0$ is a deflation by \ref{R0}, it follows that $p$ is a deflation.  This establishes \ref{R0*}.
		\item Every inflation is a kernel and kernels are monic.  If an inflation is an epimorphism, then the cokernel is zero.  As an inflation is the kernel of its cokernel, we infer that the inflation is an isomorphism.
		\item Similar.\qedhere
	\end{enumerate}
\end{proof}

\begin{proposition}\label{proposition:PushoutIfCokernel}\label{proposition:WhenPushout}\label{proposition:WhenPullback}\label{proposition:PullbackPushout}\label{proposition:InducedSequenceLemma}
	Let $\CC$ be a deflation-exact category.
\begin{enumerate}
\item For a commutative square
	\[\xymatrix{
		A\ar@{>->}[r]^{i'} \ar[d]^f & B \ar[d]^g\\
		A' \ar@{>->}[r]^{i} & B'
	}\] where the horizontal arrows are inflations, the following statements are equivalent:
		\begin{enumerate}
			\item\label{enumerate:CommutativeSquare1Pushout} the square is a pushout,
			\item\label{enumerate:CommutativeSquare1Bicartesian} the square is both a pushout and a pullback,
			\item\label{enumerate:CommutativeSquare1Extension} the square can be extended to a diagram
				\[\xymatrix{
			A\ar@{>->}[r]^{i} \ar[d]^f & B\ar@{->>}[r] \ar[d]^g & C \ar@{=}[d] \\
		A' \ar@{>->}[r]^{i'} & B' \ar@{->>}[r] & C
	}\]
	where the rows are conflations,
	    \item\label{enumerate:CommutativeSquare1Conflation} the induced sequence $\xymatrix@1{A\ar[r]^-{\begin{psmallmatrix}f\\ i\end{psmallmatrix}} & A'\oplus B\ar[r]^-{\begin{psmallmatrix}-i' & g\end{psmallmatrix}} & B'}$ is a conflation.
		\end{enumerate}
\item\label{enumerate:SecondPart} For a commutative square
	\[\xymatrix{
		B\ar@{->>}[r]^{p} \ar[d]^f & C \ar[d]^g\\
		B' \ar@{->>}[r]^{p'} & C'
	}\]
	where the horizontal arrows are deflations, the following statements are equivalent:
		\begin{enumerate}
			\item\label{enumerate:CommutativeSquare2Pullback} the square is a pullback,
			\item\label{enumerate:CommutativeSquare2Bicartesian} the square is both a pushout and a pullback,
			\item\label{enumerate:CommutativeSquare2Extension} the square can be extended to a diagram
		\[\xymatrix{
		A\ar@{>->}[r] \ar@{=}[d] & B \ar[d]^f \ar@{->>}[r]^{p} & C\ar[d]^g \\
		A' \ar@{>->}[r] & B'	\ar@{->>}[r]_{p'} & C'}\]
	where the rows are conflations.
	\end{enumerate}
	If $\CC$ satisfies axiom \ref{R0*}, then the previous are equivalent to:
	 \begin{enumerate}[resume]
		 \item\label{enumerate:CommutativeSquare2Conflation} the induced sequence $\xymatrix@1{B\ar[r]^-{\begin{psmallmatrix}f\\ p\end{psmallmatrix}} & B'\oplus C\ar[r]^-{\begin{psmallmatrix}-p' & g\end{psmallmatrix}} & C'}$ is a conflation.
	 \end{enumerate}
\end{enumerate}
\end{proposition}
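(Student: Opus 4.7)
I plan to prove each part by a cycle of implications, using the kernel/cokernel preservation of Propositions \ref{proposition:PullbacksPreserveKernels} and \ref{proposition:PushoutsPreserveCokernels} together with the additive-category observation that a commutative square is a pushout (resp.\ pullback) if and only if the associated three-term sequence realises the cokernel of its first map (resp.\ the kernel of its second map). The easy directions are straightforward: (b) $\Rightarrow$ (a) is trivial, (d) $\Rightarrow$ (b) follows since a conflation is a kernel-cokernel pair, and (a) $\Rightarrow$ (c) in part (1) is an application of Proposition \ref{proposition:PushoutsPreserveCokernels}(2), which transports the cokernel $p$ of $i'$ across the pushout to a cokernel of $i$ that must coincide with $C$ up to canonical iso since $i$ is an inflation. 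Dually, (a) $\Rightarrow$ (c) in part (2) uses Proposition \ref{proposition:PullbacksPreserveKernels}(1) to identify the kernel of $p'$ with $f \circ \ker p$, after which $A'$ is replaced by $A$ via the induced iso.

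The central technical step is (c) $\Rightarrow$ (d) in part (1). I form the pullback $P := B \times_C B'$ of the two deflations $p, p''$ from the conflation rows of (c); by \ref{R2}, both projections $\pi_1 : P \to B'$ and $\pi_2 : P \to B$ are deflations, and pulling back $\ker p'' = i$ yields a conflation $A' \to P \to B$ with inflation $(0, i)$. The identity $p'' g = p$ and the universal property of the pullback produce a section $\sigma : B \to P$ of $\pi_2$ with $\pi_1 \sigma = g$; this splits the conflation, giving an additive isomorphism $A' \oplus B \xrightarrow{\sim} P$, $(a', b) \mapsto (b, g(b) + i(a'))$. Under this identification, $\pi_1$ transports to $[i, g] : A' \oplus B \to B'$ and its kernel (the pullback of $\ker p = i'$, namely $A$) transports to $[-f, i']^{T} : A \to A' \oplus B$; absorbing the automorphism $(a', b) \mapsto (-a', b)$ recovers the signs in the statement. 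Crucially, this argument does not require \ref{R0*}: the conflation $A' \to P \to B$ is provided directly by \ref{R2} and is merely transported across an additive isomorphism of conflations.

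For part (2), the cycle is completed by (c) $\Rightarrow$ (b). The pushout half is direct: given $u, v$ with $u f = v p$, the compatibility $u i = u f i = v p i = 0$ lets $u$ factor through $p' = \coker i$ as $u = w p'$, and then $w g p = w p' f = u f = v p$ together with $p$ epic forces $w g = v$. For the pullback half, setting $P' := B' \times_{C'} C$ (which exists by \ref{R2}), the comparison map $\phi : B \to P'$ fits in a morphism of conflations $(A \to B \to C) \to (A \to P' \to C)$ with identities at both ends. A short five lemma then shows $\phi$ is an isomorphism: form $R := B \times_C P'$ by \ref{R2}; the section of $R \to B$ induced by $\phi$ splits the conflation $A \to R \to B$ as $A \oplus B$, the other projection $R \to P'$ becomes $[i_{P'}, \phi] : A \oplus B \to P'$, and a shear automorphism of $A \oplus B$ identifies this with $\phi \circ \pi_2$; uniqueness of cokernels of $\iota_1 : A \to A \oplus B$ then forces $\phi$ to be an iso. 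This argument uses only \ref{R2}.

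Finally, when \ref{R0*} holds, (c) $\Rightarrow$ (d) in part (2) follows by exhibiting $[-p', g] : B' \oplus C \to C'$ as a deflation via the factorisation $[-p', g] = [-1, g] \circ (p' \oplus 1_{C})$. Here $p' \oplus 1_C$ is the deflation of the direct sum of the conflation $A \to B' \to C'$ with the trivial conflation $0 \to C \to C$ (the latter exists because $1_C$ is a deflation, and direct sums of conflations are conflations by \ref{R1} and \ref{R2}), while $[-1, g]$ factors as $\pi_1 \circ \tau$ with $\tau(c', c) = (-c' + g(c), c)$ an automorphism of $C' \oplus C$ and $\pi_1$ the deflation of the split conflation $C \to C' \oplus C \to C'$ (obtained as the direct sum of $(C \to C \to 0)$ and $(0 \to C' \to C')$; this is where \ref{R0*} enters, to make $C \to 0$ a deflation). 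The additive kernel of $[-p', g]$ is $B' \times_{C'} C \cong B$ via the short five lemma iso of the previous paragraph, with inclusion $[f, p]^{T}$; this exhibits the sequence as a conflation.
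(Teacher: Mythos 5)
Your proof is correct and establishes all the stated equivalences; the route differs genuinely from the paper's, which is largely a citation proof: the paper invokes B\"uhler's Proposition~2.12 for $(c) \Rightarrow (a)$ in part (1) and Bazzoni--Crivei's Propositions~5.4 and 5.7 for the part (2) equivalences, whereas you build everything in-house from axiom \ref{R2}, Propositions \ref{proposition:PullbacksPreserveKernels}/\ref{proposition:PushoutsPreserveCokernels}, and the purely additive split-extension isomorphism. Your $(c) \Rightarrow (d)$ in part (1) constructs the pullback $P = B \times_C B'$ and transports the \ref{R2}-conflation $A \to P \to B'$ across the splitting $P \cong A' \oplus B$; the paper instead writes down the row $A \to A' \oplus B \to B'$ directly and verifies one pullback square, so the two are applications of the same ideas (\ref{R2} plus kernel transport) differing in which object is treated as primary. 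Your pullback half of $(c) \Rightarrow (b)$ in part (2) is effectively an inline proof of the short five lemma for deflation-exact categories, via the auxiliary pullback $R = B \times_C P'$, the split decomposition $R \cong A \oplus B$, a shear, and uniqueness of cokernels of $\iota_1$; this cleanly replaces the citation to Bazzoni--Crivei Prop.~5.4. Likewise your $(c) \Rightarrow (d)$ in part (2) via the factorisation $\begin{psmallmatrix}-p' & g\end{psmallmatrix} = \begin{psmallmatrix}-1 & g\end{psmallmatrix}\circ(p' \oplus 1_C)$ correctly isolates where \ref{R0*} enters (making $C \to 0$ a deflation so that the projection $C' \oplus C \deflation C'$ is one). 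One small unproven lemma you rely on is that finite direct sums of conflations are conflations; this is correct (factor $p_1 \oplus p_2$ as $(p_1 \oplus 1)\circ(1 \oplus p_2)$, each a pullback of a deflation by \ref{R2}, and compose via \ref{R1}) but deserves to be stated explicitly. The trade-off is clear: the paper's proof is shorter by outsourcing; yours is longer but self-contained and exposes the underlying mechanisms.
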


\begin{proof}
\begin{enumerate}
\item The implication $\eqref{enumerate:CommutativeSquare1Pushout} \Rightarrow \eqref{enumerate:CommutativeSquare1Extension}$ is straightforward to prove.  For the reverse implication, one can verify that the proof of \cite[proposition~2.12]{Buhler10} still holds.  If \eqref{enumerate:CommutativeSquare1Extension} holds, then proposition \ref{proposition:MitchellPullbackPushout} shows that the given commutative square is a pullback.  This shows that \eqref{enumerate:CommutativeSquare1Bicartesian} holds.  The implication $\eqref{enumerate:CommutativeSquare1Bicartesian} \Rightarrow \eqref{enumerate:CommutativeSquare1Pushout}$ is trivial.

Assume now that \eqref{enumerate:CommutativeSquare1Extension} holds.  Consider the following commutative diagram:
	\[\xymatrix{
		& A'\ar@{=}[r]\ar[d]^{\begin{psmallmatrix}-1_{A'}\\0\end{psmallmatrix}} & A'\ar@{>->}[d]^{i'}\\
		A\ar[r]^-{\begin{psmallmatrix}f\\ i\end{psmallmatrix}}\ar@{=}[d] & A'\oplus B\ar[r]^-{\begin{psmallmatrix}-i' & g\end{psmallmatrix}}\ar[d]^{\begin{psmallmatrix}0&1_B\end{psmallmatrix}} & B'\ar@{->>}[d]^{p'}\\
		A\ar@{>->}[r]^i & B\ar@{->>}[r]^p & C	
	}\] One readily verifies that the lower right square is a pullback square. Axiom \ref{R2} implies that the middle row is a conflation as required.  This establishes the implication $\eqref{enumerate:CommutativeSquare1Extension} \Rightarrow \eqref{enumerate:CommutativeSquare1Conflation}.$  The implication $\eqref{enumerate:CommutativeSquare1Conflation} \Rightarrow \eqref{enumerate:CommutativeSquare1Pushout}$ is trivial.
\item The equivalence $\eqref{enumerate:CommutativeSquare2Pullback} \Leftrightarrow \eqref{enumerate:CommutativeSquare2Extension}$ is \cite[proposition~5.4]{BazzoniCrivei13}. The equivalence $\eqref{enumerate:CommutativeSquare2Pullback} \Leftrightarrow \eqref{enumerate:CommutativeSquare2Bicartesian}$ again follows from proposition \ref{proposition:MitchellPullbackPushout}.

Assume now that $\CC$ satisfies axiom \ref{R0*}.  It is shown in \cite[proposition~5.7]{BazzoniCrivei13} that $\eqref{enumerate:CommutativeSquare2Pullback} \Leftrightarrow \eqref{enumerate:CommutativeSquare2Conflation}$.\qedhere
\end{enumerate}
\end{proof}

\begin{remark}
In proposition \ref{proposition:PullbackPushout}, the implication $\eqref{enumerate:CommutativeSquare2Conflation} \Rightarrow \eqref{enumerate:CommutativeSquare2Pullback}$ does not use axiom \ref{R0*}.  In contrast, the converse $\eqref{enumerate:CommutativeSquare2Pullback} \Rightarrow \eqref{enumerate:CommutativeSquare2Conflation}$ needs axiom \ref{R0*}.  To see this, let $C \in \CC$ be any object.  By proposition \ref{Lemma:BasicProperties}, the diagram
\[\xymatrix{
		C\ar@{=}[r] \ar[d] & C \ar[d]\\
		0 \ar@{=}[r] & 0
	}\]
	satifies the properties in \eqref{enumerate:CommutativeSquare2Pullback}.  The implication $\eqref{enumerate:CommutativeSquare2Pullback} \Rightarrow \eqref{enumerate:CommutativeSquare2Conflation}$ implies that $C \to 0$ is a deflation.  This implies that axiom \ref{R0*} holds.
\end{remark}

\begin{proposition}\label{proposition:FactorizationOfConflationMorphism}
Let $\CC$ be a deflation-exact category.  Every morphism $(f,g,h)$ between conflations $X \stackrel{i}{\rightarrowtail} Y \stackrel{p}{\twoheadrightarrow} Z$ and  $X' \stackrel{i'}{\rightarrowtail} Y' \stackrel{p'}{\twoheadrightarrow} Z'$ factors through some conflation $X' {\rightarrowtail} P {\twoheadrightarrow} Z$:
\[\xymatrix{
X\ar@{>->}[r]^i\ar[d]^f & Y\ar@{->>}[r]^p\ar[d] & Z\ar@{=}[d] \\
X'\ar@{=}[d]\ar@{>->}[r] & P \ar@{->>}[r]\ar[d] & Z \ar[d]^h \\
X'\ar@{>->}[r]^{i'} & Y'\ar@{->>}[r]^{p'} & Z'}\]
such that the upper-left and lower-right squares are both pullbacks and pushouts.
\end{proposition}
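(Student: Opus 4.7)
The plan is to build $P$ as the pullback and then read off everything from the two structural propositions \ref{proposition:PushoutIfCokernel} and \ref{proposition:PullbackPushout}. Concretely, I would first form the pullback of $p'\colon Y'\twoheadrightarrow Z'$ along $h\colon Z\to Z'$. Axiom \ref{R2} guarantees this pullback $P$ exists and that the induced morphism $\pi\colon P\twoheadrightarrow Z$ is a deflation. Applying proposition \ref{proposition:PullbackPushout}\eqref{enumerate:SecondPart}, the equivalence $\eqref{enumerate:CommutativeSquare2Pullback}\Leftrightarrow\eqref{enumerate:CommutativeSquare2Extension}$ then extends the pullback square to a commutative diagram
\[\xymatrix{
X'\ar@{>->}[r]^{j}\ar@{=}[d] & P\ar@{->>}[r]^{\pi}\ar[d] & Z\ar[d]^{h}\\
X'\ar@{>->}[r]^{i'} & Y'\ar@{->>}[r]^{p'} & Z'
}\]
in which the rows are conflations, giving the desired conflation $X'\rightarrowtail P\twoheadrightarrow Z$. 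Moreover, the lower-right square is automatically both pullback and pushout by the equivalence $\eqref{enumerate:CommutativeSquare2Pullback}\Leftrightarrow\eqref{enumerate:CommutativeSquare2Bicartesian}$.

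Next I would produce the middle vertical map $Y\to P$. Since $(f,g,h)$ is a morphism of conflations, we have $p'\circ g=h\circ p$, so the universal property of the pullback gives a unique $\eta\colon Y\to P$ with $\pi\eta=p$ and (the map $P\to Y'$)$\,\circ\eta=g$. To see that $\eta\circ i=j\circ f$, I would verify the equality after post-composing with each leg of the pullback cone: post-composing with $P\to Y'$ yields $g\circ i=i'\circ f$ (commutativity of the original left square), and post-composing with $\pi$ yields $p\circ i=0=0\circ f$ (since $pi=0$); uniqueness in the pullback then forces $\eta\circ i=j\circ f$.

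The final step is to recognise the upper-left square as both a pullback and a pushout. Since its horizontal arrows are inflations and it fits into a commutative diagram whose rows are the two conflations $X\rightarrowtail Y\twoheadrightarrow Z$ and $X'\rightarrowtail P\twoheadrightarrow Z$ with identity on the cokernel side, condition \eqref{enumerate:CommutativeSquare1Extension} of proposition \ref{proposition:PushoutIfCokernel} holds; the equivalence $\eqref{enumerate:CommutativeSquare1Extension}\Leftrightarrow\eqref{enumerate:CommutativeSquare1Bicartesian}$ then gives that this square is bicartesian.

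The construction is essentially forced once the pullback is taken; no real obstacle arises, since the cited propositions package precisely the nontrivial content (the use of axiom \ref{R2} to produce the conflation $X'\rightarrowtail P\twoheadrightarrow Z$ from the pullback square). The only point requiring a little care is the commutativity check for $\eta\circ i=j\circ f$, which must be done via the pullback's universal property rather than by any direct diagram chase, because no a priori relation between $i$ and $j$ is given.
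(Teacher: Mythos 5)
Your proof is correct and takes essentially the same route as the paper: the paper delegates the factorization to \cite[proposition 5.2]{BazzoniCrivei13}, which is precisely the pullback-of-$p'$-along-$h$ construction you carry out, and then reads off the bicartesian claims from proposition \ref{proposition:PushoutIfCokernel}, exactly as you do. Your verification of $\eta\circ i=j\circ f$ by post-composing with the two legs of the pullback cone is the right way to handle the only genuinely delicate point.
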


\begin{proof}
The factorization property is \cite[proposition 5.2]{BazzoniCrivei13}. The statements about the pushouts and pullbacks follow from proposition \ref{proposition:PushoutIfCokernel}.
\end{proof}

\begin{proposition}\label{proposition:PullbackOfInflation}
Let $\CC$ be a deflation-exact category.  The pullback of an inflation $f$ along a deflation is an inflation $f'$.
\end{proposition}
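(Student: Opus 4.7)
The plan is to reduce the problem to recognising $f'$ as the kernel of a deflation, and then to invoke the definition of a conflation category. Let $f\colon A\rightarrowtail B$ be an inflation and $q\colon B'\twoheadrightarrow B$ a deflation. By \ref{R2}, the pullback along $q$ exists, giving a square
\[\xymatrix{A'\ar[r]^{f'}\ar@{->>}[d]_{q'} & B'\ar@{->>}[d]^{q} \\ A\ar@{>->}[r]^{f} & B}\]
and $q'$ is a deflation. Since $f$ is an inflation, it fits in a conflation $A\stackrel{f}{\rightarrowtail}B\stackrel{\pi}{\twoheadrightarrow}C$, so in particular $f=\ker\pi$ and $\pi$ is a deflation.

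First I would form the composite $\pi q\colon B'\to C$, which is a deflation by \ref{R1}. Applying proposition~\ref{proposition:MitchellPullback}\eqref{enumerate:MitchellPullback} to the diagram
\[\xymatrix{A'\ar[r]^{f'}\ar[d]_{q'} & B'\ar[d]^{q} & \\ A\ar[r]^{f} & B\ar[r]^{\pi} & C,}\]
in which $f=\ker\pi$ and the left square is a pullback, one concludes that $f'=\ker(\pi q)$.

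Finally, since $\pi q$ is a deflation, by the definition of a conflation category it occurs as the cokernel in some conflation $K\rightarrowtail B'\stackrel{\pi q}{\twoheadrightarrow}C$, in which $K\rightarrowtail B'$ is $\ker(\pi q)$. By uniqueness of kernels, this first map is canonically isomorphic to $f'\colon A'\to B'$, and since conflations are closed under isomorphisms (definition~\ref{Definition:ConflationCategory}), the sequence $A'\stackrel{f'}{\rightarrowtail}B'\stackrel{\pi q}{\twoheadrightarrow}C$ is itself a conflation. In particular $f'$ is an inflation, as required.

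There is no real obstacle here: the whole argument is short once one realises that the right auxiliary map to consider is the composition $\pi q$ of the cokernel of $f$ with the given deflation. A more direct attempt through proposition~\ref{proposition:PullbackPushout} would be circular, since the characterisations there already presuppose that one of the horizontal maps in the square is an inflation or deflation; the kernel-of-a-composite description from proposition~\ref{proposition:MitchellPullback} is what makes the argument go through cleanly.
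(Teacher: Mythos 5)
Your proof is correct and takes essentially the same approach as the paper: compose the given deflation with the cokernel of $f$, apply \ref{R1} to see this composite is a deflation, invoke proposition~\ref{proposition:MitchellPullback}\eqref{enumerate:MitchellPullback} to identify $f'$ as its kernel, and conclude $f'$ is an inflation. The paper is slightly more terse in the final step (it simply asserts ``hence an inflation by axiom \ref{R1}''), whereas you spell out the appeal to uniqueness of kernels and closure of conflations under isomorphism, but the argument is the same.
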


\begin{proof}
Let $f\colon X \rightarrowtail Z$ be an inflation and $g\colon Y \twoheadrightarrow Z$ be a deflation.  Consider the commutative diagram
\[\xymatrix{
P \ar@{.>}[r]^{f'} \ar@{.>}[d]^{g'} & Y \ar@{->>}[d]^{g} \\ X \ar@{>->}[r]^{f} & Z \ar@{->>}[r]^-{h} & {\coker(f)}
}\]
where the square is a pullback diagram and the bottom row is a conflation.  It follows from proposition \ref{proposition:MitchellPullbackPushout}(\ref{enumerate:MitchellPullback}) that $f'$ is the kernel of the composition $Y \twoheadrightarrow Z \twoheadrightarrow \coker(f)$ and hence an inflation by axiom \ref{R1}.
\end{proof}

The following proposition provides a sufficient condition for a subcategory of a deflation-exact category to be deflation-exact.

\begin{proposition}\label{proposition:DeflationClosed}
Let $\CC$ be a deflation-exact category.  Let $\DD \subseteq \CC$ be a full subcategory. The conflation structure of $\CC$ induces a deflation-exact structure on $\DD$ if for every deflation $f\colon Y\to Z$ in $\CC$ with $Y,Z\in \DD$, one has that $\ker(f)\in \DD$.
	\begin{enumerate}
		\item If $\CC$ satisfies axiom \ref{R0*}, then so does $\DD$.
		\item If $\CC$ satisfies axiom \ref{R3}, then so does $\DD$.
	\end{enumerate}
\end{proposition}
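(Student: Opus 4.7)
The plan is to equip $\DD$ with the conflation structure inherited from $\CC$, declaring a kernel-cokernel pair in $\DD$ to be a conflation precisely when it is a conflation of $\CC$. Since $\DD$ is full in $\CC$, such a diagram is automatically a kernel-cokernel pair in both categories simultaneously, so this class is well-defined and closed under isomorphism. The bulk of the argument is then a direct verification of axioms \ref{R0}, \ref{R1}, \ref{R2} for $\DD$, each time reducing to the corresponding axiom in $\CC$ together with the standing hypothesis.

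For \ref{R0}, the pair $0\to 0\to 0$ is a conflation in $\CC$ living entirely in $\DD$. For \ref{R1}, the composite of two deflations in $\DD$ is a deflation in $\CC$ by \ref{R1} in $\CC$, and its kernel lies in $\DD$ by the standing hypothesis applied to that composite; hence the conflation exhibiting the composite is entirely in $\DD$. For \ref{R2}, I would form the pullback $P$ of a deflation $f\colon Y \twoheadrightarrow Z$ in $\DD$ along a morphism $h\colon X \to Z$ in $\DD$ inside $\CC$. By \ref{R2} in $\CC$, the induced map $P \twoheadrightarrow X$ is a deflation, and proposition \ref{proposition:PullbacksPreserveKernels} identifies its kernel with $\ker(f)$, which lies in $\DD$ by hypothesis. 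Once $P$ itself is known to lie in $\DD$, the universal property of the pullback transfers from $\CC$ to $\DD$ by fullness, yielding \ref{R2} in $\DD$.

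For the two supplementary items, suppose first that $\CC$ satisfies \ref{R0*}. For each $A \in \DD$, the deflation $A \twoheadrightarrow 0$ in $\CC$ has kernel $A \in \DD$, so $A \to A \to 0$ is a conflation lying in $\DD$ and \ref{R0*} holds there. Suppose next that $\CC$ satisfies \ref{R3}, and consider morphisms $i\colon A \to B$ and $p\colon B \to C$ in $\DD$ such that $p$ admits a kernel in $\DD$ and $pi$ is a deflation in $\DD$. By fullness of $\DD$, the kernel of $p$ in $\DD$ is also a kernel of $p$ in $\CC$, and $pi$ is a deflation in $\CC$; hence \ref{R3} in $\CC$ yields that $p$ is a deflation in $\CC$. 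Combined with the kernel lying in $\DD$, this promotes $p$ to a deflation in $\DD$.

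The step I expect to be the main obstacle is showing $P \in \DD$ in the pullback construction for \ref{R2}: the standing hypothesis furnishes kernel-closure of $\DD$ under deflations, but $P$ sits as the middle term of the conflation $\ker(f) \rightarrowtail P \twoheadrightarrow X$ whose outer terms lie in $\DD$, so descending $P$ to $\DD$ essentially asks for an extension-closure of $\DD$ in $\CC$. I expect the intended argument either to invoke this closure as an implicit part of the hypothesis (since it is typically bundled with kernel-closure when one asserts that an exact structure is inherited), or to realise $P$ more concretely — for instance as the kernel of $(-h,f)\colon X \oplus Y \to Z$ after massaging it into a deflation between $\DD$-objects so that the standing hypothesis applies directly.
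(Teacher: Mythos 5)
The proof of part (2) contains a genuine gap. You claim ``By fullness of $\DD$, the kernel of $p$ in $\DD$ is also a kernel of $p$ in $\CC$.'' This does not follow: fullness guarantees $\Hom_\DD(W,K_D)=\Hom_\CC(W,K_D)$ for $W\in\DD$, but the universal property of a kernel in $\DD$ is quantified over objects of $\DD$ only, whereas a kernel in $\CC$ must satisfy the factorization property against every object of $\CC$. A priori, $p$ may have no kernel in $\CC$, or its $\CC$-kernel may lie outside $\DD$ and be strictly larger than $K_D$. The standing hypothesis (closure under kernels of deflations) cannot be invoked here either, since at this point we do not yet know $p$ is a deflation. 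The paper avoids this trap entirely: it forms the pullback $P$ of the deflation $pi$ along $p$ in $\CC$, observes that the map $\begin{pmatrix} p & pi\end{pmatrix}\colon Y\oplus X\to Z$ is a deflation in $\CC$ (this is where \ref{R3} in $\CC$ is actually used, via proposition \ref{proposition:PushoutIfCokernel}), so that $P=\ker\begin{pmatrix}p&pi\end{pmatrix}\in\DD$ by the standing hypothesis, then identifies $P\cong K_D\oplus X$ using the $\DD$-kernel, and concludes via a cancellation lemma from Bazzoni--Crivei that $K_D\inflation Y\deflation Z$ is a conflation in $\CC$. Only at that point does it follow that $K_D$ is the kernel of $p$ in $\CC$.

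Your discussion of \ref{R2} correctly identifies the other delicate point: showing the pullback $P$ of $f\colon Y\deflation Z$ along $h\colon X\to Z$ lies in $\DD$. Realizing $P$ as $\ker\bigl((-h,f)\colon X\oplus Y\to Z\bigr)$ is indeed the intended route, but this only invokes the standing hypothesis once $(-h,f)$ is known to be a deflation; establishing that requires the split projection $X\oplus Z\to Z$ to be a deflation, which is \ref{R0*} (or follows from \ref{R3}, by the same trick used in the paper's argument for part (2)). Your proposal leaves this unresolved, and so does the paper's one-line dismissal of everything except \ref{R3} as ``non-trivial''; the fact that your suggested repair is the same mechanism the paper deploys for part (2) is worth noting, as it indicates the main statement is most comfortably read under the ambient hypotheses of parts (1) or (2).
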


\begin{proof}
	The only non-trivial part is to show that $\DD$ inherits axiom \ref{R3} from $\CC$. To that end, let $K_D\to Y\xrightarrow{p}Z$ be a sequence in $\DD$ such that $K_D\to Y$ is the kernel of $p$ and let $i\colon X\to Y$ be a map in $\DD$ such that $p\circ i$ is a deflation. Write $P$ for the pullback of $pi$ along $p$ in $\CC$. By proposition \ref{proposition:PushoutIfCokernel}, we obtain a conflation $P\inflation Y\oplus X\deflation Z$. It follows that $P\in \DD$.  As the kernel of $\begin{pmatrix}p & pi \end{pmatrix} \colon Y \oplus X \deflation Z$ in $\DD$ is $K_D \oplus X$, one finds that $P\cong K_D\oplus X$. It follows that $K_D\to Y\to Z$ is a conflation by \cite[proposition~5.9]{BazzoniCrivei13} in $\CC$. In particular $K_D$ is the kernel of $Y\to Z$ in $\CC$. This completes the proof.
\end{proof}
\section{Percolating subcategories}\label{Section:PercolatingSubcategories}

Let $\CC$ be a one-sided exact category.  In this section, we define the notion of a percolating subcategory of $\CC$. To place this notion in context: if the category $\CC$ is abelian, a subcategory $\AA \subseteq \CC$ is percolating if and only if it is a Serre subcategory; if $\CC$ is an exact category, then the notion of a percolating subcategory is weaker than the notion of a right s-filtering subcategory in \cite{Schlichting04} (this will be verified in proposition \ref{proposition:RecoveredSchlichtingsFramework}).

Starting from a percolating subcategory $\AA \subseteq \CC$, we define a set of weak isomorphisms $S_\AA$.  This set is a left or right multiplicative set (see proposition \ref{proposition:MinimalConditionsRMS}).  We will proceed to establish some techincal results which will help to understand the localization $\CC[S_\AA^{-1}],$ chief among them lemma \ref{lemma:LiftingConflations} and proposition \ref{proposition:InterpretationOfP4}.

\subsection{Definitions and basic properties}
We start by defining percolating subcategories. As this definition does not refer to the deflation-exact structure of $\CC$, we formulate the definition for a more general conflation category.

\begin{definition}\label{Definition:GeneralPercolatingSubcategory}
	Let $\CC$ be a conflation category. A non-empty full subcategory $\AA$ of $\CC$ is called a \emph{right percolating subcategory} or a \emph{deflation-percolating subcategory} of $\CC$ if the following axioms are satisfied:
	\begin{enumerate}[label=\textbf{P\arabic*},start=1]
		\item\label{P1} $\AA$ is a \emph{Serre subcategory}, meaning:
		\[\mbox{ If } A'\rightarrowtail A \twoheadrightarrow A'' \mbox{ is a conflation in $\CC$, then } A\in \Ob(\AA) \mbox{ if and only if } A',A''\in \Ob(\AA).\]
		\item\label{P2} For all morphisms $C\rightarrow A$ with $C \in \Ob(\CC)$ and $A\in \Ob(\AA)$, there exists a commutative diagram
		\[\xymatrix{
		A'\ar[rd] & \\
		C \ar@{->>}[u]\ar[r]& A\\
				}\] with $A'\in \Ob(\AA)$ and where $C \twoheadrightarrow A'$ is a deflation.
		\item\label{P3} For any composition $\xymatrix{X\ar@{>->}[r]^i & Y\ar[r]^t & T}$ which factors through $\AA$, there exists a commutative diagram 
		\[\xymatrix{
			X\ar@{>->}[r]^i\ar@{->>}[d]^f & Y\ar@{->>}[d]^{f'}\ar@/^/[rdd]^t &\\
			A\ar@{>->}[r]^{i'}\ar@/_/[rrd] & P\ar@{.>}[rd] &\\
			&& T
		}\] with $A \in \Ob(\AA)$ and such that the square $XYAP$ is a pushout square.	
		\item\label{P4} For all maps $X\stackrel{f}{\rightarrow} Y$ that factor through $\AA$ and for all inflations $A\stackrel{i}{\inflation} X$ (with $A \in \Ob(\AA)$) such that $f\circ i=0$, the induced map $\coker(i)\to Y$ factors through $\AA$.
	\end{enumerate}
	
	By dualizing the above axioms one obtains a similar notion of a \emph{left percolating subcategory} or an \emph{inflation-percolating subcategory}.
\end{definition}

\begin{remark}
In axiom \ref{P3}, we start with a composition $t \circ i$ factoring through an object $B \in \Ob(\AA).$  We do not require any compatibility between this object $B \in \Ob(\AA)$ and the object $A \in \Ob(\AA)$ in the diagram occurring in the statement of axiom \ref{P3}.
\end{remark}

\begin{definition}\makeatletter
\hyper@anchor{\@currentHref}%
\makeatother\label{definition:AdditionalPercolatingDefinitions}
	  \begin{enumerate}
			\item Following the conventions by \cite{Schlichting04}, a non-empty full subcategory $\AA$ of a conflation category $\CC$ satisfying axioms \ref{P1} and \ref{P2} is called \emph{right filtering}.
			\item If $\AA$ is a right filtering subcategory of $\CC$ such that the map $A'\rightarrow A$ in axiom \ref{P2} can be chosen as a monic map, we will call $\AA$ a \emph{strongly right filtering subcategory}.
			\item A right percolating subcategory which is also strongly right filtering will be abbreviated to a \emph{strongly right percolating subcategory} or \emph{strongly deflation-percolating subcategory}.
	  \end{enumerate}
	The notions of a \emph{left filtering}, \emph{strongly left filtering}, and \emph{strongly left percolating subcategory} are defined dually.
\end{definition}

\begin{remark}\label{remark:PercolatingRemarks}
	\begin{enumerate}
			\item Any deflation-exact category is a deflation-percolating subcategory of itself.
			\item If $\CC$ is an exact category, then any subcategory $\AA$ satisfying axiom \ref{P2}  automatically satisfies axiom \ref{P3} (see \cite[proposition~2.15]{Buhler10}).
			\item As a deflation-percolating subcategory $\AA$ of $\CC$ is closed under extensions in $\CC$ (by axiom \ref{P1}), the conflations of $\CC$ induce a deflation-exact structure on $\AA$.
	\end{enumerate}
\end{remark}

\subsection{Weak isomorphisms}\label{subsection:WeakIsomorphisms}

Let $F\colon \CC \to \DD$ be an exact functor between conflation categories.  Let $\AA \subseteq \CC$ be a full subcategory and assume that $F(\AA) = 0$.  It is clear that, for any conflation $X\stackrel{f}{\rightarrowtail} Y \stackrel{g}{\twoheadrightarrow} Z$, we have that $X \in \Ob(\AA)$ implies that $F(g)$ is an isomorphism.  Likewise, $Z \in \Ob(\AA)$ implies that $F(f)$ is an isomorphism. This observation motivates the following definition (the terminology is based on \cite{Cardenas98,Schlichting04}).

\begin{definition}\label{Definition:WeakIsomorphisms}
	Let $\CC$ be a conflation category and let $\AA$ be a non-empty full subcategory of $\CC$. 	
	\begin{enumerate}
		\item	An inflation $f\colon X\rightarrowtail Y$ in $\CC$ is called an \emph{$\AA^{-1}$-inflation} if its cokernel belongs to $\AA$.
		\item A deflation $f\colon X\twoheadrightarrow Y$ in $\CC$ is called a \emph{$\AA^{-1}$-deflation} if its kernel belongs to $\AA$.
		\item A morphism $f\colon X\rightarrow Y$ is called a \emph{weak $\AA^{-1}$-isomorphism} (or simply a \emph{weak isomorphism} if $\AA$ is implied) if it is a finite composition of $\AA^{-1}$-inflations and $\AA^{-1}$-deflations. We often endow weak isomorphisms with ``$\sim$''.
	\end{enumerate}
	The set of weak isomorphisms is denoted by $S_{\AA}$. Given a weak isomorphism $f$, the \emph{composition length} of $f$ is defined as the smallest natural number $n$ such that $f$ can be written as a composition of $n$ $\AA^{-1}$-inflations or $\AA^{-1}$-deflations. 
\end{definition}

The following proposition is a straightforward strengthening of \cite[lemma~1.13]{Schlichting04}.

\begin{proposition}\label{proposition:MinimalConditionsRMS}
	Let $\CC$ be a deflation-exact category and let $\AA$ be a right filtering subcategory. The set $S_{\AA}$ of weak isomorphisms is a right multiplicative system. Moreover, every solid diagram 
	\[\xymatrix{
		X\ar@{.>}[r]^g\ar@{.>}[d]_{\rotatebox{90}{$\sim$}}^t & Y\ar[d]_{\rotatebox{90}{$\sim$}}^s\\
		Z\ar[r]^f & W
	}\] with $s\in S_{\AA}$ can be completed to a commutative square such that $t\in S_{\AA}$ and the composition length of $t$ is at most the composition length of $s$.\\
	If $\AA$ is a strongly right filtering subcategory, then the square in axiom \ref{RMS2} can be chosen as a pullback-square.
\end{proposition}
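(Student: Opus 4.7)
The plan is to verify axioms \ref{RMS1}--\ref{RMS3} in turn, handling \ref{RMS2} and \ref{RMS3} by induction on the composition length of $s$. Axiom \ref{RMS1} is immediate: since $\AA$ is a non-empty Serre subcategory, applying \ref{P1} to a conflation $0 \rightarrowtail A \twoheadrightarrow A$ with $A \in \AA$ (which is a conflation because $1_A$ is a deflation by Lemma \ref{Lemma:BasicProperties}) shows that $0 \in \AA$. Hence every identity, being an isomorphism and thus a deflation with kernel $0$, is an $\AA^{-1}$-deflation and lies in $S_\AA$. Closure under composition is built into the definition of $S_\AA$.

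For \ref{RMS2}, I would handle the base case of composition length one and then induct. If $s\colon Y \twoheadrightarrow W$ is an $\AA^{-1}$-deflation with kernel $A \in \AA$, axiom \ref{R2} yields a pullback square; the induced arrow $t\colon X \to Z$ is again a deflation with kernel isomorphic to $A$ (Proposition \ref{proposition:PullbacksPreserveKernels}), and hence an $\AA^{-1}$-deflation of length $1$. If instead $s\colon Y \rightarrowtail W$ is an $\AA^{-1}$-inflation with cokernel $q\colon W \twoheadrightarrow A$, I would apply \ref{P2} to $qf\colon Z \to A$ to obtain a factorization $Z \twoheadrightarrow A' \to A$ with $A' \in \AA$, and define $t\colon X \to Z$ as the kernel of $Z \twoheadrightarrow A'$; this $t$ is then an $\AA^{-1}$-inflation. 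The composition $qft$ vanishes since it factors through $X \to A' \to A$ with $X \to A'$ zero, so $ft$ lifts uniquely through $s = \ker q$ to produce the required map $g\colon X \to Y$. When $\AA$ is strongly right filtering, the map $A' \to A$ may be taken monic, so $\ker(Z \to A) = \ker(Z \to A') = X$, and Proposition \ref{proposition:MitchellPullbackPushout}(\ref{enumerate:MitchellPullback}) then identifies the completed square as a pullback. The inductive step decomposes $s = s_n \circ s'$ with $s'$ of length $n-1$: apply the base case to $s_n$ and $f$ to get an intermediate square, then apply the inductive hypothesis to $s'$ along the new bottom edge. Concatenating yields $t$ of composition length at most $n$, and since the pullback lemma states that a vertical composition of pullback squares is a pullback, the strong refinement is preserved.

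Finally, \ref{RMS3} again inducts on composition length. If $s$ is an $\AA^{-1}$-inflation it is monic by Lemma \ref{Lemma:BasicProperties}, so $sf = sg$ forces $f = g$ and $t = 1_X$ suffices. If $s$ is an $\AA^{-1}$-deflation with kernel $A \in \AA$, then $f - g$ factors through $A$, and \ref{P2} factors the resulting morphism $X \to A$ as $X \twoheadrightarrow A' \to A$ with $A' \in \AA$; the kernel $t\colon X' \to X$ of $X \twoheadrightarrow A'$ is an $\AA^{-1}$-inflation, and $(f-g)t = 0$ because $X' \to A'$ vanishes. The inductive step applies the hypothesis to the innermost factor of a decomposition of $s$ and then the base case to the outermost. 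The main subtlety throughout is the length bound in \ref{RMS2}: a naive approach combining a pullback-of-a-deflation with the dual construction for inflations would double the length at each stage, and avoiding this requires that the inflation base case produce $t$ of the same kind (a single $\AA^{-1}$-inflation) rather than an inflation-deflation pair, which is exactly why \ref{P2} is tailored to deliver a deflation $Z \twoheadrightarrow A'$ rather than an arbitrary factorization.
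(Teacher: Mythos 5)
Your proposal matches the paper's proof in approach: verify \ref{RMS1} directly, and for \ref{RMS2} and \ref{RMS3} reduce to the length-one cases (pullback along an $\AA^{-1}$-deflation via \ref{R2}; for an $\AA^{-1}$-inflation $s$, apply \ref{P2} to the composite into $\coker s$ and take the kernel of the resulting deflation), with the strong filtering case giving a pullback via proposition \ref{proposition:MitchellPullbackPushout}. One small quibble: in the \ref{RMS3} induction the order must be ``base case on the outer factor first, then inductive hypothesis on the inner'' (exactly as you did for \ref{RMS2}), since the relation $s'f = s'g$ for the inner factor $s'$ is only available after the outer factor has been peeled off.
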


\begin{proof}
Axiom \ref{RMS1} is trivial.  For axiom \ref{RMS2} (using notation as in definition \ref{definition:RMS}), we can easily reduce to the case where $s\colon Y \stackrel{\sim}{\rightarrow} W$ is either a deflation or an inflation.  In the former case, we can complete the diagram by taking a pullback.  In the latter case, we can use axiom \ref{P2} to factor the composition $Z \stackrel{f}{\rightarrow} W \to \coker s$ as $Z \stackrel{\alpha}{\deflation} A \stackrel{\beta}{\rightarrow} \coker s$; the morphism $t\colon \ker \alpha \stackrel{\sim}{\inflation} Z$ then completes the diagram.

If $\AA \subseteq \CC$ is a strongly right filtering subcategory, then we can choose $\beta$ to be a monomorphism, and the square we obtained before is a pullback as well (this follows from proposition \ref{proposition:MitchellPullbackPushout}).

For axiom \ref{RMS3}, consider a composition $X \stackrel{f}{\rightarrow} Y \stackrel{s}{\rightarrow} Z$ with $s \in S$.  Assume that $s\circ f=0$.  We need to show that there is a $t \in S$ such that the composition $W \stackrel{t}{\rightarrow}X \stackrel{f}{\rightarrow} Y$ is zero.  Again, we can easily reduce to the case where $s$ is either an inflation or a deflation.

If $s$ is an inflation, then $f = 0$ so that we can choose $t = 1_X$.  If $s$ is a deflation, then $f \circ s = 0$ shows that $f$ factors as $X \stackrel{\alpha}{\rightarrow} \ker(s) \stackrel{\beta}{\rightarrow} Y$.  As $\ker(s) \in \AA$, it follows from axiom \ref{P2} that there is an inflation $t\colon W \stackrel{\sim}{\inflation} X$ such that $f \circ t = 0.$
\end{proof}

\begin{proposition}\label{proposition:WeakIsoPullback}
Let $f\colon X \deflation Y$ be a deflation in a deflation-exact category $\CC$.  For any weak isomorphism $s\colon Z \stackrel{\sim}{\rightarrow} Y$, the pullback along $f$ is a weak isomorphism.
\end{proposition}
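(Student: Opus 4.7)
The plan is to induct on the composition length $n$ of the weak isomorphism $s$. In every case, axiom \ref{R2} guarantees that the pullback of $s$ along $f$ exists, yielding a pullback square with $f$ on the right, $s$ on the bottom, a deflation $f' \colon P \deflation Z$ on the left, and a morphism $s' \colon P \to X$ on top; the goal is to show that $s'$ is a weak isomorphism.

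For the base case $n = 1$, distinguish two subcases. First, if $s$ is an $\AA^{-1}$-deflation with $\ker s \cong A \in \AA$, then axiom \ref{R2} implies that $s'$ is itself a deflation, and proposition \ref{proposition:PullbacksPreserveKernels} identifies $\ker(s')$ with $\ker(s) \cong A$, so that $s'$ is an $\AA^{-1}$-deflation. Second, if $s$ fits in a conflation $Z \stackrel{s}{\inflation} Y \stackrel{q}{\deflation} A$ with $A \in \AA$, then proposition \ref{proposition:PullbackOfInflation} shows that $s'$ is an inflation; inspecting the proof of that proposition, $s'$ is in fact the kernel of the composite $q \circ f \colon X \to A$, which is a deflation by \ref{R1}. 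Consequently there is a conflation $P \stackrel{s'}{\inflation} X \deflation A$, so $\coker(s') \cong A \in \AA$ and $s'$ is an $\AA^{-1}$-inflation.

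For the inductive step, factor $s = s_2 \circ s_1$ with each of $s_1, s_2$ of composition length strictly less than $n$. Form the pullback of $s_2$ along $f$; by induction the top morphism $s_2'$ is a weak isomorphism, while by \ref{R2} the left morphism $f_1$ is a deflation. Next, form the pullback of $s_1$ along $f_1$; the induction hypothesis again yields that its top morphism $s_1'$ is a weak isomorphism. The pullback lemma then identifies the outer rectangle with the pullback of $s = s_2 \circ s_1$ along $f$, so $s' = s_2' \circ s_1'$ is a composition of weak isomorphisms, hence itself a weak isomorphism.

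The one genuinely substantive point is the $\AA^{-1}$-inflation subcase of the base case, where one must explicitly identify the cokernel of the pulled-back inflation with the cokernel of $s$; this is handled by combining proposition \ref{proposition:PullbackOfInflation} with axiom \ref{R1} to exhibit $s'$ as the kernel of the deflation $q \circ f$. Everything else is a straightforward bookkeeping exercise with the pullback lemma and proposition \ref{proposition:PullbacksPreserveKernels}.
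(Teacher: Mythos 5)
Your proof is correct and follows essentially the same route as the paper's (one-line) proof: reduce to the cases of a single $\AA^{-1}$-deflation or $\AA^{-1}$-inflation, use proposition \ref{proposition:PullbacksPreserveKernels} (which is interchangeable here with proposition \ref{proposition:WhenPullback}(2)) for the deflation case and proposition \ref{proposition:PullbackOfInflation} together with \ref{R1} for the inflation case, and reassemble compositions via the pullback lemma. You have simply spelled out the details that the paper's terse citation suppresses.
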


\begin{proof}
This follows from propositions \ref{proposition:PushoutIfCokernel} and \ref{proposition:PullbackOfInflation}, and the pullback lemma.
\end{proof}

\begin{lemma}\label{lemma:CompositionOfAAdeflations}
	Let $\CC$ be a deflation-exact category and let $\AA\subseteq \CC$ be a non-empty full subcategory. If $\AA$ satisfies axiom \ref{P1}, the composition of two $\AA^{-1}$-deflations is again an $\AA^{-1}$-deflation.
\end{lemma}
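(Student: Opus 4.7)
The plan is to reduce the claim to axiom \ref{P1}. Given $\AA^{-1}$-deflations $f\colon X \deflation Y$ and $g\colon Y \deflation Z$, the composition $gf$ is a deflation by axiom \ref{R1}, so only $\ker(gf) \in \Ob(\AA)$ remains to be shown. I will do this by exhibiting a conflation
\[\ker f \inflation \ker(gf) \deflation \ker g,\]
at which point \ref{P1} (closure of $\AA$ under extensions) finishes the job.

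To produce this conflation, form the pullback $P$ of $f\colon X \to Y$ along the kernel inclusion $i\colon \ker g \inflation Y$. Axiom \ref{R2} guarantees that the pullback exists and that the induced leg $\bar f\colon P \to \ker g$ is a deflation. Two identifications are now made. First, applying proposition \ref{proposition:MitchellPullback}(\ref{enumerate:MitchellPullback}) — with $g$ and $i = \ker g$ playing their given role, and $f$ playing the role of $h$ — identifies the other pullback leg $P \to X$ as the kernel of $gf$, whence $P \cong \ker(gf)$. Second, proposition \ref{proposition:PullbacksPreserveKernels} applied to the same pullback square identifies $\ker(\bar f)$ with $\ker f$. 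Combining these two identifications yields the desired conflation $\ker f \inflation \ker(gf) \deflation \ker g$.

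There is no serious obstacle here: the argument is the standard pullback derivation of a kernel sequence for a composition, and the only step requiring the one-sided exact structure is the stability of deflations under pullback (axiom \ref{R2}). The only care needed is in correctly matching the hypotheses of propositions \ref{proposition:MitchellPullback} and \ref{proposition:PullbacksPreserveKernels} to the pullback square at hand; once the conflation is extracted, axiom \ref{P1} delivers $\ker(gf) \in \Ob(\AA)$ immediately, so that $gf$ is an $\AA^{-1}$-deflation.
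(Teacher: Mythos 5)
Your proof is correct and follows essentially the same approach as the paper's: pull back $f$ along $\ker g \inflation Y$, identify the resulting object as $\ker(gf)$ via proposition \ref{proposition:MitchellPullback}\eqref{enumerate:MitchellPullback} and as an extension of $\ker g$ by $\ker f$ via proposition \ref{proposition:PullbacksPreserveKernels}, and conclude with axiom \ref{P1}. Only the notation and the way the auxiliary propositions are cited differ from the paper.
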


\begin{proof}
Let $U \stackrel{a}{\rightarrow} V \stackrel{b}{\rightarrow}W$ be $\AA^{-1}$-deflations.  Axiom \ref{R1} shows that $ba\colon U \to W$ is a deflation.  Propositions \ref{proposition:MitchellPullback} and \ref{proposition:WhenPullback} now yield the following commutative diagram:
		\[\xymatrix{
		\ker(a')\ar@{>->}[d]^{k_a'}\ar@{=}[r] & \ker(a)\ar@{>->}[d]^{k_a} &\\
		P\ar@{>->}[r]^{k_{ab}}\ar@{->>}[d]^{a'} & U\ar@{->>}[d]^a \ar@{->>}[r]^{ba} & W \ar@{=}[d] \\
		\ker(b)\ar@{>->}[r]^{k_b} & V\ar@{->>}[r]^b &  W\\
		}\]  where the rows and columns are conflations, and the lower-left square is a pullback.  As $\ker(a),\allowbreak \ker(b) \in \Ob(\AA)$, axiom \ref{P1} implies that $P \in \Ob(\AA)$. Proposition \ref{proposition:MitchellPullbackPushout}\eqref{enumerate:MitchellPullback} implies that $P=\ker(ba)$. It follows that $ba \in S_{\AA}$, as required.
\end{proof}

\subsection{The lifting lemma}

The following crucial lemma allows one to lift conflations $X \inflation Y \deflation$ over a weak isomorphism $Y' \stackrel{\sim}{\rightarrow} Y$. 

\begin{lemma}[Lifting lemma]\label{lemma:LiftingConflations}
	Let $\CC$ be a deflation-exact category and let $\AA\subseteq \CC$ be a deflation-percolating subcategory. Given a conflation $X\stackrel{i}{\inflation} Y \stackrel{p}{\deflation} Z$ and a weak isomorphism $s\colon Y''\stackrel{\sim}{\rightarrow} Y$, there exists a weak isomorphism $t\colon \overline{Y} \to Y$, factoring through $s$, such that there is a commutative diagram
	\[\xymatrix{
		\overline{X}\ar@{>->}[r]^{\overline{i}}\ar[d]^{\rotatebox{90}{$\sim$}} & \overline{Y}\ar@{->>}[r]^{\overline{p}}\ar[d]^{\rotatebox{90}{$\sim$}}_t & \overline{Z}\ar[d]^{\rotatebox{90}{$\sim$}}\\
		X\ar@{>->}[r]^i & Y\ar@{->>}[r]^p & Z
	}\] where the rows are conflations and the vertical maps are weak isomorphisms.
\end{lemma}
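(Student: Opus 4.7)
The plan is to prove the lifting lemma by induction on the composition length of $s$, exploiting the flexibility that $t$ need only factor through $s$ (not equal $s$). The inductive step is formal: writing $s = s_1 \circ s'$ with $s_1$ of composition length one, first apply the base case to $s_1$ against the given conflation to produce an intermediate lifted conflation, then apply the induction hypothesis to $s'$ against this intermediate conflation. The composition of vertical maps produces a weak isomorphism $\overline{Y} \to Y$ factoring through $s$.

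The first base subcase, when $s$ is an $\AA^{-1}$-deflation, is straightforward: pull back $i\colon X \inflation Y$ along $s$. Proposition~\ref{proposition:PullbackOfInflation} shows that the pullback $\overline{i}\colon \overline{X} \inflation Y''$ is an inflation, and Proposition~\ref{proposition:PullbackPushout} extends the square to a morphism of conflations $\overline{X} \inflation Y'' \deflation Z \to X \inflation Y \deflation Z$ (with identity on $Z$). The induced $\overline{X} \to X$ is the pullback of $s$, which by Proposition~\ref{proposition:PullbacksPreserveKernels} and axiom~\ref{R2} is a deflation with the same kernel as $s$, hence an $\AA^{-1}$-deflation. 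Taking $\overline{Z} = Z$ and $t = s$ finishes this case.

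The second base subcase, when $s$ is an $\AA^{-1}$-inflation, is the main work. Write $A = \coker s \in \AA$ and $q\colon Y \deflation A$. Since $qi$ factors through $\AA$, axiom~\ref{P3} provides a pushout square with $f\colon X \deflation A'$ (for some $A' \in \AA$), $i'\colon A' \inflation P$, $f'\colon Y \deflation P$, and a map $\pi\colon P \to A$ satisfying $\pi f' = q$. By Proposition~\ref{proposition:PushoutIfCokernel} this square is also a pullback, and extending it by the original conflation $X \inflation Y \deflation Z$ yields a conflation $A' \inflation P \stackrel{r}{\deflation} Z$, so $r$ is an $\AA^{-1}$-deflation. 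I then construct $\overline{Y}$ via two successive pullbacks: pull back $p$ along $r$ to obtain $\overline{Y}' := Y \times_Z P$ with an $\AA^{-1}$-deflation $\pi_Y\colon \overline{Y}' \deflation Y$, then pull back $\pi_Y$ along $s$ to obtain $\overline{Y}$, equipped with an $\AA^{-1}$-deflation $\tilde{\pi}_Y\colon \overline{Y} \deflation Y''$ and an $\AA^{-1}$-inflation $\tilde{s}\colon \overline{Y} \inflation \overline{Y}'$ of cokernel $A$. Set $t := s \circ \tilde{\pi}_Y$; this factors through $s$ and is a composition of weak isomorphisms. The conflation $\overline{X} \inflation \overline{Y} \deflation \overline{Z}$ is then obtained by first applying the $\AA^{-1}$-deflation subcase to $\pi_Y$ acting on the original conflation (producing a lifted conflation over $\overline{Y}'$), and then restricting along the inflation $\tilde{s}$.

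The main obstacle lies in this restriction step: verifying that the restricted diagram is an honest conflation in $\CC$ and that the induced vertical maps $\overline{X} \to X$ and $\overline{Z} \to Z$ are weak isomorphisms. This requires careful bookkeeping of the kernels and cokernels produced by the two successive pullbacks, and invoking the Serre property~\ref{P1} to conclude that the relevant cokernels — built from $A, A' \in \AA$ and their quotients — remain in $\AA$.
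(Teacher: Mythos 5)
There is a genuine gap in the $\AA^{-1}$-inflation base subcase, and it is precisely at the place you flag as the ``main obstacle.'' You define $\overline{Y}$ by two successive pullbacks so that $\overline{Y}\cong Y''\times_Z P$, and then say the conflation on $\overline{Y}$ is obtained by ``restricting'' the conflation $X_1\inflation\overline{Y}'\deflation Z$ along $\tilde{s}\colon\overline{Y}\inflation\overline{Y}'$. But that restriction would amount to forming the pullback of the inflation $X_1\inflation\overline{Y}'$ along the inflation $\tilde{s}$, and in a deflation-exact category pullbacks are only guaranteed along \emph{deflations} (axiom \ref{R2}); there is no reason the intersection $\overline{X}$ should exist. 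Even if it did, the candidate map $\overline{Y}\to Z$ is the composite $\overline{Y}\stackrel{\tilde{s}}{\inflation}\overline{Y}'\deflation Z$, an inflation followed by a deflation, which is not seen to be a deflation without extra axioms. Nothing in the construction so far makes $\overline{Y}$ the middle term of a conflation with the required properties.

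What fills this hole in the paper's proof is an application of axiom \ref{P2} that your proposal omits entirely. After invoking \ref{P3} to obtain the pushout square and the conflation $A'\inflation P\stackrel{r}{\deflation}Z$, the paper factors the map $P\to A(=\coker s)$ supplied by \ref{P3} as $P\deflation B\to A$ with $B\in\Ob(\AA)$ (this is \ref{P2}), sets $\overline{Z}:=\ker(P\deflation B)$, and then takes $\overline{Y}$ to be the pullback of the inflation $\overline{Z}\inflation P$ along the \emph{deflation} $f'\colon Y\deflation P$. That pullback is legal: $\overline{Y}\deflation\overline{Z}$ is a deflation by \ref{R2}, $\overline{Y}\inflation Y$ is an inflation by proposition \ref{proposition:PullbackOfInflation}, and $\overline{X}=\ker(Y\deflation P)$ is the kernel of $\overline{Y}\deflation\overline{Z}$ because pullbacks preserve kernels (proposition \ref{proposition:PullbacksPreserveKernels}). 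Note this $\overline{Y}$ is a subobject of $Y$ — not your $Y''\times_Z P$ — and the resulting $t$ is a single $\AA^{-1}$-inflation factoring through $s$. Without the \ref{P2}-step, there is no candidate $\overline{Z}$ and no deflation out of $\overline{Y}$. A smaller, more repairable issue is in the inductive step: when $s=s_1\circ s'$ and the base case on $s_1$ yields $t_1=s_1u_1$ with $u_1\colon\overline{Y}_1\to Y_1$ not the identity, you cannot directly feed $s'\colon Y''\to Y_1$ into the induction hypothesis against the conflation on $\overline{Y}_1$; one must first apply \ref{RMS2} (proposition \ref{proposition:MinimalConditionsRMS}) to pull $s'$ up to $\overline{Y}_1$, and the length bound in that proposition is what makes the induction terminate — this is exactly what the paper does at the end of its Case~II.
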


\begin{proof}
	We first consider two cases. 
	\begin{enumerate}
		\item[Case I] Assume that $s$ factors as $\xymatrix{Y''\ar[r]^{\sim}_{s''} & Y'\ar@{->>}[r]^{\sim}_{s'} & Y}$. By axiom \ref{R2}, the pullback of $i$ along $s'$ exists.  By propositions \ref{proposition:WhenPullback} and \ref{proposition:PullbackOfInflation}, we obtain the following commutative diagram:
		\[\xymatrix{
		X'\ar@{>->}[r]\ar@{->>}[d]^{\rotatebox{90}{$\sim$}} & Y'\ar@{->>}[d]^{\rotatebox{90}{$\sim$}}_{s'}\ar@{->>}[r] & Z\ar@{=}[d]\\
		X\ar@{>->}[r]^i & Y\ar@{->>}[r]^p & Z
		}\] Thus, taking the pullback along $s'$, one can lift the conflation $(i,p)$ over $s'$.
		
		\item[Case II] Assume that $s$ factors as $\xymatrix{Y''\ar[r]^{\sim}_{s''} & Y'\ar@{>->}[r]^{\sim}_{s'} & Y}$.  Write $g\colon Y \deflation A'$ for the cokernel of $s'$. Applying axiom \ref{P3} to the composition $X\stackrel{i}{\inflation}Y\stackrel{g}{\deflation}A'$ yields a commutative diagram
		\[\xymatrix{
			\overline{X}\ar@{=}[r]\ar@{>->}[d]^{\rotatebox{90}{$\sim$}} & \overline{X}\ar@{>->}[d] & \\
			X\ar@{>->}[r]^i\ar@{->>}[d] & Y\ar@{->>}[r]^p\ar@{->>}[d] & Z\ar@{=}[d]\\
			A\ar@{>->}[r] & P\ar@{->>}[r]^{\sim}\ar[d] & Z\\
			& A' & 
		}\] such that the lower-left square is bicartesian and the composition $Y\deflation P\to A'$ equals $g$. By axiom \ref{P2}, the map $P\to A'$ factors as $P\deflation B\to A'$ with $B\in \Ob(\AA)$. Write $\overline{Z}\stackrel{\sim}{\inflation}P$ for the kernel of $P\deflation B$. Taking the pullback of $\overline{Z}\stackrel{\sim}{\inflation}P$ along $Y\deflation P$ yields the following commutative diagram:
		\[\xymatrix{
			\overline{X}\ar@{>->}[r]\ar@{=}[d] & \overline{Y}\ar@{->>}[r]\ar@{>->}[d]^{\rotatebox{90}{$\sim$}}_t & \overline{Z}\ar@{>->}[d]^{\rotatebox{90}{$\sim$}} \\
			\overline{X}\ar@{>->}[r]\ar@{>->}[d]^{\rotatebox{90}{$\sim$}} & Y\ar@{->>}[r]\ar@{=}[d] & P\ar@{->>}[d]^{\rotatebox{90}{$\sim$}}\\
			X\ar@{>->}[r]^i & Y\ar@{->>}[r]^p & Z
		}\] As the upper-right square is bicartesian, the map $t\colon\overline{Y}\stackrel{\sim}{\inflation}Y$ is indeed an $\AA^{-1}$-inflation. One readily verifies that the composition $\overline{Y} \inflation Y \deflation P \deflation Y \deflation B \to A'$ is zero, and hence that $t\colon\overline{Y}\stackrel{\sim}{\inflation}Y$ factors through $s' = \ker (Y \deflation A')$ via a map $u\colon \overline{Y}\to Y'$.
		
		By proposition \ref{proposition:MinimalConditionsRMS}, we obtain a commutative square
		\[\xymatrix{
			{\overline{Y}''}\ar@{..>}[r]^{\sim}_{t''}\ar@{..>}[d]_{u'} & \overline{Y}\ar[d]^u \ar@/^/[dr]^t\\
			{Y''}\ar[r]^{\sim}_{s''} & Y' \ar@{>->}[r]_{s'}&Y}\]
			such that the length of $t''$ is bounded by the length of $s''$. Thus, we have lifted the conflation $X\inflation Y \deflation Z$ over $s'$ to a conflation $\overline{X}\inflation \overline{Y}\deflation \overline{Z}$ and we have replaced $s''$ by $t''$.
		\end{enumerate}	
	The result follows by induction on the composition length of $s$.
\end{proof}

\subsection{Interpretation of axiom \ref{P4}}

Thus far, we have only used axioms \ref{P1} through \ref{P3} of a deflation-percolating subcategory.  The following proposition highlights the r{\^o}le of axiom \ref{P4}.

\begin{proposition}\label{proposition:InterpretationOfP4}
	Let $\AA\subseteq \CC$ be a deflation-percolating subcategory of a deflation-exact category $\CC$. Let $(f\colon X'\to Y,s\colon X'\to X)$ be a morphism in $S_{\AA}^{-1}\CC$. The following are equivalent:
		\begin{enumerate}
			\item\label{item:InterpretationOfP4A} $(f,s)=0$ in $S_{\AA}^{-1}\CC$,
			\item\label{item:InterpretationOfP4B} $f$ factors through $\AA$ in $\CC$,
			\item\label{item:InterpretationOfP4C} there exists an $\AA^{-1}$-inflation $t$ such that $f\circ t=0$ in $\CC$.			
		\end{enumerate}
\end{proposition}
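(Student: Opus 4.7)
The plan is to prove the cyclic chain $(3) \Rightarrow (1) \Rightarrow (2) \Rightarrow (3)$, which will make use of all four percolating axioms.

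For $(3) \Rightarrow (1)$, given an $\AA^{-1}$-inflation $t$ with $f \circ t = 0$, I would verify directly from construction~\ref{construction:Localization} that the roof $(f,s)$ is equivalent to $(0, 1_X)$ via the common refinement $(0, s \circ t)$ with comparison maps $u = t$ and $v = s \circ t$, relying only on the closure of $S_\AA$ under composition (axiom~\ref{RMS1}).

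For $(2) \Rightarrow (3)$, suppose $f = h \circ g$ with $g\colon X' \to A$, $h\colon A \to Y$, and $A \in \Ob(\AA)$. Applying axiom~\ref{P2} to $g$ factors it as $X' \stackrel{q}{\twoheadrightarrow} A' \stackrel{\phi}{\to} A$ with $A' \in \Ob(\AA)$; the kernel inclusion $t\colon K \rightarrowtail X'$ of $q$ is then an $\AA^{-1}$-inflation satisfying $f \circ t = h \circ \phi \circ q \circ t = 0$.

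The main step is $(1) \Rightarrow (2)$, which I would tackle in two stages. First, since $Q(s)$ is invertible by proposition~\ref{proposition:BasicPropertiesOfLocalization}, the vanishing $(f, s) = 0$ is equivalent to $Q(f) = 0$, and unwinding the equivalence relation $(f, 1_{X'}) \sim (0, 1_{X'})$ from construction~\ref{construction:Localization} produces a weak isomorphism $u\colon X_3 \to X'$ with $f \circ u = 0$. Then I would induct on the composition length $n$ of $u$. For $n = 1$, either $u$ is an $\AA^{-1}$-deflation (epic by lemma~\ref{Lemma:BasicProperties}, forcing $f = 0$, which factors through any object of the nonempty $\AA$) or $u$ is an $\AA^{-1}$-inflation (so $f$ factors through $\coker(u) \in \Ob(\AA)$ by the cokernel property). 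For the inductive step, decompose $u = u_n \circ u'$ with $u'$ of length $n-1$ and $u_n$ of length one, set $f' = f \circ u_n$, observe that $f' \circ u' = 0$, and apply the induction hypothesis to factor $f'$ through $\AA$.

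The hard part is propagating the factorization of $f'$ back to a factorization of $f$, which requires genuinely different percolating axioms in each sub-case of the inductive step. If $u_n$ is an $\AA^{-1}$-deflation with kernel $K \rightarrowtail \mathrm{dom}(u_n)$ (and $K \in \Ob(\AA)$), then axiom~\ref{P4} applied to $f'$ together with this inflation yields that the induced map on the cokernel $X' \to Y$, which is precisely $f$, factors through $\AA$. If instead $u_n\colon X_{n+1} \rightarrowtail X'$ is an $\AA^{-1}$-inflation, then $X_{n+1} \stackrel{u_n}{\rightarrowtail} X' \stackrel{f}{\to} Y$ equals $f'$ and hence factors through $\AA$, so axiom~\ref{P3} produces a pushout square with apex $P$ together with a factorization of $f$ through $P$. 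Proposition~\ref{proposition:PushoutIfCokernel} then extends this pushout to a conflation $A_0 \rightarrowtail P \twoheadrightarrow \coker(u_n)$ with both outer terms in $\AA$, so $P \in \Ob(\AA)$ by axiom~\ref{P1} and $f$ indeed factors through $\AA$.
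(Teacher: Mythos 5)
Your proposal is correct and follows essentially the same route as the paper's proof: the cyclic implication, the use of \ref{P2} for $(2)\Rightarrow(3)$, and the peeling of one factor of $u$ at a time via \ref{P4} (deflation case) or \ref{P3} together with Proposition~\ref{proposition:PushoutIfCokernel} and \ref{P1} (inflation case) are all the same ideas the paper uses, merely repackaged as an explicit induction on composition length rather than the paper's informal ``iterating the two cases.'' The only genuinely added content is the explicit common-refinement verification for $(3)\Rightarrow(1)$, which the paper dismisses as trivial; both are fine.
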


\begin{proof}
	Assume that \eqref{item:InterpretationOfP4A} holds.  We will show that \eqref{item:InterpretationOfP4B} holds.  Clearly, $f\circ s^{-1}=0$ in $S^{-1}_\AA\CC$ and $s$ is an isomorphism in $S_{\AA}^{-1}\CC$. It follows that $Q(f)=0$.  By construction \ref{construction:Localization}, there is a weak isomorphism $u\colon M\to X'$ such that $f\circ u=0$ in $\CC$. As the zero object belongs to $\AA$, the composition $f\circ u$ factors through $\AA$.  If $u$ is an isomorphism in $\CC$, then \eqref{item:InterpretationOfP4B} holds.  Otherwise, the composition length of $u$ is at least one and one of the following cases hold.
	
		\begin{enumerate}
			\item[Case I] Assume that $u$ factors as $\xymatrix{M\ar@{->>}[r]^{\sim}_{u''} & M'\ar[r]^{\sim}_{u'} & X'}$. Note that $\ker(u'')\in \Ob(\AA)$, hence axiom \ref{P4} implies that $f\circ u'$ factors through $\AA$.
			\item[Case II] Assume that $u$ factors as $\xymatrix{M\ar@{>->}[r]^{\sim}_{u''} & M'\ar[r]^{\sim}_{u'} & X'}$. As $f\circ u$ factors through $\AA$, axiom \ref{P3} yields the following commutative diagram:
			\[\xymatrix{
				M\ar@{>->}[r]^{\sim}_{u''}\ar@{->>}[d] & M'\ar[r]^{\sim}_{u'}\ar@{->>}[d] & X'\ar[d]^f\\
				A\ar@{>->}[r]^{\sim} & Q\ar[r] & Y\\
			}\] where the left square is a pushout square.  By proposition \ref{proposition:PullbackPushout}, we know that $Q/A \cong M'/M \in \Ob(\AA)$, so that, by axiom \ref{P1}, $Q\in \Ob(\AA)$.  It follows that $f\circ u'$ factors through $\AA$.
		\end{enumerate}
		Iterating the previous two cases, we conclude that \eqref{item:InterpretationOfP4B} holds.
		
		Assume that \eqref{item:InterpretationOfP4B} holds. By axiom \ref{P2}, we may assume that $f\colon X' \to Y$ factors as $X' \deflation A \to Y$ with $A \in \AA$. The kernel of the deflation $X' \deflation A$ is the desired $\AA^{-1}$-inflation and, hence, \eqref{item:InterpretationOfP4C} holds. The implication $\eqref{item:InterpretationOfP4C}\Rightarrow\eqref{item:InterpretationOfP4A}$ is trivial. 
\end{proof}

We now give a useful criterion to verify that axiom \ref{P4} holds.

\begin{proposition}\label{proposition:P4Criterion}
Let $\EE$ be a deflation-exact category.  Let $\AA \subseteq \EE$ a nonempty full subcategory. If every morphism $a\colon A \to B$ in $\AA$ admits a cokernel in $\CC$ (with $\coker a \in \AA$), then $\AA$ satisfies axiom \ref{P4}.
\end{proposition}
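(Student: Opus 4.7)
The plan is to unpack axiom \ref{P4} directly and obtain the desired factorization by two successive applications of universal properties of cokernels. Suppose $f\colon X \to Y$ factors as $f = g \circ h$ with $h\colon X \to A'$, $g\colon A' \to Y$, $A' \in \Ob(\AA)$, and let $i\colon A \inflation X$ be an inflation with $A \in \Ob(\AA)$ satisfying $f \circ i = 0$. I must exhibit a factorization of the induced map $\bar f\colon \coker(i) \to Y$ through an object of $\AA$.

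The first step is to observe that $h \circ i\colon A \to A'$ is a morphism \emph{internal} to $\AA$, and therefore by hypothesis admits a cokernel $c\colon A' \to C$ with $C \in \Ob(\AA)$. Since $g \circ (h \circ i) = f \circ i = 0$, the universal property of $c$ provides a unique $g'\colon C \to Y$ with $g = g' \circ c$.

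The second step is to let $p\colon X \deflation \coker(i)$ be the cokernel of $i$ (which exists in $\CC$ because $i$, being an inflation, is the kernel in some conflation) and to push $c \circ h$ through $p$: since $(c \circ h)\circ i = c \circ (h \circ i) = 0$, there is a unique $\phi\colon \coker(i) \to C$ with $c \circ h = \phi \circ p$. The induced map $\bar f$ then satisfies
$$\bar f \circ p = f = g \circ h = g' \circ c \circ h = g' \circ \phi \circ p,$$
and cancelling the epimorphism $p$ (lemma \ref{Lemma:BasicProperties}) yields $\bar f = g' \circ \phi$, i.e., a factorization of $\bar f$ through $C \in \Ob(\AA)$.

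I do not foresee any real obstacle: the argument is a pure diagram chase, with the only nontrivial ingredients — existence of the cokernel of an inflation and the fact that deflations are epic — already supplied by the basic theory of deflation-exact categories recalled earlier.
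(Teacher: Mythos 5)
Your argument matches the paper's proof line for line: both take the cokernel $C$ of the internal $\AA$-morphism $A\to A'$ (your $c$, the paper's $p'$), use the universal property of $\coker(i)$ to produce the map $\phi$ (the paper's $\rho'$), use the universal property of $c$ to produce $g'$ (the paper's $u$), and finish by cancelling the epimorphism $p$. Only the naming and the order of presentation differ; the proof is correct.
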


\begin{proof}
Let $f\colon X\to Y$ be a map that factors as $X\stackrel{\rho}{\rightarrow}B\stackrel{h'}{\rightarrow}Y$ with $B\in \Ob(\AA)$ and let $A\stackrel{i}{\inflation}X \stackrel{p}{\deflation} Q$ be a conflation with $A\in \Ob(\AA)$ such that $fi=0$, as in the setup of axiom \ref{P4}.  The cokernel property of $p$ induces a unique map $h\colon Q\to Y$ such that $f=hp$.  We need to show that $h\colon Q \to Y$ factors through $\AA.$  By assumption, the composition $\rho i\colon A \to B$ admits a cokernel $p'\colon B \to C$ with $C\in \Ob(\AA)$.  We obtain the following commutative diagram:
	\[\xymatrix{
		A\ar@{>->}[r]^i\ar@{=}[d] & X\ar@{->>}[r]^p\ar[d]^{\rho} & Q\ar[r]^h\ar@{.>}[d]^{\rho'} & Y\ar@{=}[d]\\
		A\ar[r]^{\rho i} & B\ar[r]^{p'}\ar@/_2pc/[rr]_{h'} & C\ar@{.>}[r]^{\exists ! u} & Y
	}\] Here, the map $\rho'$ is induced by the cokernel property of $p$. As $h'\rho i=fi=0$, the cokernel property of $p'$ induces a unique map $u\colon C\to Y$ such that $h'=up'$.  It follows that $hp=f=h'\rho=up'\rho=u\rho'p$ and hence $h=u\rho'$ since $p$ is an epimorphism. We conclude that $h$ factors through $\AA$ as required.
\end{proof}
\section{Quotients and localizations of one-sided exact categories}\label{Section:LocalizationOfOneSidedExactCategories}

Throughout this section, let $\CC$ denote a deflation-exact category and $\AA$ a deflation-percolating subcategory. We write $S_\AA$ for the corresponding set of weak isomorphisms (see definition \ref{Definition:WeakIsomorphisms}).  The aim of this section is to show that $S_{\AA}^{-1}\CC$ has a canonical deflation-exact structure such that the localization functor $Q\colon \CC\rightarrow S_{\AA}^{-1}\CC$ is exact.  Moreover, we show that $S_{\AA}^{-1}\CC$ is universal in the sense of the following definition.

\begin{definition}\label{definition:RightExactLocalization}
	Let $\CC$ be a deflation-exact category and $\AA$ a full deflation-exact subcategory.  We define the \emph{quotient} of $\CC$ by $\AA$ as a deflation-exact category $\CC/\AA$ together with an exact \emph{quotient functor} $Q\colon \CC\rightarrow \CC/\AA$ satisfying the following universal property: for any exact functor $F\colon \CC\rightarrow \mathcal{D}$ of deflation-exact categories such that $F(A)\cong 0$ for all $A\in\Ob(\AA)$ there exists a unique exact functor $G\colon \CC/\AA\rightarrow \mathcal{D}$ such that the following diagram commutes:
	\[\xymatrix{
	\AA\ar[d]\ar[rd]^0 & \\
	\CC\ar[r]^F\ar[d]_Q & \mathcal{D}\\
	\CC/\AA\ar@{.>}[ru]_{G}
	}\]
\end{definition}

\begin{remark}\label{remark:SAInverts}
The next two observations motivate the definitions of axiom \ref{P1} and weak isomorphisms.
\begin{enumerate}
  \item Let $A\rightarrowtail X\twoheadrightarrow Y$ be a conflation in $\CC$ with $A\in \Ob(\AA)$. Then $0\rightarrowtail Q(X)\twoheadrightarrow Q(Y)$ is a conflation in $\CC/\AA$. It follows $Q(X)\deflation Q(Y)$ is invertible in $\CC/\AA$.  Similarly, if $X\rightarrowtail Y\twoheadrightarrow A$ is a conflation in $\CC$ with $A\in \Ob(\AA)$, then $Q(X)\inflation Q(Y)$ is invertible. In particular, all weak isomorphisms become isomorphisms under $Q$.
  \item The kernel of any exact functor $F\colon \CC \to \DD$ is a Serre subcategory of $\CC$, i.e.~it satisfies \ref{P1}.
\end{enumerate}
\end{remark}

Let $\AA$ be a deflation-percolating subcategory of a deflation-exact category $\CC$. The main theorem (theorem \ref{theorem:Maintheorem} below) states that the localization functor $Q\colon\CC\to S_{\AA}^{-1}\CC$ is a quotient functor. The proof consists of two major steps: in the first step, we endow $S_\AA^{-1}\CC$ with the structure of a conflation category such that $Q\colon \CC \to S_\AA^{-1}\CC$ is exact; in the second step, we show that the conflation category $S_\AA^{-1}\CC$ is a deflation-exact category.

\subsection{The \texorpdfstring{category $S_\AA^{-1}\CC$}{localized category} is a conflation category}

The next proposition allows us to impose a conflation structure on $S_\AA^{-1}\CC$ for which $Q\colon \CC \to S_\AA^{-1} \CC$ is exact (see definition \ref{definition:LocalizationConflation} below).

\begin{proposition}\label{proposition:CokernelsDescend}
Let $\CC$ be a deflation-exact category and let $\AA$ be a deflation-percolating subcategory. The localization functor $Q\colon \CC \to S_\AA^{-1}\CC$ maps conflations to kernel-cokernel pairs.
\end{proposition}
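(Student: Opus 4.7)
My plan is to verify, for any conflation $X \overset{i}{\inflation} Y \overset{p}{\deflation} Z$, that $Q(X) \to Q(Y) \to Q(Z)$ is a kernel--cokernel pair in $S_\AA^{-1}\CC$. The kernel statement, $Q(i) = \ker Q(p)$, is immediate: since $S_\AA$ is a right multiplicative system (Proposition \ref{proposition:MinimalConditionsRMS}), $Q$ preserves kernels (Remark \ref{remark:PreservationOfKernelsAndPullbacks}). The cokernel statement is the real content: given $g\colon Q(Y) \to Q(W)$ with $g\,Q(i) = 0$, one must exhibit a unique $k\colon Q(Z) \to Q(W)$ satisfying $k\,Q(p) = g$.

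For existence, represent $g$ by a roof $(f, s)$ with $s$ a weak isomorphism, and invoke the Lifting Lemma (Lemma \ref{lemma:LiftingConflations}) to replace $s$ by a weak isomorphism $t\colon \overline{Y} \overset{\sim}{\to} Y$ sitting in a conflation morphism with rows $\overline{X} \overset{\overline{i}}{\inflation} \overline{Y} \overset{\overline{p}}{\deflation} \overline{Z}$ and $X \overset{i}{\inflation} Y \overset{p}{\deflation} Z$ and weak-iso vertical arrows. After this replacement one may assume $g = Q(f)\,Q(t)^{-1}$ for some $f\colon \overline{Y} \to W$, and by Proposition \ref{proposition:InterpretationOfP4} the hypothesis $g\,Q(i) = 0$ becomes: $f\overline{i}$ factors through $\AA$ in $\CC$. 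Apply axiom \ref{P3} to $\overline{X} \overset{\overline{i}}{\inflation} \overline{Y} \overset{f}{\to} W$ to produce a pushout with $A \in \AA$, a deflation $a'\colon \overline{Y} \deflation P$, and a map $\chi\colon P \to W$ with $\chi a' = f$. By Proposition \ref{proposition:PushoutIfCokernel} this pushout extends to a conflation $A \inflation P \overset{b}{\deflation} \overline{Z}$ with $ba' = \overline{p}$, so $b$ is an $\AA^{-1}$-deflation and $Q(b)$ is invertible. Then $Q(f) = Q(\chi)\,Q(a') = Q(\chi)\,Q(b)^{-1}\,Q(\overline{p})$, which, after composing with $Q(t)^{-1}$ and the lifted isomorphism $\overline{Z} \overset{\sim}{\to} Z$, factors $g$ through $Q(p)$.

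The uniqueness is the main obstacle. Using Proposition \ref{proposition:WeakIsoPullback} to pull back a roof representative $(\psi, r)$ of a morphism $\beta\colon Q(Z) \to Q(W)$ along $p$, the problem reduces to the following claim in $\CC$: given a conflation $\overline{X} \overset{\overline{i}}{\inflation} \overline{Y} \overset{\overline{p}}{\deflation} \overline{Z}$ and a map $\phi\colon \overline{Z} \to W$ such that $\phi\overline{p}$ factors through $\AA$, then $\phi$ itself factors through $\AA$. By \ref{P2}, write $\phi\overline{p} = \rho\tau$ with $\tau\colon \overline{Y} \deflation B$ and $B \in \AA$, and apply axiom \ref{P3} to $\overline{X} \overset{\overline{i}}{\inflation} \overline{Y} \overset{\tau}{\deflation} B$: this produces a deflation $\alpha'\colon \overline{Y} \deflation P$, a conflation $A \overset{j}{\inflation} P \overset{c}{\deflation} \overline{Z}$ with $A \in \AA$ and $c\alpha' = \overline{p}$, together with a map $\delta\colon P \to B$ with $\delta\alpha' = \tau$. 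Cancelling the epimorphism $\alpha'$ in $\phi c\alpha' = \rho\delta\alpha'$ yields $\phi c = \rho\delta\colon P \to W$, which factors through $B \in \AA$. Since $cj = 0$, axiom \ref{P4}---applied to the map $\phi c$ (which factors through $\AA$) and the inflation $j\colon A \inflation P$ (with $A \in \AA$)---forces the induced map on $\coker(j) = \overline{Z}$, namely $\phi$ itself, to factor through $\AA$. This is where axiom \ref{P4} is indispensable; the role of axiom \ref{P3} is precisely to replace the original inflation $\overline{i}$ (whose source $\overline{X}$ need not lie in $\AA$) with the inflation $j$ whose source $A$ does.
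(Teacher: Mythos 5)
Your proof is correct. The existence half is essentially the same argument the paper uses: lift the conflation over the roof via Lemma \ref{lemma:LiftingConflations}, translate the vanishing hypothesis into ``$f\overline{i}$ factors through $\AA$'' via Proposition \ref{proposition:InterpretationOfP4}, apply \ref{P3}, and then use Proposition \ref{proposition:PushoutIfCokernel} to extend the pushout square to a conflation $A\inflation P\deflation \overline{Z}$, so that $Q(f)$ factors through $Q(\overline{p})$.

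The uniqueness half, however, takes a genuinely different route from the paper. The paper shows $Q(p)$ is epic by invoking Proposition \ref{proposition:InterpretationOfP4}(\ref{item:InterpretationOfP4C}) to obtain an $\AA^{-1}$-inflation $k$ with $gp'k = 0$, then applies the lifting lemma a \emph{second} time to lift the conflation $X\inflation Y'\deflation Z'$ over $k$, producing a deflation $\overline{p}$ with $gs\overline{p} = 0$ in $\CC$; since $\overline{p}$ is epic and $s$ is a weak isomorphism, $Q(g) = 0$ follows. You instead isolate and prove a sharper statement entirely inside $\CC$: if $\overline{X}\inflation\overline{Y}\stackrel{\overline{p}}{\deflation}\overline{Z}$ is a conflation and $\phi\overline{p}$ factors through $\AA$, then $\phi$ itself factors through $\AA$. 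Your derivation — use \ref{P2} to write $\phi\overline{p}=\rho\tau$ through $B\in\AA$, use \ref{P3} on $\tau\overline{i}$ together with Proposition \ref{proposition:PushoutIfCokernel} to produce the conflation $A\stackrel{j}{\inflation}P\stackrel{c}{\deflation}\overline{Z}$, cancel the epimorphism $\alpha'$ to see $\phi c$ factors through $B$, and then apply \ref{P4} to $(\phi c, j)$ — is correct and avoids the second invocation of the lifting lemma. This buys a more transparent accounting of exactly where \ref{P4} enters: your argument makes explicit that \ref{P3} serves to replace the inflation $\overline{i}$ (whose source is arbitrary) by $j$ with source in $\AA$, which is precisely the hypothesis \ref{P4} needs. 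The paper's version is slightly shorter at the cost of routing the key step back through the lifting lemma.
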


\begin{proof}
Let $X \stackrel{i}{\inflation} Y \stackrel{p}{\deflation} Z$ be a conflation in $\CC$.  As $S_{\AA}$ is a right multiplicative system (see proposition \ref{proposition:MinimalConditionsRMS}), we know that $Q(i)$ is the kernel of $Q(p)$.  We only need to show that $Q(p)$ is the cokernel of $Q(i)$.  For this, we consider the following diagram:
	\[\xymatrix{
		& T& &\\
		&Y''\ar[d]_{\rotatebox{90}{$\sim$}}^s\ar[u]_f&&\\
		X\ar@{>->}[r]^i&Y\ar@{->>}[r]^p&Z
	}\] where the composition $(f,s)\circ(i,1)$ is zero in $S_{\AA}^{-1}\CC$.  We will show that $(p,1)$ is the cokernel of $(i,1)$ by showing that, in $S_\AA^{-1}\CC$, the morphism $(f,s)$ factors uniquely through $(p,1)$.  Using the lifting lemma (lemma \ref{lemma:LiftingConflations}), we find the following diagram
	\[\xymatrix{
	&T&\\
		X'\ar@{>->}[r]^{i'}\ar[d]_{\rotatebox{90}{$\sim$}}& Y'\ar@{->>}[r]^{p'}\ar[d]_{\rotatebox{90}{$\sim$}}^{s'}\ar[u]_{f'}& Z'\ar[d]_{\rotatebox{90}{$\sim$}}\\
		X\ar@{>->}[r]^i&Y\ar[r]^{p}&Z
	}\] where the rows are conflations and where $(f,s) = (f',s')$. 	As $(f,s)\circ (i,1)$ is zero in $S_{\AA}^{-1}\CC$, we infer that $Q(f'i') = 0$. By proposition \ref{proposition:InterpretationOfP4}, the composition $f'i'$ factors through $\AA$.
	
	Applying axiom \ref{P3} to the composition $f'i'$ yields a commutative diagram
	\[\xymatrix{
		\overline{X}\ar@{=}[r]\ar@{>->}[d]^{\iota}_{\rotatebox{90}{$\sim$}} & \overline{X}\ar@{>->}[d]^{\iota'} &\\
		X'\ar@{>->}[r]^{i'}\ar@{->>}[d]^{\rho} & Y'\ar@{->>}[r]^{p'}\ar@{->>}[d]^{\rho'} & Z'\ar@{=}[d]\\
		A\ar@{>->}[r]_j & \overline{Z}\ar@{->>}[r]_{k}^{\sim}\ar[d]^{h'} & Z'\\
		& T & 
	}\] such that the rows and columns are conflations, where $A\in \Ob(\AA)$ and $h'\rho'=f'$. Thus we obtain the following commutative diagram:
	\[\xymatrix{
		& T &\\
		\overline{X}\ar@{>->}[r]^{\iota'}\ar@{>->}[d]^{\rotatebox{90}{$\sim$}}_{\iota} & Y'\ar@{->>}[r]^{\rho'}\ar@{=}[d]\ar[u]^{f'} & \overline{Z}\ar@{->>}[d]^{\rotatebox{90}{$\sim$}}_k\ar@{.>}@/_/[ul]_{m}\\
		X'\ar@{>->}[r]^{i'} & Y'\ar@{->>}[r]^{p'} & Z'
	}\] As $f'\iota'=0$, there is an induced map $m\colon \overline{Z}\to T$ such that $m\rho'=f'$. It follows that $(f,s)$ factors through $(p,1_Y)$ in $S_{\AA}^{-1}\CC$ as required.
	
	It remains to show that such a factorization is unique. It suffices to show that $Q(p)$ is an epimorphism in $S_{\AA}^{-1}\CC$. So let $(g,t)$ be map such that $(g,t)\circ Q(p)=0$. By proposition \ref{proposition:WeakIsoPullback}, we find the following diagram
\[\xymatrix{
X\ar@{>->}[r]^i\ar@{=}[d] &Y \ar@{->>}[r]^p & Z \\
X\ar@{>->}[r]^{i'} & Y' \ar@{..>}[u]_{\rotatebox{90}{$\sim$}}^{t'} \ar@{..>>}[r]^{p'}& Z'\ar[u]_{\rotatebox{90}{$\sim$}}^t \ar[r]^{g} & T
}\]
where the right square is a pullback and the vertical arrows are weak isomorphisms. Note that $Q(gp')=0$ and thus proposition \ref{proposition:InterpretationOfP4}.\eqref{item:InterpretationOfP4C} yields an $\AA^{-1}$-inflation $k\colon K \stackrel{\sim}{\inflation}Y'$ such that $gp'k=0$ in $\CC$. By the lifting lemma (lemma \ref{lemma:LiftingConflations}) we obtain a commutative diagram 
\[\xymatrix{
	& \overline{K}\ar@{->>}[r]^{\overline{p}}\ar[d]^{\rotatebox{90}{$\sim$}}_{\overline{k}} & \overline{Z}\ar[d]^{\rotatebox{90}{$\sim$}}_s & \\
	X\ar@{>->}[r]^{i'} & Y'\ar@{->>}[r]^{p'} & Z'\ar[r]^g & T
}\] where $\overline{k}$ factors through $k$. It follows that the composition $gs\overline{p}=0$ in $\CC$. As $\overline{p}$ is a deflation, $\overline{p}$ is epic and thus $gs=0$ in $\CC$. It follows that $Q(g)\circ Q(s)=Q(g\circ s)=0$ and since $Q(s)$ is an isomorphism, we find $Q(g)=0$ as required. This completes the proof.
\end{proof}

\begin{definition}\label{definition:LocalizationConflation}
Let $\AA$ be a deflation-percolating subcategory of a deflation-exact category $\CC$.  We say that a sequence $X \to Y \to Z$ is a conflation in $S_\AA^{-1}\CC$ if it is isomorphic (in $S_\AA^{-1}\CC$) to the image of a conflation under the localization functor $Q\colon \CC \to S_\AA^{-1}\CC$, i.e.~there is a conflation $\overline{X} \rightarrowtail \overline{Y} \twoheadrightarrow \overline{Z}$ in $\CC$ and a commutative diagram
\[\xymatrix{
Q(\overline{X}) \ar[r] \ar[d] & Q(\overline{Y}) \ar[r] \ar[d] & Q(\overline{Z}) \ar[d] \\
X \ar[r] & Y \ar[r] & Z
}\]
in $S_\AA^{-1}\CC$ where the vertical arrows are isomorphisms.
\end{definition}

\begin{remark}
	It follows from proposition \ref{proposition:CokernelsDescend} that definition \ref{definition:LocalizationConflation} endows $S_{\AA}^{-1}\CC$ with a conflation structure.	With this choice, the localization functor $Q\colon \CC \to S^{-1}_\AA \CC$ is conflation-exact. 
\end{remark}

\begin{proposition}\label{proposition:QuotientInConflationCategories}
Let $\CC$ be a deflation-exact category and let $\AA$ be a deflation-percolating subcategory.  When we endow $\CC[S^{-1}_\AA]$ with the conflation structure from definition \ref{definition:LocalizationConflation}, the localization functor $Q\colon \CC \to \CC[S^{-1}_\AA]$ satifies the universal property of a quotient $\CC \to \CC / \AA$ in the category of (small) conflation categories, meaning that $Q$ is exact and every exact functor $F\colon \CC \to \DD$ (with $\DD$ a conflation category) for which $F(\AA) = 0$ factors uniquely through $Q$.
\end{proposition}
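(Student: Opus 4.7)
The plan is to invoke the universal property of the localization at a set of morphisms (proposition \ref{proposition:BasicPropertiesOfLocalization}). Since $Q\colon \CC \to \CC[S_\AA^{-1}]$ is a localization at $S_\AA$, it suffices to verify that any exact functor $F\colon \CC \to \DD$ with $F(\AA)=0$ inverts every weak isomorphism; this will produce the unique functor $G\colon \CC[S_\AA^{-1}]\to \DD$ with $G\circ Q = F$. The remaining task is then to check that $G$ is conflation-exact. I expect the main point to be the observation that an exact functor $F$ annihilating $\AA$ automatically inverts $\AA^{-1}$-inflations and $\AA^{-1}$-deflations.

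For the key step, consider an $\AA^{-1}$-inflation $f\colon X \inflation Y$, which by definition fits in a conflation $X \stackrel{f}{\inflation} Y \deflation A$ with $A \in \Ob(\AA)$. Applying the exact functor $F$ produces a kernel-cokernel pair $F(X) \stackrel{F(f)}{\longrightarrow} F(Y) \to F(A) = 0$ in $\DD$. Since $F(f)$ is the kernel of $F(Y)\to 0$, and the kernel of the zero map out of $F(Y)$ is the identity on $F(Y)$ (up to unique isomorphism), $F(f)$ is an isomorphism. Dually, for an $\AA^{-1}$-deflation $g\colon Y \deflation Z$ with kernel in $\AA$, the image $0 \to F(Y) \stackrel{F(g)}{\longrightarrow} F(Z)$ is a kernel-cokernel pair, so $F(g)$ is an isomorphism. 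Because every element of $S_\AA$ is, by definition \ref{Definition:WeakIsomorphisms}, a finite composition of $\AA^{-1}$-inflations and $\AA^{-1}$-deflations, $F$ inverts all of $S_\AA$.

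By the universal property of $Q$ as a localization (definition \ref{Definition:LocalizationWithRespectToMorphisms} and proposition \ref{proposition:BasicPropertiesOfLocalization}), there is a unique functor $G\colon \CC[S_\AA^{-1}] \to \DD$ with $G \circ Q = F$, and $G$ is additive since $F$ and the localization functor $Q$ are additive (proposition \ref{proposition:BasicPropertiesOfLocalization}). To see that $G$ is conflation-exact, let $X \to Y \to Z$ be a conflation in $\CC[S_\AA^{-1}]$. By definition \ref{definition:LocalizationConflation}, this sequence is isomorphic, in $\CC[S_\AA^{-1}]$, to the image $Q(\overline{X}) \to Q(\overline{Y}) \to Q(\overline{Z})$ of a conflation $\overline{X} \inflation \overline{Y} \deflation \overline{Z}$ in $\CC$. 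Then $G$ sends this image to $F(\overline{X}) \to F(\overline{Y}) \to F(\overline{Z})$, which is a conflation in $\DD$ because $F$ is conflation-exact. Since conflations in $\DD$ are closed under isomorphism, $G(X\to Y \to Z)$ is a conflation, as required.

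Uniqueness of $G$ as a conflation-exact functor then follows immediately from the uniqueness part of the universal property of the localization $Q$, since any two factoring functors must agree on objects and morphisms of $\CC[S_\AA^{-1}]$, whose objects coincide with those of $\CC$ and whose morphisms are generated by images of morphisms of $\CC$ and formal inverses of elements of $S_\AA$.
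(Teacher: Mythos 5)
Your proof is correct and follows essentially the same route as the paper's own argument: use the universal property of localization, check that an exact functor killing $\AA$ inverts every $\AA^{-1}$-inflation and $\AA^{-1}$-deflation (hence all of $S_\AA$), and then verify exactness of the induced functor $G$ by lifting conflations in $S_\AA^{-1}\CC$ back to $\CC$ via definition \ref{definition:LocalizationConflation}. The paper cites remark \ref{remark:SAInverts} for the inversion step where you spell out the kernel/cokernel argument explicitly, but the content is identical.
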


\begin{proof}
It follows easily from definition \ref{definition:LocalizationConflation} that $Q$ is exact.  It remains to show that the localization functor $Q$ satisfies the universal property of the quotient $\CC/\AA$.  For this, consider an exact functor $F\colon \CC\rightarrow \mathcal{D}$ between deflation-exact categories such that $F(A)\cong 0$ for all $A\in \Ob(\AA)$.

By remark \ref{remark:SAInverts}, we know that $F(s)$ is an isomorphism for all $\AA^{-1}$-inflations and all $\AA^{-1}$-deflations $s$ in $S_{\AA}$ (and hence for any $s \in S_{\AA}$).  By the universal property of $Q$, there exists a unique functor $G\colon S_{\AA}^{-1}\CC\rightarrow \mathcal{D}$ such that $F=G\circ Q$.

It remains to show that $G$ is exact.  It follows from definition \ref{definition:LocalizationConflation} that any conflation in $S_{\AA}^{-1}\CC$ lifts to a conflation in $\CC$.  Since $F$ is exact, this lift is mapped to a conflation in $\DD$.  Since $F=G\circ Q$, we know that $G$ maps conflations to conflations, i.e.~$G$ is exact.
\end{proof}

\subsection{The \texorpdfstring{category $S_\AA^{-1}\CC$}{localized category} is a deflation-exact category}

We are now in a position to prove the main theorem, namely that the conflation category $S_\AA^{-1}\CC$ from definition \ref{definition:LocalizationConflation} is deflation-exact.

\begin{theorem}\label{theorem:Maintheorem}
Let $\CC$ be a deflation-exact category and let $\AA$ be a deflation-percolating subcategory.  The conflation category $S_{\AA}^{-1}\CC$ (see definition \ref{definition:LocalizationConflation}) is a deflation-exact category.  Moreover, if $\CC$ satisfies axiom \ref{R0*}, so does $S_{\AA}^{-1}\CC$.
\end{theorem}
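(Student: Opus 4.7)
The plan is to verify axioms \ref{R0}, \ref{R1}, and \ref{R2} for the conflation structure on $S_{\AA}^{-1}\CC$ from definition \ref{definition:LocalizationConflation}, and axiom \ref{R0*} under its hypothesis. Axioms \ref{R0} and \ref{R0*} are immediate: the trivial conflation $0\inflation 0\deflation 0$ in $\CC$, and (assuming \ref{R0*}) each conflation $0\inflation X\deflation X$ for $X\in\Ob(\CC)$, become conflations in $S_{\AA}^{-1}\CC$ under the exact functor $Q$ (proposition \ref{proposition:QuotientInConflationCategories}); since every object of $S_{\AA}^{-1}\CC$ is $Q(X)$ for some $X$, this suffices.

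For axiom \ref{R2}, given a deflation $p\colon Y\deflation Z$ in $S_{\AA}^{-1}\CC$ and a morphism $f\colon W\to Z$, by definition \ref{definition:LocalizationConflation} there are isomorphisms $\alpha\colon Y\to Q(Y_0)$, $\beta\colon Z\to Q(Z_0)$ in $S_{\AA}^{-1}\CC$ and a deflation $p_0\colon Y_0\deflation Z_0$ in $\CC$ with $Q(p_0)\alpha=\beta p$; it suffices to construct the pullback of $Q(p_0)$ along $\beta f\colon W\to Q(Z_0)$. Representing this adjusted morphism by a roof $(f_0,s)$ with $s\colon W'\stackrel\sim\to W$ a weak iso in $\CC$ and $f_0\colon W'\to Z_0$ in $\CC$, axiom \ref{R2} in $\CC$ yields a pullback $P = Y_0\times_{Z_0}W'$ with a deflation $P\deflation W'$ in $\CC$. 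Applying $Q$, which commutes with finite limits by proposition \ref{proposition:BasicPropertiesOfLocalization}, produces a pullback in $S_{\AA}^{-1}\CC$; conjugating by $Q(s)$, $\alpha$, and $\beta$ gives the pullback of $p$ along $f$, whose vertical leg is isomorphic to $Q(P\deflation W')$ and hence is a deflation in $S_{\AA}^{-1}\CC$.

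Axiom \ref{R1} is the main obstacle. Given deflations $f\colon X\deflation Y$ and $g\colon Y\deflation Z$ in $S_{\AA}^{-1}\CC$, I first absorb the identifying isomorphism for $g$ to assume $g=Q(g_0)$ for some deflation $g_0\colon Y_0\deflation Z_0$ in $\CC$. Representing $f$ (adjusted so that it targets $Q(Y_0)$) as a roof $(h,s)$ with $h\colon X'\to Y_0$ in $\CC$ and $s$ a weak iso, the morphism $Q(h)=f\circ Q(s)$ is a deflation in $S_{\AA}^{-1}\CC$, hence isomorphic in the arrow category to $Q(p)$ for some deflation $p\colon A\deflation B$ in $\CC$; the identifying iso $Q(Y_0)\cong Q(B)$ is represented by a roof $(u,v)$ with $v\colon M\stackrel\sim\to B$ weak iso in $\CC$ and $u\colon M\to Y_0$ satisfying $Q(u)$ invertible. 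Pulling back $p$ along $v$ and invoking proposition \ref{proposition:WeakIsoPullback} yields a conflation $K\inflation P_1\stackrel{p_v}{\deflation}M$ in $\CC$ together with a weak iso $P_1\to A$. The decisive step is a \emph{sub-lemma}: representing $Q(u)^{-1}$ by a roof $(\tau_0,\sigma)$ with $\sigma\colon L_0\to Y_0$ weak iso and applying the roof-equivalence relation of construction \ref{construction:Localization} to the identity $Q(u\tau_0)=Q(\sigma)$, one produces a weak iso $\rho\colon L\to L_0$ such that $\tau := \tau_0\rho\colon L\to M$ satisfies $u\tau=\sigma\rho$, an honest weak iso in $\CC$. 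Pulling back $p_v$ along $\tau$ (axiom \ref{R2} in $\CC$), applying the lifting lemma (lemma \ref{lemma:LiftingConflations}) to the conflation $K_{g_0}\inflation Y_0\stackrel{g_0}{\deflation}Z_0$ along the weak iso $u\tau$ to obtain a lifted deflation $\overline{g_0}\colon \overline Y\deflation\overline Z$ in $\CC$ (with weak iso $\xi\colon \overline Y\to Y_0$ factoring through $u\tau$), and taking one final pullback over $\overline Y$ produces a deflation $p''\colon P_3\deflation\overline Y$; then $\overline{g_0}\circ p''$ is a deflation in $\CC$ by \ref{R1} in $\CC$. Bookkeeping the intermediate isos (each arising as the $Q$-image of a pullback along a $Q$-invertible morphism, hence $Q$-invertible by preservation of limits) shows $Q(\overline{g_0}\circ p'')$ is isomorphic in the arrow category to $gf$, so $gf$ is a deflation in $S_{\AA}^{-1}\CC$.

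The hardest part is the coordination in \ref{R1}: we must replace the map $u$, which is only $Q$-invertible, by the composition $u\tau$ which is a genuine weak isomorphism in $\CC$, so that the lifting lemma can be applied to an authentic weak iso; once this is done, the composition of deflations happens entirely inside $\CC$ and is transported back through $Q$.
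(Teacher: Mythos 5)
Your proof is correct and rests on the same three pillars the paper uses: axioms \ref{R0}/\ref{R0*} descend trivially, axiom \ref{R2} follows from the fact that $Q$ preserves pullbacks (proposition \ref{proposition:BasicPropertiesOfLocalization}) together with \ref{R2} in $\CC$, and axiom \ref{R1} is handled by rewriting the composite of two deflations as a composite in $\CC$ via roof manipulation, the lifting lemma (lemma \ref{lemma:LiftingConflations}), and pullbacks. Where you diverge from the paper is in the bookkeeping for \ref{R1}: the paper keeps all auxiliary arrows genuine weak isomorphisms by repeatedly applying \ref{RMS2} to the zig-zag of roofs (using \ref{RMS1} to compose), which positions a bona fide weak isomorphism exactly where the lifting lemma needs one; you instead absorb the identifying isomorphism for the second deflation, end up with a morphism $u$ that is only $Q$-invertible (not in $S_\AA$), and isolate a sub-lemma that extracts from $(u\tau_0,\sigma)\sim(1,1)$ in construction \ref{construction:Localization} a morphism $\rho$ with $u\tau_0\rho\in S_\AA$. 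That sub-lemma is a legitimate device and the argument goes through. One small imprecision: you call $\rho$ a weak isomorphism, but the equivalence relation only gives $\sigma\rho\in S_\AA$, not $\rho\in S_\AA$ (there is no 2-out-of-3 in this generality); this is harmless since what you actually use is that $u\tau = u\tau_0\rho\in S_\AA$ and that $\tau$ is $Q$-invertible, both of which hold. In short, same strategy, different micro-level bookkeeping in \ref{R1}, with a minor mislabel that does not affect correctness.
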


\begin{proof}
It is easy to see that axiom \ref{R0} (respectively axiom \ref{R0*}) descends to $S_{\AA}^{-1}\CC$.  We now check that $S_{\AA}^{-1}\CC$ satisfies axioms \ref{R1} and \ref{R2}.
\begin{enumerate}
	\item[\ref{R1}] We consider two deflations $X\rightarrow Y$ and $Y\rightarrow Z$ in $S_{\AA}^{-1}\CC$.  By definition \ref{definition:LocalizationConflation}, this means that there are deflations $\overline{X}\rightarrow \overline{Y}$ and $\overline{\overline{Y}}\rightarrow \overline{Z}$ and a diagram
	\[\xymatrix@!@C=0.5em@R=0.5em{
		  & & Z &\ar[l]\ar[r]^{\sim} &\overline{Z}\\
		  & &  \ar[u]\ar[d]^{\rotatebox{90}{$\sim$}} & &\\		
		X &\ar[l]_{\sim}\ar[r] & Y &\ar[l]Y''\ar[r]^{\sim} & \overline{\overline{Y}}\ar@{->>}[uu]\\
		 \ar[u]^{\rotatebox{90}{$\sim$}}\ar[d] & &  \ar[u]^{\rotatebox{90}{$\sim$}}Y'\ar[d] & P\ar@{.>}[l]\ar@{.>}[u]^{\rotatebox{90}{$\sim$}} & \\
		\overline{X}\ar@{->>}[rr]& & \overline{Y}
	}\] which descends to a commutative diagram in $S_{\AA}^{-1}\CC$. Here, we chose the direction of the isomorphisms in $S_{\AA}^{-1}\CC$ in such a way to get the particular arrangement of arrows in $S_{\AA}$. The first step of the proof is to find a better representation in $\CC$ of this composition of deflations in $S_{\AA}^{-1}\CC$.
	
	Using proposition \ref{proposition:MinimalConditionsRMS}, we obtain the dotted arrows by axiom \ref{RMS2}. Note that the induced map $P\rightarrow Y'$ descends to an isomorphism in $S_{\AA}^{-1}\CC$. It follows that we can represent the outer edge by the solid part of the following commutative diagram:
	\[\xymatrix@!{
		& & \overline{Z}\\
		R\ar@{.>}[d]\ar@{.>>}[r]& P \ar[d]  \ar[r]^{\sim} & \overline{\overline{Y}}\ar@{->>}[u]\\
		\overline{X} \ar@{->>}[r] & \overline{Y} &\\
	}\]By axiom \ref{RMS1}, the composition $P\xrightarrow{\sim}Y''\xrightarrow{\sim} \overline{\overline{Y}}$ belongs to $S_{\AA}$. Axiom \ref{R2} yields the pullback square $RP\overline{Y}\overline{X}$ in $\CC$. As $P\rightarrow \overline{Y}$ descends to an isomorphism and $Q$ commutes with pullbacks (see remark \ref{remark:PreservationOfKernelsAndPullbacks}), the map $R\rightarrow \overline{X}$ descends to an isomorphism as well. It follows that the original composition of deflations in $S_{\AA}^{-1}\CC$ can be represented by the composition $R\twoheadrightarrow P\xrightarrow{\sim}\overline{\overline{Y}}\twoheadrightarrow \overline{Z}$.
	
	Write $K\inflation \overline{\overline{Y}}$ for the kernel of the deflation $\overline{\overline{Y}}\deflation \overline{Z}$. By lemma \ref{lemma:LiftingConflations}, we can lift the conflation over $P\stackrel{\sim}{\rightarrow} \overline{\overline{Y}}$ to $P'$ and obtain the following commutative diagram:
	\[\xymatrix{
		& Z'\ar[r]^{\sim} & \overline{Z}\\
		R'\ar@{.>>}[r]\ar@{.>}[d] & P'\ar@{->>}[u]\ar[rd]^{\rotatebox{135}{$\sim$}}\ar[d] & \\
		R\ar@{->>}[r] & P\ar[r]^{\sim} & \overline{\overline{Y}}\ar@{->>}[uu]
	}\] The lower left square is a pullback square obtained by axiom \ref{R2} in $\CC$. Note that the map $P'\to P$ descends to an isomorphism in $S_{\AA}^{-1}\CC$ and the map $R'\to R$ descends to an isomorphism as well (this follows from remark \ref{remark:PreservationOfKernelsAndPullbacks} and the fact that pullbacks of isomorphisms are isomorphisms). By axiom \ref{R1}, the composition $R'\deflation P' \deflation Z'$ is a deflation in $\CC$. Moreover, this composition descends to the composition $R\twoheadrightarrow P\xrightarrow{\sim}\overline{\overline{Y}}\twoheadrightarrow \overline{Z}$ up to isomorphism. This establishes that axiom \ref{R1} holds.
	
	\item[\ref{R2}]
	We now show that the pullback along a deflation exists and yields a deflation in $S_{\AA}^{-1}\CC$. For this, consider a co-span $X \twoheadrightarrow Y \leftarrow Z$ in $S_\AA^{-1} \CC$. The co-span can be represented by the following diagram in $\CC$:
		\[\xymatrix@d@!{
		&& Z\\
		&& Z'\ar[u]^{\rotatebox{0}{$\sim$}}\ar[d]\\
		X & X'\ar[l]^{\rotatebox{90}{$\sim$}}\ar[r] &Y & P\ar@{.>}[lu]_{\rotatebox{45}{$\sim$}} \ar@{.>}[ld]\\
		X''\ar[u]^{\rotatebox{0}{$\sim$}}\ar[d] & & Y'\ar[u]^{\rotatebox{0}{$\sim$}}\ar[d]\\
		\overline{X}\ar@{->>}[rr] & & \overline{Y}\\
	}\]
	 The dotted arrows are obtained by applying axiom \ref{RMS2} to $Y' \stackrel{\sim}{\twoheadrightarrow} Y \leftarrow Z'$.  In this way, we obtain a co-span $\overline{X} \twoheadrightarrow \overline{Y} \leftarrow P$ in $\CC$, which is isomorphic, in $S_{\AA}^{-1}\CC$, to the original co-span $X \twoheadrightarrow Y \leftarrow Z$.  As $Q$ preserves pullbacks, we are done by axiom \ref{R2} in $\CC$. \qedhere
\end{enumerate}
\end{proof}

\subsection{Exact quotients of exact categories}

The definition of the quotient in definition \ref{definition:RightExactLocalization} is taken in the category of conflation categories.  Even if one starts with an exact category $\CC$, the quotient category $\CC / \AA = \CC[S_\AA^{-1}]$ needs only be deflation-exact.  In this subsection, we show how one can obtain a quotient in the category of exact categories with respect to left or right percolating subcategories.

Our approach is based on the following proposition (see \cite[proposition I.7.5]{Rosenberg11}).

\begin{proposition}
	Let $\CC$ be a deflation-exact category. There exists an exact category $\overline{\CC}$ and a fully faithful exact functor $\gamma\colon\CC\rightarrow \overline{\CC}$ which is 2-universal in the following sense: for any exact category $\DD$, the functor $-\circ Q\colon \Hom_{\text{ex}}(\overline{\CC}, \DD) \to \Hom_{\text{ex}}(\CC, \DD)$ is an equivalence.  The category $\overline{\CC}$ is called the \emph{exact hull} of $\CC$.
\end{proposition}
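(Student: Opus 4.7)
The plan is to construct $\overline{\CC}$ as an extension-closed subcategory of a canonically associated Grothendieck abelian category, following the Gabriel--Quillen strategy adapted to the one-sided exact setting. The ambient category is $\LL(\CC) \subseteq \Fun(\CC^{op}, \Ab)$, the category of additive functors $F\colon \CC^{op} \to \Ab$ that send each conflation $X \inflation Y \deflation Z$ to a left exact sequence $0 \to F(Z) \to F(Y) \to F(X)$ in $\Ab$. Axioms \ref{R0}, \ref{R1}, and \ref{R2} say precisely that the deflations form a single-morphism pretopology on $\CC$, and $\LL(\CC)$ is exactly the category of sheaves for this pretopology; standard Grothendieck-topos arguments then make $\LL(\CC)$ a Grothendieck abelian category with an exact sheafification functor left adjoint to the inclusion into $\Fun(\CC^{op},\Ab)$.

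Next I would verify that the Yoneda functor $h\colon \CC \to \LL(\CC)$, $X \mapsto \Hom_\CC(-, X)$, is well-defined (left-exactness of $h_X$ on conflations is just the kernel-cokernel-pair property), fully faithful, and conflation-exact. The only nontrivial point is that for every conflation $X \inflation Y \deflation Z$ the morphism $h_Y \to h_Z$ is an epimorphism in $\LL(\CC)$; this follows from the sheaf condition and axiom \ref{R2}, since any $T \to Z$ lifts, after pullback along the cover $Y \deflation Z$, to a map $T \times_Z Y \to Y$. I would then define $\overline{\CC}$ as the extension closure of $h(\CC)$ inside $\LL(\CC)$. Because $\LL(\CC)$ is abelian and $\overline{\CC}$ is extension-closed, the ambient short exact sequences with all three terms in $\overline{\CC}$ endow $\overline{\CC}$ with a canonical exact structure, and $\gamma = h\colon \CC \hookrightarrow \overline{\CC}$ is fully faithful and exact by construction.

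The main obstacle is the 2-universal property. Given an exact functor $F\colon \CC \to \DD$ into an exact category $\DD$, I would build an exact extension $\overline{F}\colon \overline{\CC} \to \DD$ by induction on the number of extensions needed to present an object $M \in \overline{\CC}$ from objects of $h(\CC)$: given a conflation $M' \inflation M \deflation M''$ in $\overline{\CC}$ with $\overline{F}(M')$ and $\overline{F}(M'')$ already defined, take $\overline{F}(M)$ to be the middle term of the conflation in $\DD$ classified by the image of the extension class. The technical heart of the argument is to show that this is independent of the chosen presentation, functorial in morphisms, and exact; this leans on the full Quillen axioms in $\DD$, and in particular on the obscure axioms \ref{R3} and \ref{L3}, to compare two different iterated-extension presentations of the same object and to handle morphisms that do not respect a chosen filtration. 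Uniqueness of $\overline{F}$ up to a unique natural isomorphism extending the given one on $\CC$ then follows because every object and every morphism of $\overline{\CC}$ is generated under extensions from the essential image of $h$, so any exact functor out of $\overline{\CC}$ is determined by its restriction to $h(\CC)$.
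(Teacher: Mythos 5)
Your proposal follows essentially the same route as the paper, which cites Rosenberg's construction and recalls exactly this recipe: take sheaves of abelian groups on $\CC$ for the pretopology generated by deflations, embed $\CC$ via Yoneda, and let $\overline{\CC}$ be the smallest fully exact (i.e.~extension-closed) subcategory of the sheaf category containing the representables. The only place you go beyond what the paper records is the sketch of the 2-universal property, which the paper delegates entirely to the cited reference; your extension-induction outline is the right idea, though the well-definedness of $\overline{F}$ on objects and morphisms is, as you acknowledge, where all the real work lies.
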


For the benefit of the reader, we recall the construction given in \cite{Rosenberg11}.  As $\CC$ is deflation-exact, there is a Grothendieck pretopology where the covers are the deflations.  The exact hull $\overline{\CC}$ is obtained as the smallest fully exact subcategory of the category of sheaves on $\CC$ containing the representable sheaves.

The next corollary is an immediate application of the previous proposition and theorem \ref{theorem:Maintheorem}.

\begin{corollary}\label{corollary:Rosenberg}
	Let $\CC$ be an exact category and let $\AA$ be a deflation-percolating subcategory of $\CC$. The composition of the exact quotient functor $Q\colon \CC\rightarrow \CC/\AA$ and the embedding $\gamma\colon \CC/\AA\rightarrow \CC \dq \AA \coloneqq \overline{\CC/\AA}$ is an exact functor between exact categories satisfying the following 2-universal property: for any exact category $\DD$, the functor $- \circ \gamma\colon \Hom_{\text{ex}}(\CC \dq \AA, \DD) \to \Hom_{\text{ex}}(\CC, \DD)$ is a fully faithful functor whose essential image consists of those conflation-exact functors $F\colon \CC \to \DD$ for which $F(\AA) = 0$.
\end{corollary}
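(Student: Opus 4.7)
The plan is to derive this corollary by composing two universal properties already in place: the universal property of the conflation-category quotient $Q\colon \CC\to \CC/\AA$ (provided by Theorem \ref{theorem:Maintheorem} together with Proposition \ref{proposition:QuotientInConflationCategories}) and the 2-universal property of the exact hull $\gamma\colon \CC/\AA \to \overline{\CC/\AA}$ (the preceding proposition). I read $-\circ\gamma$ in the statement as $-\circ(\gamma\circ Q)$, since it is the composition $\gamma\circ Q$ that has $\CC$ as its source.

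The first thing to check is that $\gamma\circ Q\colon \CC \to \CC\dq\AA$ is an exact functor between exact categories; this is immediate, as both factors are exact (by Theorem \ref{theorem:Maintheorem} and by construction of the exact hull), $\CC$ is exact by hypothesis, and $\CC\dq\AA=\overline{\CC/\AA}$ is exact by construction.

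The main step is to verify the 2-universal property. Given an exact category $\DD$, I would factor the precomposition functor as
\[
\Hom_{\text{ex}}(\CC\dq\AA, \DD) \xrightarrow{-\circ\gamma} \Hom_{\text{ex}}(\CC/\AA, \DD) \xrightarrow{-\circ Q} \Hom_{\text{ex}}(\CC, \DD).
\]
By the 2-universal property of the exact hull recalled above, the first arrow is an equivalence. For the second arrow, Proposition \ref{proposition:QuotientInConflationCategories} gives unique factorization through $Q$ for conflation-exact functors annihilating $\AA$; combined with the 2-universal property of the underlying categorical localization (Definition \ref{Definition:LocalizationWithRespectToMorphisms}), this identifies $\Hom_{\text{ex}}(\CC/\AA, \DD)$ with the full subcategory of $\Hom_{\text{ex}}(\CC, \DD)$ consisting of those exact functors $F$ inverting every $s\in S_\AA$. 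By Remark \ref{remark:SAInverts}, this latter condition is equivalent to $F(\AA)=0$. Composing with the equivalence provided by the exact hull, $-\circ(\gamma\circ Q)$ is fully faithful with essential image exactly the exact functors $F\colon \CC\to\DD$ with $F(\AA)=0$.

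The point requiring the most care is the upgrade from the 1-universal statement in Proposition \ref{proposition:QuotientInConflationCategories} (unique factorization of objects) to the 2-universal statement needed here (fully faithfulness of precomposition). This is resolved by noting that conflations in $\CC/\AA$ are, by Definition \ref{definition:LocalizationConflation}, precisely the isomorphic images under $Q$ of conflations in $\CC$; consequently, an additive functor on $\CC/\AA$ is conflation-exact if and only if its composite with $Q$ is, and natural transformations between such functors correspond bijectively via $Q$ to natural transformations on $\CC$. Combined with the 2-universal property of the localization $Q$ at the morphism set $S_\AA$, this supplies the required fully faithfulness of $-\circ Q$ at the level of exact functor categories, and the corollary follows.
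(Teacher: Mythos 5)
Your proof is correct and takes essentially the same approach the paper intends when it calls the corollary an ``immediate application'' of the exact-hull proposition together with theorem \ref{theorem:Maintheorem}: you compose the $2$-universal property of $Q\colon\CC\to\CC/\AA$ (upgraded from the object-level statement of proposition \ref{proposition:QuotientInConflationCategories} via the $2$-universality of the localization at $S_\AA$, using that the conflations of $\CC/\AA$ are by definition \ref{definition:LocalizationConflation} exactly the isomorphs of images of conflations under $Q$) with the $2$-universal property of the exact hull $\gamma$. Your reading of $-\circ\gamma$ as $-\circ(\gamma\circ Q)$ also correctly repairs a typo in the statement.
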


\subsection{On the role of axioms \ref{P1}-\ref{P4}}

In this section, we expand upon the inclusion of each of the axioms \ref{P1}-\ref{P4}.

As noted in \S\ref{subsection:WeakIsomorphisms}, the kernel of an exact functor is a Serre subcategory.  For $\AA$ to be the kernel of $Q$, axiom \ref{P1} needs to be satisfied.

Axiom \ref{P2} is used to bound the composition length of the reflected weak isomorphism in axiom \ref{RMS2} (see proposition \ref{proposition:MinimalConditionsRMS}). In particular, the proof of the lifting lemma and proposition \ref{proposition:InterpretationOfP4} rely on this bound. In fact, axiom \ref{P2} is equivalent to requiring that axiom \ref{RMS2} reflects the structure of weak isomorphisms. Indeed, let $f\colon X\to A$ be any morphism (with $A \in \AA$). Applying axiom \ref{RMS2}, we obtain the following commutative diagram:
\[\xymatrix{
	P\ar@{>->}[r]^{\sim}\ar[d] & X\ar@{->>}[r]\ar[d]^f & {X/P}\ar@{..>}[d]\\
	0\ar@{>->}[r]^{\sim} & A\ar@{->>}[r] & A
}\] Requiring that $P \to X$ is an inflation (since $0 \inflation X$ is), we see that axiom \ref{P2} holds.

In section \S\ref{Subsection:GliderExample} we construct an example of an exact category $\EE$ and a deflation-percolating subcategory $\AA$ such that $\EE/\AA$ is not inflation-exact. One can verify explicitly that $\AA$ satisfies the duals of axioms \ref{P1}, \ref{P3} and \ref{P4} but not the dual of \ref{P2}. As the quotient $\EE/\AA$ is not inflation-exact one sees that one cannot simply omit (the dual) of axiom \ref{P2}.

Example \ref{Example:P3Requirement} below gives a deflation-exact category $\CC$ and a full subcategory $\AA$ of $\CC$ satisfying axioms \ref{P1}, \ref{P2}, and \ref{P4}, but not satisfying axiom \ref{P3}.  We show explicitly that $\CC[S_\AA^{-1}]$ fails to satisfy theorem \ref{theorem:Maintheorem}.  This justifies the requirement of axiom \ref{P3}.

Similarly, example \ref{Example:P4Requirement} below gives a deflation-exact category $\CC$ and a full subcategory $\AA$ of $\CC$ satisfying axiom \ref{P1}, \ref{P2}, and \ref{P3}, but not satisfying axiom \ref{P4}. Again, we explicitly show that $\CC[S_\AA^{-1}]$ fails to satisfy theorem \ref{theorem:Maintheorem}. This justifies the requirement of axiom \ref{P4}.

\begin{example}\label{Example:P3Requirement}
Consider the quiver $A_4: 1\leftarrow 2\leftarrow 3\leftarrow 4$. Let $k$ be a field and write $\rep_k(A_4)$ for the category of finite-dimensional $k$-representations of $A_4$. We write $S_j$ for the simple representation associated to the vertex $j$, $P_j$ for its projective cover and $I_j$ for its injective envelope. The Auslander-Reiten quiver of $\rep_k(A_4)$ is give by:
	\[\xymatrix@!C=3pt@R=3pt{
		&&&P_4\ar[dr] &&&\\
&&P_3\ar[ur]\ar[dr] && I_2\ar[dr]&&\\
& P_2\ar[ur]\ar[dr] && \tau^{-1}P_2\ar[ur]\ar[dr] && I_3\ar[dr]&\\ 
S_1\ar[ur] && S_2\ar[ur] && S_3\ar[ur] && S_4
	}\] where $\tau$ is the Auslander-Reiten translate.
	
Let $\CC=\text{add}\{S_1,P_2,P_3,P_4,S_2,S_3,I_2\}$ be the full additive subcategory of $\rep_k(A_4)$ generated by the objects  $S_1,P_2,P_3,P_4,S_2,S_3$ and $I_2$ (thus, consisting of all objects which do not have direct summands isomorphic to $S_4, I_3,$ and $\tau^{-1} P_2$). We claim that $\CC$, with the conflations given by those in $\rep_k(A_4),$ is a deflation-exact category.  Let $\mathcal{D}$ be the full additive subcategory of $\rep_k(A_4)$ generated by $\CC$ and $\tau^{-1}P_2$.  As $\mathcal{D}$ is an extension-closed subcategory of an abelian category, it is exact.  It now follows from proposition \ref{proposition:DeflationClosed} that $\CC$ is deflation-exact.

Let $\AA$ be the full additive subcategory of $\CC$ generated by $S_2$. Clearly, $\AA$ is an abelian subcategory satisfying axioms \ref{P1} and \ref{P2}. In fact, axiom \ref{A2} (see definition \ref{Definition:AbelianPercolating} below) holds and thus axiom \ref{P4} holds by the proof of proposition \ref{proposition:AIsAbelian}.  Furthermore, one can verify that $\AA$ does not satisfy axiom \ref{P3} (indeed, \ref{P3} fails on the composition $P_2\inflation P_3\to I_2$).

Consider the commutative diagram
\[\xymatrix@!@C=0.5em@R=0.5em{
	P_2\ar@{>->}[rr]^i\ar[rd] && P_3\ar@{->>}[rr]\ar[dd] && S_3\\
	& S_2\ar[rd] &&&\\
	&&I_2&&	
}\] in $\CC$. If $\CC/\AA$ were a conflation category and the localization functor $Q\colon \CC\rightarrow \CC/\AA$ is exact, $Q(S_3)$ is the cokernel of  $Q(i)$. The above diagram yields an induced map $Q(S_3)\rightarrow Q(I_2)$ in $\CC/\AA$. On the other hand, one can verify explicitly that such a morphism cannot be obtained by localizing with respect to the weak isomorphisms. It follows that $P_2 \to P_3 \to S_3$ is not a kernel-cokernel pair in $S_\AA^{-1}\CC$.
\end{example}

\begin{example}\label{Example:P4Requirement}
	Consider the quiver $1\stackrel{\gamma}{\leftarrow}2\stackrel{\beta}{\leftarrow}3\stackrel{\alpha}{\leftarrow}4$ with relation $\gamma\beta\alpha=0$.
	The category $\UU$ of finite-dimensional representations of this quiver can be represented by its Auslander-Reiten quiver:
	\[\xymatrix@!C=3pt@R=3pt{
&&P_3\ar[dr] && I_2\ar[dr]&&\\
& P_2\ar[ur]\ar[dr] && \tau^{-1}P_2\ar[ur]\ar[dr] && I_3\ar[dr]&\\ 
S_1\ar[ur] && S_2\ar[ur] && S_3\ar[ur] && S_4
	}\] 
	We consider on $\UU$ the exact structure induced by all Auslander-Reiten sequences but $S_2\to \tau^{-1}P_2\to S_3$ (see \cite[corollary~3.10]{Enomoto2018} or \cite[theorem~5.7]{BrustleHassounLangfordRoy20} to see that this is a well-defined exact structure).  Let $\EE$ now be the full Karoubi subcategory of $\UU$ given by those objects who do not have $S_1$ as a direct summand.  As $\EE$ is extension-closed in $\UU$, we know that $\EE$ is an exact category.  As $\EE$ satisfies axiom \ref{R3}, it is straightforward to see that $P_2\to P_3\to S_3$ is not a conflation in $\EE$. 
	
	Let $\AA$ be the full additive subcategory of $\EE$ generated by $P_2$ and $P_3$. As $P_2\to P_3\to S_3$ is not a conflation, it is easy to verify axiom \ref{P1}. Axiom \ref{P2} is straightforward to verify and axiom \ref{P3} is automatic as $\EE$ is exact.
	
	We now claim that $\AA$ does not satisfy axiom \ref{P4}.  Note that the deflation $S_2\oplus P_3\deflation \tau^{-1}P_2$ is an $\AA^{-1}$-deflation (with kernel $P_2$), and that the split embedding $S_2\inflation S_2\oplus P_3$ is an $\AA^{-1}$-inflation (with cokernel $P_3$). As the composition $S_2\stackrel{\sim}{\inflation}S_2\oplus P_3\stackrel{\sim}{\deflation}\tau^{-1}P_2 \to S_3$ is zero, the map $\tau^{-1}P_2\to S_3$ descends to zero in $S_{\AA}^{-1}\EE$.  As the (irreducible) morphism $\tau^{-1}P_2\to S_3$ does not factor through $\AA$, it follows from proposition \ref{proposition:InterpretationOfP4} that $\AA$ does not satisfy axiom \ref{P4}.
	
	We now show that conflations in $\EE$ need not descend to kernel-cokernel pairs in $S_{\AA}^{-1}\EE$.  Consider the conflation $\tau^{-1}P_2\inflation I_2\deflation S_4$ in $\EE$ and note that the composition $\tau^{-1}P_2\inflation I_2\to I_3$ descends to zero in $S_{\AA}^{-1}\EE$ (indeed, the map $\tau^{-1}P_2\to I_3$ factors through $\tau^{-1}P_2\to S_3$ which descends to zero).
	
	If $\tau^{-1}P_2\inflation I_2\deflation S_4$ were a kernel-cokernel pair in $S_{\AA}^{-1}\EE$, there should be a map $f\colon S_3\to I_3$ in $S_{\AA}^{-1}\EE$ such that $P_3\to I_3$ factors through $f$. Note that a map $f\colon S_3\to I_3$ in $S_{\AA}^{-1}\EE$ can be represented by a diagram $S_3 \stackrel{\sim}{\leftarrow}X\to I_3$ in $\EE$.  Writing $X\stackrel{\sim}{\rightarrow}S_3$ as a composition of $\AA^{-1}$-inflations and $\AA^{-1}$-deflations, one finds that $X\cong S_3\oplus P_2^{\oplus n_2}\oplus P_3^{\oplus n_3}$ (for some $n_2,n_3$).  This shows that there are no non-zero maps $S_3\to I_3$ in $S_{\AA}^{-1}\EE$.  Hence, $S_{\AA}^{-1}\EE$ is not a conflation category.
\end{example}
\section{Admissibly percolating subcategories}\label{section:AbelianPercolating}

In the previous sections, we considered quotients of deflation-exact categories by deflation-percolating subcategories.  In theorem \ref{theorem:Maintheorem}, we showed that such a quotient can be obtained by localization with respect to a right multiplicative system. In practice, it might be difficult to show that a given morphism $f$ in $\CC$ is a weak isomorphism or that $Q(f)$ is invertible.

In this section, we address these difficulties by considering admissibly percolating subcategories of one-sided exact categories.  Admissibly (inflation- or deflation-) percolating subcategories satisfy stronger axioms than regular percolating subcategories.  We show that every weak isomorphism is an admissible morphism with kernel and cokernel in $\AA$ (see theorem \ref{theorem:WeakIsomorphismsEqualAAInverseIsomorphisms}). This last property motivates the terminology of an admissibly percolating subcategory.  

In addition, we show that the class of weak isomorphisms (with respect to an admissibly percolating subcategory) satisfies the $2$-out-of-$3$ property (proposition \ref{proposition:2OutOf3}) and is saturated (proposition \ref{proposition:Saturation}).

\subsection{Basic definitions and results}

We begin with the definition of an admissibly percolating subcategory.

\begin{definition}\label{Definition:AbelianPercolating}
Let $\CC$ be a conflation category and let $\AA$ be a non-empty full subcategory of $\CC$.  We call $\AA$ an \emph{admissibly deflation-percolating subcategory} or a \emph{strictly deflation-percolating subcategory} of $\CC$ if the following three properties are satisfied:
	\begin{enumerate}[label=\textbf{A\arabic*},start=1]
		\item\label{A1} $\AA$ is a Serre subcategory, meaning:
		\[\mbox{ If } A'\rightarrowtail A \twoheadrightarrow A'' \mbox{ is a conflation in $\CC$, then } A\in \Ob(\AA) \mbox{ if and only if } A',A''\in \Ob(\AA).\]
		\item\label{A2} For all morphisms $C\rightarrow A$ with $C \in \Ob(\CC)$ and $A\in \Ob(\AA)$, there exists a commutative diagram
		\[\xymatrix{
		A'\ar@{>->}[rd] & \\
		C \ar@{->>}[u]\ar[r]& A\\
				}\] with $A'\in \Ob(\AA)$, and where $C \twoheadrightarrow A'$ is a deflation and $A\rightarrowtail A'$ is an inflation.
		\item\label{A3}If $a\colon C\rightarrowtail D$ is an inflation and $b\colon C\twoheadrightarrow A$ is a deflation with $A\in \Ob(\AA)$, then the pushout of $a$ along $b$ exists and yields an inflation and a deflation, i.e.
		\[\xymatrix{
			C \ar@{>->}[r]^{a}\ar@{->>}[d]^b & D\ar@{.>>}[d]\\
			A\ar@{>.>}[r] & P
		}\]
	\end{enumerate}
\end{definition}

\begin{remark}\makeatletter
\hyper@anchor{\@currentHref}%
\makeatother\label{remark:AbelianPercolatingAboutAxioms}
\begin{enumerate}
	\item A conflation category with an admissibly deflation-percolating subcategory satisfies \ref{R0*}.  Indeed, it follows from \ref{A1} that $0 \in \AA$ and from \ref{A2} that any morphism $X \to 0$ is a deflation.
	\item It follows from proposition \ref{proposition:WhenPushout} that the pushout square in axiom \ref{A3} is a pullback square as well.
	\item Conditions \ref{A1} and \ref{A2} are also required by \cite[definition 4.0.35]{Cardenas98}. 
	\item Given a deflation-exact category $\CC$ and an admissibly deflation-percolating subcategory $\AA$, axiom \ref{A2} implies that $\AA$ is strongly right filtering. By proposition \ref{proposition:MinimalConditionsRMS}, pullbacks along weak isomorphisms exist and weak isomorphisms are stable under pullbacks.
	\item If $\CC$ is an exact category, axiom \ref{A3} is automatically satisfied. See for example the dual of \cite[proposition~2.15]{Buhler10}.
\end{enumerate}
\end{remark}

\begin{lemma}\label{Lemma:EpiToA}
Let $\AA$ be an admissibly deflation-percolating subcategory of a conflation category $\CC$.  Let $f\colon C \to A$ be a morphism in $\CC$ with $A \in \AA$.  If $f$ is a monomorphism (epimorphism), then $f$ is an inflation (deflation).  In particular, a morphism $X \to 0$ is a deflation.
\end{lemma}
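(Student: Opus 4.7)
The approach is to use axiom \ref{A2} to factor $f$, and then to force one of the two factors to be an isomorphism from the mono (resp.\ epi) hypothesis on $f$. Concretely, axiom \ref{A2} yields a commutative triangle
\[\xymatrix{A'\ar@{>->}[rd]^{m} & \\ C \ar@{->>}[u]^{g}\ar[r]_{f}& A}\]
with $A'\in\AA$, so $f = m\circ g$ with $g$ a deflation and $m$ an inflation.

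If $f$ is a monomorphism, then $g$ is mono (as $m\circ g$ is), and I would invoke the soft fact that a monic deflation in a conflation category is an isomorphism; combined with closure of conflations under isomorphism, this gives $f$ as an inflation. The epi case is dual: $m$ becomes epi, an epi inflation is an isomorphism, and so $f$ is a deflation.

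For the concluding sentence, I first establish that $0\in\AA$. Since $\AA$ is non-empty, pick $B\in\AA$ and apply \ref{A2} to the zero morphism $0\to B$: the resulting $A'\in\AA$ receives a deflation from $0$, and unwinding the defining conflation $K\rightarrowtail 0\twoheadrightarrow A'$ forces $A'=0$. Because $0$ is initial, the unique map $X\to 0$ is an epimorphism into an object of $\AA$, so the epimorphism case gives the result.

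The only non-routine ingredient is the claim that a monic deflation (dually, an epic inflation) is an isomorphism. This is recorded in \ref{Lemma:BasicProperties} for deflation-exact categories, but here $\CC$ is only a conflation category, so the statement must be re-derived from the bare kernel-cokernel definition: the defining conflation $K\rightarrowtail C\stackrel{g}{\twoheadrightarrow} A'$ of a mono deflation $g$ has $k\colon K\to C$ equal to $0$ (since $g\circ k=0$ and $g$ is mono), and a zero morphism is monic only when its source is $0$, so $K=0$; then $g$ is the cokernel of $0\to C$, i.e.\ an isomorphism. This is the one step that demands care, but it proceeds purely from the conflation-category axioms and does not require \ref{R0}--\ref{R2}.
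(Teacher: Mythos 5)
Your proposal is correct and follows the same route the paper intends: the paper's one-line proof just cites axioms \ref{A1} and \ref{A2}, and the essential content is exactly your factorization $f = m\circ g$ via \ref{A2} together with the observation that a monic deflation (resp.\ epic inflation) is an isomorphism. You are right to flag that \ref{Lemma:BasicProperties} is stated for deflation-exact categories whereas here $\CC$ is only a conflation category; your re-derivation from the kernel--cokernel definition alone is exactly the right fix, and is worth spelling out as you did. One minor difference from the paper's citation: your argument never actually invokes \ref{A1} — you obtain $0\in\AA$ directly from \ref{A2} applied to $0\to B$ rather than from Serre-closure — which is a perfectly valid (and arguably cleaner) route, since it avoids having to produce a conflation through $B$ to which \ref{A1} could be applied.
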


\begin{proof}
	This is an immediate application of axioms \ref{A1} and \ref{A2}.
\end{proof}

\begin{proposition}\label{proposition:AIsAbelian}
	Let $\CC$ be a deflation-exact category and let $\AA$ be an admissibly deflation-percolating subcategory. Then $\AA$ is a (strongly) deflation-percolating subcategory of $\CC$ and $\AA$ is an abelian subcategory.
\end{proposition}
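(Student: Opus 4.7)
My plan is to verify successively the four percolating axioms \ref{P1}--\ref{P4}, the strong filtering condition, and finally abelianness of $\AA$. Axiom \ref{P1} is literally \ref{A1}, and axiom \ref{P2} is immediate from \ref{A2} (just forget the extra inflation), which in the same breath gives the strongly filtering refinement since the induced map $A'\rightarrowtail A$ is by construction monic. So the substantive work is establishing \ref{P3}, \ref{P4}, and abelianness.

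For \ref{P3}, suppose $X\stackrel{i}{\inflation}Y\stackrel{t}{\to}T$ factors through some $B\in\AA$ as $X\stackrel{h}{\to}B\stackrel{t''}{\to}T$. Applying \ref{A2} to $h\colon X\to B$ produces a factorization $X\stackrel{\rho}{\deflation}A\stackrel{j}{\inflation}B$ with $A\in\AA$. Now \ref{A3} guarantees that the pushout of the inflation $i$ along the deflation $\rho$ exists and produces both an inflation $A\inflation P$ and a deflation $Y\deflation P$, yielding the square demanded by \ref{P3}. The maps $t\colon Y\to T$ and $t''\circ j\colon A\to T$ agree on $X$ (both compose to $t\circ i$), so the pushout universal property provides the desired factorization $P\to T$. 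The main subtlety here is that \ref{A2} reroutes a morphism into $\AA$ through a deflation, which is exactly what \ref{A3} needs as its second input.

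For \ref{P4}, I would apply proposition \ref{proposition:P4Criterion}, so it suffices to show that every morphism $a\colon A\to B$ in $\AA$ has a cokernel in $\CC$ lying in $\AA$. Applying \ref{A2} to $a$ gives a factorization $a=A\stackrel{\sigma}{\deflation}A'\stackrel{\iota}{\inflation}B$ with $A'\in\AA$. Since $\sigma$ is an epimorphism, $\coker(a)=\coker(\iota)$, and this cokernel exists as the third term of the conflation completing $\iota$, which lies in $\AA$ by the Serre property \ref{A1}. Hence \ref{P4} holds, and $\AA$ is a strongly deflation-percolating subcategory.

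For abelianness, the same \ref{A2}-factorization is the main tool. Every morphism $a\colon A\to B$ in $\AA$ factors as a deflation followed by an inflation $A\deflation A'\inflation B$; the kernel of $a$ is the kernel of the deflation, the cokernel is the cokernel of the inflation, both existing in $\CC$ and lying in $\AA$ by \ref{A1}. To see that every monomorphism in $\AA$ is an inflation, observe that a monic $a$ forces the deflation $A\deflation A'$ to be monic and hence an isomorphism by lemma \ref{Lemma:BasicProperties}, so $a$ is itself an inflation; dually every epi is a deflation. Finally, the coimage $\coker(\ker a)$ and the image $\ker(\coker a)$ are both canonically identified with $A'$, so the coimage-to-image map is the identity. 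This yields the abelian category axioms and completes the proof.
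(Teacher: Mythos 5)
Your proof is correct and follows the paper's approach: axioms \ref{P1}, \ref{P2}, and strong filtering come directly from \ref{A1} and \ref{A2}, axiom \ref{P3} is obtained by pushing out $i$ along the deflation produced by \ref{A2} (this is exactly the "\ref{P3} follows from \ref{A2} and \ref{A3}" step the paper leaves implicit), \ref{P4} is established via proposition \ref{proposition:P4Criterion} from admissibility of maps in $\AA$, and abelianness follows from every morphism in $\AA$ being admissible with kernel and cokernel in $\AA$. One small caution on the abelianness paragraph: the step "a monic $a$ forces the deflation $A\deflation A'$ to be monic and hence an isomorphism by lemma \ref{Lemma:BasicProperties}" conflates monomorphisms in the full subcategory $\AA$ with monomorphisms in $\CC$, which is what the lemma actually requires; the quick repair is to note that $\ker(a)\in\Ob(\AA)$ and its inclusion into $A$ is killed by $a$, so monicity in $\AA$ forces $\ker(a)=0$ and hence $\sigma$ is an isomorphism. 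In any case, your coimage-to-image computation is self-contained and already gives abelianness, so the overall argument stands.
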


\begin{proof}
	Axiom \ref{P1} and \ref{P2} hold by axioms \ref{A1} and \ref{A2}. Axiom \ref{P3} follows from axiom \ref{A2} and \ref{A3} in a straightforward way. To show show axiom \ref{P4}, it suffices to see that the conditions of proposition \ref{proposition:P4Criterion} holds.  For this, we note that axiom \ref{A2} shows that every every morphism $f\colon A \to B$ in $\AA$ is admissible and hence admits a cokernel $B \deflation \coker(f).$  By axiom \ref{A1}, we know that $\coker(f) \in \AA.$  We conclude that axiom \ref{P4} holds, and hence $\AA$ is a (strongly) deflation-percolating subcategory of $\CC$.
	
	It remains to show that $\AA$ is abelian. Let $f\colon A\to B$ be a morphism in $\AA$. By axiom \ref{A2}, $f$ is admissible. By axiom \ref{A1}, $\ker(f)$ and $\coker(f)$ belong to $\AA$, it follows that $\AA$ is abelian.
\end{proof}

	\begin{remark}\makeatletter
\hyper@anchor{\@currentHref}%
\makeatother
		\begin{enumerate}
			\item Let $\CC$ be a deflation-exact category and let $\AA$ be a deflation-percolating subcategory.  We claim that, if all weak isomorphisms are admissible, then $\AA$ satisfies axiom \ref{A2}.

Let $A,B \in \Ob(\AA)$.  Any morphism $f\colon A \to B$ is the composition of $\begin{psmallmatrix} 1 & f \end{psmallmatrix}\colon A \stackrel{\sim}{\inflation} A \oplus B$ and $\begin{psmallmatrix} 0 \\ 1 \end{psmallmatrix}\colon A \oplus B \stackrel{\sim}{\deflation} B$, and thus a weak isomorphism.

Hence, if the weak isomorphisms are admissible, then any map between objects of $\AA$ is admissible.  This observation, combined with axiom \ref{P2}, yields axiom \ref{A2}.

			\item As in the proof of proposition \ref{proposition:AIsAbelian}, axioms \ref{A1} and \ref{A2} implies axiom \ref{P4}.  We will see in example \ref{Example:IsbellCategory} that axioms \ref{A1}, \ref{A2}, and \ref{P3} (and thus also \ref{P4}) alone are not sufficient for the set $S_\AA$ to be admissible.  This motivates strengthening axiom \ref{P3} to axiom \ref{A3} (see theorem \ref{theorem:WeakIsomorphismsEqualAAInverseIsomorphisms}).
		\end{enumerate}
	\end{remark}

\begin{proposition}\label{proposition:PercolatingOfPercolating}
Let $\CC$ be a deflation-exact category and let $\AA \subseteq \CC$ be an admissibly percolating subcategory.  If $\BB \subseteq \AA$ is a Serre subcategory, then $\BB \subseteq \CC$ is an admissibly percolating subcategory.
\end{proposition}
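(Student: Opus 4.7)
The plan is to verify the three axioms \ref{A1}, \ref{A2}, \ref{A3} for $\BB \subseteq \CC$ directly from the corresponding axioms for $\AA \subseteq \CC$ together with the hypothesis that $\BB$ is Serre in $\AA$. Since $\AA$ is closed under extensions in $\CC$ (by \ref{A1} for $\AA$), the conflation structure on $\CC$ restricts to one on $\AA$; by proposition \ref{proposition:AIsAbelian} this makes $\AA$ abelian, so the phrase ``Serre subcategory $\BB \subseteq \AA$'' has its standard meaning (closed under subobjects, quotients, and extensions).

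For \ref{A1}, consider a conflation $B' \rightarrowtail B \twoheadrightarrow B''$ in $\CC$ with at least one of the three terms in $\BB \subseteq \AA$. Applying \ref{A1} for $\AA$ forces all three terms to lie in $\AA$, so the sequence is a conflation in $\AA$; then the Serre property of $\BB$ in $\AA$ propagates membership in $\BB$ to the other two terms.

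For \ref{A2}, given $f\colon C\to B$ with $B\in\BB$, I would simply apply \ref{A2} for $\AA$ (using $B\in\AA$) to obtain a factorization $C\twoheadrightarrow A'\rightarrowtail B$ with $A'\in\AA$. The key observation is that $A'\rightarrowtail B$ is an inflation in $\CC$ whose cokernel $B/A'$ lies in $\AA$ by \ref{A1} for $\AA$, so $A'\rightarrowtail B\twoheadrightarrow B/A'$ is a conflation in $\AA$; the Serre property of $\BB$ inside $\AA$ then gives $A'\in\BB$, so one may take $B' = A'$.

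Axiom \ref{A3} is immediate: the hypothesis on the pushout data requires only that the deflation target lie in $\BB$, but since $\BB\subseteq\AA$ this same target lies in $\AA$, so \ref{A3} for $\AA$ produces the desired pushout square with the inflation-deflation conclusion. No step here presents a genuine obstacle; the only mildly subtle point is recognizing in \ref{A2} that the intermediate object supplied by \ref{A2} for $\AA$ is automatically forced into $\BB$ by the Serre property, which is why no strengthening of the hypothesis on $\BB$ beyond ``Serre in $\AA$'' is needed.
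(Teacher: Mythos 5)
Your proof is correct and follows essentially the same route as the paper: the paper likewise verifies \ref{A1} and \ref{A3} directly (using that $\BB \subseteq \AA \subseteq \CC$ and \ref{A1}, \ref{A3} for $\AA$), and for \ref{A2} also invokes proposition \ref{proposition:AIsAbelian} (so that $\AA$ is abelian) together with \ref{A2} for $\AA$ to force the intermediate object into $\BB$ via the Serre property. Your argument merely fills in the details the paper declares ``easy to verify.''
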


\begin{proof}
It is easy to see that $\BB \subseteq \CC$ satisfies axioms \ref{A1} and \ref{A3}.  Using that $\AA$ is an abelian category (proposition \ref{proposition:AIsAbelian}) and that the embedding $\AA \subseteq \CC$ satisfies axiom \ref{A2}, it is easy to verify that $\BB \subseteq \CC$ satisfies axiom \ref{A2}.
\end{proof}

\subsection{Homological consequences of axiom \ref{A3}}\label{subsection:HomologicalConsequencesOfAxiomA3}

Throughout this subsection, let $\CC$ be a deflation-exact category and $\AA$ a non-empty full subcategory of $\CC$ satisfying axiom \ref{A3}.  We show that the existence of such a subcategory yields a weak version of axiom \ref{R3} (see proposition \ref{proposition:WeakR3} below), and we show that a weak version of the $3\times 3$-lemma holds (see proposition \ref{proposition:WeakNineLemma} below).  If $\CC$ is a strongly deflation-exact category, these two properties are automatically satisfied (see \cite{BazzoniCrivei13}).

\begin{proposition}\label{proposition:WeakR3}
	Let $g\colon Y\rightarrow Z$ be a map such that $g$ has a kernel belonging to $\AA$ and such that there exists a deflation $f\colon X\twoheadrightarrow Y$ such that $gf$ is also a deflation. Then $g$ is a deflation.
\end{proposition}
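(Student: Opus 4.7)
The plan is to use axiom \ref{A3} to construct an alternate deflation $P \twoheadrightarrow Z$ that factors through $g$ via an isomorphism. Set $K = \ker g \in \AA$ with kernel morphism $k\colon K \to Y$. By axiom \ref{R2}, I would first form the pullback of $f$ along $k$:
\[\xymatrix{
P_0 \ar[r]^{f'} \ar@{->>}[d]_{p} & X \ar@{->>}[d]^{f} \\
K \ar[r]^{k} & Y
}\]
producing a deflation $p\colon P_0 \twoheadrightarrow K$. Proposition \ref{proposition:MitchellPullback}\eqref{enumerate:MitchellPullback}, applied to $k = \ker g$, identifies $f'$ with the inflation $\ker(gf) \rightarrowtail X$.

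Since $K \in \AA$, I would next invoke axiom \ref{A3} to push out the inflation $f'$ along the deflation $p$, obtaining a square
\[\xymatrix{
P_0 \ar@{>->}[r]^{f'} \ar@{->>}[d]_{p} & X \ar@{->>}[d]^{q} \\
K \ar@{>->}[r]^{\ell} & P
}\]
with $q$ a deflation and $\ell$ an inflation; by remark \ref{remark:AbelianPercolatingAboutAxioms}(2) this square is bicartesian. The heart of the argument is to apply proposition \ref{proposition:PullbackPushout} to this single square in two ways. Part~1 (horizontal inflations) extends it rightward to a $3\times 3$ whose rows are conflations sharing the cokernel $\coker(f') = Z$; this yields a conflation $K \stackrel{\ell}{\rightarrowtail} P \stackrel{g'}{\twoheadrightarrow} Z$ with $g'q = gf$. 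After transposing so that $p,q$ are horizontal, part~2 extends the same square leftward to a $3\times 3$ whose rows share a common kernel $A = \ker p = \ker q$; since $p$ is a pullback of $f$, $A \cong \ker f =: K_f$ (via the canonical identification from proposition \ref{proposition:PullbacksPreserveKernels}), producing a conflation $K_f \rightarrowtail X \stackrel{q}{\twoheadrightarrow} P$ in which $K_f \rightarrowtail X$ is the standard kernel inclusion.

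Thus both $f$ and $q$ are cokernels of the same inflation $K_f \rightarrowtail X$. The universal property of the pushout supplies $h\colon P \to Y$ with $hq = f$ and $h\ell = k$, and the cokernel comparison forces $h$ to be an isomorphism. Then $ghq = gf = g'q$ together with the fact that $q$ is epic (lemma \ref{Lemma:BasicProperties}) give $gh = g'$, whence $g = g'h^{-1}$ is the composition of the deflation $g'$ with an isomorphism; by lemma \ref{Lemma:BasicProperties} and axiom \ref{R1}, $g$ is a deflation. The main subtlety is recognizing that the single pushout furnished by axiom \ref{A3}, combined with the two dual halves of proposition \ref{proposition:PullbackPushout}, simultaneously produces the factorization $gf = g'q$ through the deflation $g'$ and the identification $P \cong Y$ over $X$; either piece alone would not suffice.
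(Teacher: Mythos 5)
Your proof is correct but takes a genuinely longer route than the paper's.  The paper's crucial observation --- which you do not make --- is that the pullback square you construct (with vertices $\ker(gf)$, $X$, $\ker g$, $Y$) is \emph{already} a pushout.  Indeed, by proposition \ref{proposition:PullbacksPreserveKernels} the kernel $K=\ker f$ maps into $\ker(gf)$ as the kernel of the induced deflation $\ker(gf)\twoheadrightarrow\ker g$, so $K\rightarrowtail\ker(gf)\twoheadrightarrow\ker g$ is a conflation; applying proposition \ref{proposition:MitchellPullbackPushout}\eqref{enumerate:MitchellPushout} to this conflation together with the inflation $\ker(gf)\rightarrowtail X$ shows that the square is a pushout, because the composite $K\rightarrowtail\ker(gf)\rightarrowtail X$ has cokernel $f$.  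Once this is known, axiom \ref{A3} immediately forces $k\colon\ker g\to Y$ to be an inflation, and proposition \ref{proposition:WhenPushout} extends the square to a $3\times3$ exhibiting $g$ as $\coker(k)$.  Your version instead uses axiom \ref{A3} to manufacture a fresh pushout $P$ and then spends the bulk of the argument identifying $P$ with $Y$: you extract a deflation $g'\colon P\twoheadrightarrow Z$ with $g'q=gf$ from one application of proposition \ref{proposition:PullbackPushout}, identify $\ker q$ with $\ker f$ as subobjects of $X$ from the dual application together with proposition \ref{proposition:PullbacksPreserveKernels}, and then invoke uniqueness of cokernels to supply the comparison isomorphism $h\colon P\xrightarrow{\sim}Y$ before transporting $g'$ across $h^{-1}$.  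Each of these steps checks out, but the isomorphism $h$ and the second $3\times3$ are overhead you could avoid entirely by first recognizing that $Y$ is itself the pushout dictated by axiom \ref{A3}.
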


\begin{proof}
	Proposition \ref{proposition:MitchellPullback} yields the following commutative diagram
	\[\xymatrix{
	\ker(gf)\ar@{.>}[d]^{f'} \ar@{>->}[r]^{k'} & X\ar@{->>}[r]\ar@{->>}[d]^f & Z\ar@{=}[d]\\
	\ker(g)\ar[r]^k & Y\ar[r]^g & Z
	}\]
	where the left-hand square is a pullback.  Axiom \ref{R2} implies that $f'$ is a deflation.
	Proposition \ref{proposition:WhenPullback} yields the existence of the following commutative diagram
	\[\xymatrix{
	K\ar@{>->}[d]^{l}\ar@{=}[r] & K\ar@{>->}[d]^{l'}\\
	\ker(gf)\ar@{->>}[d]^{f'} \ar@{>->}[r]^{k'} & X\ar@{->>}[d]^f \\
	\ker(g)\ar[r]^k & Y
	}\] 
	where the columns are conflations.  By proposition \ref{proposition:MitchellPullbackPushout}\eqref{enumerate:MitchellPushout}, we know that the lower square is a pushout.  Since $\ker(g)\in \Ob(\AA)$, axiom \ref{A3} implies that $k\colon \ker(g)\rightarrowtail Y$ is an inflation.  Proposition \ref{proposition:WhenPushout} implies that $g$ is the cokernel of the inflation $k$, and hence $g$ is a deflation.
\end{proof}

The next proposition is a version of the $3\times 3$-lemma as given in \cite[proposition 5.11]{BazzoniCrivei13}.

\begin{proposition}\label{proposition:WeakNineLemma}
	Consider a commutative diagram
	\[\xymatrix{
		X\ar@{>->}[r]\ar@{->>}[d]  & Y\ar@{->>}[r]\ar@{->>}[d]  & Z\ar@{->>}[d] \\
		X'\ar@{>->}[r] & Y'\ar@{->>}[r]  & Z'
	}\]
	where the rows are conflations and the vertical arrows are deflations.  If $X'\in \Ob(\AA)$, then the above diagram can be completed to a commutative diagram
	\[\xymatrix{
	  X''\ar@{>->}[r]\ar@{>->}[d] & Y''\ar@{>->}[d]\ar@{->>}[r] & Z''\ar@{>->}[d]\\
		X\ar@{>->}[r]\ar@{->>}[d]  & Y\ar@{->>}[r]\ar@{->>}[d]  & Z\ar@{->>}[d] \\
		X'\ar@{>->}[r] & Y'\ar@{->>}[r]  & Z'
	}\]
	where the rows and the columns are conflations.  Moreover, the upper left square is a pullback and the lower right square is a pushout.
\end{proposition}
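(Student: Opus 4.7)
The plan is to construct the top row as the kernels of the three vertical deflations, produce a deflation in the top row via a pullback along an intermediate deflation supplied by proposition~\ref{proposition:FactorizationOfConflationMorphism}, and then verify the corner-square properties via propositions~\ref{proposition:MitchellPullback} and \ref{proposition:MitchellPullbackPushout}. The hypothesis $X'\in\AA$ (and hence axiom~\ref{A3}) enters decisively through the factorization. Specifically, I apply proposition~\ref{proposition:FactorizationOfConflationMorphism} to the morphism $(f,g,h)$ from the middle row to the bottom row, producing an intermediate conflation $X' \stackrel{j}{\inflation} P \stackrel{\bar p}{\deflation} Z$ together with maps $\pi\colon Y \to P$ and $\gamma\colon P \to Y'$ satisfying $\pi i = jf$, $\gamma\pi = g$, $\bar p\pi = p$, $\gamma j = i'$, and $p'\gamma = h\bar p$, with the upper-left and lower-right squares both pullbacks and pushouts. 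Axiom~\ref{A3} applied to $i\colon X\inflation Y$ and $f\colon X\deflation X'$ (valid since $X'\in\AA$) ensures $\pi$ is a deflation, while axiom~\ref{R2} applied to the lower-right pullback ensures $\gamma$ is a deflation.

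Setting $X''=\ker f$, $Y''=\ker g$, $Z''=\ker h$ provides the three required vertical conflations. Proposition~\ref{proposition:PullbacksPreserveKernels}, applied to the upper-left pullback, identifies $\ker\pi\cong X''$ via $X''\inflation X\inflation Y$; applied to the lower-right pullback, it identifies $\ker\gamma\cong Z''$ via an inflation $Z''\inflation P$ whose composition with $\bar p$ is $Z''\inflation Z$. Define $\delta\colon Y''\to Z''$ by factoring $\pi\circ (Y''\inflation Y)$ through $\ker\gamma = Z''$, which is possible since $\gamma\pi(Y''\inflation Y) = g\circ(Y''\inflation Y) = 0$. A direct diagram chase using $Y''=\ker(\gamma\pi)$ shows that the square
\[\xymatrix{
    Y'' \ar[r]^{\delta} \ar@{>->}[d] & Z'' \ar@{>->}[d] \\
    Y \ar@{->>}[r]^{\pi} & P
}\]
is a pullback. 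Axiom~\ref{R2} then gives that $\delta$ is a deflation, and proposition~\ref{proposition:PullbacksPreserveKernels} identifies $\ker\delta\cong\ker\pi = X''$ via an induced inflation $X''\inflation Y''$; hence $X''\inflation Y''\deflation Z''$ is a conflation and forms the top row. For the upper-left square of the $3\times 3$ diagram, proposition~\ref{proposition:MitchellPullback}\eqref{enumerate:MitchellPullback} reduces the pullback claim to verifying that $X''\to Y''$ is the kernel of $p\circ(Y''\inflation Y)\colon Y''\to Z$; but this composition factors as $(Z''\inflation Z)\circ\delta$, and since $Z''\inflation Z$ is mono, its kernel is $\ker\delta = X''$. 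For the lower-right square, proposition~\ref{proposition:MitchellPullbackPushout}\eqref{enumerate:MitchellPushout} reduces the pushout claim to verifying that $p'$ is the cokernel of $g\circ i = i'\circ f$; this holds because $f$ is epic (a deflation) and $p'$ is the cokernel of $i'$.

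The principal obstacle is producing the intermediate object $P$ together with the fact that $\pi\colon Y\to P$ is a \emph{deflation}: in a general deflation-exact category, proposition~\ref{proposition:FactorizationOfConflationMorphism} produces only a categorical pushout, and it is exactly axiom~\ref{A3}, applicable because $X'\in\AA$, that promotes the pushout map $\pi$ to a deflation. This is the sole use of the hypothesis $X'\in\AA$, and once $P$, $\pi$, and $\gamma$ are available the remaining steps are routine diagram chases with the tools from Section~\ref{section:Preliminaries}.
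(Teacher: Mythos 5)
Your proof is correct and follows essentially the same route as the paper: both proofs factor the conflation morphism through the intermediate object $P$ via proposition~\ref{proposition:FactorizationOfConflationMorphism}, use axiom~\ref{A3} (via $X'\in\AA$) together with axiom~\ref{R2} to promote $Y\to P$ and $P\to Y'$ to deflations, construct the top row by identifying $X''=\ker\pi$ and $Z''=\ker\gamma$ and exhibiting $Y''\to Z''$ as the map fitting into a pullback over $\pi$, and finish the corner-square claims with the Mitchell propositions. The only difference is presentational — the paper assembles the $3\times 3$ from several explicitly drawn auxiliary squares, while you argue the kernel identifications more directly.
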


\begin{proof}
	By proposition \ref{proposition:FactorizationOfConflationMorphism}, the diagram can be extended to a commutative diagram 
	\[\xymatrix{
		X\ar@{>->}[r]^i\ar@{->>}[d]_{f} \ar@{}[rd] | {A} & Y\ar@{->>}[r]^p\ar[d] \ar@{}[rd] | {B} & Z\ar@{=}[d]\\
		X'\ar@{>->}[r]\ar@{=}[d] \ar@{}[rd] | {C} & P\ar@{->>}[r]\ar[d] \ar@{}[rd] | {D} & Z\ar@{->>}[d]^h\\
		X'\ar@{>->}[r]_{i'} & Y'\ar@{->>}[r]_{p'} & Z'
	}\]  such that the square {D} is a pullback and square {A} is both a pullback and a pushout. By axioms \ref{A3} and \ref{R2} the maps $Y\rightarrow P$ and $P\rightarrow Y'$ are deflations.  Applying proposition \ref{proposition:WhenPullback} yields the following commutative diagrams:
	\begin{center}
	\begin{tabular}{l p{.2\textwidth} r}
	$\xymatrix{
		X''\ar@{=}[r]\ar@{>->}[d]\ar@{}[rd] | {E} & X'' \ar@{>->}[d]&\\
		X\ar@{->>}[d]\ar@{}[rd] | {A}\ar@{>->}[r] & Y\ar@{->>}[d]\ar@{->>}[r]\ar@{}[rd] | {B} & Z\ar@{=}[d]\\
		X'\ar@{>->}[r] & P\ar@{->>}[r] & Z
	}$
	& &
	$\xymatrix{
		& Z''\ar@{=}[r]\ar@{>->}[d]\ar@{}[rd] | {F} & Z''\ar@{>->}[d]\\
		X'\ar@{=}[d]\ar@{>->}[r]\ar@{}[rd] | {C} & P\ar@{->>}[d]\ar@{->>}[r]\ar@{}[rd] | {D} & Z\ar@{->>}[d]\\
		X'\ar@{>->}[r] & Y'\ar@{->>}[r] & Z'
	}$
	\end{tabular}
	\end{center}
	where the rows and colums are conflations.  Starting from the conflations $X'' \rightarrowtail Y \twoheadrightarrow P$ and $Z'' \rightarrowtail P \twoheadrightarrow Y'$, we construct the commutative diagram
	\[\xymatrix{
	X''\ar@{=}[d]\ar@{.>}[r]\ar@{}[rd] | {G} & Y''\ar@{>->}[d]\ar@{.>}[r]\ar@{}[rd] | {H} & Z''\ar@{>->}[d] \\
	X'' \ar@{>->}[r]& Y \ar@{->>}[d]\ar@{->>}[r]\ar@{}[rd] | {I}& P\ar@{->>}[d]\\
	 & Y'\ar@{=}[r] & Y'	
	}\]  
where the rows and columns are conflations. Here, the dotted morphism $\xymatrix@1{Y'' \ar@{..>}[r] & Z''}$ is chosen such that the square {H} is a pullback (see proposition \ref{proposition:MitchellPullback}, the chosen morphism is automatically a deflation by \ref{R2}).  The square {G} is given by proposition \ref{proposition:WhenPullback}.

Putting the commutative squares together, we obtain the commutative diagram:
	\[\xymatrix{
	X''\ar@{=}[d]\ar@{>.>}[r]\ar@{}[rd] | {G} & Y''\ar@{>->}[d]\ar@{.>>}[r]\ar@{}[rd] | {H} & Z''\ar@{>->}[d] \\
	X'' \ar@{>->}[r] \ar@{>->}[d] \ar@{}[rd] | {E} & Y \ar@{=}[d]\ar@{->>}[r]\ar@{}[rd] | {B} & P\ar@{->>}[d]\\
	X \ar@{>->}[r] & Y\ar@{->>}[r] & Z	
	}\]
	where the right-most column composes to the morphism $Z'' \rightarrowtail Z$ by the square {F}.  As $X'' \rightarrowtail X$, $Y'' \rightarrowtail Y$, and $Z'' \rightarrowtail Z$ have been constructed as kernels of $X \twoheadrightarrow X'$, $Y \twoheadrightarrow Y'$, and $Z \twoheadrightarrow Z'$, respectively, we obtain the $3 \times 3$-diagram in the statement of the proposition.

Finally, consider the $3 \times 3$-diagram in the statement of the proposition.  It follows from proposition \ref{proposition:MitchellPullbackPushout} that the upper left square is a pullback and the lower right square is a pushout.
\end{proof}

\subsection{Weak isomorphisms are admissible}\label{subsection:AbelianPercolatingAdmissible}

Throughout this section, $\CC$ denotes a deflation-exact category and $\AA$ denotes an admissibly deflation-percolating subcategory. Consider the set $\widehat{S_{\AA}} \coloneqq S_{\AA}\cap \Adm(\CC)$ of admissible weak isomorphisms. The aim of this section is to show that $\widehat{S_{\AA}}=S_{\AA}$.

\begin{remark}\makeatletter
\hyper@anchor{\@currentHref}%
\makeatother\label{remark:DefinitionAdmissibleWeakIso}
	\begin{enumerate}
		\item A morphism $f\colon X\rightarrow Y$ in $\CC$ belongs to $\widehat{S_{\AA}}$ if and only if $f$ is admissible and $\ker(f),\coker(f)\in \AA$. 
		\item For any admissible morphism $f$, one automatically has that $\coim(f)\cong \im(f)$ and $f$ factors as deflation-inflation through $\im(f)$.
	  \item Admissible weak isomorphisms are called $\AA^{-1}$-isomorphisms in \cite[definition~4.0.36]{Cardenas98}.
		\item Any morphism $\alpha\colon A\rightarrow B$ in an admissibly deflation-percolating subcategory $\AA\subseteq \CC$ belongs to $\widehat{S_{\AA}}$. Indeed, this is an immediate corollary of proposition \ref{proposition:AIsAbelian}.
	\end{enumerate}
\end{remark}

We show two additional homological properties which are consequences of axiom \ref{A2}. The first is a strengthening of the lifting lemma (lemma \ref{lemma:LiftingConflations})

\begin{corollary}\label{corollary:WeakNineLemma}
Let $X \rightarrowtail Y \twoheadrightarrow Z$ be a conflation and $f\colon Y \twoheadrightarrow B$ be a deflation.  If $B \in \Ob(\AA)$, then there is a commutative diagram
	\[\xymatrix{
	  X''\ar@{>->}[r]\ar@{>->}[d]^{\rotatebox{90}{$\sim$}} & Y''\ar@{>->}[d]^{\rotatebox{90}{$\sim$}}\ar@{->>}[r] & Z''\ar@{>->}[d]^{\rotatebox{90}{$\sim$}}\\
		X\ar@{>->}[r]\ar@{->>}[d]  & Y\ar@{->>}[r]\ar@{->>}[d]^{f}  & Z\ar@{->>}[d] \\
		A\ar@{>->}[r] & B\ar@{->>}[r]  & C
	}\]
	where the rows and the columns are conflations, and where the bottom row lies in $\AA$.  Moreover, the upper left square is a pullback and the lower right square is a pushout.
\end{corollary}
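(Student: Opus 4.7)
The plan is to reduce the corollary to the weak nine lemma, proposition \ref{proposition:WeakNineLemma}. That proposition requires a commutative $2 \times 3$ diagram with conflation rows, vertical arrows that are deflations, and the bottom-left object in $\AA$. Once I have such a diagram, the $3 \times 3$ diagram (and the bicartesian corner squares) follows directly; only the facts that the vertical inflations in the top row are weak isomorphisms, and that the bottom row lies in $\AA$, need to be checked.

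First I would construct the bottom row and the left two vertical deflations. Apply axiom \ref{A2} to the composition $f \circ i\colon X \to B$ (with $B \in \AA$) to obtain a factorization $X \stackrel{\alpha}{\twoheadrightarrow} A \stackrel{j}{\rightarrowtail} B$ with $A \in \AA$, where $i\colon X \rightarrowtail Y$ is the given inflation. Let $q\colon B \twoheadrightarrow C$ be the cokernel of $j$; then $C \in \AA$ by axiom \ref{A1}, and $A \stackrel{j}{\rightarrowtail} B \stackrel{q}{\twoheadrightarrow} C$ is a conflation entirely in $\AA$. Since $qfi = qj\alpha = 0$, the cokernel property of $p\colon Y \twoheadrightarrow Z$ yields a unique $g\colon Z \to C$ with $gp = qf$, giving a commutative $2 \times 3$ diagram whose rows are conflations and whose left and middle vertical arrows $\alpha$ and $f$ are deflations.

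The main obstacle is showing that the right vertical $g\colon Z \to C$ is itself a deflation. For this I would use axiom \ref{A2} a second time, applied to $g$, to factor it as $Z \stackrel{e}{\twoheadrightarrow} C' \stackrel{m}{\rightarrowtail} C$ with $C' \in \AA$. The composite $qf = gp$ is a deflation (by \ref{R1}) hence an epimorphism (by lemma \ref{Lemma:BasicProperties}); since $p$ is also epic, $g$ must be epic, and since $e$ is epic, $m$ must be epic as well. As $m$ is an inflation which is also an epimorphism, lemma \ref{Lemma:BasicProperties} forces $m$ to be an isomorphism, so $g = m \circ e$ is a composition of an isomorphism and a deflation, hence a deflation by \ref{R1}.

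With $g$ a deflation, proposition \ref{proposition:WeakNineLemma} applies and produces the $3 \times 3$ diagram in the statement, including the pullback property of the upper-left square and the pushout property of the lower-right square. It remains only to observe that each column is a conflation of the form $V'' \rightarrowtail V \twoheadrightarrow W$ with $W \in \{A, B, C\} \subseteq \AA$, so each vertical inflation is an $\AA^{-1}$-inflation and hence a weak isomorphism, and that the bottom row lies in $\AA$ by construction.
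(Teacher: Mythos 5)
Your proof is correct and follows essentially the same route as the paper: apply \ref{A2} to $f \circ i$ to build the bottom conflation in $\AA$, induce $g\colon Z \to C$ from the cokernel property of $p$, show $g$ is a deflation, and then invoke proposition \ref{proposition:WeakNineLemma}. The only cosmetic difference is that you re-derive the fact that an epimorphism into an object of $\AA$ is a deflation (via a second application of \ref{A2}), whereas the paper simply cites lemma \ref{Lemma:EpiToA}, which records exactly that consequence of axioms \ref{A1} and \ref{A2}.
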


\begin{proof}
It follows from \ref{A2} that the composition $X \to Y \to B$ factors as $X \twoheadrightarrow A \rightarrowtail B$ with $A \in \Ob(\AA)$.  This gives the following commutative diagram:
	\[\xymatrix{
		X\ar@{>->}[r]\ar@{->>}[d]  & Y\ar@{->>}[r]\ar@{->>}[d]^{f}  & Z\ar@{.>>}[d] \\
		A\ar@{>->}[r] & B\ar@{->>}[r]  & C
	}\]
	with exact rows (the bottom rows lies in $\AA$ by \ref{A1}).  The dotted arrow is induced by the universal property of the cokernel $Y\twoheadrightarrow Z$.  One easily verifies that the dotted arrow is an epimorphism and, thus, by lemma \ref{Lemma:EpiToA}, a deflation.  The statement now follows from proposition \ref{proposition:WeakNineLemma}.
\end{proof}

\begin{proposition}\label{proposition:PushoutOfAInverseAlongMapToA}
	Let $f\colon X\rightarrow Y$ belong to $\widehat{S_{\AA}}$ and let $g\colon X\rightarrow A$ be any morphism.  If $A \in \Ob(\AA)$, then the pushout of $f$ along $g$ exists and the induced map belongs to $\widehat{S_{\AA}}$.
\end{proposition}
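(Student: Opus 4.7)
The plan is to factor $f$ admissibly as $X \stackrel{p}{\deflation} I \stackrel{i}{\inflation} Y$, with $K := \ker p \in \AA$ and $K' := \coker i \in \AA$, and to construct the pushout of $f$ along $g$ by pasting three successive auxiliary pushouts.

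First, I would push $p$ along $g$. Writing $k\colon K \inflation X$ for the kernel inflation, $p$ is the cokernel of $k$, so by Proposition~\ref{proposition:MitchellPullbackPushout}(\ref{enumerate:MitchellPushout}) the pushout of $p$ along $g$ exists as soon as $\coker(gk)$ exists in $\CC$. Since $\AA$ is abelian (Proposition~\ref{proposition:AIsAbelian}), $gk\colon K\to A$ factors in $\AA$ as $K \deflation J \inflation A$, producing a conflation $J \inflation A \deflation A/J$ in $\AA$, and hence in $\CC$; thus $A \deflation A/J$ is the cokernel of $gk$ in $\CC$ too. One obtains a pushout
\[\xymatrix{X \ar@{->>}[r]^p \ar[d]^g & I \ar[d]^{\bar g} \\ A \ar@{->>}[r]^{q'} & A/J}\]
with $q'$ a deflation (kernel $J \in \AA$) and induced morphism $\bar g\colon I \to A/J$.

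Second, I would push $i$ along $\bar g$ in two stages. Since $A/J \in \AA$, axiom \ref{A2} factors $\bar g$ as $I \stackrel{q_2}{\deflation} B \stackrel{m_2}{\inflation} A/J$ with $B \in \AA$. Axiom \ref{A3} applied to the inflation $i$ and the deflation $q_2\colon I \deflation B$ (whose target lies in $\AA$) yields a pushout square in which $B \inflation Y_1$ is an inflation and $Y \deflation Y_1$ is a deflation. Proposition~\ref{proposition:PushoutsPreserveCokernels} gives $\coker(B \inflation Y_1) = \coker(i) = K' \in \AA$, so $Y_1$ is an extension of $K'$ by $B$ and therefore lies in $\AA$ by the Serre condition \ref{A1}. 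The remaining pushout, of $B \inflation Y_1$ along $m_2\colon B \inflation A/J$, now takes place entirely inside the abelian category $\AA$, so the pushout $P$ exists there with $Y_1 \inflation P$ and $A/J \inflation P$ both monic. Its induced sequence $B \to Y_1 \oplus A/J \to P$ is a conflation in $\AA$, hence in $\CC$, and Proposition~\ref{proposition:PushoutIfCokernel}(\ref{enumerate:CommutativeSquare1Conflation}) then upgrades the square to a pushout in $\CC$ as well.

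Pasting the three pushouts yields the pushout square for $f$ along $g$. The induced morphism $A \to P$ is the composition $A \stackrel{q'}{\deflation} A/J \inflation P$, visibly admissible, with kernel $J \in \AA$ and cokernel $\coker(A/J \inflation P) = K' \in \AA$; hence $A \to P \in \widehat{S_\AA}$. The main obstacle is the final pushout of two inflations, which is not guaranteed by the axioms of a deflation-exact category. The resolution is to exploit axioms \ref{A2} and \ref{A3} to arrange for all the data of this last pushout to live inside the abelian subcategory $\AA$, so that it can be computed there and transported back to $\CC$ via the conflation characterization of inflation-inflation pushouts.
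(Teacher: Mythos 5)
Your proof is correct, and while it shares the paper's essential idea -- that the dangerous step is a pushout of two inflations, which a deflation-exact category cannot supply, and that the way out is to arrange all the data of that pushout to lie in the abelian subcategory $\AA$ -- it organizes the argument differently. The paper factors \emph{both} $f$ and $g$ admissibly, applies the admissible $3 \times 3$ corollary (Corollary~\ref{corollary:WeakNineLemma}) once to produce a first pushout square landing in $\AA$, then applies axiom \ref{A3} twice to push out both admissible factorizations, and finally completes the $3\times3$ grid with a pushout computed in the abelian category $\AA$. You instead factor only $f$ admissibly, use Mitchell's criterion (Proposition~\ref{proposition:MitchellPullbackPushout}) to obtain the first pushout directly from the cokernel of $gk$ (which you locate in $\AA$ via the image factorization of $gk$ in the abelian category $\AA$), apply axiom \ref{A3} only once, and perform the remaining pushout of two inflations inside $\AA$. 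Your route bypasses Corollary~\ref{corollary:WeakNineLemma} entirely, so it is marginally more elementary given the axioms; the paper's route is tighter once that corollary is in hand and produces the same conclusion. One detail you handle implicitly but correctly, and which deserves emphasis: a monomorphism between $\AA$-objects is automatically an inflation of $\CC$ (by \ref{A2} it is admissible, and a monic admissible morphism has trivial deflation part), and a short exact sequence in the abelian category $\AA$ is a conflation of $\CC$ -- both facts are needed to transport your abelian pushout back to $\CC$ via Proposition~\ref{proposition:PushoutIfCokernel}\eqref{enumerate:CommutativeSquare1Conflation}.
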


\begin{proof}
	By definition, $f$ is an admissible map. By axiom \ref{A2}, $g$ is admissible as well. Since $A' \in \AA$, corollary \ref{corollary:WeakNineLemma} yields a commutative diagram
	\[\xymatrix{
		Y && \\
		X'\ar@{>->}[u]\ar@{.>>}[r] & P& \\
		X\ar@{->>}[r] \ar@{->>}[u]& A' \ar@{.>>}[u]\ar@{>->}[r]&A\\
		\ker(f) \ar@{.>>}[r]\ar@{>->}[u]& A''\ar@{>.>}[u]
	}\]
	with $P \in \AA$ and such that the square $X'PA'X$ is a pushout. Applying axiom \ref{A3} twice yields a commutative diagram
		\[\xymatrix{
		Y\ar@{.>>}[r] & Q& \\
		X'\ar@{>->}[u]\ar@{->>}[r]\ar@{>.>}[u] & P\ar@{>.>}[u]\ar@{>.>}[r]&R \\
		X\ar@{->>}[r] \ar@{->>}[u]& A' \ar@{->>}[u]\ar@{>->}[r]&A\ar@{.>>}[u]
	}\]

	Since $Q,R,P\in \Ob(\AA)$ and $\AA$ is an abelian category by proposition \ref{proposition:AIsAbelian}, we can complete $Q,R,P$ to a pushout square. Hence we obtain the commutative diagram
	\[\xymatrix{
		Y\ar@{->>}[r] & Q\ar@{>.>}[r]& S\\
		X'\ar@{>->}[u]\ar@{->>}[r]\ar@{>->}[u] & P\ar@{>->}[u]\ar@{>->}[r]&R\ar@{>.>}[u] \\
		X\ar@{->>}[r] \ar@{->>}[u]& A' \ar@{->>}[u]\ar@{>->}[r]&A\ar@{->>}[u]\\
	}\] where all squares are pushout squares.  It follows from the pushout lemma that the square $YSAX$ is a pushout square as well. Since the map $A\rightarrow S$ belongs to $\AA$, remark \ref{remark:DefinitionAdmissibleWeakIso} yields that it is an admissible weak isomorphism.
\end{proof}

\begin{lemma}\label{lemma:CompositionOfAAInflations}
	Let $\CC$ be a deflation-exact category and let $\AA$ be a full subcategory of $\CC$. If $\AA$ satisfies axioms \ref{A1} and \ref{A3}, the composition of $\AA^{-1}$-inflations is again an $\AA^{-1}$-inflation.
\end{lemma}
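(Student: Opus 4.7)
Let $X \stackrel{i}{\inflation} Y$ and $Y \stackrel{j}{\inflation} Z$ be two $\AA^{-1}$-inflations, with conflations $X \stackrel{i}{\inflation} Y \stackrel{p}{\deflation} A$ and $Y \stackrel{j}{\inflation} Z \stackrel{q}{\deflation} B$ where $A,B \in \Ob(\AA)$. The plan is to produce an object $P \in \Ob(\AA)$ and a deflation $q'\colon Z \deflation P$ whose kernel is $ji$.

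Apply axiom \ref{A3} to the inflation $j\colon Y \inflation Z$ and the deflation $p\colon Y \deflation A$ (noting $A \in \Ob(\AA)$) to obtain a pushout square
\[\xymatrix{
Y \ar@{>->}[r]^{j} \ar@{->>}[d]^{p} & Z \ar@{->>}[d]^{q'} \\
A \ar@{>->}[r]^{j'} & P
}\]
in which $j'$ is an inflation and $q'$ is a deflation. By proposition \ref{proposition:PushoutIfCokernel}, this square extends to a morphism of conflations with $B$ as the common cokernel, so in particular $A \stackrel{j'}{\inflation} P \deflation B$ is a conflation. Axiom \ref{A1} then forces $P \in \Ob(\AA)$.

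The crucial step is to identify $ji\colon X \to Z$ with $\ker(q')$. Composing, $q' \circ j \circ i = j' \circ p \circ i = 0$, so $ji$ factors through $k\colon \ker(q') \inflation Z$. Conversely, the pushout square is also a pullback (proposition \ref{proposition:PushoutIfCokernel}), so $Y \cong A \times_P Z$; pulling back the inclusion $0 \inflation A$ along $j'$ then along $q'$, the pullback lemma identifies $\ker(q')$ with the pullback of $0 \to P \leftarrow Z$, which factors through $Y$ via a morphism whose composition to $A$ is zero, and therefore factors through $\ker(p) = X$. This produces a map $\ker(q') \to X$ inverse to the factorization $X \to \ker(q')$ (both are monomorphisms once one notes $ji$ is a composition of monomorphisms). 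Hence $ji = \ker(q')$, and since $q'$ is a deflation, $X \stackrel{ji}{\inflation} Z \stackrel{q'}{\deflation} P$ is a conflation with $P \in \Ob(\AA)$, proving that $ji$ is an $\AA^{-1}$-inflation.

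The main obstacle is the identification $ji = \ker(q')$. One could alternatively avoid the explicit pullback chase by invoking proposition \ref{proposition:MitchellPullbackPushout}\eqref{enumerate:MitchellPullback} on the rectangle obtained by stacking the left pullback square $X \to Y \to A$ (over $0 \to A$) with the pullback/pushout square above, but either way the essential input is that the \ref{A3}-square is bicartesian.
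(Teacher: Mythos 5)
Your proof is correct and follows essentially the same route as the paper's: apply axiom \ref{A3} to the inflation $j$ and the deflation $p = \coker(i)^\sharp$ to produce the pushout square, observe it is bicartesian (proposition \ref{proposition:PushoutIfCokernel}), extend it to a morphism of conflations to read off $A \inflation P \deflation B$ (hence $P \in \AA$ by \ref{A1}), and use the pullback property to identify $ji$ with $\ker(q')$. The explicit iterated-pullback chase you wrote out ($0 \times_P Z \cong 0 \times_A (A \times_P Z) = \ker p = X$) is more elaborate than needed; the paper simply applies the bicartesian property directly (your stated alternative via proposition \ref{proposition:MitchellPullbackPushout}\eqref{enumerate:MitchellPullback}), which is the cleaner version of the same idea.
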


\begin{proof}
	Let $\xymatrix{U\ar@{>->}[r]^{\sim}_a & V}$ and $\xymatrix{V\ar@{>->}[r]^{\sim}_b & W}$ be two $\AA^{-1}$-conflations. Consider the following commutative diagram:
	\[\xymatrix{
		U\ar@{>->}[r]^{\sim}_a\ar@{=}[d] & V\ar@{->>}[r]\ar@{>->}[d]^{\rotatebox{90}{$\sim$}}_b &\coker(a)\ar@{>->}[d]\\
		U\ar[r]_{ba} & W\ar@{->>}[r]\ar@{->>}[d] &P\ar@{->>}[d]\\
		  & \coker(b)\ar@{=}[r] & \coker(b)
	}\] The top right square is a pushout square which exists by axiom \ref{A3}. Note that the top right square is also a pullback, it follows that the compostion $ba$ is the kernel of the deflation $W\deflation P$. Since $\coker(a),\coker(b)\in \AA$, axiom \ref{A1} imlies that $P\in \AA$ as well. It follows that $ba$ is an $\AA^{-1}$-inflation.
\end{proof}

The next lemma is crucial in showing that $S_{\AA}=\widehat{S_{\AA}}$, i.e.~that the weak isomorphisms are automatically admissible.

\begin{lemma}\label{lemma:SwitchAInflationsAndADeflations}
	Let $\CC$ be a deflation-exact category and let $\AA\subseteq \CC$ be an admissibly deflation-percolating subcategory. Let $a\colon U \stackrel{\sim}{\inflation} V$ and $b\colon V\stackrel{\sim}{\deflation} W$ be an $\AA^{-1}$-inflation and $\AA^{-1}$-deflation, respectively. The composition $b\circ a$ is an admissible weak isomorphism.
\end{lemma}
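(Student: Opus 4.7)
The plan is to apply corollary \ref{corollary:WeakNineLemma} to carve out the image of $U$ inside $W$ as a lift of $A_1$ through the deflation $b$.  Concretely, let $p_1\colon V \twoheadrightarrow A_1$ denote the cokernel of the inflation $a$, so $A_1 \in \AA$ since $a$ is an $\AA^{-1}$-inflation, and let $i_2\colon A_2 \rightarrowtail V$ denote the kernel of the deflation $b$, so $A_2 \in \AA$ since $b$ is an $\AA^{-1}$-deflation.  This gives us the conflation $A_2 \stackrel{i_2}{\rightarrowtail} V \stackrel{b}{\twoheadrightarrow} W$ together with the deflation $p_1\colon V \twoheadrightarrow A_1$ into the object $A_1$ of $\AA$ — precisely the setup of corollary \ref{corollary:WeakNineLemma}.

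Applying corollary \ref{corollary:WeakNineLemma} yields a $3\times 3$-diagram
\[\xymatrix{
X'' \ar@{>->}[r]\ar@{>->}[d]^{\rotatebox{90}{$\sim$}} & V''\ar@{>->}[d]^{\rotatebox{90}{$\sim$}}\ar@{->>}[r] & W''\ar@{>->}[d]^{\rotatebox{90}{$\sim$}}\\
A_2\ar@{>->}[r]^{i_2}\ar@{->>}[d] & V\ar@{->>}[r]^{b}\ar@{->>}[d]^{p_1} & W\ar@{->>}[d]\\
A'\ar@{>->}[r] & A_1\ar@{->>}[r] & C
}\]
with conflations in every row and column, and whose bottom row lies in $\AA$.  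Since $V''\rightarrowtail V$ is the kernel of $p_1$ and kernels are unique up to unique isomorphism, we may identify the middle column with the original conflation $U \stackrel{a}{\rightarrowtail} V \stackrel{p_1}{\twoheadrightarrow} A_1$; in particular $V'' \cong U$.  The top row then reads $X''\rightarrowtail U\twoheadrightarrow W''$, and commutativity of the upper-right square shows that the composition $U \twoheadrightarrow W'' \rightarrowtail W$ agrees with $b\circ a$.  Hence $ba$ is admissible, with deflation-inflation factorization supplied by $U \twoheadrightarrow W'' \rightarrowtail W$.

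To conclude $ba \in \widehat{S_\AA}$ it remains to check that the kernel and cokernel of $ba$ lie in $\AA$.  The kernel of $ba$ is $X''$, which by the leftmost column fits in a conflation $X'' \rightarrowtail A_2 \twoheadrightarrow A'$ with $A_2, A' \in \AA$, so $X'' \in \AA$ by the Serre axiom \ref{A1}.  The cokernel of $ba$ is the cokernel of $W''\rightarrowtail W$, which is $C$, and $C \in \AA$ since it belongs to the bottom row.  Thus $ba$ is an admissible morphism whose kernel and cokernel lie in $\AA$, which is exactly the characterization of $\widehat{S_\AA}$ recorded in remark \ref{remark:DefinitionAdmissibleWeakIso}.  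The only substantive step is the initial recognition that corollary \ref{corollary:WeakNineLemma} applies with $p_1$ playing the role of the auxiliary deflation into $\AA$; everything else is direct diagram chasing.
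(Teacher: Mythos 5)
Your proof is correct and follows exactly the paper's argument: the paper likewise applies corollary \ref{corollary:WeakNineLemma} to the conflation $\ker(b) \rightarrowtail V \stackrel{b}{\twoheadrightarrow} W$ with the auxiliary deflation $c_a\colon V \twoheadrightarrow \coker(a)$, identifies the middle column's kernel with $U$, reads off the deflation-inflation factorization of $ba$ from the upper-right square, and uses axiom \ref{A1} plus the bottom row to place $\ker(ba)$ and $\coker(ba)$ in $\AA$. The only difference is notational ($X'', V'', W'', A', A_1, C$ versus the paper's $\ker(c_ak_b), U, \ker(c_a'), \im(c_ak_b), \coker(a), \coker(c_ak_b)$).
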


\begin{proof}
	Using corollary \ref{corollary:WeakNineLemma}, we find the commutative diagram
				\[\xymatrix{
			 \ker(c_ak_b)\ar@{>->}[d]^i\ar[r]^{k_b'}& U\ar@{->>}[r]^{c_b'} \ar@{>->}[d]^a & \ker(c_a')\ar@{>->}[d]^{k_a'}\\
			\ker(b)\ar@{>->}[r]^{k_b}\ar@{->>}[d]^{p} & V\ar@{->>}[r]^{b}\ar@{->>}[d]^{c_a} & W\ar@{->>}[d]^{c_a'}\\
			\im(c_ak_b)\ar@{>->}[r]^{i'}& \coker(a)\ar@{->>}[r]^{p'} &\coker(c_a k_b)
		}\]
		such that the rows and columns are conflations.  By axiom \ref{A1}, the left column and lower row belong to $\AA$.  The upper-right square shows that $ba=k_a'c_b'$ and thus $ba$ is admissible. Clearly, $\ker(ba)=\ker(c_ak_b)$ and $\coker(ba)=\coker(c_a k_b)$, both belonging to $\AA$.
\end{proof}

\begin{theorem}\label{theorem:WeakIsomorphismsEqualAAInverseIsomorphisms}
Let $\CC$ be a deflation-exact category. If $\AA\subseteq \CC$ an admissibly deflation-percolating subcategory, then $S_{\AA}=\widehat{S_{\AA}}$, in particular all weak isomorphisms are admissible. Moreover, $S_{\AA}$ is a right multiplicative system such that the square in axiom \ref{RMS2} can be chosen as a pullback square, in particular, one can take pullbacks along weak isomorphisms.
\end{theorem}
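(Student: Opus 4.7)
The plan is to prove the theorem in two stages: first I will show the set equality $S_\AA = \widehat{S_\AA}$, and then I will read off the right multiplicative system / pullback properties from results already in the text.

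First, observe that the inclusion $\widehat{S_\AA} \subseteq S_\AA$ needs nothing, since an admissible map with kernel and cokernel in $\AA$ factors as an $\AA^{-1}$-deflation followed by an $\AA^{-1}$-inflation. For the reverse inclusion, the strategy is to show that $\widehat{S_\AA}$ is closed under composition; since every $\AA^{-1}$-inflation and every $\AA^{-1}$-deflation is trivially in $\widehat{S_\AA}$ (its deflation-inflation factorization being the one given by taking the identity as the missing half, using that identities are deflations by Lemma \ref{Lemma:BasicProperties}), any finite composition of such maps then lies in $\widehat{S_\AA}$.

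For closure under composition, let $f,g \in \widehat{S_\AA}$ and write their deflation-inflation factorizations as $f = i_f \circ p_f$ and $g = i_g \circ p_g$, where $p_f, p_g$ are $\AA^{-1}$-deflations and $i_f, i_g$ are $\AA^{-1}$-inflations. Then
\[
g \circ f \;=\; i_g \circ (p_g \circ i_f) \circ p_f.
\]
The middle composition $p_g \circ i_f$ is of the form ``$\AA^{-1}$-inflation followed by $\AA^{-1}$-deflation'', so by Lemma \ref{lemma:SwitchAInflationsAndADeflations} it is an admissible weak isomorphism, and can therefore be rewritten as $i' \circ p'$ with $p'$ an $\AA^{-1}$-deflation and $i'$ an $\AA^{-1}$-inflation. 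Substituting, $g \circ f = (i_g \circ i') \circ (p' \circ p_f)$. Lemma \ref{lemma:CompositionOfAAInflations} (using axioms \ref{A1} and \ref{A3}) makes $i_g \circ i'$ an $\AA^{-1}$-inflation, while Lemma \ref{lemma:CompositionOfAAdeflations} makes $p' \circ p_f$ an $\AA^{-1}$-deflation. Applying Lemma \ref{lemma:SwitchAInflationsAndADeflations} one more time to this ``inflation after deflation'' shape produces an admissible factorization of $g \circ f$, proving $g\circ f \in \widehat{S_\AA}$. Induction on composition length then yields $S_\AA \subseteq \widehat{S_\AA}$.

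For the remaining statements, Remark \ref{remark:AbelianPercolatingAboutAxioms}(4) notes that axiom \ref{A2} makes $\AA$ a strongly right filtering subcategory in the sense of Definition \ref{definition:AdditionalPercolatingDefinitions}. Hence Proposition \ref{proposition:MinimalConditionsRMS} applies: $S_\AA$ is a right multiplicative system, and the square produced in axiom \ref{RMS2} can be chosen to be a pullback. In particular pullbacks along weak isomorphisms exist. The main obstacle in the argument is the bookkeeping in the composition-closure step, specifically making sure that when one invokes Lemma \ref{lemma:SwitchAInflationsAndADeflations} the result is genuinely a \emph{new} deflation-inflation factorization that can then be recombined with $i_g$ and $p_f$ using the one-sided composition lemmas; once one notices that the three lemmas just match up to let us ``commute'' inflations past deflations modulo $\AA$, the rest is immediate.
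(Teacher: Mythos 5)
Your proof is essentially correct and follows the same strategy the paper has in mind (the paper's own proof is just a one-line pointer to Proposition \ref{proposition:MinimalConditionsRMS} and Lemmas \ref{lemma:CompositionOfAAInflations} and \ref{lemma:SwitchAInflationsAndADeflations}). Your unpacking — show $\widehat{S_\AA}$ contains all generators and is closed under composition, via the ``commute an inflation past a deflation'' step furnished by Lemma \ref{lemma:SwitchAInflationsAndADeflations}, then recombine using Lemmas \ref{lemma:CompositionOfAAInflations} and \ref{lemma:CompositionOfAAdeflations} — is exactly the intended reading. The appeal to Remark \ref{remark:AbelianPercolatingAboutAxioms}(4) plus Proposition \ref{proposition:MinimalConditionsRMS} for the pullback statement is likewise the argument the paper has in view.

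One small slip at the very end: once you have written $g\circ f = (i_g\circ i')\circ(p'\circ p_f)$ with $p'\circ p_f$ an $\AA^{-1}$-deflation (applied first) and $i_g\circ i'$ an $\AA^{-1}$-inflation (applied second), you are already done — this \emph{is} a deflation-inflation factorization with kernel and cokernel in $\AA$. Lemma \ref{lemma:SwitchAInflationsAndADeflations} applies to a composition of the shape ``$\AA^{-1}$-inflation followed by $\AA^{-1}$-deflation'', which is not the shape you have at that stage, so your final appeal to that lemma is both unnecessary and not actually applicable; you should simply conclude directly. This is a wording confusion rather than a gap, and it does not affect the validity of the argument.
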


\begin{proof}
	The proof is a straightforward application of proposition \ref{proposition:MinimalConditionsRMS} and  lemmas \ref{lemma:CompositionOfAAInflations} and \ref{lemma:SwitchAInflationsAndADeflations}. 
\end{proof}

\subsection{The 2-out-of-3 property}\label{subsection:2oo3}

Throughout this section $\CC$ is a deflation-exact category and $\AA$ is an admissibly deflation-percolating subcategory. We now show that the right multiplicative system $S_{\AA}$ of admissible weak isomorphisms satisfies the 2-out-of-3 property.  We first establish some preliminary results.

\begin{lemma}\label{Lemma:Bazzoni5.10}\label{Lemma:DeflationInflationFiveLemma}
Consider a commutative diagram
\[\xymatrix{
		X\ar@{>->}[r]\ar[d]^f & Y\ar@{->>}[r]\ar[d]^g & Z\ar@{=}[d]\\
		X'\ar@{>->}[r] & Y'\ar@{->>}[r] & Z
	}\]
with exact rows.
	\begin{enumerate}
	\item If $f$ is an $\AA^{-1}$-inflation, then $g$ is an $\AA^{-1}$-inflation.
	\item If $f$ is an $\AA^{-1}$-deflation, then $g$ is an $\AA^{-1}$-deflation.
	\end{enumerate}
\end{lemma}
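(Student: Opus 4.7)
Both parts begin by applying proposition~\ref{proposition:FactorizationOfConflationMorphism} to factor the given morphism of conflations as
\[\xymatrix{
X\ar@{>->}[r]^i\ar[d]^f & Y\ar@{->>}[r]^p\ar[d]^{\alpha} & Z\ar@{=}[d] \\
X'\ar@{=}[d]\ar@{>->}[r]^{\tilde{i}} & P \ar@{->>}[r]^{\tilde{p}}\ar[d]^{\beta} & Z\ar@{=}[d]\\
X'\ar@{>->}[r]^{i'} & Y'\ar@{->>}[r]^{p'} & Z}\]
with $g=\beta\alpha$ and both corner squares bicartesian. Since the lower-right pullback is taken along $1_Z$, the map $\beta$ is an isomorphism; moreover, proposition~\ref{proposition:PushoutIfCokernel} turns the upper-left pushout into a conflation
\[X\xrightarrow{\psi}X'\oplus Y\xrightarrow{\phi}P,\qquad\psi=\begin{psmallmatrix}f\\i\end{psmallmatrix},\qquad\phi=\begin{psmallmatrix}-\tilde i&\alpha\end{psmallmatrix}.\]
It therefore suffices to exhibit $\alpha$ as an $\AA^{-1}$-inflation (part~1) or an $\AA^{-1}$-deflation (part~2).

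\textit{Part~(1).} Let $q\colon X'\deflation A$ be the cokernel of $f$ (so $A\in\AA$). The composition $q\pi_{X'}\colon X'\oplus Y\deflation A$ is a deflation (using remark~\ref{remark:AbelianPercolatingAboutAxioms} to get~\ref{R0*}, hence the split projection is a deflation, then \ref{R1}) whose target lies in $\AA$. I apply corollary~\ref{corollary:WeakNineLemma} to the conflation $X\inflation X'\oplus Y\deflation P$ together with $q\pi_{X'}$. Because $q\pi_{X'}\psi=qf=0$, the factorization $X\deflation A'\inflation A$ furnished by the corollary has $A'=0$, so the left column of the resulting $3\times 3$-diagram is the trivial conflation $X\stackrel{=}{\to}X\to 0$. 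A direct identification then shows the middle column to be $X\oplus Y\inflation X'\oplus Y\deflation A$ (with inclusion $f\oplus 1_Y$) and the top row to be $X\inflation X\oplus Y\deflation Y$. Consequently the right column of the $3\times 3$-diagram is precisely the sought conflation $Y\overset{\alpha}{\inflation}P\overset{\rho}{\deflation}A$.

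\textit{Part~(2).} The upper-left pullback gives $\ker\alpha=\ker f=K\in\AA$. I invoke proposition~\ref{proposition:WeakR3}: it suffices to produce a deflation $h\colon Q\deflation Y$ for which $\alpha h$ is a deflation. Take the pullback
\[\xymatrix{
Q\ar[r]^{h'}\ar@{->>}[d]^h & X'\oplus Y\ar@{->>}[d]^\phi\\
Y\ar[r]^\alpha & P}\]
of $\phi$ along $\alpha$. Axiom~\ref{R2} makes $h$ a deflation. A computation identifies $Q\cong X\oplus Y$ via $(x,y)\mapsto(y,(f(x),y+i(x)))$; under this identification, $h=\pi_Y$ and $h'=(f\oplus 1_Y)\circ\begin{psmallmatrix}1&0\\i&1\end{psmallmatrix}$. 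The direct sum $f\oplus 1_Y$ is a deflation (direct sums of conflations are conflations, by iterating~\ref{R1} and~\ref{R2}), and the second factor is an automorphism of $X\oplus Y$, so $h'$ is a deflation. Hence $\alpha h=\phi h'$ is a deflation by~\ref{R1}, and proposition~\ref{proposition:WeakR3} concludes that $\alpha$, and therefore $g$, is an $\AA^{-1}$-deflation.

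\textit{Main obstacle.} A deflation-exact category has neither the dual of axiom~\ref{R2} nor, in general, the obscure axiom~\ref{R3}, so neither part admits a naive pushout or obscure-axiom argument. Part~(1) circumvents this by choosing the test deflation $q\pi_{X'}$ so that it annihilates the inflation $\psi$, letting corollary~\ref{corollary:WeakNineLemma} collapse to produce the required conflation directly. Part~(2) substitutes proposition~\ref{proposition:WeakR3} for~\ref{R3}, the hypothesis $\ker\alpha\in\AA$ being met by construction; the delicate step is showing that the non-standard leg $h'$ of the pullback is a deflation, which reduces after an elementary change of coordinates to the direct sum $f\oplus 1_Y$.
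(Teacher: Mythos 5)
Your proof is correct and uses the same two key tools as the paper's --- corollary~\ref{corollary:WeakNineLemma} for part~(1) and proposition~\ref{proposition:WeakR3} for part~(2) --- but deploys them along a different path. In part~(1), the paper works with the original bottom conflation: it notes that the left square being a pushout forces $\coker g\cong\coker f\in\Ob(\AA)$, shows via lemma~\ref{Lemma:EpiToA} that the cokernel map $c_g\colon Y'\to\coker g$ is a deflation, applies corollary~\ref{corollary:WeakNineLemma} to $X'\inflation Y'\deflation Z$ with $c_g$, and finishes with the short five lemma to identify $Y$ with $\ker(c_g)$. You instead apply the corollary to the auxiliary pushout conflation $X\inflation X'\oplus Y\deflation Y'$ with the split test deflation $q\pi_{X'}$; because that composition kills $\psi$, the bottom-left entry of the $3\times3$ collapses to $0$ and the right column is the desired conflation $Y\stackrel{g}{\inflation}Y'\deflation\coker f$. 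This route avoids the five lemma, at the cost of several explicit identifications ($Y''\cong X\oplus Y$ via $f\oplus 1_Y$, $Z''\cong Y$, and the induced map $Z''\to P$ being $g$) that you assert but should spell out. In part~(2), the paper directly exhibits $g\begin{psmallmatrix}i&1\end{psmallmatrix}=\begin{psmallmatrix}i'&g\end{psmallmatrix}\circ\begin{psmallmatrix}f&0\\0&1\end{psmallmatrix}$ as a composite of two deflations and feeds the retraction $\begin{psmallmatrix}i&1\end{psmallmatrix}$ into proposition~\ref{proposition:WeakR3}; your pullback $Q$ is essentially a structured rederivation of this object, and the composite $\alpha h$ equals $g\pi_Y$, which plays the same role. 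One point you should not leave implicit: the isomorphism $Q\cong X\oplus Y$ is not a ``computation'' in an abstract additive category --- it holds because $\alpha=\phi\iota_Y$, so the universal property of the pullback provides a section of $h$, and $h$ is then a split epimorphism with kernel $X\cong\ker\phi$; together with the explicit formula for $h'$ this yields the factorization you use. With that justification supplied, the argument is complete, though the paper's direct matrix computation is shorter.
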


\begin{proof}
	\begin{enumerate}
		\item From proposition \ref{proposition:PushoutIfCokernel}, we obtain the following commutative diagram with exact rows and columns (where the left square is a pushout):
		\[\xymatrix{
		X\ar@{>->}[r]\ar@{>->}[d]^f_{\rotatebox{90}{$\sim$}} & Y\ar@{->>}[r]\ar[d]^g & Z\ar@{=}[d]\\
		X'\ar@{>->}[r]\ar@{->>}[d] & Y'\ar@{->>}[r]\ar[d]^{c_g} & Z\\
		\coker(f)\ar@{=}[r] & \coker(g) & 
		}\]  As $c_g\colon Y'\rightarrow \coker(g)$ is an epimorphism and $\coker(f)\in \Ob(\AA)$, lemma \ref{Lemma:EpiToA} yields that $c_g$ is a deflation. Denote the kernel of $c_g$ by $K\rightarrowtail Y'$.  By corollary \ref{corollary:WeakNineLemma} we obtain a commutative diagram:
		\[\xymatrix{
		X\ar@{>->}[r]\ar@{=}[d] & Y\ar@{->>}[r]\ar@{.>}[d] & Z\ar@{=}[d]\\
		X\ar@{>->}[r]\ar@{>->}[d] & K\ar@{->>}[r]\ar@{>->}[d] & Z\ar@{=}[d]\\
		X'\ar@{>->}[r]\ar@{->>}[d]& Y'\ar@{->>}[r]\ar@{->>}[d]& Z\ar@{->>}[d]\\
		\coker(f)\ar@{=}[r] & \coker(g)\ar@{->>}[r] &0
		}\] The dotted arrow is obtained by factoring $g$ through the kernel of its cokernel. By the short five lemma (\cite[lemma 5.3]{BazzoniCrivei13}), the induced map $Y\rightarrow K$ is an isomorphism. It follows that $g$ is an inflation.  Since $\coker(g)\in\Ob(\AA)$, we find that $g \in S_{\AA}$.
		
		\item By proposition \ref{proposition:PushoutIfCokernel}, we know that the left square is a pushout and a pullback, and we obtain the following commutative diagram (where the columns are conflations)
		\[\xymatrix{
		\ker(f)\ar@{>->}[d]^k\ar@{=}[r] & \ker(g)\ar[d]\\
		X\ar@{>->}[r]^i\ar@{->>}[d]^f & Y\ar[d]^g \\
		X'\ar@{>->}[r]^{i'} & Y' 
	}\] with $\ker(f)\in \Ob(\AA)$ and such that $ik$ is the kernel of $g$.  By proposition \ref{proposition:PullbackPushout}, the map $\begin{psmallmatrix}
	i' & g
	\end{psmallmatrix}\colon X'\oplus Y\twoheadrightarrow Y'$ is a deflation. By \cite[lemma 5.1]{BazzoniCrivei13}, the map $\begin{psmallmatrix}
	f & 0\\0&1
	\end{psmallmatrix}\colon X\oplus Y\rightarrow X'\oplus Y$ is a deflation as well. Axiom \ref{R1} yields that 
	\[\begin{pmatrix}
	i' & g
	\end{pmatrix}\begin{pmatrix}
	f & 0\\0&1
	\end{pmatrix}=\begin{pmatrix}
	i'f & g
	\end{pmatrix}=\begin{pmatrix}
	gi & g
	\end{pmatrix}=g\begin{pmatrix}
	i & 1
	\end{pmatrix}\] is a deflation. As $\begin{psmallmatrix} i& 1 \end{psmallmatrix}\colon X \oplus Y \to Y$ is a retraction (and hence a deflation), proposition \ref{proposition:WeakR3} shows that $g$ is a deflation. \qedhere
	\end{enumerate}
\end{proof}

\begin{lemma}\label{lemma:CompositionOfAAInflationAndInflation}
	Let $\CC$ be a deflation-exact category and let $\AA$ be a non-empty full subcategory satisfying axiom \ref{A3}. Consider two composable inflations $f\colon X\inflation Y$ and $g\colon Y\inflation Z$. If $f$ is an $\AA^{-1}$-inflation, $g\circ f$ is an inflation.
\end{lemma}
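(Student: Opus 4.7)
The plan is to combine axiom \ref{A3} with proposition \ref{proposition:WhenPushout} to exhibit $gf$ as the kernel of a deflation, which forces it to be an inflation.

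Since $f$ is an $\AA^{-1}$-inflation, we have a conflation $X \stackrel{f}{\inflation} Y \stackrel{c_f}{\deflation} A$ with $A = \coker(f) \in \Ob(\AA)$. I would apply axiom \ref{A3} to the inflation $g\colon Y \inflation Z$ and the deflation $c_f\colon Y \deflation A$, producing a pushout square
\[
\xymatrix{
Y \ar@{>->}[r]^g \ar@{->>}[d]^{c_f} & Z \ar@{->>}[d]^h \\
A \ar@{>->}[r]^j & P
}
\]
in which $h$ is a deflation and $j$ is an inflation.

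Next, by proposition \ref{proposition:WhenPushout} this square is bicartesian, so in particular it is a pullback. Now invoke proposition \ref{proposition:PullbacksPreserveKernels}, viewing the square with the deflations $c_f$ and $h$ playing the role of the horizontal arrows: since $c_f$ admits a kernel, so does $h$, and the kernel of $h$ is the composition $\ker(c_f) \to Y \stackrel{g}{\to} Z$, which is precisely $gf$. Because $h$ is a deflation in the deflation-exact category $\CC$, its kernel $gf$ is an inflation (it fits into the conflation $X \stackrel{gf}{\inflation} Z \stackrel{h}{\deflation} P$).

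There is no real obstacle here once the correct pushout square is identified; the argument is a short assembly of axiom \ref{A3}, the bicartesian property of pushouts of inflations, and the general fact that pullbacks preserve kernels. The only thing worth checking carefully is that the pullback reading of proposition \ref{proposition:PullbacksPreserveKernels} yields $gf$ (and not, say, $j$) as the relevant kernel; this is settled by taking the deflations as horizontal in the proposition's formulation.
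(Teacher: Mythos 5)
Your proof is correct and follows essentially the same route as the paper's: apply axiom \ref{A3} to $g$ and $\coker(f)$ to produce the pushout square, note that it is bicartesian (remark \ref{remark:AbelianPercolatingAboutAxioms} / proposition \ref{proposition:WhenPushout}), and then transport the kernel of $Y \deflation \coker(f)$ across the pullback to identify $gf$ as the kernel of the deflation $Z \deflation P$. The only cosmetic difference is that the paper cites proposition \ref{proposition:MitchellPullback} for the kernel-transport step where you cite proposition \ref{proposition:PullbacksPreserveKernels}; these are the same observation in the end, and your reading of the latter (with the deflations placed horizontally) does indeed yield $gf$, not $j$, as the kernel of $h$.
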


\begin{proof}
	Let $q\colon Y\deflation \coker(f)$ be the cokernel of $f$. By axiom \ref{A3}, we obtain the following commutative diagram such that the lower-right square is bicartesian:
	\[\xymatrix{
		X\ar@{=}[r]\ar@{>->}[d]^{\rotatebox{90}{$\sim$}}_f & X\ar[d]_{gf}\\
		Y\ar@{>->}[r]\ar@{->>}[d] & Z\ar@{.>>}[d]\\
		\coker(f)\ar@{>.>}[r] & P
	}\] By proposition \ref{proposition:MitchellPullback}, $g\circ f$ is the kernel of the deflation $Z\to P$ and thus $g\circ f$ is an inflation.
\end{proof}

\begin{proposition}\makeatletter
\hyper@anchor{\@currentHref}%
\makeatother\label{proposition:DeflationInflationFiveLemma}
\begin{enumerate}

	\item Consider the following commutative diagram in $\CC$
\[\xymatrix{
X\ar@{>->}[r]\ar@{>->}[d]^f & Y\ar@{->>}[r]\ar[d]^g & Z\ar@{>->}[d]^h \\
X'\ar@{>->}[r] & Y'\ar@{->>}[r] & Z'}\]
where the rows are conflations.
	  \begin{enumerate}
		  \item If $f$ is an $\AA^{-1}$-inflation, then $g$ is an inflation. 
			\item If additionally $h$ is an $\AA^{-1}$-inflation, then $g$ is an $\AA^{-1}$-inflation.
		\end{enumerate}

	\item Consider the following commutative diagram in $\CC$
\[\xymatrix{
X\ar@{>->}[r]\ar@{->>}[d]^f & Y\ar@{->>}[r]\ar[d]^g & Z\ar@{->>}[d]^h \\
X'\ar@{>->}[r] & Y'\ar@{->>}[r] & Z'}\]
where the rows are conflations.
	  \begin{enumerate}
		  \item If $f$ is an $\AA^{-1}$-deflation, then $g$ is a deflation. 
			\item If additionally $h$ is an $\AA^{-1}$-deflation, then $g$ is an $\AA^{-1}$-deflation.
		\end{enumerate}
\end{enumerate}
\end{proposition}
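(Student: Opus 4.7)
The plan is to reduce both statements to statements about the composition of two better-behaved pieces, using the factorization of a morphism between conflations supplied by proposition \ref{proposition:FactorizationOfConflationMorphism}. Applied to the given morphism of conflations, that factorization produces an intermediate conflation $X' \inflation P \deflation Z$ through which the middle vertical $g$ decomposes as $g = g_2 \circ g_1$ with $g_1 \colon Y \to P$ and $g_2 \colon P \to Y'$, and in which the upper-left and lower-right squares are bicartesian. The point is that the upper square is controlled by $f$ alone and the lower square by $h$ alone, so each factor can be analysed in isolation and then recombined.

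For part (1), the upper bicartesian square has exact rows sharing $Z$ on the right, which is precisely the setup of lemma \ref{Lemma:DeflationInflationFiveLemma}; its first part, applied to the $\AA^{-1}$-inflation $f$, shows that $g_1$ is an $\AA^{-1}$-inflation. The lower-right square is a pullback of the inflation $h$ along the deflation $Y' \deflation Z'$, so proposition \ref{proposition:PullbackOfInflation} makes $g_2$ an inflation; moreover, the proof of that proposition realises $g_2$ as the kernel of the composition $Y' \deflation Z' \deflation \coker(h)$, which identifies $\coker(g_2)$ with $\coker(h)$. Case (1)(a) is then immediate from lemma \ref{lemma:CompositionOfAAInflationAndInflation} applied to the composable inflations $g_1$ (an $\AA^{-1}$-inflation) followed by $g_2$. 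For (1)(b), the extra hypothesis promotes $g_2$ itself to an $\AA^{-1}$-inflation, and the composition is an $\AA^{-1}$-inflation by lemma \ref{lemma:CompositionOfAAInflations}.

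Part (2) proceeds symmetrically. The upper bicartesian square again matches the hypothesis of lemma \ref{Lemma:DeflationInflationFiveLemma}, whose second part applied to the $\AA^{-1}$-deflation $f$ shows that $g_1$ is an $\AA^{-1}$-deflation. The lower-right square exhibits $g_2$ as the pullback of the deflation $h$ along the deflation $Y' \deflation Z'$, so axiom \ref{R2} makes $g_2$ a deflation, and since pullbacks preserve kernels (remark \ref{remark:PreservationOfKernelsAndPullbacks}) one has $\ker(g_2) \cong \ker(h)$. Axiom \ref{R1} then concludes (2)(a); for (2)(b), when $h$ is additionally an $\AA^{-1}$-deflation, both factors are $\AA^{-1}$-deflations, so $g$ is an $\AA^{-1}$-deflation by lemma \ref{lemma:CompositionOfAAdeflations}.

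The only point requiring real care is the identification of $\coker(g_2)$ with $\coker(h)$ (resp.\ of $\ker(g_2)$ with $\ker(h)$) in the lower-right bicartesian square; this is exactly what allows the strengthened hypothesis on $h$ in the (b) cases to pass through to $g$, and without it only the (a) parts would follow. Everything else is reassembly via the composition lemmas and the deflation-exact axioms.
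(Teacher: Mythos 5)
Your proof is correct and follows essentially the paper's route: factor $g = g_2 \circ g_1$ through the intermediate conflation supplied by proposition \ref{proposition:FactorizationOfConflationMorphism}, treat the upper square via lemma \ref{Lemma:DeflationInflationFiveLemma}, the lower square via the pullback of the relevant one-sided morphism, then recombine via the composition lemmas. One small citation fix for part (2)(b): the fact that pullbacks preserve kernels, which gives $\ker(g_2) \cong \ker(h)$, is proposition \ref{proposition:PullbacksPreserveKernels}, not remark \ref{remark:PreservationOfKernelsAndPullbacks} (the latter concerns preservation of kernels by the localization functor $Q$).
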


\begin{proof}

 Following proposition \ref{proposition:FactorizationOfConflationMorphism}, we consider the following factorization of both diagrams in the statement of the proposition:
\[\xymatrix{
X\ar@{>->}[r] \ar@{}[rd] | {A} \ar[d]_f & Y\ar@{->>}[r]\ar[d]^{g_1} \ar@{}[rd] | {B} & Z\ar@{=}[d] \\
X'\ar@{=}[d]\ar@{>->}[r] \ar@{}[rd] | {C} & P \ar@{->>}[r]\ar[d]^{g_2}  \ar@{}[rd] | {D} & Z \ar[d]^h \\
X'\ar@{>->}[r] & Y'\ar@{->>}[r] & Z'}\]

\begin{enumerate}
\item If $f$ is an $\AA^{-1}$-inflation, then lemma \ref{Lemma:DeflationInflationFiveLemma} yields that $g_1$ is an $\AA^{-1}$-inflation. As $D$ is a pullback square and $h$ an inflation, proposition \ref{proposition:PullbackOfInflation} yields that $g_2$ is an inflation. It now follows from lemma \ref{lemma:CompositionOfAAInflationAndInflation} that $g=g_2\circ g_1$ is an inflation. Moreover, if $h$ is an $\AA^{-1}$-inflation, its cokernel belongs to $\AA$. As $D$ is also a pushout square, $\coker(g_2)=\coker(h)$. Hence $g_2$ is an $\AA^{-1}$-inflation. By lemma \ref{lemma:CompositionOfAAInflations} we conclude that $g=g_2\circ g_1$ is an $\AA^{-1}$-inflation.

\item If $f$ is an $\AA^{-1}$-deflation,  lemma \ref{Lemma:DeflationInflationFiveLemma} yields that $g_1$ is an $\AA^{-1}$-deflation. As $D$ is a pullback square and $h$ a deflation, axiom \ref{R2} yields that $g_2$ is a deflation. By axiom $\ref{R1}$, $g=g_2\circ g_1$ is a deflation.
 Moreover, if $h$ is an $\AA^{-1}$-deflation, then $g_2$ is a deflation (by axiom \ref{R3}) with $\ker g = \ker h \in \AA$.  It then follows from theorem \ref{theorem:WeakIsomorphismsEqualAAInverseIsomorphisms} yields that $g = g_2 \circ g_1$ is an $\AA^{-1}$-deflation.\qedhere
\end{enumerate}
\end{proof}

\begin{proposition}\label{proposition:2OutOf3}
	The two-out-of-three property holds, i.e.~if $f,g$ are composable morphisms, then if two of the three maps $f,g$, and $gf$ belong to $S_{\AA}$, so does the third. 
\end{proposition}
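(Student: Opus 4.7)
The plan is to verify the three cases separately. The closure case $f, g \in S_\AA \Rightarrow gf \in S_\AA$ is immediate: a composition of finite compositions of $\AA^{-1}$-inflations and $\AA^{-1}$-deflations is again such a composition, and admissibility follows from theorem \ref{theorem:WeakIsomorphismsEqualAAInverseIsomorphisms}. Both cancellation cases proceed by using the admissible factorizations of the given weak isomorphisms to produce an auxiliary map whose $\AA^{-1}$-inflation/deflation structure can be detected, and then transporting this via a pullback square using proposition \ref{proposition:WeakIsoPullback}.

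For $g, gf \in S_\AA \Rightarrow f \in S_\AA$, I factor $g = i_g p_g$ and $gf = i_{gf} p_{gf}$. Since $i_g$ is monic and $\mathrm{im}(gf) \subseteq \mathrm{im}(g)$, there is a unique $i_h \colon I_{gf} \to I_g$ with $i_{gf} = i_g \circ i_h$. An application of corollary \ref{corollary:WeakNineLemma} to the conflation $I_g \inflation Z \deflation \coker(g)$ together with the deflation $Z \deflation \coker(gf)$ identifies the upper-left pullback with $I_{gf}$ and exhibits $i_h$ as an $\AA^{-1}$-inflation, so $p_g f = i_h \circ p_{gf}$ lies in $S_\AA$. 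I then form the pullback $P = Y \times_{I_g} X$ of $p_g f$ along the deflation $p_g$: by proposition \ref{proposition:WeakIsoPullback} the map $P \to Y$ is a weak isomorphism, while $P \deflation X$ is an $\AA^{-1}$-deflation with kernel $\ker(g) \in \AA$. The universal property of the pullback yields a section $s \colon X \to P$ of $P \deflation X$; since axiom \ref{R0*} holds (remark \ref{remark:AbelianPercolatingAboutAxioms}(1)), the corresponding split kernel-cokernel pair is a conflation, so $s$ is an $\AA^{-1}$-inflation. Hence $f = (P \to Y) \circ s$ is a composition of weak isomorphisms.

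For $f, gf \in S_\AA \Rightarrow g \in S_\AA$, I factor $f = i_f p_f$ and $gf = i_{gf} p_{gf}$, and let $p_h \colon I_f \to I_{gf}$ be the induced map satisfying $p_{gf} = p_h \circ p_f$. A direct construction shows that the kernel of $p_h$ is the quotient $\ker(p_{gf})/\ker(p_f)$, which lies in the abelian subcategory $\AA$, so proposition \ref{proposition:WeakR3} forces $p_h$ to be an $\AA^{-1}$-deflation. The induced map $\phi \colon \coker(f) \to \coker(gf)$ characterised by $\phi \circ q_f = q_{gf} \circ g$ is a morphism between objects of $\AA$, and is therefore a weak isomorphism by remark \ref{remark:DefinitionAdmissibleWeakIso}(4). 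Pulling back the conflation $I_{gf} \inflation Z \deflation \coker(gf)$ along $\phi$ produces a conflation $I_{gf} \inflation P' \deflation \coker(f)$ with $P' \to Z$ a weak isomorphism (proposition \ref{proposition:WeakIsoPullback}); the universal property of the pullback supplies $\tilde g \colon Y \to P'$ with $g = (P' \to Z) \circ \tilde g$ fitting in the commutative diagram
\[\xymatrix@C=1.5em{
I_f \ar@{>->}[r]^{i_f} \ar[d]_{p_h} & Y \ar@{->>}[r]^{q_f} \ar[d]_{\tilde g} & \coker(f) \ar@{=}[d] \\
I_{gf} \ar@{>->}[r] & P' \ar@{->>}[r] & \coker(f)
}\]
with exact rows. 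The short five lemma (lemma \ref{Lemma:DeflationInflationFiveLemma}) then upgrades $\tilde g$ to an $\AA^{-1}$-deflation, so $g$ is a composition of weak isomorphisms.

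The main technical hurdle is the identification of $p_h$ (respectively $i_h$) as an $\AA^{-1}$-deflation (respectively an $\AA^{-1}$-inflation): this step extracts kernel or cokernel information outside of the given conflations and is precisely where axiom \ref{A3} plays its essential role, via proposition \ref{proposition:WeakR3} and corollary \ref{corollary:WeakNineLemma}. Once this is in hand, the role of proposition \ref{proposition:WeakIsoPullback} is to promote a morphism of the abelian category $\AA$ (which is automatically a weak isomorphism) into a weak isomorphism of the ambient one-sided exact category $\CC$ via a pullback.
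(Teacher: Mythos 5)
Your proof is correct but takes a genuinely different route from the paper's. The paper establishes each cancellation case by splitting on the admissible factorization of the known weak isomorphism: for $g,gf\Rightarrow f$ it treats separately "$g$ is an inflation" (step 1, where $\ker f = \ker gf$) and "$g$ is a deflation" (step 2); for $f,gf\Rightarrow g$ likewise (steps 1', 2'), each time directly constructing the deflation-inflation factorization of the target morphism via axiom \ref{A3} or the $3\times 3$-lemma. You instead handle each cancellation in a single pass by factoring the known weak isomorphism once, building an intermediate morphism on images ($i_h$ or $p_h$), certifying its $\AA^{-1}$-status via corollary \ref{corollary:WeakNineLemma} or proposition \ref{proposition:WeakR3}, and then \emph{transporting} it to the target morphism by pulling back along a deflation (proposition \ref{proposition:WeakIsoPullback}) — in the first case exhibiting $f$ as (weak iso)$\circ$(split $\AA^{-1}$-inflation) using the pullback $Y\times_{I_g}X$ together with axiom \ref{R0*}, and in the second case exhibiting $g$ as (weak iso)$\circ\tilde g$ and upgrading $\tilde g$ with lemma \ref{Lemma:DeflationInflationFiveLemma}(2). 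This yields a more uniform argument at the price of a few extra intermediate objects. The only place deserving more care is your appeal to "a direct construction" identifying $\ker(p_h)$ with $\ker(p_{gf})/\ker(p_f)$: to make this precise one should first observe that the square $\ker(gf)\inflation X\deflation I_f$, $\ker(gf)\deflation \ker(gf)/\ker(f)$ is a pushout (proposition \ref{proposition:MitchellPullbackPushout}), and either invoke axiom \ref{A3} there directly — which already makes $p_h$ an $\AA^{-1}$-deflation, rendering proposition \ref{proposition:WeakR3} redundant — or carry out an elementary universal-property chase to identify the kernel before applying proposition \ref{proposition:WeakR3}; either fills the gap, so the argument stands.
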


\begin{proof}
	As we already showed that $S_{\AA}$ is a right multiplicative set, we know that $f,g\in S_{\AA}$ implies that $gf \in S_\AA$. We will first show that $g,gf\in S_{\AA}$ implies that $f \in S_\AA$. In step 1 we prove this statement assuming that $g$ is an inflation. In step 2 we prove the statement assuming that $g$ is a deflation. These two steps suffice as we know that $g$ has a deflation-inflation factorization where both parts are morphisms in $S_{\AA}$.\\
	
	\begin{enumerate}
	\item[Step 1: ] We now show that if $g,gf\in S_{\AA}$ and $g$ is an inflation, then $f\in S_{\AA}$.  As $g$ is a monomorphism, we find that $\ker(f)=\ker(gf)\in \Ob(\AA)$. It follows that $\coim(gf)\cong\coim(f)$. Hence we obtain the diagram: 
	\[\xymatrix{
		& Y\ar@{>->}[rd]^g & \\
		X\ar@{->>}[r]\ar[ru]^f\ar@{->>}[rd] & \coim(f)\ar[d]^{\cong} \ar@{.>}[u]& Z\\
		& \coim(gf)\ar@{>->}[ru] & 
	}\] Clearly the left-hand side of the diagram is commutative. Since $X\twoheadrightarrow \coim(f)$ is epic, the right side is commutative as well.  Since $gf\in S_{\AA}$ is admissible, we have that $\coim(gf)\cong \im(gf)$.  Since the right-hand side commutes, the map $\im(gf)\rightarrowtail Z \twoheadrightarrow \coker(g)$ is zero. By corollary \ref{corollary:WeakNineLemma}, we obtain a commutative diagram:
		\[\xymatrix{
	\coim(f)\ar@{>->}[d]\ar[r]^{\cong} & \im(gf)\ar@{->>}[r] \ar@{>->}[d]& 0\ar@{>->}[d]\\
	Y\ar@{>->}[r]^g\ar@{->>}[d] & Z\ar@{->>}[r]\ar@{->>}[d] & \coker(g)\ar@{=}[d]\\
	 K \ar@{>->}[r]& \coker(gf)\ar@{->>}[r] & \coker(g)
	}\] Using that $g$ is monic, one readily verifies that $\coim(f)\rightarrowtail Y$ coincides with the dotted arrow from the previous diagram. It follows that $f\colon X \deflation X/\ker(f) = \coim(f) \inflation Y$ is admissible and that$\coker(f)=K\in \AA$. We conclude that $f\in S_{\AA}$.

	\item[Step 2: ] We now show that if $g,gf\in S_{\AA}$ and $g$ is a deflation, then $f\in S_{\AA}$.  Consider the commutative diagram: 
	\[\xymatrix{
	\ker(gf)\ar@{.>}[d]^{\phi} \ar@{>->}[r]& X\ar[d]^f\ar@{->>}[r] & \im(gf)\ar@{>->}[d]\\
	\ker(g)\ar@{>->}[r] & Y\ar@{->>}[r]^g &Z
	}\] As $\phi$ is a map in $\AA$ and $\AA$ is abelian (see proposition \ref{proposition:AIsAbelian}), we know that $\phi$ factors as a deflation-inflation through its image $\im(\phi)$. Moreover, proposition \ref{proposition:MitchellPullback} implies that the left square is a pullback. As pullbacks preserve kernels, $\ker(f)=\ker(\phi)\in \Ob(\AA)$.
	
	Axiom \ref{A3} yields the following commutative diagram:
	\[\xymatrix{
		\ker(gf)\ar@{>->}[r]\ar@{->>}[d]^{\rotatebox{90}{$\sim$}} & X\ar@{->>}[r]\ar@{->>}[d] & \im(gf)\ar@{=}[d]\\
		\im(\phi)\ar@{>->}[r]\ar@{>->}[d]^{\rotatebox{90}{$\sim$}}& P\ar@{->>}[r]\ar@{.>}[d] & \im(gf)\ar@{>->}[d]^{\rotatebox{90}{$\sim$}}\\
		\ker(g)\ar@{>->}[r]  & Y\ar@{->>}[r] & Z
	}\] Indeed, the upper-left square is a pushout square constructed by axiom \ref{A3} and the lower-right square commutes as $X\twoheadrightarrow P$ is epic. By proposition \ref{proposition:DeflationInflationFiveLemma} we conclude that $P\rightarrow Y$ is an $\AA^{-1}$-inflation. It follows that $f\in S_{\AA}$. This concludes step $2$.
	\end{enumerate}
	
	Next, we will show that if $f,gf\in S_{\AA}$ then $g\in S_{\AA}$. Since $f$ has a deflation-inflation factorization, it suffices to prove the statement separately assuming that $f$ is a deflation and assuming $f$ is an inflation. This will be done in step 1' and step 2'.
	
	\begin{enumerate}
	\item[Step 1': ] Assume that $f$ is a deflation. Then $\coker(gf)=\coker(g)\in \Ob(\AA)$. Hence we get the diagram 
	\[\xymatrix{
		 & Y\ar[rd]^g\ar@{.>}[d]^h & \\
		X\ar@{->>}[ru]^f\ar@{->>}[rd] & \im(g)\ar@{>->}[r] & Z\\
		& \im(gf)\ar@{>->}[ru]\ar[u]^{\cong} & 
	}\] Using that $\im(g)\rightarrowtail Z$ is monic, we see that this diagram is commutative.
	
	As the composition $\ker(f)\rightarrow X\rightarrow \im(gf)$ is zero, one easily obtains the following commutative diagram:
	\[\xymatrix{
		\ker(f)\ar@{>.>}[r]\ar@{=}[d] & \ker(gf)\ar@{->>}[r]\ar@{>->}[d] & C\ar@{.>}[d]\\
		\ker(f)\ar@{>->}[r]\ar@{->>}[d] & X\ar@{->>}[r]^f\ar@{->>}[d] & Y\ar@{.>}[d]^{h'}\\
		0\ar@{>->}[r] & \im(gf)\ar@{=}[r] & \im(gf)
	}\] Here the induced map $\ker(f)\rightarrow \ker(gf)$ is a monomorphism between objects in$\AA$.  As $\AA$ is abelian (see propositio \ref{proposition:AIsAbelian}),  it has a cokernel $C  \in \AA$. By proposition \ref{proposition:MitchellPullbackPushout} the upper-right square is a pushout, so that by axiom \ref{A3} the map $C\rightarrow Y$ is an inflation and by proposition \ref{proposition:PushoutIfCokernel} the map $Y\rightarrow \im(gf)$ is a deflation. As $f$ is epic one sees that $h=h'$. It follows that $g\colon Y \deflation \im(gf) \inflation Z$ has a deflation-inflation factorization. Since $\ker(h)=C\in \Ob(\AA)$ and $\ker(h)=\ker(g)$ we conclude that $g$ is an admissible weak isomorphism.
	
	\item[Step 2': ] Let $f$ be an inflation. We obtain a commutative diagram:
	\[\xymatrix{
		X\ar@{>->}[r]^f\ar@{->>}[d] & Y\ar[d]^g\ar@{->>}[r] & \coker(f)\ar@{.>}[d]^{\phi}\\
		\im(gf)\ar@{>->}[r] & Z\ar@{->>}[r]\ar@{.>>}[d] & \coker(gf)\ar@{->>}[d]\\
		& \coker(\phi)\ar@{=}[r] & \coker(\phi)
	}\] Here we used that the induced map $\phi$ lies in $\AA$ and hence has a cokernel $\coker(\phi) \in \AA$ (as $\AA$ is abelian, see proposition \ref{proposition:AIsAbelian}), the map $Z\twoheadrightarrow \coker(\phi)$ is a deflation by axiom \ref{R1}. The upper-right square is a pushout by proposition \ref{proposition:MitchellPullbackPushout}. It follows that $Z\twoheadrightarrow \coker(\phi)$ is the cokernel of $g$.
	
	As $\phi$ is a morphism in the abelian category $\AA$, it admits a deflation-inflation factorization $\coker(f)\twoheadrightarrow \im(\phi)\rightarrowtail \coker(gf)$.  Taking the pullback of $\im(\phi)\rightarrowtail \coker(gf)$ along $Z\twoheadrightarrow \coker(gf)$ we obtain the commutative diagram:
	\[\xymatrix{
		X\ar@{>->}[r]^f\ar@{->>}[d]_{\rotatebox{90}{$\sim$}} & Y\ar@{->>}[r]\ar@{.>}[d] & \coker(f)\ar@{->>}[d]_{\rotatebox{90}{$\sim$}}\\
		\im(gf)\ar@{>->}[r]\ar@{=}[d] & P\ar@{->>}[r]\ar[d] & \im(\phi)\ar@{>->}[d]\\
		\im(gf)\ar@{>->}[r] & Z\ar@{->>}[r] & \coker(gf)		
	}\]
	By proposition \ref{proposition:PullbackOfInflation}, $P\rightarrowtail Z$ is an inflation. By proposition \ref{proposition:DeflationInflationFiveLemma}, the map $Y\rightarrow P$ is a deflation whose kernel belongs to $\AA$. It follows that $g$ is an admissible weak isomorphism. \qedhere
	\end{enumerate}
\end{proof} 

\subsection{The saturation property}

We now show that the right multiplicative set $S_\AA$ of weak isomorphisms satisfies the saturation property.  Our proof is based on the two-out-of-three-property (proposition \ref{proposition:2OutOf3}).

\begin{lemma}\label{lemma:IdempotentsInKernel}
Let $\CC$ be a deflation-exact category and let $\AA$ be an idempotent complete deflation-percolating subcategory. Let $e\colon X \to X$ be any idempotent in $\CC$.  If $Q(e) = 0$, then $X \cong \ker(e) \oplus A$ for some $A \in \AA$.
\end{lemma}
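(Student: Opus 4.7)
The plan is to split $e$ through an object $A' \in \AA$, construct $\ker(e)$ as a pullback, and verify a direct sum decomposition by a $2 \times 2$ matrix calculation. First I would apply proposition \ref{proposition:InterpretationOfP4} to $Q(e)=0$ together with axiom \ref{P2} to factor $e = jp$ with $p\colon X \deflation A$ a deflation, $A \in \AA$, and $j\colon A \to X$. From $e^2 = e$ and $p$ epic, cancelling $p$ on the right yields $jpj = j$, so $f \coloneqq pj$ is an idempotent in $\End_{\AA}(A)$. Idempotent completeness of $\AA$ lets me split both $f = \iota\pi$ with $\pi\iota = 1_{A'}$ and $1-f = \iota'\pi'$ with $\pi'\iota' = 1_{A''}$, giving $A \cong A' \oplus A''$ in $\AA$. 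Setting $i \coloneqq j\iota$ and $p' \coloneqq \pi p$, short computations yield $p'i = 1_{A'}$, $ip' = j(pj)p = e^2 = e$, and (combining $jpj = j\iota\pi = i\pi$ with $jpj = j$) the identity $j = i\pi$.

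Next I would exhibit $\ker(e)$ as a pullback. Since $i$ is split monic and $j = i\pi$, one has $\ker(e) = \ker(jp) = \ker(\pi p)$. Moreover $\iota'\colon A'' \to A$ is a kernel of $\pi$ in $\CC$: $\pi\iota' = 0$ follows from $\pi(1-f)=0$, while any $h\colon Y \to A$ with $\pi h = 0$ factors as $h = (\iota\pi + \iota'\pi')h = \iota'(\pi' h)$, uniquely because $\pi'\iota' = 1$. Axiom \ref{R2} then provides the pullback of $\iota'$ along the deflation $p$, and proposition \ref{proposition:MitchellPullback}\eqref{enumerate:MitchellPullback} identifies its leg $k\colon \ker(e) \to X$ as the kernel of $\pi p = p'$.

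Finally, since $e(1-e) = 0$, the universal property of $k$ produces a unique $r\colon X \to \ker(e)$ with $kr = 1-e$. A direct check shows that $\begin{psmallmatrix} k & i \end{psmallmatrix}\colon \ker(e) \oplus A' \to X$ and $\begin{psmallmatrix} r \\ p' \end{psmallmatrix}\colon X \to \ker(e) \oplus A'$ are mutually inverse: on $X$ their composition is $kr + ip' = (1-e) + e = 1_X$, while on $\ker(e) \oplus A'$ the four matrix entries are $rk = 1$ (from $krk = (1-e)k = k$ and $k$ monic), $ri = 0$ (from $kri = (1-e)i = 0$ and $k$ monic), $p'k = 0$ (since $k$ factors through $\ker(p')$), and $p'i = 1$. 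The main obstacle is the existence of $\ker(e)$ in the merely deflation-exact category $\CC$; the pullback construction in the previous paragraph resolves this by using idempotent completeness of $\AA$ to access $\ker(\pi)$ and axiom \ref{R2} to pull back along $p$.
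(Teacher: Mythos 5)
Your proof is correct. The paper's own proof is just a citation to the first part of Schlichting's Lemma 1.17(6) together with proposition \ref{proposition:InterpretationOfP4}; your write-up is a self-contained version of exactly that argument (factor $e = jp$ through $\AA$ via \ref{P2}, split the idempotent $pj$ using idempotent completeness of $\AA$, realize $\ker(e)$ via a pullback along the deflation $p$ using \ref{R2} and proposition \ref{proposition:MitchellPullback}, then verify the $2\times 2$ matrix identities), with the added merit that you explicitly address why $\ker(e)$ exists in the merely deflation-exact setting.
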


\begin{proof}
This follows from the first part of the proof of \cite[lemma 1.17.6]{Schlichting04} and proposition \ref{proposition:InterpretationOfP4}.
\end{proof}

\begin{proposition}\label{proposition:Saturation}
	Let $\CC$ be a deflation-exact category and let $\AA$ be an admissibly deflation-percolating subcategory. The set $S_{\AA}$ of weak isomorphisms is saturated.
\end{proposition}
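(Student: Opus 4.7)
The forward inclusion $S_{\AA}\subseteq\{f\mid Q(f)\text{ is invertible}\}$ is proposition \ref{proposition:BasicPropertiesOfLocalization}, so the plan is to prove the reverse implication. Fix $f\colon X\to Y$ with $Q(f)$ invertible. By theorem \ref{theorem:WeakIsomorphismsEqualAAInverseIsomorphisms} it will suffice to exhibit $f$ as an admissible morphism with kernel and cokernel in $\AA$.

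The starting point is that $S_{\AA}^{-1}\CC$ is itself deflation-exact (theorem \ref{theorem:Maintheorem}), and in any deflation-exact category every isomorphism is a deflation (lemma \ref{Lemma:BasicProperties}); hence $0\inflation X\overset{Q(f)}{\deflation}Y$ is a conflation in $S_{\AA}^{-1}\CC$. By definition \ref{definition:LocalizationConflation} this conflation is isomorphic to $Q$ of some conflation $A\inflation B\overset{p}{\deflation}C$ in $\CC$ with $Q(A)\cong 0$. Proposition \ref{proposition:InterpretationOfP4} applied to $1_A$ shows that $1_A$ factors through $\AA$; since $\CC$ satisfies \ref{R0*} (by remark \ref{remark:AbelianPercolatingAboutAxioms}(1)) split kernel-cokernel pairs are conflations, so together with axiom \ref{A1} this forces $A\in\AA$. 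Consequently $p$ is a deflation with kernel in $\AA$, i.e.\ $p\in S_{\AA}$.

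The isomorphism of conflations provides isomorphisms $\alpha\colon Q(B)\to X$ and $\beta\colon Q(C)\to Y$ in $S_{\AA}^{-1}\CC$ satisfying $Q(f)\circ\alpha=\beta\circ Q(p)$. The remaining task is to replace these localization-level isomorphisms by honest weak isomorphisms in $\CC$ arranged into a commutative square
\[\xymatrix{
B'\ar@{->>}[r]^{p'}\ar[d]_{\alpha_0}^{\rotatebox{90}{$\sim$}} & C'\ar[d]^{\beta_0}_{\rotatebox{90}{$\sim$}}\\
X\ar[r]^{f} & Y
}\]
with $p'\in S_{\AA}$. Representing $\alpha$ and $\beta$ as right roofs and repeatedly applying the lifting lemma (lemma \ref{lemma:LiftingConflations}) together with the 2-out-of-3 property (proposition \ref{proposition:2OutOf3}) to align the apices of these roofs with a common refinement of $A\inflation B\deflation C$ produces such a square. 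Once it is in place, $\beta_0\,p'=f\alpha_0$ is a composition of weak isomorphisms, hence lies in $S_{\AA}$; combined with $\alpha_0\in S_{\AA}$ and 2-out-of-3, this gives $f\in S_{\AA}$.

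The main obstacle is the passage from an isomorphism in $S_{\AA}^{-1}\CC$ to a genuine commutative diagram in $\CC$: two right roofs representing the same morphism agree only after a further precomposition with a weak isomorphism, so the candidate square initially commutes only up to such a precomposition. This is where lemma \ref{lemma:IdempotentsInKernel} enters. The discrepancy between competing roof-representatives assembles into an idempotent $e$ on a suitable pullback with $Q(e)=0$; since $\AA$ is abelian (proposition \ref{proposition:AIsAbelian}) and therefore idempotent complete, the resulting splitting $\ker(e)\oplus A$ with $A\in\AA$ allows us to absorb the discrepancy into an $\AA$-summand, thereby replacing the roof-level commutativity with genuine commutativity in $\CC$ and completing the plan.
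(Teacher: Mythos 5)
Your outline shares the paper's broad strategy (reduce to an idempotent that vanishes in the localization, then apply lemma \ref{lemma:IdempotentsInKernel}), and the opening reduction --- showing that the first term $A$ of a lifted conflation must lie in $\AA$ via proposition \ref{proposition:InterpretationOfP4} and idempotent-splitting in the abelian category $\AA$ --- is correct, if a bit terse. However, the load-bearing step is missing. To go from the isomorphisms $\alpha\colon Q(B)\to X$, $\beta\colon Q(C)\to Y$ in $S_\AA^{-1}\CC$ to \emph{genuine} weak isomorphisms $\alpha_0,\beta_0$ in $\CC$ fitting into a commutative square with a deflation $p'\in S_\AA$, one would have to know that suitable roof representatives of $\alpha,\beta$ are themselves in $S_\AA$: a roof for $\alpha$ is a pair $(a\colon B''\to X,\,t\colon B''\to B)$ with $t\in S_\AA$, and all you can say a priori is that $Q(a)$ is an isomorphism. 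Concluding $a\in S_\AA$ from this is \emph{exactly} the saturation property you are trying to prove, so invoking it here is circular.

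Your closing paragraph acknowledges this obstacle and gestures at an idempotent ``on a suitable pullback'' whose kernel-splitting should absorb the discrepancy, but the construction is never given: you do not say which pullback, which morphisms produce a section--retraction pair, or why the resulting endomorphism is idempotent with $Q(e)=0$. The paper's proof does supply exactly this missing ingredient by pulling back $f$ along a composite $fgh\in S_\AA$ extracted from the roof of $Q(f)^{-1}$; the universal property of the pullback then hands you a section $\gamma$ of the projection $\alpha$, so $\gamma\alpha$ is an honest idempotent with $Q(1-\gamma\alpha)=0$, and lemma \ref{lemma:IdempotentsInKernel} together with \ref{R0*} gives $\alpha\in S_\AA$ directly, after which $2$-out-of-$3$ closes the argument. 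Without an analogous explicit construction, your proof as written has a genuine gap at the step you yourself identify as ``the main obstacle.''
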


\begin{proof}
	Let $f\colon Y\rightarrow Z$ be map that descends to an isomorphism in $S_{\AA}^{-1}\CC$. By definition, there exists a map $(g,s)\in \Mor(S_{\AA}^{-1}\CC)$ such $(f,1) \circ (g,s) \sim (1,1)$, i.e.~there exists a commutative diagram:
	\[\xymatrix{
		& Z'\ar[ld]_{s}\ar[rd]^{fg}&\\
		Z\ar@{=}[rd] & M\ar[l]_{\sim}\ar[r]^{\sim}\ar[d]_{\rotatebox{90}{$\sim$}}\ar[u]^h & Z\\
		& Z\ar@{=}[ru] &
	}\] Since $fgh\in S_{\AA}$, we can take the pullback of $f$ along $fgh$ (see theorem \ref{theorem:WeakIsomorphismsEqualAAInverseIsomorphisms}). We obtain the following commutative diagram
		\[\xymatrix{
		& Y\ar[rr]^f&& Z\\
		&&&\\
		&P\ar[uu]^{\beta}_{\rotatebox{90}{$\sim$}}\ar[rr]^{\alpha}&&M\ar[uu]^{\rotatebox{90}{$\sim$}}_{f(gh)}\ar[lluu]_{gh}\\
		M\ar@/^2pc/[uuur]^{gh}\ar@/_2pc/[rrru]_{1_M}\ar@{.>}[ru]^{\gamma}&&&
	}\] where the square is a pullback and $\gamma\colon M \to P$ is induced by the pullback property.  Clearly, $\alpha$ is a retraction and $\gamma$ is a section. Since $Q(f)$ is invertible in $S_{\AA}^{-1} \CC$ and the localization functor commutes with pullbacks (see proposition \ref{proposition:BasicPropertiesOfLocalization}), we know that $Q(\alpha)$ is invertible in $S_{\AA}^{-1} \CC$ and that $Q(\alpha)^{-1} = Q(\gamma)$. It follows that the kernel of $\alpha$ is zero in $S_{\AA}^{-1}\CC$. This implies that $Q(1_P - \gamma \circ \alpha) = 0$.  Lemma \ref{lemma:IdempotentsInKernel} shows that $P \cong K \oplus A$ with $A \in \AA$.  We infer that $\ker(\gamma \circ \alpha) = \ker(\alpha) = A.$  As $\CC$ satisfies \ref{R0*} (see remark \ref{remark:AbelianPercolatingAboutAxioms}) and $\alpha$ is a retraction, we may infer that $\alpha$ is a deflation in $\CC$.  It follows that $\alpha\in S_{\AA}$.  From the 2-out-of-3 property, it follows that $gh \in S_\AA$ and subsequently that $f \in S_{\AA}$.
\end{proof}

\section{Quillen's obscure axiom under localizations}\label{section:StabilityQuillen'sObscureAxiom}

Let $\AA$ be a deflation-percolating subcategory of a strongly deflation-exact category $\CC$.  It is shown in theorem \ref{theorem:Maintheorem} that the localization $\CC / \AA = S_{\AA}^{-1}\CC$ is again a deflation-exact category.  As will be illustrated in example \ref{example:MissingR3} below, even when $\AA$ is an admissibly deflation-percolating subcategory, the category $\CC / \AA$ does not need to inherit axiom \ref{R3} from $\CC$.

In this section, we show that if $\CC$ is \emph{weakly idempotent complete} (thus, if every retraction $X \to Y$ admits a kernel) and satisfies axiom \ref{R3}, then the same holds for $\CC / \AA$ whenever $\AA$ is a strongly deflation-percolating subcategory (see definition \ref{definition:AdditionalPercolatingDefinitions}).

We start by reformulating the conditions on $\CC$.

\begin{proposition}\label{proposition:wicR3}
Let $\CC$ be a deflation-exact category.  The following are equivalent:
\begin{enumerate}
	\item $\CC$ is weakly idempotent complete and satisfies axiom \ref{R3},
	\item If $f\colon X \to Y$ and $g\colon Y \to Z$ are morphisms in $\CC$ such that $gf$ is a deflation, then $g$ is a deflation.
\end{enumerate}
\end{proposition}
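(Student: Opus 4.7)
The plan is to prove the two implications separately, with the $(2) \Rightarrow (1)$ direction being essentially immediate and the $(1) \Rightarrow (2)$ direction being the substantive content.

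For $(2) \Rightarrow (1)$, I would argue as follows. Axiom \ref{R3} is a direct specialization of (2): if $pi$ is a deflation, then (2) already forces $p$ to be a deflation, regardless of whether $p$ has a kernel. For weak idempotent completeness, let $r\colon Y\to Z$ be a retraction with section $s\colon Z\to Y$, so $rs=1_Z$. Since $1_Z$ is an isomorphism, it is a deflation by lemma \ref{Lemma:BasicProperties}. Applying (2) to $s$ and $r$ yields that $r$ is a deflation, hence admits a kernel. This covers one direction without any real obstacle.

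For $(1) \Rightarrow (2)$, the plan is to reduce the statement to axiom \ref{R3} by producing a kernel for $g$. Suppose $gf$ is a deflation. By axiom \ref{R2}, the pullback of $gf$ along $g$ exists, giving a commutative square
\[\xymatrix{
P \ar[r]^{p_1} \ar@{->>}[d]_{p_2} & X \ar@{->>}[d]^{gf} \\
Y \ar[r]^g & Z
}\]
in which $p_2$ is a deflation. The universal property applied to the compatible pair $(1_X,f)\colon X\to X$, $X\to Y$ (compatible because $gf\circ 1_X = g\circ f$) yields a morphism $\beta\colon X\to P$ with $p_1\beta=1_X$ and $p_2\beta=f$. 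In particular $p_1$ is a retraction, so by weak idempotent completeness it admits a kernel. Proposition \ref{proposition:PullbacksPreserveKernels} then transports this kernel across the pullback square: since the square is a pullback and $p_1$ has a kernel, so does $g$. Now $g$ has a kernel and $g\circ f = gf$ is a deflation, so axiom \ref{R3} applied to $i=f$, $p=g$ forces $g$ to be a deflation, as desired.

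The main (mild) obstacle is simply seeing the right construction; once one knows to pull $gf$ back along $g$, the section $\beta$ appears automatically from the universal property, and the two hypotheses of (1) match precisely the two ingredients one needs (weak idempotent completeness to produce $\ker(p_1)$, and \ref{R3} to convert ``$g$ has a kernel and $gf$ is a deflation'' into ``$g$ is a deflation''). No appeal to axiom \ref{R0*} or to retractions being deflations is required, which keeps the argument valid in the fully general deflation-exact setting.
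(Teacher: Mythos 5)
Your proof is correct; the paper itself gives no argument here and simply cites \cite[proposition 6.4]{BazzoniCrivei13}, so you have in effect supplied the self-contained proof the paper defers to an external reference. Your strategy is the standard one for results of this type: for $(1)\Rightarrow(2)$, pull the deflation $gf$ back along $g$, observe that the universal property of the pullback hands you a section $\beta$ of the top projection $p_1$, use weak idempotent completeness to obtain $\ker(p_1)$, transport that kernel to $g$ across the pullback square via proposition~\ref{proposition:PullbacksPreserveKernels}, and finish with axiom~\ref{R3}. For $(2)\Rightarrow(1)$, you correctly note that \ref{R3} is the special case of $(2)$ where $g$ has a kernel, and that applying $(2)$ to $rs=1_Z$ (a deflation, since isomorphisms are deflations by lemma~\ref{Lemma:BasicProperties}) shows every retraction is a deflation and hence has a kernel. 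All the cited lemmas are used in the correct orientation, and the argument stays within the axioms available in a general deflation-exact category, as you point out; nothing needs to be added or repaired.
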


\begin{proof}
This follows easily from \cite[proposition 6.4]{BazzoniCrivei13}.
\end{proof}

\begin{theorem}\label{theorem:StabilityOfStrongCondition}
	Let $\CC$ be a weakly idempotent complete strongly deflation-exact category (thus, satisfying the equivalent conditions in proposition \ref{proposition:wicR3}) and let $\AA$ be a strongly deflation-percolating subcategory. The localization $\CC[S_\AA^{-1}]$ is also a weakly idempotent complete strongly deflation-exact category.
\end{theorem}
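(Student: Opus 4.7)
By Proposition \ref{proposition:wicR3}, it suffices to verify that whenever $f\colon X\to Y$ and $g\colon Y\to Z$ are morphisms in $\CC[S_\AA^{-1}]$ with $gf$ a deflation, the morphism $g$ is itself a deflation. The plan is to lift this situation to $\CC$, apply the analogous property of $\CC$, and then descend back to $\CC[S_\AA^{-1}]$.

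Using the right calculus of fractions (valid because $\AA$ is strongly right filtering, see Proposition \ref{proposition:MinimalConditionsRMS}), I would first produce a common-denominator presentation: find morphisms $\tilde f\colon X_1\to Y_1$ and $\tilde g\colon Y_1\to Z$ in $\CC$, together with weak isomorphisms $s\colon X_1\stackrel{\sim}{\to}X$ and $t\colon Y_1\stackrel{\sim}{\to}Y$, such that $f=Q(t)Q(\tilde f)Q(s)^{-1}$ and $g=Q(\tilde g)Q(t)^{-1}$. Then $gf=Q(\tilde g\tilde f)Q(s)^{-1}$, so since $Q(s)$ is an isomorphism and deflations in $\CC[S_\AA^{-1}]$ are closed under isomorphism (Definition \ref{definition:LocalizationConflation}), $Q(\tilde g\tilde f)\colon Q(X_1)\to Q(Z)$ is a deflation in $\CC[S_\AA^{-1}]$.

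The critical step is the following lifting claim: there is a weak isomorphism $u\colon X_2\stackrel{\sim}{\to}X_1$ such that $\tilde g\tilde f u\colon X_2\to Z$ is an actual deflation in $\CC$. By Definition \ref{definition:LocalizationConflation}, $Q(\tilde g\tilde f)$ being a deflation means that it is isomorphic (in $\CC[S_\AA^{-1}]$) to $Q(d)$ for some deflation $d\colon A\deflation B$ in $\CC$, fitting into a commutative square of isomorphisms at the endpoints. Representing these isomorphisms by roofs and applying \ref{RMS2} and \ref{RMS3} iteratively, one descends to a common base $X_2$ with a weak isomorphism $u\colon X_2\to X_1$ on which $\tilde g\tilde f u$ coincides, in $\CC$, with a composite passing through $d$. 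The Lifting Lemma (Lemma \ref{lemma:LiftingConflations}) is then applied to the conflation containing $d$ and to the weak isomorphism landing at $Z$, producing a conflation in $\CC$ whose deflation part is exactly $\tilde g\tilde f u$; the weak idempotent completeness of $\CC$ is used here to absorb an auxiliary retraction arising from the isomorphism on the codomain side.

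With $\tilde g\tilde f u$ established as a deflation in $\CC$, the composition $\tilde g\circ(\tilde f u)$ is a deflation in $\CC$, so by the equivalent formulation of \ref{R3} plus weak idempotent completeness assumed of $\CC$ (Proposition \ref{proposition:wicR3}), $\tilde g\colon Y_1\to Z$ is itself a deflation in $\CC$. Consequently, $Q(\tilde g)$ is a deflation in $\CC[S_\AA^{-1}]$, and $g=Q(\tilde g)Q(t)^{-1}$ is the composition of a deflation with an isomorphism, hence a deflation in $\CC[S_\AA^{-1}]$. The main obstacle is the lifting step in the third paragraph: extracting an honest deflation in $\CC$ out of a morphism whose localization is only known to be a deflation in $\CC[S_\AA^{-1}]$, which requires a delicate combination of the right calculus of fractions with the Lifting Lemma and the weak idempotent completeness of $\CC$.
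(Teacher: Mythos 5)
Your high-level plan matches the paper's: reduce to the criterion of Proposition~\ref{proposition:wicR3}, use the right calculus of fractions to produce a common representative in $\CC$ for $f$, $g$, and $gf$, and then try to lift the ``deflation'' property back from $\CC[S_\AA^{-1}]$ to $\CC$. The first two steps correspond to the extensive \ref{RMS2}-chasing in the paper's proof, and your final paragraph (applying \ref{R3} plus weak idempotent completeness to a deflation composite in $\CC$) is the right closing move. The gap is in the middle.

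Your critical lifting claim --- that there is a weak isomorphism $u\colon X_2\stackrel{\sim}{\to}X_1$ such that $\tilde g\tilde f u$ is an \emph{actual deflation in $\CC$} --- is false, and cannot be established with the tools you invoke. When you apply Lemma~\ref{lemma:LiftingConflations} to the conflation containing the representing deflation $d\colon A\deflation B$ and to the weak isomorphism landing in $A$, you obtain a lifted conflation $\overline K\inflation\overline X\stackrel{\overline h}{\deflation}\overline Z$ together with weak isomorphisms into \emph{both} endpoints, in particular a weak isomorphism $w\colon\overline Z\to Z$. The best you can conclude is $\tilde g\tilde f u = w\circ\overline h$, a deflation followed by a weak isomorphism. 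Such a composite is in general \emph{not} a deflation: if $w$ has an $\AA^{-1}$-inflation as one of its factors, then $w\overline h$ is not even an epimorphism. Weak idempotent completeness offers no remedy, since there is no retraction here to split --- your phrase about ``absorbing an auxiliary retraction'' does not correspond to an actual step. Consequently, the subsequent application of Proposition~\ref{proposition:wicR3} to $\tilde g\circ(\tilde f u)$ is vacuous. Notice also that your argument never uses the \emph{strongly} percolating hypothesis on $\AA$, which is a reliable warning sign that something essential is missing.

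The paper resolves the problem precisely where your argument breaks: it changes the codomain as well as the domain. Because $\AA$ is \emph{strongly} deflation-percolating, Proposition~\ref{proposition:MinimalConditionsRMS} guarantees that pullbacks along weak isomorphisms exist. Writing $\overline h\colon\overline X\deflation\overline Z$ for the lifted deflation and $u\colon\overline Z\to Z$ for the induced weak isomorphism, the paper forms the pullback $g'\colon P_1\to\overline Z$ of $g$ along $u$, then the pullback $f'\colon P_2\to P_1$ of $f$ along $P_1\to Y$; by the pullback universal property there is $t'\colon\overline X\to P_2$ with $g'f't'=\overline h$. This \emph{is} a genuine deflation in $\CC$, with the correct codomain $\overline Z$, so \ref{R3} together with weak idempotent completeness gives that $g'$ is a deflation in $\CC$. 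Finally $Q(g')$ is isomorphic to $g$ via the images of the two pullback projections (which are weak isomorphisms), and the localization is closed under isomorphism of conflations, so $g$ is a deflation in $\CC[S_\AA^{-1}]$. In short: do not precompose $\tilde g\tilde f$ with a weak isomorphism and hope to land on a deflation; instead pull $g$ and $f$ back along the weak isomorphisms produced by the lifting lemma (using the strong filtering hypothesis to form these pullbacks), and apply \ref{R3} to the resulting honest composite in $\CC$.
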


\begin{proof}
	By theorem \ref{theorem:Maintheorem}, we know that $S_{\AA}^{-1}\CC$ is a deflation-exact category. We now show that if $f\colon X\rightarrow Y$ and $g\colon Y\rightarrow Z$ are two maps in $S_{\AA}^{-1}\CC$ such that $gf$ is a deflation, then $g$ is a deflation.  Consider a commutative diagram
	\[\xymatrix{
	X \ar@{=}[r]\ar[d]^f &X\ar@{->>}[d]\\
	Y\ar[r]^g & Z
	}\] in $S_{\AA}^{-1}\CC$. The diagram lifts to a diagram 
		\[\xymatrix@!@C=0.3em@R=0.3em{
	X\ar@{=}[rr] & & X&\ar[l]X'''\ar[r]^{\sim}&\overline{X}\ar@{->>}[dd]\\
	X' \ar[u]^{\rotatebox{90}{$\sim$}}\ar[d]& & X''\ar[u]^{\rotatebox{90}{$\sim$}}\ar[d]&&\\
	Y & Y'\ar[l]_{\sim}\ar[r] & Z&\ar[l]_{\sim}Z'\ar[r]&\overline{Z}
	}\] in $\CC$. We claim that we can choose a lift 
	\[\xymatrix{
	\widetilde{X} \ar[r]^{\sim}\ar[d]^{f'} &\overline{X}\ar@{->>}[d]^h\\
	\widetilde{Y}\ar[r]^{g'} & \overline{Z}
	}\] in $\CC$ such that this diagram descends to $gf$ under the localization functor $Q$.	Indeed, applying axiom \ref{RMS2} four times we obtain the diagram
	\[\xymatrix@!@C=0.2em@R=0.2em{
	X\ar@{=}[rr] & & X & X'''\ar[l]\ar[rr]^{\sim} & & \overline{X}\ar@{->>}[dddd]\\
	& & & \ar@{.>}[u]^{\rotatebox{90}{$\sim$}}\ar@{.>}[ld] & &\\
	X'\ar[uu]^{\rotatebox{90}{$\sim$}}\ar[dd]& \ar@{.>}[l]X_1\ar@{.>}[r]^{\sim} & X''\ar[uu]^{\rotatebox{90}{$\sim$}}\ar[dd] & &\ar@{.>}[lu]_{\rotatebox{135}{$\sim$}}\ar@{.>}[ld]X_2 &\\
	&& &\ar@{.>}[lu]_{\rotatebox{135}{$\sim$}}\ar@{.>}[d] & &\\
	Y & Y'\ar[l]_{\sim}\ar[r] & Z & Z'\ar[l]_{\sim}\ar[rr] && \overline{Z}
	}\] which descends to a commutative diagram in $S_{\AA}^{-1}\CC$. Rearranging the diagram and applying axiom \ref{RMS2} twice we obtain the diagram: 
	\[\xymatrix@!@C=0.2em@R=0.5em{
		X'' &&&&X_2\ar[d]_{\rotatebox{90}{$\sim$}}\ar[llll]\\
		X_1\ar[u]^{\rotatebox{90}{$\sim$}}\ar[d] & & X_3 \ar@{.>}[ll]\ar@{.>}[rru]^{\rotatebox{26.5}{$\sim$}}&&\overline{X}\ar@{->>}[d]\\
		Y & Y'\ar[l]_{\sim}\ar[r] & Z & Z'\ar[l]_{\sim}\ar[r] & \overline{Z}\\
		&& \widetilde{Y}\ar@{.>}[lu]_{\rotatebox{135}{$\sim$}}\ar@{.>}[ru] &&
	}\]	Applying axiom \ref{RMS2} to $X_3\rightarrow Y$ along $\widetilde{Y}\xrightarrow{\sim} Y$ we obtain the desired lift:
	\[\xymatrix{
	\widetilde{X} \ar[r]^{\sim}\ar[d]^{f'} &\overline{X}\ar@{->>}[d]^{h}\\
	\widetilde{Y}\ar[r]^{g'} & \overline{Z}
	}\] This shows the claim.	 Hence, it suffices to show that, given a commutative diagram in $\CC$: 
	\[\xymatrix{
		X\ar[r]^{\sim}_s\ar[d]^f & X'\ar@{->>}[d]^h\\
		Y\ar[r]^g&Z
	}\] the morphism $g$ descends to a deflation in $S_{\AA}^{-1}\CC$.  
	
	Write $k\colon K\inflation X'$ for the kernel of $h$. By the lifting lemma \ref{lemma:LiftingConflations}, there exists a commutative diagram
	\[\xymatrix{
	\overline{K}\ar[rr]^{\sim}\ar@{>->}[d]_{\overline{k}} && K\ar@{>->}[d]^k\\
	\overline{X}\ar@{->>}[d]_{\overline{h}}\ar[r]_t\ar@/^2pc/[rr]^{\sim} & X\ar[r]^{\sim}_s\ar[rd]_{gf} & X'\ar@{->>}[d]^{h}\\
	\overline{Z}\ar[rr]^{\sim}_u && Z	
	}\] Since $\AA$ is strongly right filtering in $\CC$, proposition \ref{proposition:MinimalConditionsRMS} yields that pullbacks along weak isomorphisms exist. Hence we obtain the following commutative diagram:
	\[\xymatrix{
		\overline{X}\ar@/^2pc/[rrd]^t\ar@{.>}[rd]^{t'}\ar@/_2pc/@{->>}[rddd]_{\overline{h}} &&\\
		& P_2\ar[r]^{\sim}_{u''}\ar[d]_{f'} & X\ar[d]^f\\
		& P_1\ar[r]^{\sim}_{u'}\ar[d]_{g'} & Y\ar[d]^g\\
		&\overline{Z}\ar[r]^{\sim}_{u} & Z
	}\] Here the two squares are pullback squares and thus the rectangle $P_2X\overline{Z}Z$ is a pullback by the pullback lemma. The map $t'$ is induced by the pullback property. Note that $g'f't'=\overline{h}$ is a deflation in $\CC$. By axiom \ref{R3} and weak idempotent completeness, $g'$ is a deflation in $\CC$. Clearly $Q(g')=Q(g)$ in $S_{\AA}^{-1}\CC$ and thus $g$ descends to a deflation as required.
	
	Note that the above property automatically implies that $S_{\AA}^{-1}\CC$ is weakly idempotent complete (see proposition \ref{proposition:wicR3}), this completes the proof.
\end{proof}

The following example illustrates that the condition of $\CC$ being weakly idempotent complete cannot be dropped.

\begin{example}\label{example:MissingR3}
Let $Q$ be the quiver $\xymatrix@1{c \ar[r]^\alpha & b\ar[r]^\beta & a}$ with relation $\beta\alpha = 0$.  Let $k$ be a field and write $\rep_k Q$ for the category of finite-dimensional $k$-representations of $Q$.  We write $S_a, S_b,$ and $S_c$ for the simple representations associated to the corresponding vertices, and $P_a, P_b,$ and $P_c$ for their projective covers (note that $P_a \cong S_a$).  The Auslander-Reiten of $\rep_k Q$ quiver is given by
\[\xymatrix{
&P_b \ar[rd]^{g} && P_c \ar[rd]^{l} \\
S_a \ar[ru]^{f} && S_b \ar[ru]^{h} && S_c}\]
Let $\CC$ be the full subcategory of $\rep_k Q$ given by all objects not isomorphic to $S_a^{\oplus n} \oplus S_b$ (for any $n \geq 0$).  As $\CC$ is an extension-closed full subcategory of an abelian category, it is endowed with a natural exact structure.

Let $\AA = \add\{S_a\}$ be the additive closure of $S_a$ in $\CC$.  We have that $\AA \subseteq \CC$ is an admissibly deflation-percolating subcategory.  Following theorem \ref{theorem:Maintheorem}, we can consider the quotient $Q\colon \CC \to \CC/\AA$.  Note, as an additive category, $\CC / \AA$ is generated by $Q(P_b), Q(P_c),$ and $Q(S_c)$.  More specifically, the category $\CC / \AA$ is equivalent (as an additive category) to $\rep_k A_2$.

We claim that the quotient $\CC / \AA$ does not satisfy axiom \ref{R3}.  Indeed, consider the sequences
\[\mbox{$\xymatrix@1{Q(P_b) \ar[r]^{Q(hg)} & Q(P_c) \ar[r]^{Q(l)} & Q(S_c)}$ and $\xymatrix@1{Q(P_b) \ar@{=}[r] & Q(P_b) \ar[r]& 0}$}\]
in $\CC/\AA$.  One can verify that the first sequence is not a conflation (in particular, the map $Q(l)\colon Q(P_c) \to Q(S_c)$ is not a deflation).  However, the direct sum of these two sequence is a conflation:
\[\xymatrix{Q(P_b \oplus P_b) \ar@{>->}[rr]^{Q\begin{psmallmatrix} 1 & 0 \\ 0 & hg \end{psmallmatrix}} && Q(P_b \oplus P_c) \ar@{->>}[rr]^{Q \begin{psmallmatrix} 0 & l \end{psmallmatrix}} && Q(S_c)}\]
(this uses that $Q(P_b \oplus P_b) \cong (P_b \oplus S_b)$).  It follows from \cite[proposition 5.9]{BazzoniCrivei13} that $\CC / \AA$ does not satisfy axiom \ref{R3}.

Note that $\CC$ satisfies axiom \ref{R3} (as $\CC$ is exact) and that $\AA$ is strongly (even admissibly) deflation-percolating in $\CC$, but that $\CC$ is not weakly idempotent complete (as the retraction $S_b \oplus P_b \to P_b$ has no kernel).
\end{example}
\section{Examples and applications}\label{section:Examples}

In this section, we give examples of the localizations studied in this paper.  We start with a comparison with \cite{Cardenas98, Schlichting04}.  Next, we show that a deflation-exact category with \ref{R0*} is a category with fibrations (thus, in particular, a coWaldhausen category).  In this way, we have a natural definition of the $K$-theory of a deflation-exact category.  We show that the quotient behaves as expected on the level of the Grothendieck groups.

We then proceed by considering some more specific examples of percolating subcategories.  In \S\ref{subsection:Torsion}, we consider torsion theories in exact categories and give sufficient conditions for the torsion-free part to be a deflation-percolating subcategory or a right special filtering subcategory.  In \S\ref{subsection:QuasiAbelian}, we consider the case of a one-sided quasi-abelian category (also called an almost abelian category) and show that the axioms of a percolating subcategory simplify in this setting.

Finally, we consider two more explicit examples.  The first example (\S\ref{subsection:LCA}) concerns the category $R-\LC$ locally compact modules over a discrete ring $R$.  It was shown in \cite{Braunling20} that the subcategory $R-\LC_D$ of discrete modules is, in general, neither a left nor a right special filtering subcategory.  We show that $R-\LC_D$ is a right percolating subcategory so that we can consider the quotient category $R-\LC / R-\LC_D$.  Furthermore, we show that the subcategory of totally disconnected locally abelian groups is a deflation-percolating subcategory.  In the second example (\S\ref{Subsection:GliderExample}), we give an example coming from the theory of glider representations.  Here, we show explicitly that the quotient category is not inflation-exact (and hence not exact).

\subsection{Comparison to localization theories of exact categories}

Localizations of exact categories have been considered with an eye on $K$-theoretic applications in \cite{Cardenas98, Schlichting04}. We now compare these notions with the notions introduced in this paper.  The following figure provides an overview.
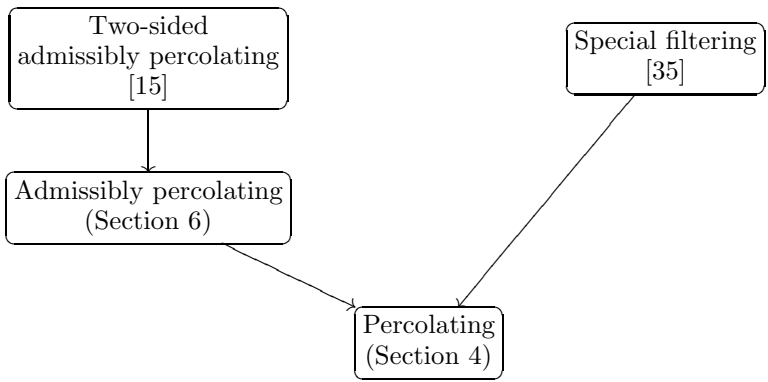
\begin{figure}[H]
		\[\xymatrix{
		*+[F-:<3pt>]{\txt{Two-sided \\ admissibly percolating \\ \cite{Cardenas98}}} \ar[d] &&*+[F-:<3pt>]{\txt{Special filtering \\ \cite{Schlichting04}}} \ar[ddl]\\
		*+[F-:<3pt>]{\txt{Admissibly percolating \\ (Section \ref{section:AbelianPercolating})}} \ar[dr] \\
		&*+[F-:<3pt>]{\txt{Percolating \\ (Section \ref{Section:PercolatingSubcategories})}}
		}\]
	\caption{Different types of subcategories of an exact category}
	\label{figure:ExactOverview}
\end{figure}

\subsubsection{Cardenas' localization theory}

The localization theory of exact categories developed by Cardenas in \cite{Cardenas98} is recovered completely by the framework of localizations with respect to percolating subcategories.  We recover the following result from \cite{Cardenas98}: 

\begin{theorem}
Let $\CC$ be an exact category and let $\AA$ be a full subcategory satisfying axioms \ref{A1}, \ref{A2} and the dual of \ref{A2}. There exists an exact category $\CC/\AA$ and an exact functor $Q\colon \CC\rightarrow \CC/\AA$ satisfying the following universal property: for any exact category $\CC$ and exact functor $F\colon \CC\rightarrow \CC$ such that $F(A)\cong 0$ for all $A\in \AA$, there exists a unique exact functor $G\colon \CC/\AA\rightarrow \CC$ such that $F=G\circ Q$.

The set $S_\AA$ of weak equivalences is a (left and right) multiplicative set, and the quotient $\CC / \AA$ is equivalent to the category localization $\CC[S_\AA^{-1}].$
\end{theorem}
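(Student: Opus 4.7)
The plan is to deduce the theorem as a two-sided consequence of the main localization theorem (Theorem 4.5) and its dual. Under the hypotheses, $\AA$ satisfies axioms \ref{A1}, \ref{A2}, and the dual of \ref{A2}, and since $\CC$ is an exact category, axioms \ref{A3} and its dual are automatic (see Remark 5.2(5)). Thus $\AA$ is simultaneously an admissibly deflation-percolating and an admissibly inflation-percolating subcategory of $\CC$. By Proposition 5.3 (and its dual), $\AA$ is therefore a strongly deflation-percolating subcategory and a strongly inflation-percolating subcategory of $\CC$.

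The next step is to transport this two-sided information to the localization. First, I would note that the set $S_\AA$ of weak isomorphisms is self-dual: it is defined as finite compositions of $\AA^{-1}$-inflations and $\AA^{-1}$-deflations (Definition 3.12). Hence Proposition 3.4 and its dual apply simultaneously to show that $S_\AA$ is both a left and a right multiplicative system, and the category localization $\CC[S_\AA^{-1}]$ agrees with both the left and right calculus-of-fractions localizations. By Theorem 4.5 applied to $\AA \subseteq \CC$, the category $\CC[S_\AA^{-1}]$ carries a canonical deflation-exact structure (Definition 4.3) making $Q\colon \CC \to \CC[S_\AA^{-1}]$ exact; by the dual of Theorem 4.5 applied to $\AA^{\mathrm{op}} \subseteq \CC^{\mathrm{op}}$, the same underlying category carries a canonical inflation-exact structure making $Q$ exact.

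Next, I would argue that these two conflation structures coincide. Indeed, both are defined as the class of sequences in $\CC[S_\AA^{-1}]$ that are isomorphic to the image under $Q$ of a conflation of $\CC$; this description is manifestly self-dual. Combining the deflation-exact and inflation-exact axioms on a common conflation structure yields all of \ref{R0}--\ref{R2} and \ref{L0}--\ref{L2}, and by Keller's reformulation (Remark 3.2(1)) this suffices to show $\CC[S_\AA^{-1}]$ is exact. Setting $\CC/\AA \coloneqq \CC[S_\AA^{-1}]$ with this exact structure, the functor $Q$ is exact between exact categories and sends $\AA$ to zero.

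For the universal property: given an exact functor $F\colon \CC \to \DD$ to an exact category $\DD$ with $F(A) \cong 0$ for all $A \in \AA$, apply Proposition 4.4 (which gives the universal property in the $2$-category of conflation categories) to factor $F$ uniquely as $F = G \circ Q$ with $G$ conflation-exact. Since the target $\DD$ is a fortiori a conflation category and $G$ already preserves conflations, $G$ is automatically exact as a functor between exact categories, and uniqueness is inherited from Proposition 4.4. The main subtlety in this plan is verifying that the two one-sided conflation structures on $\CC[S_\AA^{-1}]$ coincide as classes of kernel-cokernel pairs; once this is in hand, the theorem follows by assembling the pieces already established in Sections 3--5.
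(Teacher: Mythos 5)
Your proposal is correct and follows essentially the same route as the paper's proof: observe that \ref{A3} and its dual are automatic since $\CC$ is exact, so $\AA$ is simultaneously admissibly inflation- and deflation-percolating; then invoke theorem \ref{theorem:Maintheorem} and proposition \ref{proposition:QuotientInConflationCategories} together with their duals. Where you improve on the paper's terse proof is in making explicit two points that the paper leaves implicit: (i) the two one-sided conflation structures on $\CC[S_\AA^{-1}]$ coming from theorem \ref{theorem:Maintheorem} and its dual coincide, because definition \ref{definition:LocalizationConflation} (sequences isomorphic to $Q$-images of conflations of $\CC$) is manifestly self-dual; and (ii) a conflation category carrying both a deflation-exact and an inflation-exact structure on the same class of conflations is exact by Keller's criterion (\ref{R0}, \ref{R1}, \ref{R2}, \ref{L2} suffice), as recorded in remark \ref{remark:DefExactAndQuillenObscureAxiom}. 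These are genuinely needed to complete the argument and are worth spelling out, so your flagging of (i) as the ``main subtlety'' is well placed. One minor remark: the detour through proposition \ref{proposition:AIsAbelian} to upgrade to ``strongly percolating'' is harmless but unnecessary — admissibly deflation-percolating already implies deflation-percolating, which is all theorems \ref{theorem:Maintheorem} and \ref{proposition:QuotientInConflationCategories} require.
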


\begin{proof}
	Since $\CC$ is exact, the subcategory $\AA$ automatically satisfies axiom \ref{A3}. Hence, $\AA$ is both an admissibly inflation- and deflation-percolating subcategory. By theorem \ref{theorem:WeakIsomorphismsEqualAAInverseIsomorphisms} and its dual, the set $\widehat{S_{\AA}}$ is a multiplicative system. By theorem \ref{theorem:Maintheorem} and its dual, as well as proposition \ref{proposition:QuotientInConflationCategories}, the category $\CC[S_{\AA}^{-1}]$ is both inflation-exact and deflation-exact and the canonical localization functor $Q\colon \CC\rightarrow \CC[S_{\AA}^{-1}]$ is exact.  Moreover, $\CC[S_{\AA}^{-1}]\cong \CC/\AA$ and $Q$ satisfies the desired universal property. 
\end{proof}

\subsubsection{Schlichting's localization theory}\label{subsubsection}

We recall the notion of an s-filtering subcategory of an exact category introduced by Schlichting (see \cite[definition~1.5]{Schlichting04}). We use the reformulation given in \cite[definition~2.12 and proposition~A.2]{BraunlingGroechenigWolfson16}.

\begin{definition}\label{definition:SpecialFiltering}
	Let $\CC$ be an exact category and let $\AA$ be a full subcategory. The subcategory $\AA$ is called \emph{right special} if for every inflation $A\rightarrowtail X$ with $A\in \AA$ there exists a commutative diagram 
	\[\xymatrix{
		A\ar@{>->}[r]\ar@{=}[d] & X\ar@{->>}[r]\ar[d] & Y\ar[d]\\
		A\ar@{>->}[r] & B\ar@{->>}[r] & C
	}\] such that the rows are conflations in $\CC$ and the lower row belongs to $\AA$. Dually, $\AA$ is called \emph{left special} if $\AA^{op}$ is right special in $\CC^{op}$.\\
	The subcategory $\AA$ is called \emph{right s-filtering} if it is both right filtering, i.e.~satisfies axioms \ref{P1} and \ref{P2}, and right special in $\CC$.
\end{definition}

The following results about Schlichting's localization theory can be found in \cite[propositions~1.16 and 2.6]{Schlichting04}):

\begin{theorem}\label{theorem:Schlichting'sMainTheorem}
	Let $\CC$ be an exact category and let $\AA$ be a right s-filtering subcategory. The localization functor $Q\colon \CC\rightarrow S_{\AA}^{-1}\CC$ endows $S_{\AA}^{-1}\CC$ with the structure of an exact category. The functor $Q$ is universal among exact functors from $\CC$ to exact categories that vanish on $\AA$, i.e.~$\CC/\AA \cong S_{\AA}^{-1}\CC$.\\
	Moreover, if $\AA$ is idempotent complete, the sequence 
	\[\Db(\AA)\rightarrow \Db(\CC)\rightarrow \Db(\CC/\AA)\]
	is a Verdier localization sequence.
\end{theorem}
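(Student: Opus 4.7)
The strategy is to derive Schlichting's theorem from the main theorem \ref{theorem:Maintheorem} of this paper by exhibiting right s-filtering subcategories of exact categories as instances of (strongly) deflation-percolating subcategories. The first step is to prove proposition \ref{proposition:RecoveredSchlichtingsFramework}: if $\CC$ is an exact category and $\AA \subseteq \CC$ is right s-filtering, then $\AA$ is a strongly deflation-percolating subcategory of $\CC$. Axioms \ref{P1} and \ref{P2} coincide with the right filtering hypothesis, and axiom \ref{P3} is automatic in any exact category by remark \ref{remark:PercolatingRemarks}(2). The verification of the ``strong'' form of \ref{P2} and of axiom \ref{P4} is where the right special condition enters essentially: given $f\colon X\to Y$ factoring as $gh$ with $h\colon X\to B$, $B\in\AA$, and an $\AA^{-1}$-inflation $i\colon A\inflation X$ with $fi=0$, I would apply the right special condition to $i$ to produce a morphism of conflations from $A\inflation X\deflation \coker(i)$ into an $\AA$-conflation $A\inflation C\deflation D$, and then combine this with a pushout of $A\inflation X$ along $hi\colon A\to B$ to obtain a factorization of the induced map $\coker(i)\to Y$ through an object of $\AA$.

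Once proposition \ref{proposition:RecoveredSchlichtingsFramework} is established, theorem \ref{theorem:Maintheorem} together with proposition \ref{proposition:QuotientInConflationCategories} yield that $S_\AA^{-1}\CC$ carries a canonical deflation-exact structure and that $Q\colon \CC\to S_\AA^{-1}\CC$ is 2-universal among exact functors to deflation-exact categories that vanish on $\AA$. Since exact categories form a full sub-2-category of deflation-exact categories, this immediately restricts to Schlichting's stated universal property.

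The main obstacle is to upgrade the conclusion from ``deflation-exact'' to ``exact''; only the deflation-side axioms are guaranteed by our main theorem, whereas Schlichting's result asserts the stronger conclusion. I would address this by showing that the right special condition, combined with the two-sided exactness of $\CC$, also forces the duals of \ref{P1}--\ref{P4} to hold for $\AA$, so that $\AA$ is simultaneously inflation-percolating. Invoking the dual of theorem \ref{theorem:Maintheorem} equips $S_\AA^{-1}\CC$ with a compatible inflation-exact structure on the same class of kernel-cokernel pairs, by proposition \ref{proposition:CokernelsDescend} and its dual, yielding a full Quillen exact structure. For the second statement of the theorem, namely that $\Db(\AA)\to \Db(\CC)\to \Db(\CC/\AA)$ is a Verdier localization sequence, I would defer to the follow-up paper \cite{HenrardvanRoosmalen19b}, where this conclusion is proved in the much more general context of percolating subcategories of one-sided exact categories; the right s-filtering case is then a direct specialization.
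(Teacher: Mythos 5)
This theorem is not proved in the paper: it is quoted as background and attributed directly to \cite[propositions~1.16 and 2.6]{Schlichting04}, and then used only to set up the comparison in proposition~\ref{proposition:RecoveredSchlichtingsFramework} and the remark that follows it. Your proposal to re-derive Schlichting's result from theorem~\ref{theorem:Maintheorem} is therefore a genuinely different route, but it stalls at the decisive step.

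The gap is in your upgrade from deflation-exact to exact. You assert that the right special condition, together with exactness of $\CC$, forces the duals of \ref{P1}--\ref{P4} to hold for $\AA$, so that $\AA$ would also be inflation-percolating and the dual of theorem~\ref{theorem:Maintheorem} would supply the inflation-exact structure. There is no justification given for this, and the paper explicitly warns against it: the remark immediately after proposition~\ref{proposition:RecoveredSchlichtingsFramework} states that, combining that proposition with theorem~\ref{theorem:Maintheorem}, ``we may only conclude that $S_{\AA}^{-1}\CC$ is a deflation-exact category,'' and then falls back on Schlichting's own theorem to get exactness. Note that the right special condition only yields a lifting diagram for \emph{inflations} $A\inflation X$ with $A\in\AA$; it gives no control over arbitrary morphisms $A\to X$, so there is no route to the dual of \ref{P2} (left filtering). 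Had your implication held, the authors would certainly have used it to recover Schlichting's theorem as a corollary of theorem~\ref{theorem:Maintheorem}, which they conspicuously do not. Furthermore, \S\ref{Subsection:GliderExample} shows that no upgrade is available inside the general percolating framework: there a quasi-abelian (hence exact) category with an admissibly deflation-percolating subcategory localizes to a category failing \ref{L1}. To prove that $S_\AA^{-1}\CC$ is Quillen exact under the right s-filtering hypothesis you must reproduce Schlichting's direct argument, which uses the right special condition to construct inflations and verify their pushout-stability in the localization, rather than passing through a dual percolating hypothesis on $\AA$. (As a smaller point, deferring the Verdier localization claim to \cite{HenrardvanRoosmalen19b} only gives the sequence with $\DAb(\CC)$ in the first slot; the identification of $\DAb(\CC)$ with $\Db(\AA)$ is precisely where the idempotent completeness of $\AA$ enters and needs to be argued.)
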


The localization theory developed in \cite{Schlichting04} is compatible with the localization theory with respect to percolating subcategories in the following sense.

\begin{proposition}\label{proposition:RecoveredSchlichtingsFramework}
Let $\CC$ be an exact category and $\AA\subseteq \CC$ a full subcategory. If $\AA$ is a right s-filtering subcategory, then $\AA$ is a deflation-percolating subcategory.
\end{proposition}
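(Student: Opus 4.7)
I would verify the four axioms of a deflation-percolating subcategory in turn.  Axioms \ref{P1} and \ref{P2} are immediate: by definition \ref{definition:SpecialFiltering}, a right s-filtering subcategory is in particular right filtering, which is exactly the conjunction of these two axioms.  Axiom \ref{P3} is automatic in the present setting: since $\CC$ is an exact category, remark \ref{remark:PercolatingRemarks}(2) (invoking \cite[proposition~2.15]{Buhler10}) shows that any subcategory satisfying \ref{P2} also satisfies \ref{P3}.

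The substantive step is \ref{P4}.  Let $f\colon X\to Y$ factor through $\AA$, and let $i\colon A\inflation X$ be an inflation with $A\in\AA$ and $f\circ i=0$; write $p\colon X\deflation Z$ for the cokernel of $i$ and $\bar f\colon Z\to Y$ for the induced map.  First I would apply \ref{P2} to refine the factorization to $f=h\circ g$ with $g\colon X\deflation B$ a deflation and $B\in\AA$.  I would then form the pushout of $i$ along $g\circ i$ in $\CC$, yielding an inflation $j\colon B\inflation P$ with cokernel $p''\colon P\deflation Z$ (proposition \ref{proposition:PushoutIfCokernel}) together with a map $g'\colon X\to P$; the universal property, combined with $f\circ i=h\circ(g\circ i)$, produces a unique $\phi\colon P\to Y$ satisfying $\phi\circ j=h$ and $\phi\circ g'=f$.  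A useful observation at this stage is that the conflation $B\inflation P\deflation Z$ splits canonically: the difference $jg-g'\colon X\to P$ vanishes on $A$, hence factors as $\bar u\circ p$ for a unique $\bar u\colon Z\to P$, and composing with $p''$ gives $p''\circ\bar u=-1_Z$, exhibiting $-\bar u$ as a section of $p''$.  Under the resulting identification $P\cong B\oplus Z$, the equation $\phi\circ g'=h\circ g$ forces the $Z$-component of $\phi$ to vanish.

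Next I would apply the right-special property to $j\colon B\inflation P$, producing a conflation $B\inflation B''\deflation C''$ in $\AA$ together with a morphism of conflations having vertical maps $\gamma\colon P\to B''$ and $\delta\colon Z\to C''$.  The right-hand square is a pushout by proposition \ref{proposition:PushoutIfCokernel}, so factoring $\bar f$ through $\delta$ (which would place it in $\AA$ since $C''\in\AA$) reduces to producing a compatible map $\eta\colon B''\to Y$ whose restriction to $B$ along $B\inflation B''$ equals $h$ and which is compatible with $\bar f\circ p''$ via $\gamma$.  The hardest step will be this construction of $\eta$: it expresses the compatibility of an arbitrary right-special extension with the specific factorization $f=hg$, and it is exactly here that the right-specialness hypothesis is used in an essential way.

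A cleaner alternative route, which I would pursue in parallel, is to invoke Schlichting's theorem \ref{theorem:Schlichting'sMainTheorem} directly: since $Q\colon\CC\to\CC/\AA\cong S_\AA^{-1}\CC$ is exact with $Q(\AA)=0$, we have $Q(f)=0$, while $Q(p)$ is an isomorphism because $Q$ sends the conflation $A\inflation X\deflation Z$ to a conflation with vanishing first term; hence $Q(\bar f)=0$.  The description of morphisms in the Schlichting quotient (analyzed in \cite{Schlichting04}) shows that any morphism of $\CC$ vanishing in $\CC/\AA$ must factor through an object of $\AA$, which immediately yields the desired factorization of $\bar f$ and completes the verification of \ref{P4}.
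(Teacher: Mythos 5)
Your first route stalls at exactly the point you flagged, and the obstruction is real. The right-special property only produces \emph{some} conflation $B\inflation B''\deflation C''$ in $\AA$ dominating $j\colon B\inflation P$; it gives no compatibility between the chosen $(B'',C'',\gamma,\delta)$ and the auxiliary map $\phi\colon P\to Y$, so there is no mechanism to manufacture the required $\eta\colon B''\to Y$. Your own earlier observation that $B\inflation P\deflation Z$ splits actually exhibits the difficulty: the split inflation admits the trivial right-special extension $B''=B$, $C''=0$ with $\gamma=\pi_B$ and $\delta=0$, and then $\eta:=h$ does satisfy $\eta\gamma|_B=h$, but $\delta=0$ tells you nothing about $\bar f$. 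So this branch of the argument does not close as written.

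Your second route is correct and genuinely different from the paper's. You invoke theorem \ref{theorem:Schlichting'sMainTheorem} to get that $Q\colon \CC\to\CC/\AA$ is an exact functor between exact categories that kills $\AA$; then $Q(p)$ is invertible because the conflation $A\inflation X\stackrel{p}{\deflation}Z$ is sent to one with vanishing first term, so $Q(\bar f)=Q(f)\circ Q(p)^{-1}=0$; and the characterization of morphisms vanishing in the Schlichting quotient (part of \cite[lemma~1.17]{Schlichting04}) yields that $\bar f$ factors through $\AA$. The paper's own proof is more hands-on and self-contained: it also reduces via axiom \ref{P2} to a deflation $\rho\colon X\deflation B$ with kernel $\iota\colon K\inflation X$, but then uses only the technical statement \cite[lemma~1.17.(3)]{Schlichting04} (refining $p\iota\in S_\AA$ by an $\AA^{-1}$-inflation $t$ so that $p\iota t$ is an $\AA^{-1}$-inflation), applies lemma \ref{lemma:CompositionOfAAInflations}, and runs a $3\times 3$-lemma argument to exhibit an explicit factorization of the induced map through an object of $\AA$. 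Your approach buys brevity by outsourcing the work to Schlichting's main theorem; the paper's is more elementary, leaning only on one subsidiary lemma of \cite{Schlichting04} plus the paper's internal toolkit, and so is better suited to a paper whose aim is to reprove and extend Schlichting's results rather than assume them.
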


\begin{proof}
	As $\AA$ is a right filtering subcategory of $\CC$, axioms \ref{P1} and \ref{P2} are satisfied. Since $\CC$ is exact, axiom \ref{A3} is satisfied (and thus axiom \ref{P3} is satisfied as well). It remains to verify axiom \ref{P4}.
	
	To that end, consider a commutative diagram:
	\[\xymatrix{
		A\ar@{>->}[rd]^i\ar@/^2pc/[rrrd]^0 &&&\\
		 & X\ar[rr]^f\ar@{->>}[rd]_p^{\rotatebox{135}{$\sim$}}\ar[dd]_{\rho}&& Y\\
		&& Q\ar@{.>}[ru]^h & \\
		& B\ar@/_2pc/[rruu]_g&&
	}\] We need to show that $h$ factors through $\AA$.
	
	By axiom \ref{P2}, we may, without loss of generality, assume that $\rho$ is a deflation with kernel $\iota\colon K\stackrel{\sim}{\inflation} X$. Since $p\circ \iota\in S_{\AA}$, \cite[lemma~1.17.(3)]{Schlichting04} implies that there exists an $\AA^{-1}$-inflation $t\colon U\stackrel{\sim}{\inflation}K$ such that $p\iota t$ is an $\AA^{-1}$-inflation as well. Lemma \ref{lemma:CompositionOfAAInflations} shows that the composition $\iota \circ t$ is an $\AA^{-1}$-inflation. As $\CC$ is an exact category, the $3\times 3$-lemma (see \cite[corollary 3.6]{Buhler10}) yields the following commutative diagram:
	\[\xymatrix{
	0\ar@{>->}[r]\ar@{>->}[d]^{\rotatebox{90}{$\sim$}} & U\ar@{=}[r]\ar@{>->}[d]^{\rotatebox{90}{$\sim$}}_{\iota t} & U\ar@{>->}[d]^{\rotatebox{90}{$\sim$}} & \\
	A\ar@{>->}[r]_i\ar@{=}[d] & X\ar@{->>}[r]_p^{\sim}\ar@{->>}[d]_{q'} & Q\ar@{->>}[d]_q\ar[r]_h & Y\ar@{=}[dd]\\
	A\ar@{>->}[r]^{i'} & B'\ar@{->>}[r]^{p'}\ar@{.>}[d]_l& C\ar@{.>}[rd]^{\exists ! u} &\\
	& B\ar[rr]^g && Y\\
	}\] Here, $q'$ is the cokernel of $\iota t$ and $l$ is the induced map such that $lq'=\rho$. Note that $(gl)i'=glq'i=g\rho i=fi=0$, hence $gl$ factors through $p'$ via a unique induced map $u\colon C\to Y$ satisfying $up'=gl$.  Moreover, as $p$ is epic and $hp=f=g\rho=q'lg=up'q'=(uq)p$, we conclude that $h=uq$. This shows that $h$ factors through $C\in \Ob(\AA)$, as required.
\end{proof}

\begin{remark}
Let $\CC$ be an exact category and let $\AA$ be a right s-filtering subcategory.  By theorem \ref{theorem:Schlichting'sMainTheorem}, the category $S_{\AA}^{-1}\CC$ is an exact category.  Using the above proposition and theorem \ref{theorem:Maintheorem}, we may only conclude that $S_{\AA}^{-1}\CC$ is a deflation-exact category. In section \ref{Subsection:GliderExample} we will show that the localization of an exact category with respect to a one-sided percolating subcategory need not be exact.
\end{remark}

\begin{remark}
In subsequent work \cite{HenrardvanRoosmalen19b}, we show that, given a deflation-exact category $\CC$ and a deflation-percolating subcategory $\AA$, one still obtains a Verdier localization sequence
		\[\DAb(\CC)\rightarrow \Db(\CC)\rightarrow \Db(\CC/\AA).\]
	Here $\DAb(\CC)$ denotes the thick triangulated subcategory of $\Db(\CC)$ generated by $\AA$ (the derived category of a deflation-exact category is defined in \cite{BazzoniCrivei13}).
\end{remark}

\subsection{Waldhausen categories and the Grothendieck group}

Given a deflation-exact category $\CC$ and an admissibly deflation-percolating subcategory $\AA$, one can encode the localization $\CC/\AA$ into a \emph{coWaldhausen category}. In this way, one can study the K-theory of $\CC/\AA$. In particular one obtains an immediate description of the Grothendieck group of $\CC/\AA$ (see proposition \ref{proposition:WaldhausenGrothendieck}). We refer the reader to \cite{Weibel13} for more details.

\begin{definition}
	Let $\CC$ be a category and let $\cofib(\CC)$ be a set of morphisms in $\CC$ called \emph{cofibrations} (indicated by arrows $\rightarrowtail$). The pair $(\CC,\cofib(\CC))$ is called a \emph{category with cofibrations} if the following axioms are satisfied:
	\begin{enumerate}[label=\textbf{W\arabic*},start=0]
		\item\label{W0} Every isomorphism is a cofibration and cofibrations are closed under composition.
		\item\label{W1} The category $\CC$ has a zero object $0$ and for each $X\in \CC$ the unique map $0\rightarrowtail X$ is a cofibration.
		\item\label{W2} Pushouts along cofibrations exist and cofibrations are stable under pushouts.
	\end{enumerate}
	Axioms \ref{W1} and \ref{W2} yield the existence of cokernels of cofibrations, thus for every cofibration $X\rightarrowtail Y$ there is a canonical \emph{cofibration sequence} $X\rightarrowtail Y\twoheadrightarrow Z$.
	A \emph{category with fibrations} is defined dually. A fibration is depicted by $\twoheadrightarrow$ and the set of fibrations is denoted by $\fib(\CC)$.
\end{definition}

\begin{remark}\label{remark:EquivalenceRightExactAndCategoryWithFibrations}
An inflation-exact category with \ref{L0*} is a category with cofibrations and, dually, a deflation-exact category with \ref{R0*} is a category with fibrations.
\end{remark}

\begin{definition}
	Let $(\CC,\cofib(\CC))$ be a category with fibrations and let $w\CC$ be a set of morphisms in $\CC$ called \emph{weak equivalences} (indicated by arrows endowed with $\sim$). The triple $(\CC,\cofib(\CC),w\CC)$ is called a \emph{Waldhausen category} if $w\CC$ contains all isomorphisms and is closed under composition and the following axiom (called the \emph{gluing axiom}) is satisfied:
	\begin{enumerate}[label=\textbf{W\arabic*},start=3]
		\item\label{W3}  For any commutative diagram of the form 
		\[\xymatrix{
			Z\ar[d]_{\rotatebox{90}{$\sim$}} & X\ar[d]_{\rotatebox{90}{$\sim$}}\ar@{>->}[r]\ar[l]  & Y\ar[d]_{\rotatebox{90}{$\sim$}}\\
			Z'&X'\ar@{>->}[r]\ar[l]&Y'
		}\] the induced map $Z\cup_X Y\rightarrow Z'\cup_{X'}Y'$ between the pushouts is a weak equivalence.
	\end{enumerate}
A \emph{coWaldhausen category} is defined dually.
\end{definition}

\begin{definition}
	Let $(\CC,\cofib(\CC),w\CC)$ be a Waldhausen category. The Grothendieck group $K_0(\CC)$ (often denoted as $K_0(w\CC)$) is defined as the free abelian group generated by the isomorphism classes of objects in $\CC$ modulo the relations:
	\begin{enumerate}
		\item $[X]=[Y]$ if there is a weak equivalence $X\xrightarrow{\sim}Y$.
		\item $[Z]=[X]+[Y]$ for every cofibration sequence $X\rightarrowtail Z\twoheadrightarrow Y$.
	\end{enumerate}
	The Grothendieck group of a coWaldhausen category is defined dually.
\end{definition}

\begin{proposition}
Let $\CC$ be a deflation-exact category satisfying axiom \ref{R0*} and let $\AA$ be a deflation-percolating subcategory. Let $\fib(\CC)$ be the set of deflations in $\CC$ and let $w\CC$ be the saturated closure of the set of weak isomorphisms with respect to the subcategory $\AA$.  The triple $(\CC,\fib(\CC),w\CC)$ is a coWaldhausen category.
\end{proposition}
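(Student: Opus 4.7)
The plan is to verify the dual of axioms \ref{W0} through \ref{W3}, most of which follow immediately from the hypotheses on $\CC$. By remark \ref{remark:EquivalenceRightExactAndCategoryWithFibrations}, the pair $(\CC, \fib(\CC))$ is already a category with fibrations: \ref{R1} and lemma \ref{Lemma:BasicProperties} ensure that deflations contain all isomorphisms and are closed under composition, \ref{R0*} supplies the deflation $X \twoheadrightarrow 0$ for each $X$, and \ref{R2} supplies pullbacks along deflations together with their stability. This settles the duals of \ref{W0}, \ref{W1}, and \ref{W2}.

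Write $Q\colon \CC \to S_\AA^{-1}\CC$ for the localization at $S_\AA$; by the very definition of saturation, $w\CC = \{f \in \Mor(\CC) \mid Q(f)\ \text{is invertible in}\ S_\AA^{-1}\CC\}$. Since $Q$ is a functor, $w\CC$ manifestly contains all isomorphisms and is closed under composition. The only remaining condition is the dual of the gluing axiom \ref{W3}, which I expect to be the main (and essentially only) substantive step.

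For that axiom, consider a commutative diagram
\[\xymatrix{
Z \ar[r] \ar[d]^{\alpha} & X \ar[d]^{\beta} & Y \ar@{->>}[l] \ar[d]^{\gamma} \\
Z' \ar[r] & X' & Y' \ar@{->>}[l]
}\]
in $\CC$ with $\alpha, \beta, \gamma \in w\CC$ and right-hand horizontal maps deflations. By \ref{R2} the pullbacks $P \coloneqq Z \times_X Y$ and $P' \coloneqq Z' \times_{X'} Y'$ exist in $\CC$, producing a canonical comparison morphism $\phi\colon P \to P'$; one must show $\phi \in w\CC$. By proposition \ref{proposition:BasicPropertiesOfLocalization}, $Q$ commutes with finite limits, so $Q(P)$ and $Q(P')$ are pullbacks in $S_\AA^{-1}\CC$ of the cospans $Q(Z) \to Q(X) \leftarrow Q(Y)$ and $Q(Z') \to Q(X') \leftarrow Q(Y')$ respectively. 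Since $Q(\alpha), Q(\beta), Q(\gamma)$ are isomorphisms, these two cospans are isomorphic in $S_\AA^{-1}\CC$, and the universal property of the pullback then forces $Q(\phi)$ to be an isomorphism as well. Hence $\phi \in w\CC$, completing the verification.
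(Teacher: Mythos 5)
Your proof is correct and follows essentially the same strategy as the paper: both reduce the gluing axiom to the fact that the localization functor $Q$, being a localization at a right multiplicative system, preserves finite limits. The only stylistic difference is that the paper spells out the pullback $Z\cap_X Y$ as the kernel of $Z\oplus Y\twoheadrightarrow X$ (invoking axiom \ref{R0*} and the dual of \cite[proposition~5.7]{BazzoniCrivei13} to produce that conflation) and then uses that $Q$ preserves kernels, whereas you appeal directly to $Q$ preserving pullbacks and conclude by the universal property applied to the isomorphic cospans; this sidesteps the explicit conflation but rests on the identical underlying fact.
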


\begin{proof}
	By assumption the category $\CC$ satisfies axiom \ref{R0*}.  By remark \ref{remark:EquivalenceRightExactAndCategoryWithFibrations}, the pair $(\CC,\fib(\CC))$ is a category with fibrations. We now show that $w\CC$ satisfies the gluing axiom. Consider a commutative diagram:
	\[\xymatrix{
		Z\ar[r]\ar[d]_{\rotatebox{90}{$\sim$}} & X\ar[d]_{\rotatebox{90}{$\sim$}} & \ar@{->>}[l]\ar[d]_{\rotatebox{90}{$\sim$}} Y\\
		Z'\ar[r] & X'&\ar@{->>}[l] Y'
	}\] Here, arrows endowed with $\sim$ are weak equivalences.
	
	By axiom \ref{R0*} and the dual of \cite[proposition~5.7]{BazzoniCrivei13} we obtain a commutative diagram:
	\[\xymatrix{
		Z\cap_{X}Y \ar@{>->}[r]\ar@{.>}[d]& Z\oplus Y\ar@{->>}[r]\ar[d]_{\rotatebox{90}{$\sim$}} & X\ar[d]_{\rotatebox{90}{$\sim$}}\\
		Z'\cap_{X'}Y'\ar@{>->}[r] & Z'\oplus Y'\ar@{->>}[r] & X'
	}\] As the localization $Q\colon \CC \to \CC / \AA$ commutes with kernels, the induced map $Z\cap_{X}Y\rightarrow Z'\cap_{X'}Y'$ descends to an isomorphism. It follows that the triple $(\CC,\fib(\CC),w\CC)$ is a coWaldhausen category.
\end{proof}

\begin{proposition}\label{proposition:WaldhausenGrothendieck}
Let $\CC$ be a deflation-exact category satisfying axiom \ref{R0*} and let $\AA$ be a deflation-percolating subcategory. Let $\fib(\CC)$ be the set of deflations in $\CC$ and let $w\CC$ be the saturated closure of the set of weak isomorphisms with respect to the subcategory $\AA$.  The quotient functor $Q\colon \CC \to \CC / \AA$ induces an isomorphism $K_0(w\CC)\cong K_0(\CC / \AA)$, where $K_0(\CC/\AA)$ is defined in the usual manner (the weak equivalences on $\CC / \AA$ are the isomorphisms).
\end{proposition}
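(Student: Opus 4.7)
The plan is to exhibit mutually inverse group homomorphisms between $K_0(w\CC)$ and $K_0(\CC/\AA)$. First, the exact functor $Q\colon \CC \to \CC/\AA$ sends every morphism in $w\CC$ to an isomorphism (by construction $w\CC$ is the saturated closure of $S_\AA$, i.e.\ exactly the class of morphisms inverted by $Q$), and sends every conflation in $\CC$ (which is exactly a fibration sequence in the coWaldhausen structure) to a conflation in $\CC/\AA$ by proposition \ref{proposition:CokernelsDescend} and definition \ref{definition:LocalizationConflation}. Hence $[X] \mapsto [Q(X)]$ defines a homomorphism $\phi\colon K_0(w\CC) \to K_0(\CC/\AA)$.

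For the inverse, observe that $\Ob(\CC/\AA) = \Ob(\CC)$, so we attempt to define $\psi\colon K_0(\CC/\AA) \to K_0(w\CC)$ by $[X]_{\CC/\AA} \mapsto [X]_{w\CC}$. The two points to verify are: (i) if $X \cong Y$ in $\CC/\AA$ then $[X]_{w\CC} = [Y]_{w\CC}$, and (ii) for any conflation $X \rightarrowtail Z \twoheadrightarrow Y$ in $\CC/\AA$ one has $[Z]_{w\CC} = [X]_{w\CC} + [Y]_{w\CC}$. For (i), any isomorphism $X \cong Y$ in $\CC/\AA$ is represented (via construction \ref{construction:Localization}) by a roof $X \xleftarrow{s} X' \xrightarrow{f} Y$ with $s \in S_\AA$. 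Because the roof represents an isomorphism in $\CC/\AA$ and $Q(s)$ is already invertible there, $Q(f)$ must be invertible as well; hence $f \in w\CC$, and $[X]_{w\CC} = [X']_{w\CC} = [Y]_{w\CC}$.

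For (ii), definition \ref{definition:LocalizationConflation} provides a conflation $\overline{X} \rightarrowtail \overline{Z} \twoheadrightarrow \overline{Y}$ in $\CC$ together with isomorphisms in $\CC/\AA$ identifying its image with the given conflation. Applying (i) to these isomorphisms gives $[X]_{w\CC} = [\overline{X}]_{w\CC}$, $[Z]_{w\CC} = [\overline{Z}]_{w\CC}$, and $[Y]_{w\CC} = [\overline{Y}]_{w\CC}$, while the fibration-sequence relation in $K_0(w\CC)$ applied to $\overline{X} \rightarrowtail \overline{Z} \twoheadrightarrow \overline{Y}$ yields $[\overline{Z}]_{w\CC} = [\overline{X}]_{w\CC} + [\overline{Y}]_{w\CC}$. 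Combining these equalities produces the required relation. Finally, $\phi$ and $\psi$ act as the identity on the generators $[X]$ of each group, so they are mutual inverses. The argument is essentially bookkeeping; the only delicate point is (i), where one must translate an abstract isomorphism in the localization into honest morphisms of $w\CC$, and this follows directly from the roof description of construction \ref{construction:Localization} together with the characterization of $w\CC$ as the saturated closure of $S_\AA$.
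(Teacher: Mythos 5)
Your proposal is correct and follows essentially the same approach as the paper's proof: both define the map on generators by the identity on objects (using $\Ob(\CC)=\Ob(\CC/\AA)$) and check that the conflation and weak-equivalence relations are respected in each direction. Your treatment is a bit more careful than the paper's on the point that an abstract isomorphism $X\cong Y$ in $\CC/\AA$ forces $[X]=[Y]$ in $K_0(w\CC)$ — the roof argument plus the characterization of $w\CC$ as the saturation of $S_\AA$ — whereas the paper folds this into the larger diagram and leaves the corresponding check on the forward map to the reader; this added detail is a genuine improvement in rigor but not a different route.
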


\begin{proof}
By theorem \ref{theorem:Maintheorem}, the quotient category $\CC/\AA$ is a deflation-exact category. Note that $\Ob(\CC)=\Ob(\CC/\AA)$. One readily verifies that the map $f\colon K_0(w\CC)\rightarrow K_0(\CC/\AA)$, which is the identity on objects, is a group morphism.

Let $F(\CC / \AA)$ and $F(w\CC)$ be the free abelian groups generated by the objects of $\CC / \AA$ and $\CC$, respectively.  As $\Ob(\CC / \AA) = \Ob(\CC)$, we can consider the identity map $\bar{g}\colon F(\CC/\AA)\rightarrow F(w\CC)$.  Let $X\rightarrowtail Y\twoheadrightarrow Z$ be a conflation in $\CC/\AA$.  By definition \ref{definition:LocalizationConflation}, this means that there is a diagram
	\[\xymatrix{
		\overline{X}\ar@{>->}[rr] && \overline{Y}\ar@{->>}[rr] && \overline{Z}\\
		\ar[u]^{\rotatebox{90}{$\sim$}}\ar[d]_{\rotatebox{90}{$\sim$}}&&\ar[u]^{\rotatebox{90}{$\sim$}}\ar[d]_{\rotatebox{90}{$\sim$}}&&\ar[u]^{\rotatebox{90}{$\sim$}}\ar[d]_{\rotatebox{90}{$\sim$}}\\
		X &\ar[l]_{\sim}\ar[r]& Y &\ar[l]_{\sim}\ar[r]& Z
	}\] in $\CC$ which descends to a commutative diagram in $\CC/\AA$ and such that the vertical arrows descend to isomorphisms. Hence $[X]=[\overline{X}]$, $[Y]=[\overline{Y}]$ and $[Z]=[\overline{Z}]$ in $K_0(w\CC)$. As $\overline{X}\rightarrowtail \overline{Y}\twoheadrightarrow \overline{Z}$ is a conflation in $\CC$, we obtain $[Y]=[X]+[Z]$ in $K_0(w\CC)$. It follows that $\bar{g}$ induces a group homomorphism $K_0(\CC/\AA)\rightarrow K_0(w\CC)$. Clearly, $f$ and $g$ are inverse to each other. We conclude that $K_0(\CC/\AA)\cong K_0(w\CC)$.
\end{proof}

\subsection{Torsion theories in one-sided exact categories}\label{subsection:Torsion}

In \cite{BournGran06}, a definition of a torsion theory is given for general homological categories.  We restrict ourselves to the context of deflation-exact categories and relate them to deflation-percolating subcategories.

\begin{definition}
	Let $\CC$ be a one-sided exact category.  A torsion theory in $\CC$ is a pair of full replete (=closed under isomorphisms) subcategories $(\TT,\FF)$ such that 
	\begin{enumerate}
		\item $\Hom(T,F)=0$ for all $T\in \TT$ and all $F\in \FF$,
		\item for any object $M\in \CC$ there exists a conflation 
		\[T \rightarrowtail M \twoheadrightarrow F\]
		in $\CC$ with $T\in \TT$ and $F\in \FF$.
	\end{enumerate}
	We refer to the category $\FF$ as the torsion-free subcategory and to $\TT$ as the torsion category.
	
	A torsion theory $(\TT,\FF)$ is called \emph{hereditary} if $\TT$ is a Serre subcategory of $\CC$ and \emph{cohereditary} if $\FF$ is a Serre subcategory.
\end{definition}

\begin{lemma}\label{lemma:TorsionSequence}
	Let $\CC$ be a one-sided exact category and let $(\TT,\FF)$ be a torsion theory. For any object $M\in \CC$, there is a conflation
	\[\xymatrix{M_T\ar@{>->}[r] & M\ar@{->>}[r] & M_F,}\]
	with $ M_T \in \TT$ and $M_F\in \FF$, which is unique up to isomorphism.
\end{lemma}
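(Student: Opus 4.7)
The existence of the conflation is built into the definition of a torsion theory, so the task is uniqueness up to isomorphism. My plan is to use the Hom-orthogonality condition together with the kernel and cokernel properties of the inflation and deflation in a conflation.

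Suppose we are given two conflations $T_1 \stackrel{i_1}{\inflation} M \stackrel{p_1}{\deflation} F_1$ and $T_2 \stackrel{i_2}{\inflation} M \stackrel{p_2}{\deflation} F_2$ with $T_j \in \TT$ and $F_j \in \FF$. The first step is to observe that the composition $p_2 \circ i_1 \colon T_1 \to F_2$ is zero, since $\Hom(T_1, F_2) = 0$ by the definition of a torsion theory. As $i_2$ is the kernel of $p_2$ (it is the inflation of a conflation), we obtain a unique morphism $\alpha \colon T_1 \to T_2$ with $i_2 \circ \alpha = i_1$. Swapping the roles of the two conflations yields a unique morphism $\beta \colon T_2 \to T_1$ with $i_1 \circ \beta = i_2$.

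The next step is to show that $\alpha$ and $\beta$ are mutually inverse. Indeed, $i_2 \circ \alpha \circ \beta = i_1 \circ \beta = i_2 = i_2 \circ 1_{T_2}$, and because inflations are monomorphisms (Lemma~\ref{Lemma:BasicProperties}), we conclude $\alpha \circ \beta = 1_{T_2}$; symmetrically $\beta \circ \alpha = 1_{T_1}$. Hence $T_1 \cong T_2$ and the isomorphism is compatible with the inclusions into $M$.

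For the torsion-free quotient, I would dualize the argument using that $p_j$ is the cokernel of $i_j$. Since $p_2 \circ i_1 = 0$, the universal property of the cokernel $p_1$ induces a unique $\gamma \colon F_1 \to F_2$ with $\gamma \circ p_1 = p_2$; symmetrically, a unique $\delta \colon F_2 \to F_1$ with $\delta \circ p_2 = p_1$. Using that deflations are epimorphisms (Lemma~\ref{Lemma:BasicProperties}), the identities $\delta \circ \gamma \circ p_1 = p_1$ and $\gamma \circ \delta \circ p_2 = p_2$ force $\delta \circ \gamma = 1_{F_1}$ and $\gamma \circ \delta = 1_{F_2}$, so $F_1 \cong F_2$ compatibly with the quotient maps from $M$. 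There is no real obstacle here; the only subtlety is to remember that inflations and deflations in a one-sided exact category are genuine kernels and cokernels, so both the factoring step and the cancellation step (via monic/epic) are available without any further assumption on $\CC$.
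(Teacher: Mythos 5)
Your proof is correct and complete. The paper simply cites Bourn and Gran's Lemma~4.2 for this, and the argument you give — using $\Hom(\TT,\FF)=0$ to produce the comparison maps via the kernel and cokernel universal properties of the conflations, then cancelling with the monic/epic properties of inflations and deflations — is exactly the standard argument behind that reference; you have supplied the details the paper leaves implicit.
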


\begin{proof}
The proof from \cite[lemma 4.2]{BournGran06} carries over to this setting.
\end{proof}

\begin{proposition}\label{proposition:TorsionPairAdjoints}
Let $\CC$ be a conflation category, let $(\TT,\FF)$ be a torsion theory in $\CC$.
\begin{enumerate}
	\item The inclusion functor $i\colon \TT\rightarrow \CC$ has a right adjoint $R$ and the inclusion functor $j\colon \FF\rightarrow \CC$ has a left adjoint $L$.
	\item $\TT = \{C \in \Ob(\CC) \mid \Hom(C, \FF) = 0\},$
	\item $\FF = \{C \in \Ob(\CC) \mid \Hom(\TT, C) = 0\}.$
\end{enumerate} 
\end{proposition}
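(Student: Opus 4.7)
The plan is to use the functorial torsion sequence from lemma \ref{lemma:TorsionSequence} to build the adjoints, and then to identify $\TT$ and $\FF$ by a standard orthogonality argument, using only that inflations in a conflation category are monomorphisms (as kernels) and deflations are epimorphisms (as cokernels).

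For part (1), I would define $R\colon \CC \to \TT$ and $L\colon \CC \to \FF$ on objects by $R(M) = M_T$ and $L(M) = M_F$, where $M_T \inflation M \deflation M_F$ is the torsion sequence of $M$. To extend these to functors and establish the adjunctions simultaneously, I would show the universal property: for any $T \in \TT$ and any morphism $f\colon T \to M$, the composition $T \to M \deflation M_F$ lies in $\Hom(\TT, \FF)$ and hence is zero; therefore $f$ factors through the inflation $M_T \inflation M$, and the factorization is unique because this inflation is monic. This gives the bijection $\Hom_{\CC}(i(T), M) \cong \Hom_{\TT}(T, R(M))$, which is natural in $M$. Applying this universal property to morphisms $M \to M'$ then yields the functoriality of $R$, and a dual argument (using that $M \deflation M_F$ is epic so that factorizations through it are unique) produces the left adjoint $L$.

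For parts (2) and (3), which are dual, I will treat part (3). One inclusion is immediate from the definition of a torsion theory. For the reverse inclusion, suppose $C \in \CC$ satisfies $\Hom(\TT, C) = 0$. The torsion sequence $C_T \stackrel{i}{\inflation} C \stackrel{p}{\deflation} C_F$ gives a morphism $i \in \Hom(C_T, C) = 0$, so $i = 0$. Since $i$ is a kernel (hence a monomorphism), the identity on $C_T$ must also vanish, forcing $C_T = 0$. The cokernel property of $p$ then shows $p$ is an isomorphism, so $C \cong C_F \in \FF$.

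The only step that is not entirely mechanical is the construction of $R$ and $L$ on morphisms together with the naturality of the adjunction isomorphism; the orthogonality computations for parts (2)--(3) are routine consequences of the conflation axioms once one remembers that inflations are monic and deflations are epic. The torsion sequence is unique up to isomorphism by lemma \ref{lemma:TorsionSequence}, which makes $R$ and $L$ well defined up to canonical isomorphism, so no further choice issues arise.
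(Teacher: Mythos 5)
Your proposal is correct and follows essentially the same approach as the paper: defining $R(M)=M_T$, $L(M)=M_F$ via the torsion sequence of lemma \ref{lemma:TorsionSequence}, and then for parts (2)--(3) observing that the relevant leg of the torsion sequence must vanish. The paper treats part (2) (showing $C_F=0$ from the deflation $C\deflation C_F$ being a zero epimorphism) where you treat the dual part (3), and the paper leaves the adjunction verification implicit where you spell it out, but these are presentational differences only.
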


\begin{proof}
The right adjoint to the embedding $i\colon \TT \to \CC$ is given by $M \mapsto M_T$.  The left adjoint to the embedding $j\colon \FF\rightarrow \CC$ is given $M \mapsto M_F$.

It follows from the definition of a torsion theory that $\TT \subseteq \{C \in \Ob(\CC) \mid \Hom(C, \FF) = 0\}$.  For the other inclusion, let $C \in \Ob(\CC)$ such that $\Hom(C, \FF)=0$.  As there is a deflation $C \deflation C_F$, we find $C_F = 0$ so that $C \cong C_T \in \TT$, as required.

The proof that $\FF = \{C \in \Ob(\CC) \mid \Hom(\TT, C) = 0\}$ is similar.
\end{proof}

\begin{corollary}\label{corollary:TorsionTheories}
Let $\CC$ be a conflation category, let $(\TT,\FF)$ be a torsion theory in $\CC$.  The subcategory $\FF$ is closed under extensions and subobjects in $\CC$.  Dually, $\TT$ is closed under extensions and (epimorphic) quotient objects in $\CC$.
\end{corollary}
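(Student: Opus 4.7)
The plan is to derive all four closure properties directly from the characterizations $\TT = \{C \mid \Hom(C,\FF) = 0\}$ and $\FF = \{C \mid \Hom(\TT, C) = 0\}$ established in Proposition \ref{proposition:TorsionPairAdjoints}, together with the elementary fact that in any conflation category inflations are monomorphisms (being kernels of deflations) and deflations are epimorphisms (being cokernels of inflations).

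For closure of $\FF$ under subobjects, I would consider an inflation $i\colon N \rightarrowtail F$ with $F \in \FF$. For any $T \in \TT$ and morphism $f\colon T \to N$, the composite $i \circ f\colon T \to F$ lies in $\Hom(T, F) = 0$, and since $i$ is monic, $f = 0$. Hence $\Hom(\TT, N) = 0$, giving $N \in \FF$. For closure of $\FF$ under extensions, I would take a conflation $F' \stackrel{j}{\rightarrowtail} M \stackrel{p}{\twoheadrightarrow} F''$ with $F', F'' \in \FF$, and for any $T \in \TT$ and $f\colon T \to M$, observe that $p \circ f = 0$ since $\Hom(T, F'') = 0$; hence $f$ factors through $j = \ker p$ via some $f'\colon T \to F'$, and $\Hom(T, F') = 0$ forces $f' = 0$, hence $f = 0$. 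Thus $M \in \FF$.

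The statements about $\TT$ follow formally by passing to $\CC^{\mathrm{op}}$, in which $(\FF, \TT)$ is a torsion theory in the opposite conflation category, so the two arguments just given yield the dual closure properties. Equivalently, one can argue directly: given a conflation $T' \rightarrowtail M \twoheadrightarrow T''$ with $T', T'' \in \TT$ and any $F \in \FF$, any map $M \to F$ vanishes on $T'$ (as $\Hom(T', F) = 0$) hence factors through the deflation $M \twoheadrightarrow T''$, and then vanishes because $\Hom(T'', F) = 0$; and given a deflation $q\colon T \twoheadrightarrow N$ with $T \in \TT$, any $g\colon N \to F$ satisfies $g \circ q \in \Hom(T, F) = 0$, so $g = 0$ because $q$ is epic.

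There is no substantive obstacle here — the argument is a pure diagram chase once Proposition \ref{proposition:TorsionPairAdjoints} is in hand. The only point worth noting is that the proof works at the generality of conflation categories (rather than requiring a one-sided exact structure), because the monicity of inflations and the epicity of deflations are immediate from the definition of a kernel-cokernel pair, with no appeal to pullback or pushout axioms.
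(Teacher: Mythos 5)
Your proof is correct and is essentially the argument the paper has in mind — the corollary carries no explicit proof, being an immediate consequence of the characterizations $\TT = \{C \mid \Hom(C,\FF)=0\}$ and $\FF = \{C \mid \Hom(\TT,C)=0\}$ from proposition \ref{proposition:TorsionPairAdjoints}. One small remark on scope: the parenthetical ``(epimorphic)'' in the statement signals that $\TT$ is closed under quotients by arbitrary epimorphisms, not only deflations, and dually ``subobjects'' should be read as arbitrary monomorphisms; you decorate your arrows as $\rightarrowtail$ and $\twoheadrightarrow$, but your argument only uses monicity of $i$ and epicity of $q$, so it already establishes the intended generality.
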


\begin{remark}
Let $\CC$ be a deflation-exact category.  The conflation structure of $\CC$ induces a conflation structure on the extension-closed subcategories $\TT$ and $\FF$.  Moreover, both $\TT$ and $\FF$ are deflation-exact.
\end{remark}

The following proposition uses notation from proposition \ref{proposition:TorsionPairAdjoints}. 

\begin{proposition}\label{proposition:TorsionfreeIsPercolating}
Let $\CC$ be an exact category. Let $(\TT, \FF)$ be a cohereditary torsion pair in $\CC$.  
\begin{enumerate}
  \item The subcategory $\FF \subseteq \CC$ is a right filtering subcategory.
	\item\label{enumerate:TorsionfreeIsPercolating} If for every inflation $f\colon A \inflation X$ (with $A \in \FF$), the morphism $jL(f)\colon A \to X_F$ has a cokernel which lies in $\FF$, then $\FF$ is a deflation-percolating subcategory of $\CC$.
	\item If $L\colon \CC \to \FF$ (or, equivalently, $jL\colon \CC \to \CC$) is a conflation-exact functor, then $\FF$ is a right s-filtering subcategory.
\end{enumerate}
\end{proposition}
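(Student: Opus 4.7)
The plan is to verify each of the three claims in turn, using the torsion approximation functor $L$ and its adjunction with the inclusion $j\colon \FF \to \CC$ established in proposition \ref{proposition:TorsionPairAdjoints}.

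For part (1), axiom \ref{P1} is immediate from the cohereditary hypothesis. For axiom \ref{P2}, given any $f\colon C \to F$ with $F \in \FF$, I would consider the torsion sequence $C_T \inflation C \deflation C_F$ of lemma \ref{lemma:TorsionSequence}. The composition $C_T \to C \to F$ vanishes because $\Hom(\TT, \FF) = 0$, so $f$ descends along the deflation $C \deflation C_F$ to a map $C_F \to F$, providing the required diagram with $C_F \in \FF$.

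For part (2), since $\CC$ is exact, axiom \ref{P3} follows automatically from \ref{P2} by remark \ref{remark:PercolatingRemarks}, so only axiom \ref{P4} must be verified. Given $f\colon X \to Y$ factoring through some $F \in \FF$ together with an inflation $i\colon A \inflation X$ (with $A \in \FF$) satisfying $f \circ i = 0$, I would use the adjunction $L \dashv j$ to rewrite the factorization of $f$ through $F$ as a factorization $f = \phi \circ \eta_X$ with $\phi\colon X_F \to Y$ (where $\eta$ denotes the unit). Naturality of $\eta$ together with $\eta_A = 1_A$ yields $\eta_X \circ i = jL(i)$, hence $\phi \circ jL(i) = 0$. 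The extra hypothesis on $jL(i)$ produces a cokernel $q\colon X_F \deflation C$ with $C \in \FF$, through which $\phi$ factors as $\psi \circ q$. The composite $q \circ \eta_X$ annihilates $i$ and therefore descends through $p\colon X \deflation \coker(i)$ to a morphism $\alpha\colon \coker(i) \to C$; I would then conclude that the induced map $\bar f\colon \coker(i) \to Y$ equals $\psi \circ \alpha$ and hence factors through $C \in \FF$, as \ref{P4} requires.

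For part (3), assuming $L$ is conflation-exact, I would complete the given inflation $A \inflation X$ (with $A \in \FF$) to a conflation $A \inflation X \deflation Z$ in $\CC$ and apply $jL$; since $L(A) \cong A$, this yields a conflation $A \inflation X_F \deflation Z_F$ lying entirely in $\FF$. The components of the unit $\eta$ then assemble into a morphism of conflations which is precisely the commutative diagram required by the right special condition of definition \ref{definition:SpecialFiltering}. The main obstacle lies in part (2): the bookkeeping for \ref{P4} requires carefully identifying $\eta_X \circ i$ with $jL(i)$ (where naturality and $A \in \FF$ intervene) and then chaining the universal property of $q\colon X_F \deflation C$ against that of $p\colon X \deflation \coker(i)$; once these identifications are in place, the hypothesis on cokernels of $jL(f)$ closes the argument directly.
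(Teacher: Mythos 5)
Your argument is correct, and for parts (1) and (3) it is the same as the paper's (for (3) the paper merely observes that $jL(f) = \eta_X\circ f$ is an inflation and leaves the construction of the right-special diagram implicit; you spell it out). For part (2) the paper simply cites proposition \ref{proposition:P4Criterion}; read literally, that proposition requires \emph{every} morphism between objects of $\FF$ to admit a cokernel in $\CC$ lying in $\FF$, which is not what part (2) assumes — the hypothesis there only controls morphisms of the special form $jL(f)$ with $f$ an inflation out of an object of $\FF$. Your direct verification does precisely the bookkeeping needed to make that citation legitimate: you first replace an arbitrary factorization of $f$ through $\FF$ by the canonical one $f = \phi\circ\eta_X$, identify $\eta_X\circ i$ with $jL(i)$ via naturality and $\eta_A = 1_A$, and then chain the universal properties of $\coker(jL(i))$ and $\coker(i)$. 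In effect you are re-running the proof of proposition \ref{proposition:P4Criterion} in the situation where the factoring object is taken to be $X_F$ and the morphism whose cokernel is required is $jL(i)$, which is exactly what the stated hypothesis provides. So this is the paper's argument, carried out more carefully. One small notational point: you write $q\colon X_F\twoheadrightarrow C$ as a deflation, but the hypothesis only asserts the existence of a cokernel, not that it is a deflation; your argument never uses admissibility of $q$, so the two-headed arrow overstates what is known.
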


\begin{proof} \begin{enumerate}
	\item  As $(\TT, \FF)$ is a cohereditary torsion pair in $\CC$, we know that $\FF$ satisfies axiom \ref{P1}.  We show that axiom \ref{P2} holds.  Let $f\colon X\rightarrow F$ be a morphism with $F\in \FF$.  By lemma \ref{lemma:TorsionSequence} we obtain the commutative diagram:
		\[\xymatrix{
			X_T\ar@{>->}[d]\ar[rd]^0 & \\
			X\ar[r]^{f}\ar@{->>}[d] & F\\
			X_F\ar@{.>}[ru] &
		}\]
		The composition $X_T\rightarrowtail X\xrightarrow{\alpha} F$ is zero since $\Hom(\TT,\FF)=0$. It follows that $f$ factors through the deflation $X\deflation X_F$. This shows that axiom \ref{P2} is satisfied.

\item By the above, $\FF$ is right filtering. As $\CC$ is exact, axiom \ref{P3} is automatically satisfied. Axiom \ref{P4} follows immediately from proposition \ref{proposition:P4Criterion}.
\item We only need to verify that $\FF \subseteq \CC$ is right special.  Let $f\colon A \inflation X$ be any inflation with $A \in \FF$.  As $L$ is conflation-exact, we know that the composition $L(f)\colon A \inflation X \deflation X_F$ is an inflation.  This establishes that $\FF \subseteq \CC$ is right special.\qedhere
\end{enumerate}
\end{proof}

\begin{definition}\label{definition:RightConflationExact}
Let $F\colon \CC \to \DD$ be a functor between conflation categories.  We say that $F$ is \emph{right conflation-exact} if, for any conflation $X \stackrel{f}{\inflation} Y \stackrel{g}{\deflation} Z$, we have that $F(g)\colon F(Y) \to F(Z)$ is a deflation and $F(g) = \coker F(f).$  A \emph{left conflation-exact functor} is defined dually.
\end{definition}

\begin{remark}
The notions of a right `exact' functor \cite[\S1.3]{Rosenberg11} and a sequentially right exact functor \cite[definition 3.1]{PeschkeVanderLinden16} are special cases of right conflation-exact functors. 
\end{remark}

\begin{corollary}\label{corollary:RightConflationExact}
Let $\CC$ be an exact category. Let $(\TT, \FF)$ be a cohereditary torsion pair in $\CC$.  If $jL\colon \CC \to \CC$ is right conflation-exact, then $\FF$ is a deflation-percolating subcategory of $\CC$.
\end{corollary}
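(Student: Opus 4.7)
My plan is to reduce the corollary to part (\ref{enumerate:TorsionfreeIsPercolating}) of Proposition \ref{proposition:TorsionfreeIsPercolating}: it suffices to show that, under the assumption that $jL\colon \CC \to \CC$ is right conflation-exact, every inflation $f\colon A \inflation X$ with $A \in \FF$ gives rise to a morphism $jL(f)\colon A \to X_F$ whose cokernel (in $\CC$) lies in $\FF$.

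To do this, I would first extend $f$ to a conflation. Since $\CC$ is exact, any inflation $f\colon A \inflation X$ sits in a conflation
\[
\xymatrix{A \ar@{>->}[r]^{f} & X \ar@{->>}[r] & X/A}
\]
in $\CC$. By hypothesis, $jL$ is right conflation-exact, so applying $jL$ yields that $jL(X) \to jL(X/A)$ is a deflation and is the cokernel of $jL(f)\colon jL(A) \to jL(X)$ in $\CC$. Under the standard identifications $jL(M) = M_F$, and using that $A \in \FF$ implies $A_T = 0$ and hence $A_F \cong A$ (with the unit $\eta_A\colon A \to jL(A)$ being an isomorphism, see Proposition \ref{proposition:TorsionPairAdjoints} and Lemma \ref{lemma:TorsionSequence}), the morphism $jL(f)\colon A \to X_F$ has cokernel $(X/A)_F$ in $\CC$.

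The key observation is that $(X/A)_F$ lies in $\FF$ by construction of the torsion-free quotient. Thus the hypothesis of Proposition \ref{proposition:TorsionfreeIsPercolating}(\ref{enumerate:TorsionfreeIsPercolating}) is satisfied, and we conclude that $\FF$ is a deflation-percolating subcategory of $\CC$. I do not foresee a genuine obstacle here: the only subtlety is the bookkeeping of the identifications $jL(A) \cong A$ and ensuring the diagram commutes, which follows immediately from the naturality of the unit of the adjunction $L \dashv j$.
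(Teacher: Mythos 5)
Your proposal is correct and follows essentially the same route as the paper: in both cases one extends an inflation $f\colon A \inflation X$ to a conflation, applies the right conflation-exactness of $jL$ to see that $jL(f)$ has a cokernel lying in $\FF$, and then invokes Proposition \ref{proposition:TorsionfreeIsPercolating}(\ref{enumerate:TorsionfreeIsPercolating}). Your write-up merely makes explicit the identification $jL(A)\cong A$ for $A\in\FF$ via the unit of the adjunction, which the paper leaves implicit, and justifies $(X/A)_F\in\FF$ by construction of the torsion-free quotient rather than appealing (somewhat redundantly) to the Serre property of $\FF$; both are fine.
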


\begin{proof}
Let $X \stackrel{f}{\inflation} Y \stackrel{g}{\deflation} Z$ be a conflation in $\CC$.  By assumption, $jL(f)$ has a cokernel (namely $jL(Z)$), which lies in $\FF$ since $\FF$ is a Serre subcategory of $\CC$.  The result follows from proposition \ref{proposition:TorsionfreeIsPercolating}.
\end{proof}

\begin{example}\label{example:IndiscreteAndHausdorff}
Consider the quasi-abelian category $\mathsf{TAb}$ of topological abelian groups. Let $\TT$ be the full subcategory of of topological abelian groups with the indiscrete topology and let $\FF$ be the full subcategory of Hausdorff abelian groups. Every abelian group fits into a conflation $\overline{\left\{e_G\right\}}\inflation G \deflation G/\overline{\left\{e_G\right\}}$ with $\overline{\left\{e_G\right\}}\in \Ob(\TT)$ and $G/\overline{\left\{e_G\right\}}\in \Ob(\FF)$. Moreover, $\Hom(\TT,\FF)=0$. Hence $(\TT,\FF)$ is a torsion theory. 

It is easy to check that $(\TT, \FF)$ is hereditary.  In the notation of proposition \ref{proposition:TorsionPairAdjoints}, the functor $iR\colon \mathsf{TAb} \to \mathsf{TAb}$ is left conflation-exact.  By the dual of corollary \ref{corollary:RightConflationExact}, $\TT$ is an inflation-percolating subcategory of $\mathsf{TAb}$.  Moreover, using that $\TT$ is abelian and that $\mathsf{TAb}$ is an exact category, we find that the subcategory $\TT$ is an admissibly inflation-percolating subcategory of $\mathsf{TAb}$.

Note that the natural functor $\Phi\colon \FF\to \mathsf{TAb}/\TT$ is essentially surjective and faithful. However, $\Phi$ is not full. Indeed, consider the conflation $\mathbb{Q}\inflation \mathbb{R}\deflation \mathbb{R}/\mathbb{Q}$ where $\mathbb{R}$ has the usual Euclidean topology. Note that $\mathbb{Q}$ and $\mathbb{R}$ are Hausdorff groups and $\mathbb{R}/\mathbb{Q}$ has the indiscrete topology. It follows that $\mathbb{Q}\cong \mathbb{R}$ in $\mathsf{TAb}/\TT$. As there is no non-zero morphism $\mathbb{R}\to \mathbb{Q}$ in $\mathsf{TAb}$, $\Phi$ cannot be full.
\end{example}

\begin{example}
	Let $\rep_k(A_4)$ be the category of finite-dimensional representations of the quiver $A_4$.  The category $\rep_k(A_4)$ can be visualized by its Auslander-Reiten quiver:
	\[\xymatrix@!@C=0.5em@R=0.5em{
	&&&P_4\ar[rd]&&&\\
	 &&P_3\ar[ru]\ar[rd] && I_2\ar[rd]&&\\
		& P_2\ar[ru]\ar[rd] && X\ar[ru]\ar[rd] && I_3\ar[rd] &\\
		S_1\ar[ru] && S_2\ar[ru] && S_3\ar[ru] && S_4
	}\]
	Let $\CC$ be the full additive subcategory of $\UU$ generated by $S_1,P_2,P_3,P_4, S_2, X, I_2$ and $S_4$ . Clearly, $\CC$ is exact as it is an extension-closed subcategory of $\UU$. Let $\TT$ be the full additive subcategory of $\CC$ generated by $S_1, P_4, I_2$ and $S_4$ and let $\FF$ be the full additive subcategory of $\CC$ generated by $S_2$ and $X$. One readily verifies that $(\TT,\FF)$ is a cohereditary torsion pair in $\CC$. The functor $jL\colon \CC \to \CC$ is right deflation-exact.  By corollary \ref{corollary:RightConflationExact}, $\FF$ is a deflation-percolating subcategory of $\CC$.  However, $\FF$ is not right special in $\CC$ (this can be seen by considering the inflation $X \inflation I_2$).
\end{example}

\begin{remark}
In example \ref{Example:P4Requirement}, the subcategory $\AA$ is a torsion-free class of a torsion theory $(\TT, \AA)$.  This torsion theory is split in the sense that every object in $\CC$ is a direct sum of an object in $\AA$ and an object in $\TT.$   However, the corresponding localization is not deflation-exact.  This shows that the conditions on the functor $jL$ in proposition \ref{proposition:TorsionfreeIsPercolating}.\ref{enumerate:TorsionfreeIsPercolating} cannot be removed.

Applying $jL$ to the conflation $P_2 \inflation P_3 \deflation S_3$ yields $P_2 \to P_3 \deflation 0.$  This shows that $jL$ does not commute with cokernels.
\end{remark}

\subsection{(One-sided) quasi-abelian categories}\label{subsection:QuasiAbelian}

Many interesting examples of localizations with respect to percolating subcategories arise in the context of \emph{(one-sided) quasi-abelian} categories.  We recall the following definition from \cite{Rump01}:

\begin{definition}
	An additive category $\CC$ is called \emph{pre-abelian} if every morphism $f\colon A\rightarrow B$ in $\CC$ has a kernel and cokernel.
	
	A pre-abelian category $\CC$ is called \emph{left quasi-abelian} if cokernels are stable under pullbacks and it is called \emph{right quasi-abelian} if kernels are stable under pushouts.  A pre-abelian category is called \emph{quasi-abelian} if it is both left and right quasi-abelian. 
\end{definition}

\begin{remark}\makeatletter
\hyper@anchor{\@currentHref}%
\makeatother\label{remark:QuasiAbelianremarks}
	\begin{enumerate}
		\item Left quasi-abelian categories have a natural strongly deflation-exact structure (all cokernels are deflations) and right quasi-abelian categories have natural strongly inflation-exact structure (all kernels are inflations), for this reason we will prefer the terminology of \emph{deflation quasi-abelian} and \emph{inflation quasi-abelian} categories over the left and right versions. We refer the reader to \cite[section~4]{BazzoniCrivei13} for useful results on one-sided quasi-abelian categories in the context of one-sided exact categories. Quasi-abelian categories inherit a natural exact structure (see also \cite{Schneiders99}).
		\item Left or right quasi-abelian categories are called \emph{left or right almost abelian categories} in \cite{Rump01}. 
		\item Let $f\colon X \to Y$ be a morphism.  In a preabelian category, a morphism $f\colon X \to Y$ admits a diagram
		\[\xymatrix{
		{\ker f} \ar[r]& X \ar[rrr]^f \ar[dr] &&& Y \ar[r] & {\coker f} \\
		&& {\coim f} \ar[r]_{\widehat{f}} & {\im f} \ar[ru]}\]
		where $\coim f = \coker(\ker f)$ and $\im f = \ker(\coker f)$.  In a deflation quasi-abelian category, the canonical map $\widehat{f}\colon \coim f \to \im f$ is a monomorphism; dually, in an inflation quasi-abelian category the map $\widehat{f}\colon \coim f \to \im f$ is an epimorphism (see \cite[proposition~1]{Rump01}).  Hence, for a quasi-abelian category, the canonical morphism $\coim(f)\rightarrow \im(f)$ is both a monomorphism and an epimorphism (see also \cite[corollary~1.1.5]{Schneiders99}). 
		\item By \cite[proposition~B.3]{BondalVandenBergh03}, every quasi-abelian category $\CC$ can be realized as the torsion-free part of a cotilting torsion pair $(\TT,\CC)$ in an abelian category. Dually, $\CC$ can be realized as the torsion part of a tilting torsion pair $(\CC,\FF)$ in an abelian category.
		\item Following \cite[theorem 4.17]{BrustleHassounTattar20} (extending \cite{HassounShahWegner20}), a pre-abelian exact category $\CC$ is quasi-abelian if and only if it satisfies the admissible intersection property (in the sense of \cite[definition 4.3]{HassounRoy19}).
	\end{enumerate}
\end{remark}

The next lemma yields an easy characterization of axiom \ref{P2} for deflation quasi-abelian categories.

\begin{lemma}\label{lemma:QuasiAbelianP2}
	Let $\CC$ be a deflation quasi-abelian category and let $\AA\subseteq \CC$ be a full subcategory. The subcategory $\AA$ satisfies axiom \ref{P2} if and only if $\AA$ is closed under subobjects.
\end{lemma}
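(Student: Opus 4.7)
The plan is to prove both implications directly, using the canonical $(\coim,\im)$-factorization available in a deflation quasi-abelian category together with the basic fact from Lemma \ref{Lemma:BasicProperties} that a deflation which is a monomorphism is an isomorphism.

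For the forward direction $(\Rightarrow)$, suppose $\AA$ satisfies \ref{P2} and let $m\colon B \hookrightarrow A$ be a monomorphism in $\CC$ with $A \in \Ob(\AA)$. Applying \ref{P2} to $m$ produces $A' \in \Ob(\AA)$, a deflation $p\colon B \twoheadrightarrow A'$ and a morphism $h\colon A'\to A$ with $h\circ p = m$. Since $m$ is monic, the deflation $p$ is also a monomorphism; by Lemma \ref{Lemma:BasicProperties}, $p$ is then an isomorphism. Hence $B \cong A' \in \Ob(\AA)$, so $\AA$ is closed under subobjects.

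For the reverse direction $(\Leftarrow)$, suppose $\AA$ is closed under subobjects in $\CC$. Let $f\colon C \to A$ be any morphism with $A\in \Ob(\AA)$. Since $\CC$ is deflation quasi-abelian, $f$ admits the canonical factorization
\[
\xymatrix{C \ar@{->>}[r] & \coim f \ar[r]^{\widehat{f}} & \im f \ar@{>->}[r] & A,}
\]
where $C \twoheadrightarrow \coim f$ is the cokernel of $\ker f$ (hence a deflation, since in a deflation quasi-abelian category every cokernel is a deflation), $\widehat{f}$ is a monomorphism (by remark \ref{remark:QuasiAbelianremarks}, which recalls this property from \cite{Rump01}), and $\im f \hookrightarrow A$ is a kernel, hence monic. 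Composing, we see that $\coim f \to A$ is a monomorphism, so $\coim f$ is a subobject of $A$ in $\CC$. By assumption, $\coim f \in \Ob(\AA)$. Taking $A' := \coim f$ with the deflation $C \twoheadrightarrow A'$ and the induced map $A' \to A$ gives the commutative diagram required by \ref{P2}.

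Neither direction contains a serious obstacle: the content lies entirely in recognizing that the one-sided quasi-abelian hypothesis provides exactly the factorization $C \twoheadrightarrow \coim f \hookrightarrow A$ (with deflation on the left and monomorphism on the right) needed for \ref{P2}, and conversely that \ref{P2} applied to a monomorphism forces the witnessing deflation to be an isomorphism. The only point to double-check is that $\AA$ is tacitly closed under isomorphisms (as is standard for full subcategories in this paper), so that $B \cong A' \in \Ob(\AA)$ yields $B\in\Ob(\AA)$ in the forward direction.
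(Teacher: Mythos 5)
Your proof is correct and follows essentially the same argument as the paper: the forward direction applies \ref{P2} to a monomorphism and uses Lemma \ref{Lemma:BasicProperties} to conclude the witnessing deflation is an isomorphism, and the reverse direction uses the canonical factorization through $\coim f$ with the observation from remark \ref{remark:QuasiAbelianremarks} that $\widehat{f}$ is monic in a deflation quasi-abelian category. No changes needed.
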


\begin{proof}
	Assume that $\AA$ satisfies axiom \ref{P2}. Let $f\colon X\hookrightarrow A$ be a monomorphism such that $A\in \Ob(\AA)$. By axiom \ref{P2}, $f$ factors as 
	\[\xymatrix{X\ar@{->>}[r] & B\ar[r] & A}\] with $B\in \Ob(\AA)$. Since $f$ is monic, the deflation $X\twoheadrightarrow B$ is an isomorphism. Hence $X\cong B\in \AA$.
	
	Conversely, assume that $\AA$ is closed under subobjects. Let $f\colon X\rightarrow A$ be a morphism in $\CC$ with $A\in \Ob(\AA)$. Since $\CC$ is deflation quasi-abelian, $f$ factors as 
	\[\xymatrix{X\ar@{->>}[r] & \coim(f)\ar[r] & A}\]
	As in remark \ref{remark:QuasiAbelianremarks}, the map $\coim(f)\rightarrow A$ is monic. Since $\AA$ is closed under subobjects, $\coim(f)\in \Ob(\AA)$.  Hence, $f$ has the desired factorization and axiom \ref{P2} holds. 
\end{proof}

The next proposition yields an easy characterization of percolating subcategories in a (one-sided) quasi-abelian setting.

\begin{proposition}\label{proposition:PercolatingInQA}
	Let $\CC$ be a deflation quasi-abelian category and let $\AA\subseteq \CC$ be a full subcategory. The subcategory $\AA$ is deflation-filtering (i.e. axioms \ref{P1} and \ref{P2} are satisfied) if and only if $\AA$ is strongly deflation-percolating.
\end{proposition}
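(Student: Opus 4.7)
The backward direction is immediate, since strongly deflation-percolating is stronger than deflation-filtering. For the forward direction, assume $\AA$ satisfies \ref{P1} and \ref{P2}. By lemma \ref{lemma:QuasiAbelianP2}, this amounts to saying that $\AA$ is a Serre subcategory of $\CC$ that is closed under subobjects. I must upgrade \ref{P2} to its strongly filtering form and verify \ref{P3} and \ref{P4}.

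The strong form of \ref{P2} is immediate from the quasi-abelian structure: for $\alpha\colon C \to A$ with $A \in \AA$, the canonical factorization $C \deflation \coim(\alpha) \to \im(\alpha) \inflation A$ provides a monomorphism $\coim(\alpha) \to A$ (since $\coim(\alpha) \to \im(\alpha)$ is monic in a deflation quasi-abelian category), so $\coim(\alpha) \in \AA$ by closure under subobjects. Axiom \ref{P4} follows from proposition \ref{proposition:P4Criterion}: any $a\colon A \to B$ in $\AA$ admits a cokernel in the pre-abelian category $\CC$, and the Serre property applied to the conflation $\im(a) \inflation B \deflation \coker(a)$ forces $\coker(a) \in \AA$.

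The substantive step is \ref{P3}. Given $X \stackrel{i}{\inflation} Y \stackrel{t}{\to} T$ with $ti$ factoring through some object of $\AA$, the coimage-image factorization combined with closure of $\AA$ under subobjects reduces us to the case $ti = \alpha \circ f$ with $f\colon X \deflation A$ a deflation and $A \in \AA$. Forming the pushout $P$ of $i$ along $f$ in the pre-abelian category $\CC$ yields a commutative square with $f'\colon Y \to P$ and $i'\colon A \to P$, and the pushout property supplies the required morphism $P \to T$. Writing $f = \coker(j)$ for $j\colon K \inflation X$, proposition \ref{proposition:MitchellPullbackPushout}\eqref{enumerate:MitchellPushout} identifies $f'$ as $\coker(ij)$, so $f'$ is a deflation.

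The main obstacle is showing $i'$ is an inflation. Let $\bar p \coloneqq \coker(i')$ with kernel $\iota\colon K_{\bar p} \inflation P$, so $i' = \iota g'$ for a unique $g'\colon A \to K_{\bar p}$; it suffices to show $g'$ is an isomorphism. Extending the pushout square on the right by $p = \coker(i)\colon Y \deflation Z$ and applying proposition \ref{proposition:MitchellPullbackPushout}\eqref{enumerate:MitchellPushout} produces an induced morphism $\eta\colon Z \to \coker(i')$ making the right-hand square a pushout; the pasting lemma then identifies the outer rectangle as the pushout of $f\colon X \deflation A$ along the zero map $X \to Z$, which, since $f$ is epic, is just $Z$. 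Hence $\eta$ is an isomorphism and $\bar p f' = \eta p$, giving $\ker(\bar p f') = \ker(p) = X$ via $i$. Proposition \ref{proposition:MitchellPullbackPushout}\eqref{enumerate:MitchellPullback} now exhibits $X$ as the pullback of $\iota$ along $f'$ with projections $i$ and $g' f$. Axiom \ref{R2} (valid since $\CC$ is strongly deflation-exact) forces $g' \circ f$ to be a deflation, and proposition \ref{proposition:PullbacksPreserveKernels} combined with $\ker(f') = \im(ij) \subseteq i(X)$ identifies $\ker(g' \circ f) = K$ via $j$. Thus $f$ and $g' f$ are two deflations out of $X$ with the same kernel, and the uniqueness of cokernels produces an isomorphism between their targets that must coincide with $g'$. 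Therefore $i' = \iota g'$ is an inflation, which completes the verification of \ref{P3}.
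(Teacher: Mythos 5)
Your handling of the backward direction, the strong form of \ref{P2}, and axiom \ref{P4} matches the paper, and the reduction of \ref{P3} to a pushout square $XYAP$ together with the identification of $f'$ as $\coker(ij)$ (hence a deflation) is correct. The gap is in the final paragraph, where you try to prove that $i'\colon A\to P$ is itself an inflation by showing $g'$ is an isomorphism. This is false in general: $i'$ need not even be a monomorphism. The error sits in the sentence claiming that $\ker(g'\circ f)=K$ ``via $j$''. What proposition \ref{proposition:PullbacksPreserveKernels} actually yields is that the composite $\ker(g'f)\to X\xrightarrow{i} Y$ is the kernel of $f'$, i.e.\ $\ker(g'f)\cong\im(ij)=\ker(\coker(ij))$ as a subobject of $Y$. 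The canonical morphism $K=\coim(ij)\to\im(ij)$ is a monomorphism in a deflation quasi-abelian category, but it is an isomorphism precisely when $ij$ is admissible (equivalently, an inflation), and in a deflation quasi-abelian category a composite of two inflations need not be an inflation. In the Isbell category $\II$ (example \ref{Example:IsbellCategory}), take $\AA$ the elementary abelian $p$-groups, $X=Y=\mathbb{Z}$, $i=j=(\cdot p)$, $f=(\bmod\ p)\colon\mathbb{Z}\twoheadrightarrow\mathbb{Z}/p$. One computes the pushout $P=\mathbb{Z}/p$, $f'=(\bmod\ p)$, $i'=0\colon\mathbb{Z}/p\to\mathbb{Z}/p$, so $K_{\bar p}=0$ and $g'\colon\mathbb{Z}/p\to 0$ is not an isomorphism; correspondingly $\ker(g'f)=X\neq K=p\mathbb{Z}$.

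The paper circumvents this by \emph{not} trying to make $i'$ an inflation. Having shown that the square $XYK_{\bar p}P$ is a pullback and that $g'f$ is a deflation, it invokes proposition \ref{proposition:wicR3} (a deflation quasi-abelian category is weakly idempotent complete and satisfies \ref{R3}) to conclude only that $g'$ is a \emph{deflation}, whence $K_{\bar p}\in\AA$ by \ref{P1} since $A\in\AA$. Then proposition \ref{proposition:PushoutIfCokernel} shows $XYK_{\bar p}P$ is a bicartesian square with $X\twoheadrightarrow K_{\bar p}\rightarrowtail P$. The key point you overlooked is that axiom \ref{P3} does \emph{not} require the pushout to be taken over the original object $A$: one is free to replace it by any object of $\AA$, here the quotient $K_{\bar p}$ of $A$. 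If you replace your final isomorphism argument by the weaker claim that $g'$ is a deflation and relabel the square accordingly, your proof becomes correct and coincides with the paper's.
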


\begin{proof}
If $\AA$ is strongly deflation-percolating, then $\AA$ is deflation-filtering by definition.  For the other implication, assume that $\AA$ is deflation-filtering in $\EE$.  By definition, axioms \ref{P1} and \ref{P2} are satisfied.  We show that $\AA$ is strongly right filtering.  Let $f\colon X\to A$ be a morphism in $\CC$ with $A\in \Ob(\AA)$.  Note that $f=f_m\circ \widehat{f}\circ f_e$ and that $f_m$ is monic as it is a kernel. By remark \ref{remark:QuasiAbelianremarks}, $\widehat{f}$ is monic and hence $f_m\circ \widehat{f}$ is monic.  By lemma \ref{lemma:QuasiAbelianP2}, we conclude that $\coim(f)\in \Ob(\AA)$ and hence $\AA$ is strongly deflation-filtering.
	
	We now show axiom \ref{P3}. Let $i\colon X\inflation Y$ be an inflation and $t\colon Y\to T$ a map such that $t\circ f$ factors through $\AA$. Without loss of generality we may assume there is a deflation $f\colon X\deflation A$ with $A\in \Ob(\AA)$ and a map $t'\colon A\to T$ such that $t\circ i=t'\circ f$. As $\CC$ is pre-abelian, pushouts exist and we obtain the following commutative diagram:
	\[\xymatrix{
		X\ar@{>->}[rr]^{i}\ar@{->>}[d]^f && Y\ar@{->>}[r]^p\ar@{->>}[d]^{f'}\ar@/^2pc/[rr]^t& Z\ar@{=}[d]& T\ar@{=}[d]\\
		A\ar[rr]^{i'}\ar@{.>}[rd]^{\iota} &&P\ar@{->>}[r]^{p'}\ar@/_2pc/@{.>}[rr]_{t''} & Z & T\\
		& K'\ar@{>->}[ru]_{\iota'} &&	
	}\] Here the square $XYAP$ is a pushout square, $f'$ is a deflation since it is the cokernel of the composition $i\circ \ker f$ (see proposition \ref{proposition:MitchellPullbackPushout}), $p'$ is the cokernel of $i'$ and $\iota'$ is the kernel of $p'$ (see proposition \ref{proposition:PushoutsPreserveCokernels}). The map $t''$ is induced by the pushout property and satisfies $t''i'=t'$. Since $f$ is epic and $p'i'f=0$, $p'i'=0$ and hence $i$ factors through $\iota'=\ker(p')$ via $\iota$.  By proposition \ref{proposition:MitchellPullback}, the square $XYKP$ is a pullback square and thus axiom \ref{R2} implies that the composition $X\stackrel{f}{\deflation} A \stackrel{\iota}{\rightarrow} K$ is a deflation. As $\CC$ is a deflation-exact category (and hence satisfies the equivalent conditions in proposition \ref{proposition:wicR3}), we find that $\iota$ is a deflation.  Axiom \ref{P1} implies that $K\in \Ob(\AA)$.  Furthermore, proposition \ref{proposition:PushoutIfCokernel} implies that the square $XYKP$ is a pushout square as well.  This shows that axiom \ref{P3} is satisfied.
	
Finally, it follows from proposition \ref{proposition:P4Criterion} that axiom \ref{P4} is satisfied.
\end{proof}

Combining lemma \ref{lemma:QuasiAbelianP2} and proposition \ref{proposition:PercolatingInQA}, we find the characterization as given in proposition \ref{proposition:IntroductionRecognitionToolI}.

\begin{proposition}\label{proposition:TwoSidedQuotientOfQuasiAbelianIsQuasiAbelian}
Let $\CC$ be a quasi-abelian category and let $\AA \subseteq \CC$ be a full subcategory.  If $\AA$ is both inflation- and deflation-percolating in $\CC$, then the category $\CC / \AA$ is quasi-abelian.
\end{proposition}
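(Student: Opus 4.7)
The plan is to verify the three defining properties of a quasi-abelian category for $\CC/\AA$: pre-abelianness, and the closure of cokernels (respectively kernels) under pullback (respectively pushout).

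First, I would note that since $\AA$ is simultaneously inflation- and deflation-percolating, Proposition \ref{proposition:MinimalConditionsRMS} and its dual imply that $S_{\AA}$ is both a left and a right multiplicative system. By Proposition \ref{proposition:BasicPropertiesOfLocalization} and its dual, the localization functor $Q \colon \CC \to \CC/\AA$ then commutes with both finite limits and finite colimits. Any morphism of $\CC/\AA$ has the form $Q(g) \circ Q(s)^{-1}$ for some $g$ in $\CC$ and $s \in S_{\AA}$; since $Q(s)$ is invertible, computing the kernel (resp.\ cokernel) of such a morphism reduces to computing the kernel (resp.\ cokernel) of $Q(g)$, which equals $Q(\ker g)$ (resp.\ $Q(\coker g)$) by preservation of finite limits and colimits. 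As $\CC$ is pre-abelian, $\ker g$ and $\coker g$ exist in $\CC$, so $\CC/\AA$ is pre-abelian, and moreover every kernel (resp.\ cokernel) in $\CC/\AA$ is $Q$ of a kernel (resp.\ cokernel) in $\CC$.

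Second, by Theorem \ref{theorem:Maintheorem} together with its dual, $\CC/\AA$ carries both a deflation-exact and an inflation-exact structure for the class of conflations from Definition \ref{definition:LocalizationConflation}, and $Q$ is exact for each. Since $\CC$ is quasi-abelian, every cokernel in $\CC$ is a deflation and every kernel in $\CC$ is an inflation. Combining this with the previous paragraph, every cokernel in $\CC/\AA$ is the image under the exact functor $Q$ of a deflation in $\CC$, hence a deflation in $\CC/\AA$. Dually, every kernel in $\CC/\AA$ is an inflation.

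Finally, given a cokernel $p \colon Y \to Z$ in $\CC/\AA$ and any morphism $Z' \to Z$, I would invoke axiom \ref{R2} of the deflation-exact structure on $\CC/\AA$: since $p$ is a deflation, the pullback exists and the induced map $p' \colon Y' \to Z'$ is again a deflation, and as such is the cokernel of its kernel. The dual argument, using axiom \ref{L2}, shows that kernels in $\CC/\AA$ are stable under pushout. Together these give the quasi-abelian property. The step I expect to require most care is not any single computation but rather the consistency check that the conflation structure from Definition \ref{definition:LocalizationConflation} used on the deflation side agrees with the dual one used on the inflation side; this is however immediate because both describe precisely those sequences in $\CC/\AA$ which are isomorphic to the image under $Q$ of a kernel-cokernel pair that is a conflation in $\CC$.
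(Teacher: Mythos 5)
Your proof is correct and essentially reproduces the paper's argument: both rely on $Q$ commuting with finite limits and colimits (since $S_\AA$ is simultaneously a left and right multiplicative system) and on the fact that in the quasi-abelian category $\CC$ every kernel is an inflation and every cokernel a deflation, which is transported along the exact functor $Q$. The paper phrases the conclusion as showing that every kernel-cokernel pair in $\CC[S_\AA^{-1}]$ is a conflation, whereas you verify the quasi-abelian axioms directly and are somewhat more explicit about pre-abelianness of $\CC/\AA$ and about why the deflation- and inflation-exact conflation structures on the localization coincide.
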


\begin{proof}
As $\AA$ is both inflation- and deflation exact, the set $S_\AA$ of weak isomorphisms is a left and right multiplicative system.  The localization $Q\colon \CC \to \CC[S^{-1}_\AA] (=\CC/ \AA)$ commutes with finite limits and colimits and $\CC[S^{-1}_\AA]$ is exact.  We need to show that every kernel-cokernel pair in $\CC[S^{-1}_\AA]$ is a conflation.  Let $X\stackrel{f}{\to} Y \stackrel{g}{\rightarrow} Z$ be a kernel-cokernel pair in $\CC[S^{-1}_\AA]$.  As $S_\AA$ is a right multiplicative system, there is a roof $Y \stackrel{\sim}{\leftarrow} Y' \stackrel{g'}{\rightarrow} Z$ in $\CC$ representing $g$.  Since $Q$ commutes with finite limits, we know that $X \cong \ker(g')$ in $\CC[S^{-1}_\AA].$  As $\ker(g) \inflation Y'$ is an inflation in $\CC$, the map $f\colon X \to Y$ is an inflation in $\CC[S^{-1}_\AA]$ as well.  Hence, $X \to Y \to Z$ is a conflation in $\CC[S^{-1}_\AA].$ 
\end{proof}

\begin{example}
Let $k$ be a field and let $\mathsf{TVS}$ be the category of topological vector spaces over $k.$  Any of the following subcategories satisfy the conditions of proposition \ref{proposition:PercolatingInQA} and are strongly deflation-percolating subcategories of $\mathsf{TVS}:$
\begin{enumerate}
	\item the subcategory of vector spaces with the discrete topology,
	\item the subcategory of finite-dimensional vector spaces.
\end{enumerate}
\end{example}

\begin{example}
	Let $\LCAf\subset \LCA$ be the full subcategory of the locally compact (Hausdorff) groups consisting of finite abelian groups. Clearly $\LCAf\subset\LCA$ is a Serre subcategory which is closed under subojects and quotients. Hence proposition \ref{proposition:IntroductionRecognitionToolI} (and its dual) imply that $\LCAf$ is both a inflation- and deflation-percolating subcategory of $\LCA$. Moreover, proposition \ref{proposition:TwoSidedQuotientOfQuasiAbelianIsQuasiAbelian} then implies that $\LCA/\LCAf$ is a quasi-abelian category. This result extends to locally compact $R$-modules (see \S\ref{subsection:LCA}).
\end{example}

\begin{example}\label{Example:IsbellCategory}
	Let $\II$ be the Isbell category, that is, the full additive subcategory of $\Ab$ generated by the abelian groups containing no element of order $p^2$ for some fixed prime $p$. The Isbell category is a deflation quasi-abelian category which does not satisfy axioms \ref{L1}, \ref{L2} and \ref{L3} (see \cite[section~2]{Kelly69} and \cite[example~4.7]{BazzoniCrivei13}). Moreover, $\II$ is a reflective subcategory of $\Ab$, i.e.~the inclusion functor $\iota\colon \II\hookrightarrow \Ab$ has a left adjoint $L\colon \Ab\to \II$. The adjoint $L$ is determined by $L(G)=\{g\in G\mid p^2\nmid \text{ord}(g)\}$.
	
	Let $\AA\subseteq \II$ be the full subcategory generated by the $p$-groups. Clearly $\AA$ is a Serre subcategory of $\II$ which is closed under subobjects. Proposition \ref{proposition:IntroductionRecognitionToolI} implies that $\AA$ is a strongly deflation-percolating subcategory of $\II$. Write $\BB$ for the full subcategory of $\Ab$ generated by the $p$-groups. As $\BB\subset\Ab$ is a Serre subcategory, the quotient $\Ab/\BB = \Ab[S^{-1}_\BB]$ is an abelian category.	Using the adjunction $L\dashv \iota$ and the universal properties of quotients, one readily verifies that $\Ab/\BB\simeq \II/\AA$ are equivalent. In particular $\II/\AA$ is abelian.
	
	Note that lemma \ref{lemma:CompositionOfAAInflations} implies that $\AA\subseteq \II$ does not satisfy axiom \ref{A3} as the inflation $\mathbb{Z}\stackrel{\cdot p}{\inflation}\mathbb{Z}$ is a weak isomorphism, but the composition $\mathbb{Z}\stackrel{\cdot p}{\inflation}\mathbb{Z}\stackrel{\cdot p}{\inflation}\mathbb{Z}$ is not an inflation in $\II$. In particular, this shows that weak isomorphisms need not be admissible.
\end{example}

\subsection{Locally compact modules}\label{subsection:LCA}
Let $\LCA$ be the category of locally compact (and Hausdorff) abelian groups. It is shown in \cite[proposition~1.2]{HoffmannSpitzweck07} that $\LCA$ is a quasi-abelian category.

Let $R$ be a unital ring, endowed with the discrete topology.  We write $R-\LC$ for the category of locally compact (and Hausdorff) $R$-modules.  We furthermore write $R-\LC_{\mathsf{C}}$ or $R-\LC_{\mathsf{D}}$ for the full subcategories given by those $R$-modules whose topology is compact or discrete, respectively.  

\begin{proposition}\label{proposition:LCAIsQuasiAbelian}
Let $R$ be a unital ring.
\begin{enumerate}
\item The categories $R-\LC$ and $\LC-R$ are quasi-abelian.
\item There are quasi-inverse contravariant functors:
\[\mbox{$\bD\colon R-\LC \to \LC-R$ and $\bD'\colon \LC-R \to R-\LC$}\]
which interchange compact and discrete $R$-modules.
\end{enumerate}
\end{proposition}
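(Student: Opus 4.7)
The plan is to deduce both parts from the analogous facts for $\LCA$ via the forgetful functor $U\colon R-\LC \to \LCA$. Since $R$ carries the discrete topology, specifying an object of $R-\LC$ is equivalent to specifying an object $M$ of $\LCA$ together with a ring homomorphism $R \to \End_{\LCA}(M)$, i.e., a continuous action $R \times M \to M$.

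For part (1), I would first check that $R-\LC$ is pre-abelian: given a morphism $f\colon M \to N$ in $R-\LC$, its kernel in $\LCA$ is an $R$-submodule by $R$-linearity of $f$, and the closure $\overline{\mathrm{im}(f)} \subseteq N$ is $R$-stable because each element of $R$ acts as a continuous endomorphism. Hence both kernels and cokernels, taken in $\LCA$, lift canonically to $R-\LC$. The same diagonal-action argument shows that $U$ creates pullbacks and pushouts (the R-stability of the closure of $\{(f(k),-g(k)) : k \in K\} \subseteq M \oplus N$ in the pushout case again uses continuity of the $R$-action). Since kernels are stable under pushouts and cokernels under pullbacks in $\LCA$ by \cite[proposition~1.2]{HoffmannSpitzweck07}, and these universal constructions lift as above, the same stability transfers to $R-\LC$, establishing that $R-\LC$ is quasi-abelian. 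The argument for $\LC-R$ is symmetric.

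For part (2), set $\bD(M) := \Hom_{\LCA}(U(M), \bT)$ with $\bT := \bR/\bZ$, equipped with the compact-open topology, so that $\bD(M)$ is a locally compact abelian group by classical Pontryagin duality. Define the \emph{contragredient} right $R$-action by $(\varphi \cdot r)(m) := \varphi(rm)$, making $\bD(M)$ an object of $\LC-R$; define $\bD'$ analogously using the rule $(r \cdot \psi)(n) := \psi(nr)$. The classical Pontryagin biduality isomorphism $M \to \bD'(\bD(M))$ is tautologically $R$-linear, giving the natural isomorphism $\bD'\bD \cong \mathrm{id}_{R-\LC}$, and symmetrically on the other side. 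The interchange of compact and discrete submodules follows at once from the classical case, since these properties depend only on the underlying object in $\LCA$.

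The main obstacle will be verifying continuity of the contragredient $R$-action on $\bD(M)$ in the compact-open topology. Since $R$ is discrete, this reduces to showing that, for each fixed $r \in R$, the precomposition map $\varphi \mapsto \varphi \circ \mu_r$ on $\bD(M)$ is continuous, where $\mu_r \colon M \to M$ is multiplication by $r$; this follows from the standard principle that precomposition by a continuous map between locally compact Hausdorff spaces is continuous for the compact-open topology. Everything else is routine bookkeeping in transporting classical Pontryagin duality to the $R$-linear setting.
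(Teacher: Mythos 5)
Your proposal is correct, and it is compatible with the paper's approach, but it is considerably more self-contained: the paper's proof of this proposition is a one-liner that cites \cite[proposition~1.2]{HoffmannSpitzweck07} and \cite[proposition~2.2]{Braunling20} for part (1), and \cite[theorem~1]{Levin73} and \cite[theorem~2.3]{Braunling20} for part (2), while you actually carry out the transfer from $\LCA$ to $R-\LC$ via the forgetful functor and the contragredient $R$-action. Your argument that $U$ creates kernels and cokernels (the kernel is $R$-stable by $R$-linearity of $f$; the closure of the image is $R$-stable because each $\mu_r$ is a continuous endomorphism preserving the image) is the correct way to obtain that $R-\LC$ is pre-abelian, and since pullbacks and pushouts are computed from kernels and cokernels of difference maps in any pre-abelian category, the ``creates pullbacks and pushouts'' step is a consequence rather than an extra check. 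From there the quasi-abelian axioms for $R-\LC$ indeed reduce to those for $\LCA$, since $U$ preserves and reflects both the universal constructions and the classes of open surjections and closed injections. For part (2), your contragredient construction $\bD(M) = \Hom_{\LCA}(U(M),\bT)$ with $(\varphi\cdot r)(m)=\varphi(rm)$ is exactly the $R$-linear Pontryagin duality of Levin, and your continuity check (precomposition by a continuous map is continuous for the compact-open topology, which suffices since $R$ is discrete) is the nontrivial verification that the papers you would otherwise cite perform. In short: same mathematics as the references the paper invokes, but you make the reduction to classical Pontryagin duality explicit rather than outsourcing it.
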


\begin{proof}
The first part follows from \cite[proposition~1.2]{HoffmannSpitzweck07} (see also \cite[proposition~2.2]{Braunling20}).

The contravariant functors in the second statement are induced by the standard Pontryagin duality $\LCA \to \LCA$ (see \cite[theorem 1]{Levin73} or \cite[theorem 2.3]{Braunling20}).
\end{proof}

It follows from \cite{HoffmannSpitzweck07} that the canonical exact structure on $R-\LC$ is described as follows: a morphism $f\colon X \to Y$ is an inflation if and only if it is a closed injection; a morphism $f\colon X \to Y$ is a deflation if and only if it is an open surjection.

\begin{proposition}\makeatletter
\hyper@anchor{\@currentHref}%
\makeatother\label{proposition:PercolatingInLCA}
	\begin{enumerate}
		\item The category $R-\LC_{\mathsf{D}}$ is an admissibly deflation-percolating subcategory of $R-\LC$.  The set $S_{R-\LC_{\mathsf{D}}}$ of admissible weak isomorphisms is saturated.
		\item The category $R-\LC_{\mathsf{C}}$ is an admissibly inflation-percolating subcategory of $R-\LC$.  The set $S_{(R-\LC_{\mathsf{C}})}$ of admissible weak isomorphisms is saturated.
	\end{enumerate}
\end{proposition}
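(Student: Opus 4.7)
The plan is to verify the three axioms \ref{A1}, \ref{A2}, \ref{A3} of Definition \ref{Definition:AbelianPercolating} for $R-\LC_{\mathsf{D}} \subseteq R-\LC$ directly, using the concrete description of the exact structure (inflations are closed injective continuous homomorphisms; deflations are open surjective continuous homomorphisms). Since $R-\LC$ is quasi-abelian by Proposition \ref{proposition:LCAIsQuasiAbelian}, hence exact, axiom \ref{A3} is automatic by Remark \ref{remark:AbelianPercolatingAboutAxioms}. Once part (1) is in hand, part (2) will follow by Pontryagin duality, and the two saturation claims will follow from Proposition \ref{proposition:Saturation} and its dual.

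For axiom \ref{A1}, consider a conflation $X \inflation Y \deflation Z$ in $R-\LC$. If $Y$ is discrete, then $X$, as a subspace of a discrete space, is discrete, and $Z$ carries the quotient topology from $Y$, so it too is discrete. Conversely, if $X$ and $Z$ are both discrete, then $X \subseteq Y$ is the preimage of the open point $0 \in Y/X \cong Z$ under the open (and continuous) deflation $Y \deflation Z$, hence $X$ is open in $Y$; combined with $X$ being discrete, this forces $\{0\}$ to be open in $Y$, i.e.\ $Y$ is discrete. For axiom \ref{A2}, given $f\colon C \to A$ with $A \in R-\LC_{\mathsf{D}}$, the kernel $N = f^{-1}(0)$ is an open (and closed) $R$-submodule of $C$, so the canonical projection $q\colon C \deflation C/N$ is a deflation with $C/N$ discrete. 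The induced continuous injective homomorphism $C/N \hookrightarrow A$ is then a map between discrete abelian $R$-modules, hence automatically a closed embedding, i.e.\ an inflation. This exhibits the factorization $C \deflation C/N \inflation A$ required by \ref{A2} with middle term in $R-\LC_{\mathsf{D}}$.

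For part (2), I would invoke the Pontryagin duality $\bD\colon R-\LC \to \LC-R$ from Proposition \ref{proposition:LCAIsQuasiAbelian}, which is an exact contravariant equivalence of quasi-abelian categories interchanging inflations and deflations, and interchanging compact and discrete objects. Applying part (1) to the opposite ring $R^{op}$ (so that $(\LC-R)_{\mathsf{D}}$ is admissibly deflation-percolating in $\LC-R$) and transporting along $\bD$ identifies $R-\LC_{\mathsf{C}}$ with $\bD^{-1}\bigl((\LC-R)_{\mathsf{D}}\bigr)$ as an admissibly inflation-percolating subcategory of $R-\LC$. The saturation of $S_{R-\LC_{\mathsf{D}}}$ follows from Proposition \ref{proposition:Saturation}; saturation of $S_{R-\LC_{\mathsf{C}}}$ follows either from the dual of that proposition or by transporting along $\bD$.

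The only slightly delicate step is the "both ends discrete implies middle discrete" direction of \ref{A1}, which crucially uses that a deflation in $R-\LC$ is an open map (not merely a surjective continuous homomorphism); without openness one could not conclude that $X$ is open in $Y$. All remaining verifications are either routine topology-of-discrete-groups computations or formal consequences of duality and the general theory already established.
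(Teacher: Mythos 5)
Your proof is correct and follows essentially the same route as the paper: verify \ref{A1} by a direct topological argument, get \ref{A2} from the coimage factorization $C\deflation C/\ker f\inflation A$ using that discrete modules form a Serre-like subcategory closed under subobjects, observe \ref{A3} is automatic because $R\text{-}\LC$ is exact, and then transport to the compact case by Pontryagin duality, with both saturation claims coming from Proposition \ref{proposition:Saturation}. One small correction to your closing remark: in the step showing $X=g^{-1}(\{0\})$ is open in $Y$, what you use is only that $g$ is \emph{continuous} and that $\{0\}$ is open in the discrete $Z$ (preimages of open sets under continuous maps are open); openness of $g$ as a map plays no role there. Openness of the deflation is what guarantees that $Z$ carries the quotient topology (so $Y$ discrete forces $Z$ discrete in the forward direction), not the converse direction you flagged.
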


\begin{proof}
We first show that $R-\LC_{\mathsf{D}}$ satisfies axiom \ref{A1}. Let $A\stackrel{f}{\inflation}B\stackrel{g}{\deflation}C$ be a conflation in $R-\LC$. It is straightforward to show that if $B$ is discrete, then so are $A$ and $C$. Conversely, assume that $A$ and $C$ are discrete. Since the singleton $\left\{0_B\right\}$ is open in $A$ and $A$ has the subspace topology of $B$, there exists an open $U\subseteq B$ such that $\left\{0_B\right\}=U\cap A$. Since the singleton $\left\{0_C\right\}$ is open in $C$, $g^{-1}(\left\{0_C\right\})=\ker(g)=A$ is open in $B$. Hence $\left\{0_B\right\}$ is open $B$. It follows that $B$ has the discrete topology.

Axiom \ref{A2} follows from the observation that any map $f\colon X\rightarrow A$ with $A$ discrete induces an open surjective map $X\rightarrow \im(f)$ in $R-\LC$. Axiom \ref{A3} is automatic as $R-\LC$ is an exact category.
It follows from proposition \ref{proposition:Saturation} that the set $S_{(R-\LC_{\mathsf{C}})}$ is saturated and it follows from theorem \ref{theorem:WeakIsomorphismsEqualAAInverseIsomorphisms} that weak isomorphisms are admissible.

Pontryagin duality implies the corresponding statements about $R-\LC_{\mathsf{C}}$.
\end{proof}

\begin{remark}
\cite[example 4]{Braunling20} shows that $\LCA_{\mathsf{C}}$ is not left (or right) s-filtering in $\LCA$ in the sense of \cite{Schlichting04} (see definition \ref{definition:SpecialFiltering}).  On the other hand, putting $R=\mathbb{Z}$, the previous proposition implies that the category $\LCA_C$ is an admissibly inflation-percolating subcategory of $\LCA$.  It follows that $\LCA/\LCA_{\mathsf{C}}$ can be described as a localization with respect to the saturated left multiplicative system given by the weak $\LCA_{\mathsf{C}}^{-1}$-isomorphisms and the localization carries a natural inflation-exact structure (see theorem \ref{theorem:Maintheorem}).
\end{remark}

\begin{remark}
The category $\LCA_{\mathsf{D}}$ is not an inflation-percolating subcategory of $\LCA$. Indeed, the dual of axiom \ref{A2} fails for the map $1_{\mathbb{R}}\colon (\mathbb{R},\tau_{\text{discrete}})\rightarrow (\mathbb{R},\tau_{\text{trivial}})$.  Dually, the category $\LCA_{\mathsf{C}}$ is not a deflation-percolating subcategory of $\LCA$.
\end{remark}

\begin{remark}
It follows from proposition \ref{proposition:PercolatingOfPercolating} that Serre subcategories of $R-\LC_{\mathsf{D}}$ or $R-\LC_{\mathsf{C}}$ are themselves admissibly (deflation- or inflation-)percolating subcategories of $R-\LC$.
\end{remark}

Following \cite{Braunling19, Braunling20}, we write $R-\LC_{\bR \mathsf{C}}$ for the full subcategory of $R-\LC$ whose objects have a direct sum decomposition $\bR^n \oplus C$ (as topological groups) where $C$ is compact.  It is shown in \cite[corollary 9.4]{Braunling19} that $R-\LC_{\bR {\mathsf{C}}}$ is an idempotent complete fully exact subcategory of $R-\LC$.  We write $R-\LC_{\bR}$ for those objects of $R-\LC$ which are isomorphic to $\bR^n$ (with the standard topology).

As an application of proposition \ref{proposition:TorsionfreeIsPercolating}, we show that $R-\LC_{\mathsf{C}}$ is left s-filtering in $R-\LC_{\bR {\mathsf{C}}}$.  In this way, we recover \cite[proposition~9.8]{Braunling19}.

\begin{proposition}
\begin{enumerate}
	\item The pair $(R-\LC_{{\mathsf{C}}}, R-\LC_{\bR})$ is a torsion pair in $R-\LC_{\bR {\mathsf{C}}}$.
	\item $R-\LC_{{\mathsf{C}}}$ is left s-filtering in $R-\LC_{\bR {\mathsf{C}}}$.
	\item $R-\LC_{\bR}$ is right s-filtering in $R-\LC_{\bR {\mathsf{C}}}$.
\end{enumerate}
\end{proposition}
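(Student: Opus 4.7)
For (1), both torsion-pair conditions are immediate: any continuous homomorphism from a compact $R$-module to $\bR^n$ has image a compact subgroup of $\bR^n$, hence is zero, giving $\Hom_{R-\LC}(T, F) = 0$ for $T \in R-\LC_{\mathsf{C}}$ and $F \in R-\LC_{\bR}$; and any decomposition $M \cong \bR^n \oplus C$ of an object of $R-\LC_{\bR \mathsf{C}}$ yields the split conflation $C \inflation M \deflation \bR^n$ of the required type.

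For (2) and (3), the plan is to apply proposition~\ref{proposition:TorsionfreeIsPercolating}(3) and its evident dual. This reduces the problem to checking that (a) the torsion pair is both cohereditary and hereditary, i.e.\ that both $R-\LC_{\bR}$ and $R-\LC_{\mathsf{C}}$ are Serre in $R-\LC_{\bR \mathsf{C}}$, and (b) that the torsion-free functor $L \colon M \mapsto M / R(M)$ and the torsion functor $R \colon M \mapsto R(M)$ (the maximal compact subgroup) are both conflation-exact.

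That $R-\LC_{\mathsf{C}}$ is Serre in $R-\LC_{\bR \mathsf{C}}$ is the standard stability of compactness under closed subgroups, quotients, and extensions in LCA groups. For $R-\LC_{\bR}$ I would argue as follows. If $A'$ is a closed subgroup of $\bR^n$ lying in $R-\LC_{\bR \mathsf{C}}$, the structure theorem for closed subgroups of $\bR^n$ gives $A' \cong \bR^a \oplus \bZ^b$ topologically; matching with the required decomposition $A' \cong \bR^{n'} \oplus C'$ (with $C'$ compact), first at the level of identity components and then on the quotients by them, forces $\bZ^b \cong C'$ to be both compact and discrete, hence finite; so $b = 0$, $C' = 0$, and $A' \cong \bR^a$ sits in $\bR^n$ as a linear subspace with quotient $\bR^{n-a}$. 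Conversely, every extension in $R-\LC$ of $\bR^{n''}$ by $\bR^{n'}$ is itself Euclidean, so extensions of Euclidean objects in $R-\LC_{\bR\mathsf{C}}$ remain in $R-\LC_{\bR}$.

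For exactness of $L$ and $R$, I would explicitly analyse arbitrary conflations $A' \inflation A \deflation A''$ in $R-\LC_{\bR\mathsf{C}}$. Writing $A' = \bR^{n'} \oplus C'$ and $A = \bR^n \oplus C$, the fact that continuous homomorphisms from compact groups to $\bR^n$ vanish, combined with the fact that a nonzero real vector space admits no injective continuous hom into a compact group, forces the inflation into the block form $(x, c') \mapsto (\alpha(x), \beta(x) + \gamma(c'))$ with $\alpha \colon \bR^{n'} \inflation \bR^n$ linear injective and $\gamma \colon C' \inflation C$ injective continuous. Choosing coordinates so that $\alpha$ identifies $\bR^{n'}$ with the first $n'$ coordinates of $\bR^n$, a direct cokernel calculation gives $A'' \cong \bR^{n-n'} \oplus (C / \gamma(C'))$. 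Applying $R$ then yields the conflation $C' \stackrel{\gamma}{\inflation} C \deflation C / \gamma(C')$ in $R-\LC_{\mathsf{C}}$, and applying $L$ yields $\bR^{n'} \stackrel{\alpha}{\inflation} \bR^n \deflation \bR^{n-n'}$ in $R-\LC_{\bR}$, establishing exactness of both $L$ and $R$. With these ingredients, (3) is proposition~\ref{proposition:TorsionfreeIsPercolating}(3) and (2) is its dual applied to the hereditary torsion pair with exact torsion functor $R$. The main technical obstacle is the structural analysis of conflations just described, which also underpins the Serre property of $R-\LC_{\bR}$.
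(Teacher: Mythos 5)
Your proof follows essentially the same route as the paper: identify the torsion pair, establish the Serre property of both classes, reduce to proposition~\ref{proposition:TorsionfreeIsPercolating}(3) and its dual, and verify conflation-exactness of the torsion and torsion-free functors. The paper states the Serre property of $R-\LC_{\bR}$ via injectivity of $\bR$ in $\LCA$ and is terse on the conflation-exactness of $L$ and $R$ (citing the $3\times 3$-lemma after asserting that $L$ preserves the conflation), whereas you make the conflation-exactness explicit through a block-decomposition of an arbitrary conflation $A' \inflation A \deflation A''$. Your version is thus more detailed than the paper's, and the block analysis does genuinely supply a verification the paper treats as evident.

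There is one small but genuine gap: the claim that ``a nonzero real vector space admits no injective continuous hom into a compact group'' is false. For instance, the map $\bR \to (\bR/\bZ)^2$, $t\mapsto(t,\sqrt{2}\,t)\bmod\bZ^2$, is an injective continuous homomorphism. What is true, and what you actually need, is that a nonzero real vector space admits no \emph{closed embedding} (equivalently, no inflation in $\LCA$) into a compact group, since a closed subgroup of a compact group is compact whereas $\bR^k$ for $k>0$ is not. To use this in your block analysis, observe that the restriction of the inflation $\phi\colon A'\inflation A$ to the closed subgroup $\ker(\alpha)\oplus C' \leq A'$ lands in $0\oplus C=C$ and remains a closed embedding; since $C$ is compact, $\ker(\alpha)\oplus C'$ is compact, hence $\ker(\alpha)$ is compact and therefore zero. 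With this correction, the rest of your argument (the change of coordinates, the computation of the cokernel as $\bR^{n-n'}\oplus(C/\gamma(C'))$, and the resulting exactness of both $L$ and $R$) goes through.
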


\begin{proof}
It is clear that $(R-\LC_{\mathsf{C}}, R-\LC_{\bR})$ is a torsion pair in $R-\LC_{\bR {\mathsf{C}}}$: the torsion of an object $\bR^n \oplus C$ is given by $t(\bR^n \oplus C) = C$ and the torsion-free part of an object is given by $f(\bR^n \oplus C) = \bR^n$.

As $R-\LC_{\bR {\mathsf{C}}}$ is a fully exact subcategory of $R-\LC$ and $R-\LC_{C}$ satisfies \ref{P1} in $R-\LC$, it follows that $R-\LC_{{\mathsf{C}}}$ satisfies \ref{P1} in $R-\LC_{\bR {\mathsf{C}}}$.  Moreover, as $\bR$ is injective in $\LCA$, it is clear that $R-\LC_{\bR}$ is closed under extensions.  Hence, we find that $R-\LC_{\bR}$ also satisfies \ref{P1} in $R-\LC_{\bR {\mathsf{C}}}$.

Lastly, given any conflation $\bR^{n_1} \oplus C_1 \inflation \bR^{n_2} \oplus C_2 \deflation \bR^{n_3} \oplus C_3$ in $R-\LC_{\bR {\mathsf{C}}}$, we find, by applying the functor $L\colon R-\LC_{\bR {\mathsf{C}}} \to R-\LC_{\bR}$, the conflation $\bR^{n_1}\inflation \bR^{n_2}\deflation \bR^{n_3}$.  The $3 \times 3$-lemma shows that the torsion part $C_1 \inflation  C_2 \deflation  C_3$ is also a conflation.
\end{proof}

In the following proposition, we write $\LCAcon$ and $\LCAtd$ for the full subcategories of $\LCA$ given by the connected locally abelian groups and the totally disconnected locally abelian groups.

\begin{proposition}
In the category $\LCA$, there is a cohereditary torsion pair $(\LCAcon, \LCAtd)$.  In particular, the subcategory $\LCAtd$ of totally disconnected locally compact abelian groups is a strongly percolating subcategory of $\LCA$.
\end{proposition}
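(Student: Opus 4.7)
The plan is to establish the cohereditary torsion pair $(\LCAcon, \LCAtd)$ and then invoke proposition \ref{proposition:IntroductionRecognitionToolI} to conclude that $\LCAtd$ is strongly deflation-percolating in $\LCA$.

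First, I would recall the standard topological group theoretic fact that every $G \in \LCA$ has a closed connected subgroup $G_0$, the identity component, for which the quotient $G/G_0$ is totally disconnected. Since $G_0 \hookrightarrow G$ is a closed injection and $G \twoheadrightarrow G/G_0$ is an open surjection, one obtains a conflation $G_0 \inflation G \deflation G/G_0$ in $\LCA$ with $G_0 \in \LCAcon$ and $G/G_0 \in \LCAtd$. The orthogonality $\Hom(\LCAcon, \LCAtd) = 0$ is immediate: the continuous image of a connected group is connected, while connected subsets of a totally disconnected group are singletons. This already confirms that $(\LCAcon, \LCAtd)$ is a torsion pair.

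The main step is to prove that this torsion pair is cohereditary, i.e.~that $\LCAtd$ is a Serre subcategory of $\LCA$. Given a conflation $A \inflation B \deflation C$, if $B \in \LCAtd$ then $A$, being a closed subgroup, is totally disconnected as a subspace, and $C$ is totally disconnected because van Dantzig's theorem equips $B$ with a neighborhood basis of the identity consisting of compact open subgroups, whose images under the open surjection $B \twoheadrightarrow C$ form a neighborhood basis of the identity in $C$ consisting of compact open subgroups. Conversely, if $A, C \in \LCAtd$, then the identity component $B_0$ of $B$ maps to a connected subset of the totally disconnected group $C$, which must be trivial; hence $B_0 \subseteq A$, and the total disconnectedness of $A$ forces $B_0 = \{0\}$, so $B \in \LCAtd$.

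Finally, I would appeal to proposition \ref{proposition:IntroductionRecognitionToolI}. By proposition \ref{proposition:LCAIsQuasiAbelian}, the category $\LCA$ is (deflation) quasi-abelian, so it suffices to verify that $\LCAtd$ is closed under categorical subobjects in the sense of lemma \ref{lemma:QuasiAbelianP2}: given a monomorphism $X \hookrightarrow A$ with $A \in \LCAtd$, the topology on $X$ is at least as fine as the subspace topology pulled back from $A$, and since subspaces of totally disconnected spaces are totally disconnected and refinements preserve this property, $X \in \LCAtd$. I expect the principal difficulty to be the quotient-closure part of the Serre condition, which is where van Dantzig's theorem enters essentially.
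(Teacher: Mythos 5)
Your proof is correct and follows the same broad strategy as the paper: construct the torsion pair via the identity component $G_0$, establish that $\LCAtd$ is a Serre subcategory (i.e., the torsion pair is cohereditary), and then conclude via the quasi-abelian recognition criterion (proposition~\ref{proposition:IntroductionRecognitionToolI}). The main difference is one of economy. The paper first invokes corollary~\ref{corollary:TorsionTheories}, which automatically gives closure of a torsion-free class under extensions and subobjects, and so needs only to cite a Bourbaki reference for closure under quotients; you instead verify all three parts of the Serre condition by hand (subspaces via closed subgroups, quotients via van~Dantzig's theorem, extension-closure via the identity component landing in a totally disconnected kernel) and then separately reverify closure under arbitrary monomorphisms for proposition~\ref{proposition:IntroductionRecognitionToolI}. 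This duplicates work that the torsion-theoretic machinery already provides, but it is more self-contained, and your van~Dantzig argument makes explicit what the paper delegates to a citation. Your final observation — that a monomorphism in $\LCA$ need not be a topological embedding, but a finer topology on a totally disconnected space is still totally disconnected — is a genuine subtlety correctly handled, and it is exactly the point lemma~\ref{lemma:QuasiAbelianP2} silently relies on here.
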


\begin{proof}
For a $G \in \LCA$, let $G_0$ be the connected component of the identity.  It follows from \cite[Proposition III.4.6.14]{BourbakiTop} that $G_0$ is a (closed) subgroup of $G$ and from \cite[proposition I.11.5.9]{BourbakiTop} that the quotient $G/G_0$ is totally disconnected.  As $\Hom(\LCAcon, \LCAtd)=0$, we see that $(\LCAcon, \LCAtd)$ is indeed a torsion pair.  Furthermore, it follows from \cite[Corollary III.4.6.3]{BourbakiTop} that quotients of totally disconnected locally compact groups are totally disconnected so that $(\LCAcon, \LCAtd)$ is a cohereditary torsion pair (this uses corollary \ref{corollary:TorsionTheories}).

As $\LCA$ is a quasi-abelian category, it follows from proposition \ref{proposition:TorsionfreeIsPercolating} (or proposition \ref{proposition:PercolatingInQA}) that $\LCAtd$ is a strongly deflation-percolating subcategory of $\LCA.$
\end{proof}

\begin{remark}
As $\LCA_{\mathsf{D}} \subseteq \LCAtd$, we find that $\LCA_{\mathsf{D}}$ is an admissibly deflation-percolating subcategory of $\LCAtd$.  It follows from proposition \ref{proposition:PercolatingInLCA} that the category $\LCA_{\mathsf{C}} \cap \LCAtd$ of compact totally disconnected groups is an admissibly inflation-percolating subcategory of $\LCAtd$.
\end{remark}

\subsection{An example coming from filtered modules}\label{Subsection:GliderExample}

In this section, we consider an example of an exact category $\EE$ and an admissibly deflation-percolating subcategory $\AA$ such that the localization $S_\AA^{-1}\EE$ is deflation-exact but not inflation-exact. This shows that in general one cannot expect that localizing with respect to an (admissibly) percolating subcategory preserves two-sided exactness.  This example is based on the theory of glider representations (see for example \cite{CaenepeelVanOystaeyen16,CaenepeelVanOystaeyen19book,CaenepeelVanOystaeyen18} or \cite{HenrardvanRoosmalen20a}). 

Let $k$ be a field and let $R$ be the matrix ring \[R=\begin{pmatrix}
k & 0 & 0\\
k[t]_{\leq 2} & k& 0\\
k[t] & k[t] & k[t]
\end{pmatrix}.\] We write $E_{i,j}$ for the $3\times 3$-matrix defined by $(E_{i,j})_{k,l}=\delta_{i,k}\delta_{j,l}$ where $\delta_{i,k}$ is the Kronecker delta. We write $e_1,e_2,e_3$ for the primitive orthogonal idempotents, i.e. $e_i=E_{i,i}$. Let $\EE$ be the abelian category of left $R$-modules. Given an $R$-module $M$, we have that $M\cong e_1M+e_2M+e_3M$ as a $k$-vector space. Note that $e_3M$ is a $k[t]$-module. Let $\EE$ be the full subcategory of $\EE$ of all left $R$-modules $M$ such that the maps
\begin{eqnarray*}
 \iota_1\colon e_1M\hookrightarrow e_2M &:& m\mapsto E_{2,1}m\\
 \iota_2\colon e_2M\hookrightarrow e_3M &:& m\mapsto E_{3,2}m
\end{eqnarray*}
are injective. For simplicity, we write an object of $\EE$ as $e_1M\hookrightarrow e_2M\hookrightarrow e_3M$.  One readily verifies that $\EE$ is extension-closed in $\EE$ and therefore inherits a natural exact structure.  Using \cite[proposition~B.3]{BondalVandenBergh03} we see that $\EE$ is in fact a quasi-abelian category.  Indeed, one can verify that $\EE$ is closed under subobjects and contains all projective $R$-modules.  It follows that $\EE$ arises as the the torsion-free part of a cotilting torsion pair.

Let $\AA$ be the full subcategory of $\EE$ consisting of all $R$-modules such that $e_1M=0=e_2M$.  Clearly, $\AA$ is equivalent to the abelian category of $k[t]$-modules.  Consider the map $\phi$ in $\EE$ given by the following commutative diagram:
\[\xymatrix{
0\ar@{^{(}->}[r]\ar[d]&0\ar@{^{(}->}[r]\ar[d]&k\ar[d]^{1_k}\\
k\ar@{^{(}->}[r] & k\ar@{^{(}->}[r] & k
}\] One readily verifies that $\ker(\phi)=0=\coker(\phi)$ in $\EE$. It follows that if $\phi$ is admissible, it is an isomorphism.  However, $\phi$ does not admit a right inverse.  It follows that $\AA$ is not inflation-percolating in $\EE$. On the other hand, it is easy to see that $\AA$ is admissibly deflation-percolating in $\EE$.  Hence, we can describe the localization $\EE/\AA$ using theorem \ref{theorem:Maintheorem}.  Moreover, proposition \ref{proposition:Saturation} implies that the right multiplicative system of $\AA^{-1}$-isomorphisms is saturated.

\begin{lemma}\label{Lemma:DesciptionIsoInQuotient}
	Let $M,N\in \Ob(\EE/\AA)$ such that $M\cong N$. Then $e_1M\cong e_1N$ and $e_2M\cong e_2N$ as $k$-vector spaces.
\end{lemma}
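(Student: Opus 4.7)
The idea is to use the universal property of the quotient $\EE/\AA$ (proposition \ref{proposition:QuotientInConflationCategories}) to lift the functors $M \mapsto e_1 M$ and $M \mapsto e_2 M$ to functors on $\EE/\AA$, and then exploit the trivial fact that any functor preserves isomorphisms.

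Concretely, I would first define functors $e_i\colon \EE \to \Vecc_k$ ($i=1,2$) by $M \mapsto e_i M$. These are restrictions of $\Hom_R(Re_i,-)$ on the ambient abelian category of left $R$-modules. Since $Re_i$ is projective, these functors are exact on $R\text{-}\mathrm{Mod}$, and because the exact structure on $\EE$ is inherited from $R\text{-}\mathrm{Mod}$ (conflations in $\EE$ are precisely those short exact sequences of $R$-modules all three of whose terms lie in $\EE$), the functors $e_1, e_2\colon \EE \to \Vecc_k$ are conflation-exact. By the very definition of $\AA$, we have $e_i(A) = 0$ for every $A \in \Ob(\AA)$.

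Next, invoking proposition \ref{proposition:QuotientInConflationCategories} (applied with $\DD = \Vecc_k$, which is a conflation category), each $e_i$ factors uniquely as $e_i = \overline{e_i} \circ Q$ for some conflation-exact functor $\overline{e_i}\colon \EE/\AA \to \Vecc_k$. Any functor preserves isomorphisms, so an isomorphism $M \cong N$ in $\EE/\AA$ yields isomorphisms $\overline{e_i}(M) \cong \overline{e_i}(N)$ in $\Vecc_k$ for $i = 1,2$. Since $\overline{e_i}(M) = \overline{e_i}(Q(M)) = e_i(M)$ and similarly for $N$, we conclude $e_1 M \cong e_1 N$ and $e_2 M \cong e_2 N$ as $k$-vector spaces, as desired.

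The only nontrivial point is verifying that $e_i\colon \EE \to \Vecc_k$ is conflation-exact, and this follows immediately from the projectivity of $Re_i$ together with the fact that $\EE$ is a fully exact subcategory of $R\text{-}\mathrm{Mod}$. No obstacle arises from $\EE/\AA$ failing to be inflation-exact, since the universal property used is the conflation-category one, and preservation of isomorphisms by functors is trivial.
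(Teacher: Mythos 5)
Your proposal is correct, and it takes a genuinely different route from the paper's. The paper argues directly at the level of morphisms in $\EE/\AA$: it represents an isomorphism $M\cong N$ by a roof $(g\colon L\to N, s\colon L\to M)$ with $s\in S_\AA$, invokes the saturation of $S_\AA$ (proposition~\ref{proposition:Saturation}) together with admissibility of weak isomorphisms (theorem~\ref{theorem:WeakIsomorphismsEqualAAInverseIsomorphisms}) to conclude that $g$ itself is an $\AA^{-1}$-isomorphism, and then checks by hand that the induced $k$-linear maps on the $e_1$- and $e_2$-components of $s$ and $g$ have vanishing kernel and cokernel. Your argument instead goes through the 2-universal property of $\CC/\AA$ as a conflation category (proposition~\ref{proposition:QuotientInConflationCategories}): you observe that $e_i = \Hom_R(Re_i,-)\colon \EE \to \Vecc_k$ is conflation-exact (projectivity of $Re_i$ plus the fact that $\EE$ inherits its exact structure from $R$-$\mathrm{Mod}$) and kills $\AA$, so it factors through $Q\colon \EE \to \EE/\AA$, and any functor preserves isomorphisms. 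Your route avoids the saturation and admissibility machinery entirely and is more conceptual; the paper's route is more hands-on and has the side benefit of illustrating, in this concrete example, exactly how saturation and admissibility of weak isomorphisms are used. Both are complete proofs.
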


\begin{proof}
	Let $f\colon X\rightarrow Y$ be an $\AA^{-1}$-isomorphism in $\EE/\AA$ and write $f_i$ for the induced map $e_iX\rightarrow e_iY$. Since $\ker(f),\coker(f)\in \AA$, we have that $\ker(f_j)=0=\coker(f_j)$ for $j=1$ or $2$.  It follows that $f_1$ and $f_2$ are isomorphisms of $k$-vector spaces.
	
	Assume that $M\cong N$ in $\EE/\AA$ and let $(g\colon L \rightarrow N, s\colon L\rightarrow M)\in \Hom_{S_{\AA}^{-1}\EE}(M,N)$ be an isomorphism in $\EE/\AA$. Since $Q(g)$ is also an isomorphism in $\EE/\AA$ and $S_{\AA}$ is saturated, $g$ is an $\AA^{-1}$-isomorphism. It follows that $g_1,g_2,s_1$ and $s_2$ are isomorphisms and hence $e_1M\cong e_1N$ and $e_2M\cong e_2N$ as $k$-vector spaces.
\end{proof}

We now show that the localization $\EE/\AA$ is not inflation-exact by explicitly showing the failure of axiom \ref{L1}. Consider the following commutative diagram in $\EE$
\[\xymatrix{
	tRe_2 \ar@{>->}[d]_{f} && 0\ar@{^{(}->}[r]\ar[d] & kt\ar@{^{(}->}[r]\ar[d]^{\begin{psmallmatrix}1\\0\end{psmallmatrix}} & tk[t]\ar[d]^{\begin{psmallmatrix}1\\0\end{psmallmatrix}}\\
	tRe_2\oplus tRe_2\ar[d]_{g}^{\rotatebox{90}{$\sim$}} && 0 \ar@{^{(}->}[r]\ar[d] & kt \oplus kt \ar@{^{(}->}[r]\ar[d]^{\begin{psmallmatrix}1&0\\0&t\end{psmallmatrix}} & tk[t]\oplus tk[t]\ar[d]^{\begin{psmallmatrix}1&t\end{psmallmatrix}}\\
	M \ar@{>->}[d]_{h}&& 0 \ar@{^{(}->}[r]\ar[d] & kt\oplus kt^2 \ar@{^{(}->}[r]\ar[d]^{\begin{psmallmatrix}0&0\\1&0\\0&1 \end{psmallmatrix}} & tk[t]\ar[d]_1\\
	Re_1 && k \ar@{^{(}->}[r] & k\oplus kt \oplus kt^2 \ar@{^{(}->}[r] & k[t]
}\]
One can verify that $f\colon tRe_2\rightarrow tRe_2\oplus tRe_2$ is an inflation, $g$ is an $\AA^{-1}$-isomorphism, and $h \colon M\rightarrow Re_1$ is an inflation. It follows that the composition $Re_2\xrightarrow{f} Re_2\oplus Re_2\xrightarrow{g} M$ descends to an inflation in $\EE/\AA$. The cokernel of $hgf$ in $\EE/\AA$ is given by $k\hookrightarrow k\hookrightarrow k$. A direct computation shows that $\ker(\coker(hgf))$ is given by $0\hookrightarrow kt\oplus kt^2\hookrightarrow tk[t]$. By lemma \ref{Lemma:DesciptionIsoInQuotient}, $\ker(\coker(hgf))\not\cong tRe_2$ in $\EE/\AA$. It follows that $hgf$ is not an inflation in $\EE/\AA$. This shows that axiom \ref{L1} is not satisfied.

\providecommand{\bysame}{\leavevmode\hbox to3em{\hrulefill}\thinspace}
\providecommand{\MR}{\relax\ifhmode\unskip\space\fi MR }
\providecommand{\MRhref}[2]{%
  \href{http://www.ams.org/mathscinet-getitem?mr=#1}{#2}
}
\providecommand{\href}[2]{#2}

\end{document}